\documentclass[11pt, leqno]{amsart}
\usepackage{mathrsfs}
\usepackage{graphicx}
\usepackage{amsfonts,delarray,amssymb,amsmath,amsthm,a4,a4wide}
\usepackage{latexsym}
\usepackage{epsfig}
\usepackage{color}
\usepackage[T1]{fontenc}

\newtheorem{thm}{Theorem}[section]
\newtheorem{cor}[thm]{Corollary}

\newtheorem{lem}[thm]{Lemma}
\newtheorem{exa}[thm]{Example}
\newtheorem{prop}[thm]{Proposition}

\newtheorem{question}[thm]{Question}

\theoremstyle{remark}
\newtheorem{rem}[thm]{Remark}
\theoremstyle{definition}
\newtheorem{defn}[thm]{Definition}

\numberwithin{equation}{section}
\newcommand{\R}{\mathbb R}
\newcommand{\K}{\mathbb K}
\newcommand{\N}{\mathbb N}
\newcommand{\E}{\mathbb E}
\renewcommand{\L}{\mathcal L}
\newcommand{\e}{\varepsilon}

\newcommand{\p}{\partial}
\newcommand{\dist}{\mbox{dist}\,}
\newcommand{\diam}{\mbox{diam}\,}

\newcommand{\comment}[1]{}
\def\h{\hspace*{.24in}}

\makeatletter
\@namedef{subjclassname@2020}{%
  \textup{2020} Mathematics Subject Classification}
\makeatother

\makeatletter
\def\l@subsection{\@tocline{2}{0pt}{2.5pc}{5pc}{}}

\begin{document} 

\title
[Degenerate Monge--Amp\`ere Equations via Mixed Measures]
{A Variational
Approach to 
Degenerate 
Monge--Amp\`ere Equations with Mixed Measures and Monotonicity 
}
\author{Nam Q. L\^e}
\address{Department of Mathematics, Indiana University, 
Bloomington, IN 47405, USA}
\email{nqle@iu.edu}
\thanks{The author was supported in part by the National Science Foundation under grant DMS-2452320.}

\subjclass[2020]{ 35J96, 35P30, 49R05, 35J70, 35J75}
\keywords{Degenerate Monge--Amp\`ere equation, Monge--Amp\`ere eigenvalue problem, Mixed Monge--Amp\`ere measure, Convex envelope, Maximum principle, Comparison principle}

\begin{abstract}
We study the solvability and uniqueness for several degenerate Monge--Amp\`ere equations including the Monge--Amp\`ere eigenvalue problem in real Euclidean spaces that involve singular Borel measures. Our approach 
 systematically analyzes the Monge--Amp\`ere energy from the variational point of view and appropriately exploits monotonicity arguments.
 Our main tools consist of the mixed Monge--Amp\`ere measure, Aleksandrov--Blocki--Jerison-type maximum principles, integration by parts, convex envelope, and comparison principles for subcritical equations. 
For the Monge--Amp\`ere eigenvalue problem, we contrast the analysis within and without the energy class;
even if it might not have solutions in the energy class, we show that the 
infimum of the Rayleigh quotient can be approximated from above by Monge--Amp\`ere eigenvalues of the truncated measures, and by
Rayleigh quotients of an inverse iterative scheme. We give examples showing that for very singular Borel measures, the Monge--Amp\`ere eigenvalue problem has only solutions outside the energy class together with symmetry breaking and nonuniqueness.

\end{abstract}
\maketitle

\tableofcontents
 
 \section{Introduction and statement of the main results}
 
 In this paper, we study, via variational method and monotonicity, 
 degenerate Monge--Amp\`ere equations including the Monge--Amp\`ere eigenvalue problem that involve singular Borel measures on general bounded convex domains in Euclidean spaces.

 \medskip

 Given a Borel measure $\nu$ on a bounded convex domain $\Omega\subset\R^n$ with positive mass and $p\in [0,\infty)$,  we are interested in the Monge--Amp\`ere equation of the type
 \begin{equation}
 \label{eqp1}
   \det D^2 u = \lambda|u|^p\nu\quad\text{in} ~  \Omega, \quad
u = 0\quad\text{on}~ \p\Omega,
\end{equation}
 where $u\in C(\overline{\Omega})$ is an unknown convex function and $\lambda>0$ is an unknown constant. Due to homogeneity, we can always take $\lambda=1$ when $p\neq n$. 
\subsection{Case of  Lebesgue measure}
 When $\nu$ is the $n$-dimensional Lebesgue measure $d\L^n$, \eqref{eqp1} becomes
  \begin{equation}
 \label{eqp2}
 \det D^2 u=\lambda |u|^p\quad \text{in } \Omega\subset\R^n,\quad  u=0\quad \text{on } \p\Omega,\end{equation}
 and this equation has been extensively studied.
 When $p=n$, \eqref{eqp2} is the Monge--Amp\`ere eigenvalue problem for the domain $\Omega$ which was first studied by Lions \cite{Ls}. When $\Omega$ is a 
  smooth, uniformly convex domain in $\R^n$, Lions showed that there exist a unique positive constant $\lambda=\lambda(\Omega)$ and a unique (up to positive multiplicative constants) nonzero 
convex function $u\in C^{1,1}(\overline{\Omega})\cap C^{\infty}(\Omega)$ solving the eigenvalue problem for the Monge-Amp\`ere operator 
\begin{equation}\label{EPLi}
   \det D^2 u~ = \lambda|u|^n~\quad\text{in} ~  \Omega, \quad
u = 0~\quad\text{on}~ \p\Omega.
  \end{equation}
  The constant $\lambda(\Omega)$ is called 
the Monge--Amp\`ere eigenvalue of $\Omega$, and the nonzero convex functions $u$ solving (\ref{EPLi}) are called the Monge--Amp\`ere eigenfunctions of $\Omega$. 

\medskip
A variational characterization of $\lambda(\Omega)$ was found by Tso \cite{Tso} who discovered, using gradient flows (parabolic Monge--Amp\`ere equations), that for sufficiently smooth,  uniformly convex domains $\Omega$, the following formula involving the Rayleigh quotient holds:
\begin{multline}
 \label{lamTs}
 \lambda(\Omega)=\inf\bigg\{ \frac{\int_{\Omega} (-u)\det D^2 u~dx}{\int_{\Omega}(-u)^{n+1}~dx}: \text{convex}\,\, u\in C^{0,1}(\overline{\Omega})\cap C^2(\Omega)\setminus\{0\},\,\, u=0~\text{on}~\p\Omega\bigg\}.
\end{multline}
In fact, with $\lambda=\lambda(\Omega)$ defined by \eqref{lamTs}, using  gradient flows and variational arguments, Tso showed that \eqref{eqp2} has a nonzero convex solution $u_p\in C^{0,1}(\overline{\Omega})\cap C^\infty(\Omega)$ when $0<p\neq n<\infty$. When $0<p<n$, Tso established the uniqueness of $u_p$ and that $\|u_p\|_{L^{\infty}(\Omega)}$ is uniformly bounded from below and above by positive constants, so when $p\nearrow n$, $u_p$ tends to a Monge--Amp\`ere eigenfunction of $\Omega$. 

\medskip
For general bounded convex domains $\Omega\subset\R^n$, building on Tso's work and using approximation arguments together with regularity techniques based on the works of Caffarelli \cite{C1, C2}, the author (see \cite[Theorem 1.1 and Proposition 2.8]{L} and  \cite[Theorem 1.1 and Corollary 1.2]{L_AFST}) was able to extend the above-mentioned results of Tso 
 for all $p>0$, except that when $0<p\leq n-2$, the unique nonzero convex solution to \eqref{eqp2} is smooth in the interior of $\Omega$ but its gradient blows up near any flat part of the boundary; see \cite[Theorem 1.1]{LCPAA} and \cite[Theorem 1.5]{L_AFST}.
 Moreover,  
 when $p=n$, the Monge--Amp\`ere eigenvalue problem  \eqref{EPLi} has a unique Monge--Amp\`ere eigenvalue, renamed $\lambda[\Omega]$ to reflect the fact that the boundary $\p\Omega$ might have flat parts or corners, and a unique Monge-Amp\`ere eigenfunction $u\in C^{0, 1}(\overline{\Omega})\cap C^\infty(\Omega)$ (up to positive multiplicative constants). Here, $\lambda[\Omega]$ is characterized by either of the following variational formulas:
 \begin{equation}
\label{lam_def}
\begin{split}
\lambda[\Omega] &=\inf\bigg\{ \frac{\int_{\Omega} |u| \, d\mu_u }{\int_{\Omega}|u|^{n+1}\, dx}: \text{convex}\,\, u\in C(\overline{\Omega})\setminus\{0\},\,~ u=0~\text{on}~\p\Omega\bigg\}\\
  &=\inf\bigg\{ \frac{\int_{\Omega} (-u)\det D^2 u~dx}{\int_{\Omega}(-u)^{n+1}~dx}: \text{convex}\,\, u\in C^{0,1}(\overline{\Omega})\cap C^2(\Omega)\setminus\{0\},\,
 ~u=0~\text{on}~\p\Omega\bigg\},
  \end{split}
 \end{equation}
 where $\mu_u$ denotes the Monge--Amp\`ere measure of the convex function $u$ (see \eqref{MAmu}).

\medskip
The uniqueness is a widely open problem in the case $p>n$.
Recent progresses have been made by Huang \cite{H} and Cheng--Huang--Xu \cite{CHX}. When $\Omega$ is smooth and uniformly convex, Huang proved the uniqueness
of least energy solution to \eqref{eqp2} for any $p\in (2, \infty)$ when $n=2$, and
the uniqueness of nonzero convex solution to \eqref{eqp2} for any $2\leq n<p<\alpha_0(n)$ where $\alpha_0(n)$ is close to $n$. In two dimensions, Cheng, Huang, and Xu
obtained uniqueness when the bounded convex domain  has at least two different symmetric axes (such as a triangle).

\subsection{Case of general Borel measure}
Recently, Lu and Zeriahi \cite{LZ} studied \eqref{eqp1} with a general Borel measure $\nu$ when $p=0$ (the Dirichlet problem) and $p=n$, using techniques of complex Monge--Amp\`ere equations including variational method, and then transferring the results to the real  Monge--Amp\`ere equations. This passage is rooted in a deep connection between the complex and real Monge--Amp\`ere operators via the logarithmic map; see Berman--Berndtsson \cite[Section 2.2]{BBe} and Coman--Guedj--Sahin--Zeriahi \cite[Lemma 2.2]{CGSZ}.
Lu and Zeriahi obtained very interesting existence and uniqueness results when the measure $\nu$ satisfies
 \begin{equation}\label{nuLZ}\nu(\Omega)>0,\quad  \int_\Omega (-v)\, d\nu<\infty\quad\text{for some convex function } v\in C(\Omega),\quad v<0\quad\text{in }\Omega.\end{equation}
Since the convex function $v$ in \eqref{nuLZ} must have the growth $|v(x)|\geq \frac{\|v\|_{L^{\infty}(\Omega)}}{\diam(\Omega)}\dist(x,\p\Omega)$,
the measure $\nu$ in \eqref{nuLZ} must satisfy the finiteness condition $\int_\Omega \dist(\cdot,\p\Omega) d\nu<\infty$.
This is the class of measures that we will focus on.

\medskip
 When $p=n$, it was only assumed in \cite{LZ} for the Monge--Amp\`ere eigenvalue problem that $\nu=\mu_w$ for some convex function $w\in C(\overline{\Omega})$.  In view of Blocki's inequality (see Theorem \ref{BlemR} (ii)), this measure is natural for the variational method which requires the finiteness of $\int_\Omega |u|^{n+1}\, d\nu$ when $u$ is in the finite energy class $\E(\Omega)$ (see \eqref{EEdef}).
 In this setting, Lu and Zeriahi \cite[Theorem 1.4]{LZ} solved the Monge--Amp\`ere eigenvalue problem including existence and uniqueness in the energy class, established a variational characterization of the Monge--Amp\`ere eigenvalue via the infimum $\lambda[\Omega,\nu]$ of the Rayleigh quotient, and proved the convergence of an inverse iterative scheme (first introduced by Abedin--Kitagawa \cite{AK}) to the Monge--Amp\`ere eigenvalue problem.  When $\Omega$ contains the origin and $\nu=|x|^{s}\, d\L^n$ where $s>-n$ (so $\nu$ can only have interior integrable singularity), He--Huang \cite[Theorem 1.1]{HH} obtained similar results to those in \cite{LZ} using approximations and elliptic and parabolic techniques as in Tso \cite{Tso} and Wang \cite{W1}. When $\nu$ is the Lebesgue measure, earlier works by Badiane--Zeriahi \cite{BZ1, BZ2} solved the complex Monge--Amp\`ere eigenvalue problem using plurisubharmonic envelopes.

  \medskip
 Our recollections above suggest to view the infimum of the Rayleigh quotient $\lambda[\Omega,\nu]$ as either an independent object or in its relation to the Monge--Amp\`ere eigenvalue problem. It is interesting to see what can be said about $\lambda[\Omega,\nu]$ and the convergence property of an inverse iterative scheme when the Monge--Amp\`ere eigenvalue problem does not have solutions in the energy class. Moreover, in view of the above-mentioned results of Tso and the author for \eqref{eqp1} when $\nu=d\L^n$ and $p>0$, one would like to know if \eqref{eqp1} has nonzero convex solutions when $\nu$ is more singular.

 \subsection{Real Monge--Amp\`ere variational method}
 Here, we provide a unified real Mong--Amp\`ere treatment of \eqref{eqp1} via variational method with mixed Monge--Amp\`ere measures and monotonicity. Some of our arguments are inspired by the work of Lu and Zeriahi \cite{LZ} which relies on complex Monge--Amp\`ere technologies developed in the last two decades. Others push the ideas, including integration by parts, in our previous works on the real Monge--Amp\`ere eigenvalue problem \cite{L, L_RMI, L_scheme} from Lebesgue measure to more singular Borel measures.
 Our intention in this paper is to give a self-contained treatment of the real Monge--Amp\`ere variational method that could be useful for later references and for other researchers.
 
 \medskip
 Convex functions $u\in  C(\overline{\Omega})$ are the most natural candidates for solutions to \eqref{eqp1}.  When $\nu$ is regular enough, solutions to  \eqref{eqp1} belong to the energy class $\E(\Omega)$. This is not the case when $\nu$ is quite singular. See Example \ref{exaH1} where we also have nonuniqueness and symmetry breaking; that is, very nonuniqueness phenomena can happen outside the energy class. This is reminiscent of the convex integration construction in other contexts; see, for example, the survey by Buckmaster--Vicol \cite{BV}, Lewicka \cite{Lw} and the references therein.
 
 \medskip
 As will be seen later, all potential difficulties in studying \eqref{eqp1} come from the singularity of $\nu$ near the boundary.  We also encounter the general lack of compactness for solutions corresponding to measures in the class under consideration;
 see Remark \ref{lackrem}.
 As in \cite{L}, to resolve these issues, approximations will be repeatedly used in our analysis.  Most importantly, we truncate $\nu$ to measures of the form $\nu_m= \chi_{\{x\in\Omega: \dist(x, \p\Omega)>1/m\}}\nu$, and study corresponding problems with $\nu_m$ where uniqueness and energy class member can be guaranteed. 
  In this process, we obtain many monotone quantities which facilitate the convergence analysis.

  \medskip
 We allow the measure $\nu$ to be singular near the boundary $\p\Omega$. It turns out that, as long as $\dist(\cdot,\p\Omega)\nu$ is integrable, the problem \eqref{eqp1} is always solvable for all $p\in [0, \infty)$. When $p>n$, $\nu$ can be more singular. 
 In view of the Aleksandrov--Jerison maximum principle (Theorem \ref{AJ_thm}), the finiteness of 
 $\dist(\cdot,\p\Omega)$ is natural for the existence of solutions to the Dirichlet problem. The variational method suggests $\dist^{\frac{p+1}{n+1}}(\cdot,\p\Omega)$ should be integrable with respect to $\nu$. However, we take advantage of subcriticality when $0\leq p<n$ to allow $\nu$ to be more singular in the sense that only the smaller weight $\dist(\cdot,\p\Omega)$ is required to be integrable.

 \medskip
 Before stating our main results, we recall some standard notions.
 Let $\Omega$ be a 
 domain in $\R^n$ $(n\geq 1)$.  When $\Omega$ is not necessary convex, we say that a function \(u:\Omega \to \R\) is convex if it can be extended to a convex function, possibly taking value $+\infty$, on all of $\R^n$.
For a convex function \(u:\Omega \to \R\), we define
 the subdifferential of $u$  at $x\in\Omega$ by
 \[
\partial u (x):=\big\{p\in \R^{n}\,:\, u(y)\ge u(x)+p\cdot (y-x)\quad \text{for all } y \in \Omega\big\},
\]
and we define the Monge--Amp\`ere measure $\mu_u$ of $u$  by
\begin{equation}
\label{MAmu}
\mu_u(S) = |\p u(S)|\quad\quad\text{where } \p u(S) = \bigcup_{x\in S} \p u(x)\quad \text{for each Borel set } S\subset\Omega.\end{equation}
The Monge--Amp\`ere measure $\mu_u$ is Borel regular on $\Omega$ and is finite on compact subsets of $\Omega$, so it is in fact a Radon measure.
If $u\in C^{2}(\Omega)$, then 
\[
\mu_u=\det D^{2} u\,d\mathcal{L}^n \quad \text{in }\Omega.\] Motivated by this, 
given a Borel measure \(\nu\) on \(\Omega\), we call a convex
function \(u:\Omega \to \R\) an \emph{Aleksandrov solution} to the Monge--Amp\`ere equation
$
\det D^{2} u =\nu
$
if $\nu=\mu_u$ as Borel measures. 
We will say for simplicity  that \(u\) solves $\det D^2 u=\nu$.

\medskip
On a general bounded convex domain $\Omega\subset\R^n$ (not necessarily smooth nor uniformly convex), we define the finite energy class of convex functions with zero boundary values by
\begin{equation}
\label{EEdef}
\E(\Omega) = \{  w \in C(\overline{\Omega}):  
 ~w~\text{is convex in } \Omega,~ w=0~\text{on}~\p\Omega\quad\text{with } \int_\Omega |w|\, d\mu_w<\infty \}.
\end{equation}
For $u\in \E(\Omega)$, we denote its Monge--Amp\`ere energy by
\begin{equation}
\label{Edef}
E(u)= E(u;\Omega):=\int_\Omega (-u)\, d\mu_{u}\equiv \int_\Omega |u|\, d\mu_u.\end{equation}
For a Borel measure $\nu$ with $\nu(\Omega)>0$ on a bounded convex domain $\Omega\subset\R^n$, we 
denote the Rayleigh quotient $R_\nu$ associated with $\nu$ and its infimum in the energy class by
\begin{equation}\label{Radef} R_\nu (u):= \frac{E(u)}{\int_\Omega |u|^{n+1}\, d\nu}\,\quad\text{for }u\in \E(\Omega),\quad \lambda[\Omega, \nu]:=\inf\bigg\{\frac{E(u)}{\int_\Omega |u|^{n+1}\, d\nu}: u\in \E(\Omega)\bigg\}. \end{equation}
\subsection{Main results}
We now state our main results.

Our first theorem is concerned with the solvability, uniqueness, and variational characterization of solutions to degenerate Monge--Amp\`ere equations.
\begin{thm}[Solvability, uniqueness, and variational characterization of degenerate Monge--Amp\`ere equations]
\label{Dist0pn}
Let $\Omega\subset\R^n$ $(n\geq 1)$ be a bounded convex domain, and $p\in (-1, \infty)$.
Let $\nu$ be a locally finite Borel measure on $\Omega$ satisfying $\nu(\Omega)>0$ and 
\begin{equation}
\label{nuthm1}
 \left\{
 \begin{alignedat}{1}
\int_\Omega \dist(\cdot,\p\Omega)d\nu\leq C<\infty& \quad\text{if } \quad 0\leq p\leq n,\\
\int_\Omega \dist^{\frac{p+1}{n+1}}(\cdot,\p\Omega)d\nu\leq C<\infty &\quad \text{if } \quad p\in (-1, 0)\cup (n,\infty).
 \end{alignedat}
 \right.
\end{equation}
\begin{enumerate}
\item If $p\in (-1, 0) \cup (n,\infty)$,
then, there exists a nonzero convex solution $u\in C(\overline{\Omega})$ 
to 
\begin{equation*}
   \mu_u=|u|^p\nu \quad\text{in} ~\Omega, \quad
u =0\quad\text{on}~\p \Omega,
\end{equation*}
which is multiple of a minimizer of the functional 
$\displaystyle \frac{\int_\Omega (-v) d\mu_v}{\big(\int_\Omega |v|^{p+1}\, d\nu\big)^{\frac{n+1}{p+1}}}$
over $\E(\Omega)$.
\item If $p=0$ and $\varphi\in C(\overline{\Omega})$ is a convex function, then, the Dirichlet problem
\begin{equation*}
   \det D^{2} u=\nu \quad\text{in} ~\Omega, \quad
u =\varphi\quad\text{on}~\p \Omega
\end{equation*}
has a unique convex Aleksandrov solution 
$u\in C(\overline{\Omega})$. Moreover, 
\begin{equation}
\label{Dirineq0}
\|(\varphi-u)^+\|^n_{L^{\infty}(\Omega)}  \leq C(n, \diam(\Omega))\int_\Omega \dist(\cdot,\p\Omega) d\nu.\end{equation}
\item If $0<p<n$,
then, there exists a nonzero convex Aleksandrov solution 
$u\in C(\overline{\Omega})$ to
\begin{equation*}
   \det D^{2} u=|u|^p\nu \quad\text{in} ~\Omega, \quad
u =0\quad\text{on}~\p \Omega.
\end{equation*} 

\item If $p=n$ and the following general vanishing mass condition holds instead of \eqref{nuthm1}
 \begin{equation} 
 \label{vamass}
 \small
 \int_{\{x\in\Omega: \dist(x, \p\Omega)\leq\frac{1}{m}\}}|v|^{n+1}\,d\nu\to 0 \,\mbox{when $m\to\infty$ uniformly in $v$ on  bounded subsets of $\E(\Omega)$},
 \end{equation}
then the Monge--Amp\`ere eigenvalue problem 
\begin{equation*}
   \mu_u=\lambda |u|^n\nu \quad\text{in} ~\Omega, \quad
u =0\quad\text{on}~\p \Omega.
\end{equation*}
has a solution $(\lambda, u)\in (0,\infty)\times\E(\Omega)\setminus\{0\}$ where $\lambda$ is uniquely given by
\[\lambda =\lambda[\Omega,\nu]=\sup_{\Sigma_\nu} \Lambda,\]
where $\lambda[\Omega,\nu]$ is defined by \eqref{Radef} and $\Sigma_\nu$ denotes the set of subeigenvalues
\begin{equation*}
\Sigma_\nu:=\Big\{\Lambda\in\R: \mbox{there exists a convex $v\in  C(\overline{\Omega})\setminus\{0\}$, $v=0$ on $\p\Omega$, such that $\mu_v\geq \Lambda|v|^n\nu$}\Big\}.
\end{equation*}
 Moreover, $u$ is unique up to positive multiplicative constants and it is a minimizer of the Rayleigh quotient $R_\nu$ (see \eqref{Radef})
over $\E(\Omega)$.
\end{enumerate}
\end{thm}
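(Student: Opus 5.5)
The plan is to handle all four parts through truncation. For $m\in\N$ put $\nu_m=\chi_{\{\dist(\cdot,\p\Omega)>1/m\}}\nu$, so that $\nu_m$ has compact support and finite mass. At this level the variational method and the known comparison tools (Aleksandrov--Blocki--Jerison maximum principle, Blocki's inequality, integration by parts with mixed Monge--Amp\`ere measures) apply without difficulty. We then pass $m\to\infty$ using monotonicity in $m$ together with uniform estimates that depend only on the weighted integrals in \eqref{nuthm1}.

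\emph{Part (2).} For $\nu_m$ we solve the Dirichlet problem with boundary data $\varphi$ by Perron's method, or by approximating $\nu_m$ with smooth densities and using stability of Aleksandrov solutions. The comparison principle gives uniqueness, and since $\nu_m\le\nu_{m+1}$ the solutions $u_m$ decrease in $m$. The key estimate is \eqref{Dirineq0}. I would prove it through the Aleksandrov--Jerison maximum principle (Theorem \ref{AJ_thm}) applied to $u_m$ against the convex envelope of $\varphi$, writing $\mu_{u_m}$-mass near the boundary weighted by $\dist(\cdot,\p\Omega)$. This yields $\|(\varphi-u_m)^+\|_{L^\infty}^n\le C\int\dist\,d\nu$ uniformly in $m$. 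Continuity up to the boundary should follow from a local version of the same estimate: for a boundary point $x_0$, the part of $\nu$ near $x_0$ controls the oscillation because $\int_{\{\dist<\delta\}}\dist\,d\nu\to0$. The monotone limit is then convex and continuous on $\overline\Omega$ and equals $\varphi$ on $\p\Omega$. Weak continuity of Monge--Amp\`ere measures under locally uniform convergence gives $\mu_u=\nu$.

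\emph{Parts (1) and (4).} These are variational. For $p\in(-1,0)\cup(n,\infty)$ we minimize $J(v)=E(v)/(\int|v|^{p+1}d\nu)^{(n+1)/(p+1)}$ over $\E(\Omega)$. The key inequality is a Sobolev-type bound $\int|v|^{p+1}d\nu\le C E(v)^{(p+1)/(n+1)}$. To get it, use $|v(x)|\le C E(v)^{1/(n+1)}\dist(x,\p\Omega)^{1/(n+1)}$, which comes from the ABJ/Blocki estimates together with the fact that $E(v)$ controls $\|v\|_\infty^{n+1}$ and a H\"older-$1/(n+1)$ boundary modulus; this is where the weight $\dist^{(p+1)/(n+1)}$ enters. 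Then:
\begin{enumerate}
\item take a minimizing sequence normalized by $E=1$;
\item extract a locally uniform limit via compactness of $\{E\le1\}$;
\item observe that lower semicontinuity of $E$ holds, while $\int|v|^{p+1}d\nu$ is continuous by the uniform tail bound. For $p>n$ this tail bound follows from the extra room in the exponent; for $p<0$ the exponent $(p+1)/(n+1)$ is less than one and dominated convergence applies.
\end{enumerate}
The Euler--Lagrange equation $\mu_u=c|u|^p\nu$ comes from the standard envelope argument: perturb $u$ by $t\,h$, take the convex envelope, and differentiate $E$ along envelopes via mixed measures, giving $\frac{d}{dt}E=-(n+1)\int h\,d\mu_u$ (orthogonality on the contact set). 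For $p\ne n$ we rescale to make $c=1$. For (4) the same minimization works with $p=n$, and \eqref{vamass} supplies exactly the continuity of $\int|v|^{n+1}d\nu$ on bounded subsets of $\E(\Omega)$. The identity $\lambda[\Omega,\nu]=\sup\Sigma_\nu$ and uniqueness come from a comparison argument: given a subsolution $\mu_v\ge\Lambda|v|^n\nu$, integrate by parts with mixed measures $\mu(u,\dots,u,v,\dots)$ and use the mixed Minkowski inequality to show $\Lambda\le\lambda$, with equality forcing $v=cu$. For $v$ outside $\E$ we truncate to $\nu_m$ first.

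\emph{Part (3) and the main obstacle.} For $0<p<n$ only $\int\dist\,d\nu<\infty$ is assumed, so the variational functional may be infinite and we must rely on monotonicity. For $\nu_m$ we obtain a unique nonzero solution $u_m$, for instance by a minimization with the subcritical functional $E(v)/(n+1)-\int|v|^{p+1}d\nu_m/(p+1)$; uniqueness follows from the comparison principle for subcritical equations, since $|u|^p$ grows sublinearly relative to degree $n$. Because $\nu_m$ increases, comparison gives $u_m$ decreasing. The hard step is a uniform lower bound: we need $\|u_m\|_\infty$ bounded independently of $m$. From Part (2) with right-hand side $|u_m|^p\nu$ we get $\|u_m\|_\infty^n\le C\|u_m\|_\infty^p\int\dist\,d\nu$, which gives $\|u_m\|_\infty\le C$ since $p<n$. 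Nontriviality is automatic because $u_m\le u_1\not\equiv0$. We then pass to the limit exactly as in Part (2), with boundary continuity again obtained from the local maximum-principle estimate. I expect this boundary continuity under the weak weight $\dist$ to be the main technical obstacle.
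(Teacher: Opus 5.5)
\textbf{Verdict.} Your treatment of parts (1)--(3), and of the existence and eigenvalue half of (4), is essentially the paper's: truncation, the energy estimate $|v|\le C\,E(v)^{1/(n+1)}\dist^{1/(n+1)}(\cdot,\p\Omega)$ feeding dominated convergence, the envelope derivative for the Euler--Lagrange equation, and the subcritical comparison principle for monotonicity. Your shortcut $u_m\le u_1$ for nontriviality in (3) is fine. The genuine gap is the uniqueness of the eigenfunction in (4).

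\textbf{The gap.} You claim that equality in the mixed Monge--Amp\`ere inequality forces $v=cu$. No such equality characterization is available. Theorem \ref{mMAnu} is proved by approximation for a general Borel $\nu$, and equality information does not survive that limit. Even for smooth data with $\nu=g\,dx$, equality only yields the pointwise relation $D^2v=(v/u)\,D^2u$, which is not proportionality. For $n=1$ the mechanism is empty. There $\mu_1[v]=\mu_v$, so for two eigenfunctions with the same eigenvalue your chain reads $\lambda\int|u||v|\,d\nu=\int|u|\,d\mu_v=\lambda\int|u||v|\,d\nu$. This is an identity for every such pair, so its equality case carries no information. Example \ref{exaH1} illustrates that the eigenvalue equation by itself does not force proportionality: $v_\alpha$ and $v_{-\alpha}$ share the eigenvalue $1-4\alpha^2$ and are not proportional. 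Those functions lie outside $\E(\Omega)$, so the finite-energy hypothesis has to enter the uniqueness proof in an essential way.

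\textbf{What the paper does instead.} Take $a>0$ with $\{u<av\}\neq\emptyset$ and set $w=\Gamma_{\min\{u-av,0\}}$. Lemma \ref{EFconv} shows that $w$ is a supersolution, $\mu_w\le\lambda|w|^n\nu$, using the contact set together with the mixed inequality. Since $w\ge u$, Proposition \ref{latticelem} puts $w\in\E(\Omega)$. Then $E(w)\le\lambda\int|w|^{n+1}d\nu$, and the variational characterization (Theorem \ref{varchac}) forces $\mu_w=\lambda|w|^n\nu$. Because $\mu_w$ only charges $\{u<av\}$, this gives $\nu(\{av<u\})=0$, hence $\mu_{av}(\{av<u\})=0$. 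The domination principle (Lemma \ref{domi_prin}) then yields $av\ge u$. Taking the supremum over such $a$ gives $u=cv$ (Theorem \ref{MAfu}). You need this argument or an equivalent one.

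\textbf{Smaller points.}
\begin{itemize}
\item In (2), Theorem \ref{AJ_thm} needs equal (zero) boundary values and $\mu_{u_m}\ge\mu_{\tilde u}$. Comparing $u_m$ with $\varphi$ itself therefore fails. Either compare with the maximal convex extension of $\varphi|_{\p\Omega}$, which has zero Monge--Amp\`ere measure and dominates $\varphi$, or do as the paper does: solve with zero data to get $u^0_m$, and use $\mu_{u^0_m+\varphi}\ge\mu_{u^0_m}$ to obtain $u^0_m+\varphi\le u_m\le h$.
\item Boundary continuity is not the obstacle you expect. For $m<m'$, Theorem \ref{AJ_thm} gives $\|u^0_m-u^0_{m'}\|^n_{L^\infty(\Omega)}\le C\int_{\{1/m'<\dist(\cdot,\p\Omega)\le 1/m\}}\dist(\cdot,\p\Omega)\,d\nu$, so the sequence is uniformly Cauchy on $\overline\Omega$. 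In (3), the solution $U$ of $\mu_U=C_0^p\nu$, $U=0$ on $\p\Omega$, from part (2) gives the barrier $U\le u_m\le u_1$.
\item For $\sup_{\Sigma_\nu}\Lambda\le\lambda$ with a subsolution $v\notin\E(\Omega)$, truncating to $\nu_m$ only works once you know the truncated eigenvalues $\lambda_m$ decrease to $\lambda[\Omega,\nu]$ (Theorem \ref{monolam}). The paper instead uses the Poincar\'e inequality of Proposition \ref{unmuE}.
\end{itemize}
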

As mentioned earlier, the existence and uniqueness part in Theorem \ref{Dist0pn} (ii) is equivalent to Theorem 1.5 in Lu--Zeriahi \cite{LZ}. The estimate \eqref{Dirineq0} is new. 
In Remarks \ref{criticalrem} and \ref{criticalrem2}, we discuss the optimality of the condition \eqref{nuthm1} on the measure $\nu$.
The vanishing mass condition \eqref{vamass} is satisfied when $\nu=\mu_w$ for some convex function $w\in C(\overline{\Omega})$ (see Lemma \ref{vamassho}), so \eqref{nuthm1} implies \eqref{vamass} due to part (ii). Theorem \ref{Dist0pn}(iv) is a reformulation of Theorem 1.4 and  Remark 7.7 in Lu--Zeriahi \cite{LZ}. 
Our formulation \eqref{vamass} emphasizes the importance of controlling the boundary behavior of integrals with respect to 
$\nu$. Example \ref{exaH1} gives an instance of insolvability in the energy class when  \eqref{vamass} fails. The spectral characterization $\lambda[\Omega,\nu]=\sup_{\Sigma_\nu} \Lambda$ is new, and it extends  Birindelli--Payne \cite[Theorem 6.6]{BP} from the Lebesgue measure $\nu=d\L^n$ to singular Borel measures. 

\medskip
We will prove Theorem \ref{Dist0pn} in Sections \ref{Dir_sec} and \ref{deg_sec}. 

\medskip
Our next result analyzes the Monge--Amp\`ere eigenvalue problem even if it might not have solutions in the energy class.  If it has a solution in the class of convex, globally continuous functions, then the measure $\nu$ must satisfy a Poincar\'e-type inequality. It turns out that the 
infimum of the Rayleigh quotient can be approximated from above by  Monge--Amp\`ere eigenvalues of the truncated measures, and by
Rayleigh quotients of 
an inverse iterative scheme. 
The first statement is analogous to the convergence of Monge--Amp\`ere eigenvalues with Lebesgue measure of convex domains converging in the Hausdorff distance.
\begin{thm}[Monge--Amp\`ere eigenvalue problem, Rayleigh quotient,  Poincar\'e inequality, and Inverse scheme]
\label{EVP1}
Let $\nu$ be a locally finite Borel measure on a bounded convex domain $\Omega\subset\R^n$ $(n\geq 1)$ with $\nu(\Omega)>0$. 
 Let $R_\nu$ be the Rayleigh quotient associated with $\nu$ and $\lambda[\Omega, \nu]$ be its infimum  in the energy class (see \eqref{Radef}).
\begin{enumerate}
\item[(i)]  
 For each positive integer $m$, let 
$\nu_m= \chi_{\{x\in\Omega: \dist(x, \p\Omega)>1/m\}}\nu$.
Let $u_m\in\E(\Omega)$ be a nonzero solution to the Monge--Amp\`ere eigenvalue problem (see Theorem \ref{Dist0pn})
\begin{equation*}
   \mu_{u_m}=\lambda_m |u_m|^n\nu_m \quad \text{in} ~\Omega, \quad
u_{m} =0\quad \text{on}~\p \Omega.
\end{equation*}
Then, the sequence $\{\lambda_m\}_{m=1}^\infty$ is nonincreasing and 
\[\lim_{m\to\infty }\lambda_m=\lambda[\Omega,\nu].\]
\item[(ii)] If 
the Monge--Amp\`ere eigenvalue problem
\begin{equation*}
   \mu_{w}=\lambda |w|^n\nu \quad\text{in} ~\Omega, \quad
w=0\quad \text{on}~\p \Omega.
\end{equation*}
has a solution $(\lambda, w)\in (0,\infty)\times C(\overline{\Omega})\setminus \{0\}$ where $w$ is convex, then one has the Poincar\'e-type inequality:
\[R_\nu(u)\geq\lambda\quad\text{for all } u\in \E(\Omega).\]

\item[(iii)] Assume, in addition, $\nu$ satisfies the Poincar\'e-type inequality: There exists a constant $c>0$ such that $R_\nu(u)\geq c$ for all $u\in \E(\Omega)$.
 Fix $u_0\in \E(\Omega)\setminus\{0\}$. 
Consider the following iterative scheme for 
\begin{equation*}
\mu_{u_{k+1}}= R_\nu(u_k) |u_k|^n\nu \quad \text{in } \Omega, \quad
u_{k+1} = 0 \quad \text{on } \partial \Omega,
\end{equation*}
which has a unique solution in $\E(\Omega)$ for each nonnegative integer $k$. Then 
\[\lim_{k\to\infty} R_\nu(u_k) =\lambda[\Omega, \nu]. \]
\item[(iv)] 
Assume that 
there exists a convex function $u\in C(\overline{\Omega})\setminus \E(\Omega)$ with $u=0$ on $\p\Omega$ and
\[\mu_u =\lambda[\Omega, \nu] |u|^n \nu.\]
Then, the Monge--Amp\`ere eigenvalue problem \eqref{eqp1} with the measure $\nu$ and $p=n$ is not solvable in the energy class $\E(\Omega)$. Moreover, there are examples where it could have infinitely many families of eigenvalues and convex eigenfunctions.
\end{enumerate}
\end{thm}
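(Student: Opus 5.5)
We take the four parts in order. Throughout we use Theorem \ref{Dist0pn}(iv) for the truncated measures $\nu_m$. Each $\nu_m$ is supported at distance $>1/m$ from $\p\Omega$, so the vanishing mass condition \eqref{vamass} holds trivially. Hence each $\nu_m$ has a unique eigenvalue $\lambda_m=\lambda[\Omega,\nu_m]$ with eigenfunction $u_m\in\E(\Omega)$, and $\lambda_m$ is the infimum of $R_{\nu_m}$ over $\E(\Omega)$.

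\textbf{Part (i).} Since $\nu_m\le\nu_{m+1}\le\nu$, we have $R_{\nu_m}(u)\ge R_{\nu_{m+1}}(u)\ge R_\nu(u)$ for every $u\in\E(\Omega)$. Taking infima shows that $\lambda_m$ is nonincreasing and $\lambda_m\ge\lambda[\Omega,\nu]$. For the reverse inequality, fix $u\in\E(\Omega)$. By monotone convergence, $\int|u|^{n+1}d\nu_m\nearrow\int|u|^{n+1}d\nu$, which gives $\limsup_m\lambda_m\le R_\nu(u)$. If $\int|u|^{n+1}d\nu=\infty$, then $R_\nu(u)=0$ and the bound still holds. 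Taking the infimum over $u$ finishes (i).

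\textbf{Part (ii).} Given an eigenpair $(\lambda,w)$ with $w\in C(\overline\Omega)$ convex, possibly outside $\E(\Omega)$, and given $u\in\E(\Omega)$, we want $E(u)\ge\lambda\int|u|^{n+1}d\nu$. By homogeneity, normalize so that the statement is scale invariant. We use the mixed Monge--Amp\`ere inequality $\int(-u)\,d\mu(u,\dots,u,w)\le E(u)^{n/(n+1)}E(w)^{1/(n+1)}$; since $w$ may have infinite energy, we must avoid it. Instead, replace $w$ by $w_\e=\max(w,-\e^{-1})$, or compare on sublevel sets, and use the comparison principle together with the convex envelope.

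A cleaner route is as follows. For $u\in\E(\Omega)$ and $t>0$, compare $u$ with $tw$. On the set $\{tw<u\}$ one has $\mu_{tw}\ge\lambda|tw|^n\nu\ge\lambda|u|^n\nu$. Then use the pointwise inequality $\mu_u^{1/n}\cdots$, via Blocki-type integration by parts, to obtain $\int(-u)\,d\mu_u\ge\int(-u)\,d\mu(u,\dots,u,w)\cdot(\cdots)$.

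In practice, we first prove the inequality for $\nu_m$. Solve $\mu_{w_m}=\lambda|w|^n\nu_m$, so that $w_m\in\E(\Omega)$ and $w_m\ge w$ by comparison. Hence $\mu_{w_m}\le\lambda|w_m|^n\nu_m$, i.e.\ $w_m$ is a superfunction for $\nu_m$. The eigenfunction $u_m$ of $\nu_m$ then satisfies $\lambda_m\ge\lambda$, by the standard argument comparing $su_m$ and $w_m$ via the maximal $s$ with $su_m\ge w_m$ and the comparison principle. Thus $R_{\nu_m}(u)\ge\lambda_m\ge\lambda$. Letting $m\to\infty$ as in (i) gives $R_\nu(u)\ge\lambda$. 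We expect the main obstacle to be the subeigenvalue comparison $\lambda_m\ge\lambda$; we would run it through the spectral characterization $\lambda_m=\sup\Sigma_{\nu_m}$ type arguments of Theorem \ref{Dist0pn}(iv), adapted to supersolutions.

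\textbf{Part (iii).} Solvability of each step follows from Theorem \ref{Dist0pn}(ii). The measure $R_\nu(u_k)|u_k|^n\nu$ has finite $\int|v|\,d(\cdot)$ for $v\in\E(\Omega)$ by Hölder and the Poincar\'e inequality. We then prove monotonicity $R_\nu(u_{k+1})\le R_\nu(u_k)$ using the mixed energy inequality
\[\int|u_k|\,d\mu_{u_{k+1}}\le E(u_k)^{1/(n+1)}E(u_{k+1})^{n/(n+1)},\]
together with Hölder's inequality for $\int|u_{k+1}||u_k|^n\,d\nu$. This is the Abedin--Kitagawa/Lu--Zeriahi computation. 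The limit $\lambda_*\ge\lambda[\Omega,\nu]$ exists. To identify it, compare with the truncated problems: for any $\e>0$, choose $m$ with $\lambda_m<\lambda[\Omega,\nu]+\e$. A Harnack-type bound $u_k\le -c\,|u_m|$ after normalization, propagated by comparison through the scheme, forces the limit below $\lambda_m$.

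\textbf{Part (iv).} For the first claim, argue by contradiction. Suppose $v\in\E(\Omega)$ solved $\mu_v=\Lambda|v|^n\nu$. By (ii) applied to both $u$ and $v$, we get $\Lambda=\lambda[\Omega,\nu]$. Then a uniqueness argument comparing $u$ with $tv$ at the maximal $t$, using strong comparison, forces $u=tv\in\E(\Omega)$, which is a contradiction. The examples are deferred to Example \ref{exaH1}.
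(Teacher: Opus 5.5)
Part (i) is fine; getting the monotonicity of $\lambda_m$ directly from $\nu_m\le\nu_{m+1}$ and $\lambda_m=\lambda[\Omega,\nu_m]$ is simpler than the paper's integration-by-parts argument. In (iv), the identification $\Lambda=\lambda[\Omega,\nu]$ is also fine once you add $R_\nu(v)=\Lambda$, which you get by testing the equation against $|v|$. The genuine gap is the next step of (iv). You conclude $u=tv$ by ``comparing $u$ with $tv$ at the maximal $t$, using strong comparison,'' but no such tool is available here:
at $p=n$ the equation is invariant under positive scaling, so there is no comparison principle (Theorem~\ref{compnu} needs $p<n$);
for a general Borel $\nu$, which may vanish on open sets, no strong maximum principle forces touching solutions to coincide;
and nothing controls $|v|/|u|$ near $\p\Omega$, so a finite positive touching constant need not even exist.

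The missing idea is the convex-envelope argument of Lemma~\ref{EFconv} and Theorem~\ref{MAfu}. For $a>0$ with $\{v<au\}\neq\emptyset$, set $w=\Gamma_{\min\{v-au,0\}}$. Since $w\ge v$, Proposition~\ref{latticelem} puts $w$ in $\E(\Omega)$. Since $v$ is a supersolution and $au$ a subsolution with the same eigenvalue $\lambda=\lambda[\Omega,\nu]$, Lemma~\ref{EFconv} gives $\mu_w\le\lambda|w|^n\nu$; it uses the contact-set localization of Lemma~\ref{Enlinear} and the mixed Monge--Amp\`ere inequality. Hence $E(w)\le\lambda\int_\Omega|w|^{n+1}d\nu$, and the variational characterization forces $\mu_w=\lambda|w|^n\nu$. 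Because $\mu_w$ is carried by the contact set, which lies in $\{v<au\}$, this gives $\nu(\{au<v\})=0$ and hence $\mu_{au}(\{au<v\})=0$. The domination principle (Lemma~\ref{domi_prin}) then yields $au\ge v$. Taking $c$ to be the supremum of such $a$ gives $v=cu$, so $u\in\E(\Omega)$, which is the contradiction. It is the energy-class member $v$ that puts $w$ in $\E(\Omega)$ and lets the variational characterization act.

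Part (ii) has a sign error that breaks the argument as written. From $w_m\ge w$, i.e.\ $|w_m|\le|w|$, you get $\mu_{w_m}=\lambda|w|^n\nu_m\ge\lambda|w_m|^n\nu_m$, so $w_m$ is a \emph{sub}solution. A supersolution would only give $\lambda_m\le R_{\nu_m}(w_m)\le\lambda$, the wrong inequality. The ``maximal $s$ plus comparison'' argument is unavailable for the reasons above. Invoking $\lambda_m=\sup\Sigma_{\nu_m}$ would be circular, since the paper derives that identity (Corollary~\ref{BPcor}) from Proposition~\ref{unmuE}, which is (ii) itself. With the correct sign the step is immediate: $u_m,w_m\in\E(\Omega)$ are nonzero with $\mu_{u_m}=\lambda_m|u_m|^n\nu_m$ and $\mu_{w_m}\ge\lambda|w_m|^n\nu_m$, so Lemma~\ref{MAlu} gives $\lambda_m\ge\lambda$, and your monotone-convergence step finishes. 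This is a valid alternative to the paper, which truncates $\mu_w$ instead of $\nu$ and obtains the stronger bound $\int_\Omega|v|^{n+1}|w|^{-n}d\mu_w\le E(v)$.

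In (iii), Theorem~\ref{Dist0pn}(ii) only gives $u_{k+1}\in C(\overline\Omega)$; membership in $\E(\Omega)$ needs the a priori energy bound of Lemma~\ref{Distex2}. Your identification of the limit can be completed, but it needs an ingredient you never state: the uniform bound $\|u_k\|_{L^{n+1}(\Omega,d\nu)}\le C_1$. This follows from the monotonicity of $E(u_k)/\|u_k\|_{L^{n+1}(\Omega,d\nu)}$ together with the Poincar\'e inequality. With it, suppose $\Lambda=\lim_k R_\nu(u_k)>\lambda_m$ and let $v_m$ be the $\nu_m$-eigenfunction. Then $\mu_{u_1}\ge c\,\mu_{v_m}$ for some $c>0$, because $|u_0|$ is bounded below on the support of $\nu_m$. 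Inductively, $\mu_{u_{k+1}}\ge\Lambda|u_k|^n\nu_m$ and Theorem~\ref{compa1} give $|u_k|\ge(\Lambda/\lambda_m)^{(k-1)/n}c^{1/n}|v_m|$, which contradicts the bound. The paper extracts the same geometric factor by integrating by parts against $v_m$ rather than by pointwise comparison.
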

 In Example \ref{exaRn}, we give an example showing that the insolvability situation in Theorem \ref{EVP1} (iv) can actually happen. The measure $\nu$ in Example \ref{exaH1} for nonuniqueness is of Hardy-type; that is, it is the square inverse of the distance to the boundary.
 
 \medskip
We will prove Theorem \ref{EVP1} in Section \ref{deg_sec}.

\subsection{Ingredients of the proofs}
We find nontrivial solutions of \eqref{eqp1} in the proof of Theorem \ref{Dist0pn} (i) by looking for minimizers of the functional
$\displaystyle \frac{\int_\Omega (-u) d\mu_u}{\big(\int_\Omega |v|^{p+1}\, d\nu\big)^{\frac{n+1}{p+1}}}$
among nonzero convex functions $u\in C(\overline{\Omega})$ 
in the finite energy class $\E(\Omega)$.
By now, it is more or less standard to show the existence of a nontrivial minimizer $u$ when $u\mapsto \int_\Omega |u|^{p+1}\, d\nu$ is continuous on bounded subsets of $\E(\Omega)$; see \cite[Theorem 11.5]{Lbook}. However, showing that $u$ (or one of its multiples) solves \eqref{eqp1} is more involved. For this purpose, one needs to use test functions $\varphi\in C_c^{\infty}(\Omega)$ and do variations involving $u+ t\varphi \,(t\in\R)$. These are not convex functions in general for the minimizing argument, so one needs to use the convex envelops $\Gamma_{u+ t\varphi}$ of $u+ t\varphi$ instead. This naturally leads to computing the derivative of $E(\Gamma_{t+ t\varphi})$ at $t=0$.
 \begin{thm}[Variational derivative of  
Monge--Amp\`ere energy of convex envelopes] 
\label{envED}
Let $\Omega$ be a bounded convex domain in $\R^n$, $u\in \E(\Omega)$ and  $v\in \E(\Omega) \cap C^{0, 1}(\overline{\Omega})$. Let $E$ be as in \eqref{Edef}. For $t\in\R$, let $\Gamma_{u+ tv}$ be the convex envelope of $u+ tv$ in $\overline\Omega$; that is, 
\[\Gamma_{u+ tv}(x):=\sup\big\{\varphi(x): \varphi\leq u+ tv\text{ in }\overline{\Omega},\, \varphi \text{ is convex in }\overline{\Omega}\big\}.\]
Then 
\[\frac{d}{dt}\mid_{t=0} E (\Gamma_{u + tv})=(n+ 1)\int_\Omega (-v) \, d\mu_u.\]
\end{thm}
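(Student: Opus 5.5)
The plan is to follow the classical Berman--Boucksom--Guedj--Zeriahi strategy for differentiating energies of envelopes, transplanted to the real setting. It rests on three ingredients.

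\textbf{(a) A cocycle/concavity estimate for $E$.} For $u,w\in\E(\Omega)$ we have, via mixed Monge--Amp\`ere measures and integration by parts (symmetry of $\int (-f)\,d\mu(g,h,\dots)$ in its entries),
\[
E(w)-E(u)=\sum_{j=0}^{n}\int_\Omega (u-w)\, d\mu(\underbrace{u,\dots,u}_{j},\underbrace{w,\dots,w}_{n-j}) .
\]
Together with the positivity $\int (-f)\,d\mu(\dots)\ge 0$ and the monotonicity in $j$ (a Cauchy--Schwarz/Aleksandrov-type inequality among mixed energies when $u-w$ has a sign), this yields the two-sided bound
\[
(n+1)\int_\Omega (u-w)\,d\mu_w\;\le\; E(w)-E(u)\;\le\;(n+1)\int_\Omega (u-w)\,d\mu_u ,
\]
in the correct order for the convexity of $-E$ along these comparisons. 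I would first prove this bound for smooth or Lipschitz functions and then extend it to $\E(\Omega)$ by monotone approximation.

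\textbf{(b) Orthogonality of the envelope.} Put $u_t=\Gamma_{u+tv}$. Since $v\in C^{0,1}$ and $u+tv$ vanishes on $\p\Omega$, we get $u_t\in C(\overline\Omega)$ with $u_t=0$ on $\p\Omega$. Moreover $u+tv-|t|\|v\|_\infty\le u_t\le u+tv$, and $u-|t|\|v\|_{\infty}$ is convex, hence $u_t\in\E(\Omega)$ by comparison of energies. The key property is that $\mu_{u_t}$ is supported on the contact set $\{u_t=u+tv\}$. I would prove this by the standard argument: at a point where $u_t<u+tv$, a section of $u_t$ is affine. That is, $u_t$ is a solution of the homogeneous equation in the open set $\{u_t<u+tv\}$, by Perron/balayage using the comparison principle.

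\textbf{(c) Differentiating.} Apply (a) with $w=u_t$:
\[
(n+1)\int (u-u_t)\,d\mu_{u_t}\le E(u_t)-E(u)\le (n+1)\int (u-u_t)\,d\mu_u .
\]
By (b), the left side equals $(n+1)\int(-tv)\,d\mu_{u_t}$. For the right side, $u_t\ge u+tv-|t|\cdot 0$ is false in general, so I instead use $u_t\le u+tv$, which gives $u-u_t\ge -tv$. I therefore need the opposite inequality, using convexity of $t\mapsto u_t$, or rather concavity of the envelope: $t\mapsto \Gamma_{u+tv}$ is concave in $t$ pointwise, since $\Gamma_{u+(s+t)v/2}\ge (\Gamma_{u+sv}+\Gamma_{u+tv})/2$. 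Then $(u_t-u)/t$ is monotone, and $u_t\to u$ uniformly because $\Gamma_u=u$. Dividing by $t>0$ and $t<0$ gives one-sided bounds
\[
\int(-v)\,d\mu_{u_t}\le \frac{E(u_t)-E(u)}{(n+1)t}\le \int \frac{u-u_t}{t}\,d\mu_u \quad (t>0),
\]
and reversed inequalities for $t<0$. Letting $t\to0$ uses weak convergence $\mu_{u_t}\to\mu_u$, which follows from uniform convergence, together with a uniform tail control from $|v|\le C\dist(\cdot,\p\Omega)$ and $\int\dist\,d\mu_{u_t}$ bounded. This handles the lower and upper bounds on the left-hand side. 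On the right-hand side, $(u-u_t)/t\to -v$ $\mu_u$-a.e. on the support of $\mu_u$. Indeed, where $\mu_u$ lives, $u$ is "strictly" touched, so the envelope agrees to first order; formally, this is the $\mu_u$-a.e. derivative of the envelope, a real analogue of the BBGZ lemma. Domination comes from $|u-u_t|\le |t|\|v\|_\infty$ and $\dist$-weights.

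\textbf{Main obstacle.} The hardest step is this last one: identifying $\lim_{t\to0}(u-u_t)/t=-v$ in $L^1(\mu_u)$. I would avoid a pointwise statement and squeeze instead. Take $t>0$ for the upper bound, and use the same inequality chain with $u$ and $u_t$ swapped, i.e. view $u=\Gamma_{u_t+(u-u_t)}$-type bounds combined with the left inequality evaluated at $-t$. Monotonicity of $t\mapsto E(u_t)$ and concavity in $t$ show that one-sided derivatives exist and both equal $(n+1)\int(-v)\,d\mu_u$. Concretely, concavity of $f(t)=E(u_t)$ is not available, but the lower bound $f(t)-f(0)\ge (n+1)t\int(-v)\,d\mu_{u_t}$ for both signs of $t$, together with the continuity $\int(-v)\,d\mu_{u_t}\to\int(-v)\,d\mu_u$, gives $\liminf$ for $t\downarrow0$ and $\limsup$ for $t\uparrow0$ of the difference quotients. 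The matching bounds come from applying the same lower estimate centered at $u_t$ instead of $u$, using $\Gamma_{u_t+(-t)v}\le u$ and monotonicity of $E$. This symmetric swap is the step I expect to require the most care.
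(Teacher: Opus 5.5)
Your inequality (a) has the wrong sign, and step (c) inherits the error. The functional $E$ is convex, not concave; Proposition \ref{cvxlem} gives convexity of $E^{1/(n+1)}$. In the factorization $E(w)-E(u)=\sum_j\int_\Omega(u-w)\,d\mu_n[u,\ldots,u,w,\ldots,w]$, the terms increase with the number of $w$'s. The increment is $\int_\Omega (u-w)\,d\mu_n[\ldots,w-u]$, which for smooth data equals $\int_\Omega \tilde M^{ij}D_i(u-w)D_j(u-w)\,dx\ge 0$. Hence the correct bound is
\[
(n+1)\int_\Omega (u-w)\,d\mu_u\le E(w)-E(u)\le (n+1)\int_\Omega (u-w)\,d\mu_w ,
\]
which is the reverse of yours. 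Already for $n=1$, $E(u)=\int_\Omega (u')^2\,dx$ and $E(w)-E(u)-2\int_\Omega(u-w)\,d\mu_u=\int_\Omega(w'-u')^2\,dx\ge0$.

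Consequently, the two-sided bounds you display in (c) fail. So does the estimate $E(u_t)-E(u)\ge(n+1)t\int_\Omega(-v)\,d\mu_{u_t}$ ``for both signs of $t$'', on which your closing symmetric swap rests. For $n=1$, $u=v$ and $t>0$ one has $u_t=(1+t)u$ and $E(u_t)-E(u)=(2t+t^2)E(u)$. By contrast, $(n+1)t\int_\Omega(-v)\,d\mu_{u_t}=(2t+2t^2)E(u)$ and $(n+1)\int_\Omega(u-u_t)\,d\mu_u=2tE(u)$.

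Your fallback, $(u-u_t)/t\to-v$ in $L^1(\mu_u)$ because $u$ is \emph{strictly touched} where $\mu_u$ lives, is only a heuristic. For $t<0$ it is essentially the content of Proposition \ref{envelopeD}, which the paper has to prove via Corollary \ref{mpcor}, Lemma \ref{Enlinear} and the factors $(-s)^k$ from multilinearity. The ``main obstacle'' you identify is an artifact of the sign error.

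With the sign corrected, your plan closes at once for both signs of $t$. It then needs neither concavity of $t\mapsto u_t$ nor any first-order information on the envelope. Take $w=u_t$. The orthogonality $\int_\Omega(u+tv-u_t)\,d\mu_{u_t}=0$ turns the upper bound into $(n+1)t\int_\Omega(-v)\,d\mu_{u_t}$. The inequality $u_t\le u+tv$ turns the lower bound into $(n+1)t\int_\Omega(-v)\,d\mu_u$. Dividing by $t$ squeezes both one-sided difference quotients.

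The tangent inequalities themselves follow from convexity of the polynomial $s\mapsto E((1-s)w+su)$ on $[0,1]$, with endpoint derivatives computed via Theorem \ref{IBPf}. The remaining input is $\int_\Omega v\,d\mu_{u_t}\to\int_\Omega v\,d\mu_u$. This does not follow from a bound on $\int_\Omega\dist(\cdot,\p\Omega)\,d\mu_{u_t}$, since mass may accumulate at $\p\Omega$. Instead, telescope $\mu_{u_t}-\mu_u$ and exchange $v$ and $u_t-u$ by Theorem \ref{IBPf}. Then $|u_t-u|\le|t||v|$ together with Theorem \ref{gCS} gives a bound $C|t|$.

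Also, $u-|t|\|v\|_{L^\infty(\Omega)}\notin\E(\Omega)$ because it does not vanish on $\p\Omega$. For $t<0$, get $u_t\in\E(\Omega)$ instead from $u\le u_t$ and Proposition \ref{latticelem}.

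So repaired, your argument is a genuinely different and shorter route than the paper's. The paper treats $t>0$ separately through Lemma \ref{Kr_lem}, and $t<0$ through the factorization formula plus the approximate orthogonality of Proposition \ref{envelopeD}. The squeeze needs only the exact orthogonality from Lemma \ref{Enlinear}.
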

Theorem \ref{envED} has its origin in the complex Monge--Amp\`ere equations in the work of  Berman--Boucksom \cite[Theorem B]{BB} on compact, complex manifolds. See also Lu--Nguyen \cite[Theorem 6.13]{LN} and Guedj--Zeriahi \cite[Theorem 11.11]{GZ}.
Moreover, it is the real Monge--Amp\`ere version of Theorem 4.11 in Lu \cite{Lu} for complex Hessian equations on domains in ${\mathbb C}^n$.  In these contexts, it is referred to as the {\it Projection Theorem}.

Theorem \ref{envED} will be proved in Section \ref{env_sec}.

\medskip
As in the case of Borel measures with finite total masses, the solvability of the Dirichlet problem with singular Borel measures uses certain Aleksandrov-type maximum principles. 
For Theorem \ref{Dist0pn} (ii) and (iii), we need the following uniform Aleksandrov--Jerison maximum principle, improving the classical ones 
 due to Aleksandrov and Jerison (see Theorem \ref{Al_E}).
\begin{thm}[Uniform Aleksandrov--Jerison maximum principle]\label{AJ_thm} Let  $\Omega\subset\R^n$ be a bounded convex domain. Let $u,\tilde u\in C(\overline{\Omega})$ be convex functions
with $u=\tilde u=0$ on $\p \Omega$ and $\mu_u\geq\mu_{\tilde u}$ in $\Omega$. Then for all $\alpha\in [0, 1]$ and all $x_0\in\Omega$, we have
\[
|\tilde u(x_0)-u(x_0)|^{n}\le C(n)[\diam (\Omega)]^{n-1}\dist^{\alpha}(x_0,\partial \Omega)\int_{\Omega}\dist^{1-\alpha}(\cdot,\partial \Omega)\,(d\mu_u-d\mu_{\tilde u}).
\]
\end{thm}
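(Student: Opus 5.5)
The plan is to reduce the statement to the classical single-function Aleksandrov--Jerison estimate (Theorem \ref{Al_E}). The reduction combines the superadditivity of the Monge--Amp\`ere measure with the comparison principle.

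\emph{Reduction.} Set $f:=\mu_u-\mu_{\tilde u}\geq 0$, a locally finite Borel measure on $\Omega$. We may assume $\int_\Omega \dist^{1-\alpha}(\cdot,\p\Omega)\,df<\infty$, since otherwise there is nothing to prove. Because $\dist\leq \diam(\Omega)^{\alpha}\dist^{1-\alpha}$, this also gives $\int_\Omega \dist(\cdot,\p\Omega)\,df<\infty$. By the comparison principle, $\mu_u\geq\mu_{\tilde u}$ together with $u=\tilde u$ on $\p\Omega$ gives $u\leq \tilde u$. It therefore suffices to bound $\tilde u(x_0)-u(x_0)\geq 0$ from above.

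\emph{Key steps.}
\begin{enumerate}
\item Construct a convex $w\in C(\overline\Omega)$ with $w=0$ on $\p\Omega$ and $\mu_w=f$. To avoid circularity with Theorem \ref{Dist0pn}(ii), which presumably relies on the present result, I will build $w$ by approximation. Let $f_m:=\chi_{\{\dist(\cdot,\p\Omega)>1/m\}}f$; these have finite total mass. Let $w_m$ be the classical Aleksandrov solutions of $\mu_{w_m}=f_m$ with $w_m=0$ on $\p\Omega$. By comparison, $w_m$ is nonincreasing in $m$. Theorem \ref{Al_E}, applied to $w_m$, gives the pointwise bound
\[|w_m(x)|^n\leq C(n)\diam(\Omega)^{n-1}\dist^{\alpha}(x,\p\Omega)\int_\Omega\dist^{1-\alpha}(\cdot,\p\Omega)\,df,\]
uniformly in $m$. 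Hence $w_m$ decreases to a convex $w$ that is locally uniformly bounded and satisfies the same bound. In particular $w$ is continuous up to the boundary with $w=0$ there, taking $\alpha>0$ in this auxiliary step. Local uniform convergence of convex functions then gives $\mu_w=\lim_m f_m=f$ by weak continuity of Monge--Amp\`ere measures.
\item Superadditivity: $\mu_{\tilde u+w}\geq\mu_{\tilde u}+\mu_w=\mu_u$. This is Minkowski's determinant inequality $\det(A+B)\geq\det A+\det B$ for smooth functions. For Aleksandrov solutions it follows by regularizing $\tilde u$ and $w$ by convolution and passing to the weak limit; alternatively, one can use the inclusion $\p(\tilde u+w)(S)\supseteq \p\tilde u(S)+\p w(S)$ together with Brunn--Minkowski.
\item Comparison: $\tilde u+w$ and $u$ agree on $\p\Omega$, and $\mu_{\tilde u+w}\geq\mu_u$. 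Hence $\tilde u+w\leq u$, so $0\leq\tilde u(x_0)-u(x_0)\leq |w(x_0)|$.
\item Apply the bound from step (i) to $w$ at $x_0$ for the given $\alpha$. This yields the claimed inequality. The endpoints $\alpha\in\{0,1\}$ are exactly the Jerison and Aleksandrov forms, and intermediate $\alpha$ follows either from Theorem \ref{Al_E} directly or by interpolating the two.
\end{enumerate}

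\emph{Main obstacle.} I expect the hardest step to be (i), constructing $w$ when $f$ has infinite total mass near $\p\Omega$. Two points need care: the continuity of $w$ up to $\p\Omega$, and the identification $\mu_w=f$ in the limit. The uniform estimate with the weight $\dist^{\alpha}(x,\p\Omega)$ is exactly what gives boundary equicontinuity, so I would first secure Theorem \ref{Al_E} in the form with a general exponent $\alpha\in(0,1]$.

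A secondary technical point is the comparison principle in step (iii) when $\mu_u$ is not locally finite near the boundary. This is handled by the usual argument applied on $\{\tilde u+w-u>\e\}\Subset\Omega$.
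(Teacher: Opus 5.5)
Your steps (ii)--(iii) are sound. If $\mu_w=\mu_u-\mu_{\tilde u}$ with $w=0$ on $\p\Omega$, then superadditivity and Theorem \ref{compa1} give $0\le\tilde u-u\le|w|$, so you have correctly reduced the theorem to its special case $\tilde u\equiv 0$. The gap is in step (iv), which needs exactly that special case: $|w(x_0)|^n\le C(n)[\diam(\Omega)]^{n-1}\dist^{\alpha}(x_0,\p\Omega)\int_\Omega\dist^{1-\alpha}(\cdot,\p\Omega)\,d\mu_w$ with $C(n)$ independent of $\alpha\in[0,1]$. Theorem \ref{Al_E} does not contain this. Part (i) covers only $\alpha=1$. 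Part (iii) is stated for $\alpha\in(0,1]$, on normalized domains, with the normalized distance $\delta(\cdot,\Omega)$ and an unspecified constant $C(n,\alpha)$. After a John normalization it yields your bound for each fixed $\alpha\in(0,1]$ with some $C(n,\alpha)$. It gives neither the endpoint $\alpha=0$ nor uniformity in $\alpha$, and that uniformity is what is new here: the paper remarks after Theorem \ref{Al_E} that uniformity of $C(n,\alpha)$ as $\alpha\to0^+$ is a \emph{consequence} of this theorem. So $\alpha=0$ is not ``the Jerison form''. Interpolation does not rescue you either. Splitting $\mu_w$ along $\{\dist(\cdot,\p\Omega)<\dist(x_0,\p\Omega)\}$ would interpolate between the two endpoints, but it needs the $\alpha=0$ bound $|w(x_0)|^n\le C(n)[\diam(\Omega)]^{n-1}\int_\Omega\dist(\cdot,\p\Omega)\,d\mu_w$ as input. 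That bound also does not follow by superposing Aleksandrov estimates over boundary layers. Superadditivity only gives $|w(x_0)|\le\sum_k|w_k(x_0)|$, a sum of $n$-th roots of layer contributions, and for $n\ge2$ this is not controlled by the $n$-th root of their total. The real difficulty is therefore step (iv), not step (i).

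What is missing is a linearization by duality, which the paper supplies through the Blocki-type estimate. Let $w_{x_0}$ be the cone with vertex $(x_0,-1)$ over $\Omega$. Then $\mu_{w_{x_0}}=C_{x_0}\delta_{x_0}$, and Aleksandrov's estimate applied to $w_{x_0}$ gives $\dist(x_0,\p\Omega)\,C_{x_0}\ge c(n)[\diam(\Omega)]^{1-n}$. Since $w_{x_0}$ has Lipschitz constant at most $1/\dist(x_0,\p\Omega)$, you also get $|w_{x_0}|\le\min\{1,\dist(\cdot,\p\Omega)/\dist(x_0,\p\Omega)\}\le\dist^{\alpha-1}(x_0,\p\Omega)\,\dist^{1-\alpha}(\cdot,\p\Omega)$. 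Apply Theorem \ref{BlemR}(i) to the pair $u\le\tilde u$ with $u_1=\cdots=u_n=w_{x_0}$. It gives $C_{x_0}|\tilde u(x_0)-u(x_0)|^n\le n!\int_\Omega|w_{x_0}|\,(d\mu_u-d\mu_{\tilde u})$. Combining these yields the theorem with $C(n)=n!/c(n)$ for every $\alpha\in[0,1]$. (The paper uses $v=-(-w_{x_0})^{1-\alpha}$ instead. That is unnecessary, and for $0<\alpha<1$ it only gives $\mu_v\ge(1-\alpha)^nC_{x_0}\delta_{x_0}$, because $\p v(x_0)=(1-\alpha)\,\p w_{x_0}(x_0)$.) Since Theorem \ref{BlemR}(i) carries $d\mu_u-d\mu_{\tilde u}$ on the right, it treats the pair directly, and your auxiliary $w$ is not needed. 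Conversely, your reduction would let you get by with Blocki's original inequality (which has $\int_\Omega|u_n|\,d\mu_v$ on the right), applied to the pair $w\le 0$; but some such integration-by-parts estimate is indispensable. A smaller point in step (i): when $\alpha=0$ you only know $\int_\Omega\dist(\cdot,\p\Omega)\,df<\infty$, so you cannot ``take $\alpha>0$'' to get continuity of $w$ at $\p\Omega$. Use instead the barrier $u\le w_m\le 0$, which follows from $\mu_{w_m}=f_m\le\mu_u$ and Theorem \ref{compa1}.
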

Theorem \ref{AJ_thm} will be proved in Section \ref{maxDir_sec1}.

\medskip
It is worth mentioning the lack of compactness in general of
convex solutions $u_m$ to the Dirichlet problem problem $\mu_{u_m}=\nu_m$ on a fixed bounded convex domain $\Omega\subset\R^n$ where the nonmonotonic Borel measures $\nu_m$ have uniformly bounded mass when weighted against the distance function; see Remark \ref{lackrem}. 
When a fixed measure $\nu$ is truncated to $\nu_m= \chi_{\{x\in\Omega: \dist(x, \p\Omega)>1/m\}}\nu$, the monotonicity of $\nu_m$ transfers to that of $\mu_{u_m}$ (and also $u_m$ due to the classical comparison principle) and we can use  the  Aleksandrov--Jerison maximum principle in Theorem \ref{AJ_thm} to obtain the convex solution to 
the Dirichlet problem $\mu_u=\nu$ on $\Omega$.  

\medskip
Even for the truncated measures $\nu_m$, then due to the dependence of the right-hand side on the solution itself for the equation $\mu_{u_m}=|u_m|^p\nu_m$ when $p>0$, it is not obvious if the monotonicity of $\nu_m$ transfers to that of $\mu_{u_m}$ or $u_m$.
We show that this is in fact the case when the equation is scaling subcritical; that is, when $0<p<n$.
In the proof of Theorem \ref{Dist0pn} (iii), we also will need the following comparison principle.
\begin{thm}[Comparison principle for degenerate subcritical Monge--Amp\`ere equations]
 \label{compnu} Let $p\in (0, n)$, $\Omega$ be a bounded convex domain in $\R^n$, and $\nu$ be a Borel measure in $\Omega$.
 Assume 
 \[\mbox{either (i) $\nu=f \, d\L^n$ where $f\in C(\overline{\Omega})$ with $f>0$ in $\overline\Omega$,\,
 or (ii) $\nu$ is compactly supported in  $\Omega$}.\]
 Let $u, v\in C(\overline{\Omega})$ be convex functions 
 satisfying 
 \begin{enumerate}
 \item[$\bullet$] $v<0$ in $\Omega$, $v\leq 0$ on $\p\Omega$,  and $v$ is a subsolution in the sense that
 $ \mu_v\geq |v|^{p}\nu$ in $\Omega$,
 \item[$\bullet$] $u= 0$ on $\p\Omega$, and $u$ is a supersolution in the sense that
 $ \mu_u\leq |u|^{p}\nu$ in $\Omega$.
 \end{enumerate}
 Then $u\geq v$ in $\Omega$, and  consequently, $\mu_u\leq \mu_v$ in $\Omega$.
  \end{thm}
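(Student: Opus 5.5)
The plan is a scaling argument that exploits the gap $n-p>0$, combined with the classical comparison principle for Monge--Amp\`ere measures on sublevel sets. Since $v<0$ in $\Omega$, I look at the ratio $u/v$ and set
\[
M:=\sup_{\Omega}\frac{u}{v}\in[0,\infty].
\]
The claim $u\geq v$ is equivalent to $M\leq 1$, so I argue by contradiction and assume $M>1$. The first task is to show $M<\infty$, and in fact that the supremum is not approached only at $\p\Omega$. This is where the two alternative hypotheses on $\nu$ are used.

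\textbf{Boundary behavior (main obstacle).}
\begin{enumerate}
\item[(ii)] If $\nu$ is compactly supported in $\Omega$, then $\mu_u\leq |u|^p\nu$ vanishes on a neighborhood $\Omega\setminus K$ of $\p\Omega$. Since $u=0$ on $\p\Omega$, $|u|$ can be bounded near $\p\Omega$ by a multiple of $|v|$. I would compare $u$ on $\Omega\setminus K$ with a suitable multiple of $v$; if needed, I would first replace $v$ by $v-\e$ and let $\e\to0$ at the end, so that $v\leq-\e<0$ on $\overline\Omega$. This keeps the maximizing sequences of $u/v$ away from $\p\Omega$.
\item[(i)] If $\nu=f\,d\L^n$ with $f>0$ continuous on $\overline\Omega$, then $\mu_v\geq c|v|^p\,d\L^n$. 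Aleksandrov-type lower bounds give $|v|\gtrsim$ a positive power of $\dist(\cdot,\p\Omega)$ that is at least as large as the decay of $|u|$. The upper bound on $|u|$ comes from Theorem~\ref{AJ_thm} with $\alpha=1$, which gives $|u|\lesssim\dist^{1/n}(\cdot,\p\Omega)$.
\end{enumerate}
If a matching of the exact rates is too delicate, I would again work with $v-\e$, which is still a subsolution because $|v-\e|^p\geq|v|^p$, and then send $\e\to0$.

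\textbf{Contradiction via scaling.} Assume $1<M<\infty$, so $u\geq Mv$ in $\Omega$ with near-equality at interior points. For small $\delta>0$ put $w_\delta=(M-\delta)v$ and $D=\{u<w_\delta\}$. The set $D$ is nonempty, and by the previous step it is compactly contained in $\Omega$ with $u=w_\delta$ on $\p D$. On $D$ we have $(M-\delta)|v|<|u|\leq M|v|$. Therefore
\[
\mu_{w_\delta}=(M-\delta)^n\mu_v\geq (M-\delta)^{n}|v|^p\nu=(M-\delta)^{n-p}|w_\delta|^p\nu\geq (M-\delta)^{n-p}\Big(\tfrac{M-\delta}{M}\Big)^{p}|u|^p\nu\geq \theta\,\mu_u,
\]
where $\theta:=(M-\delta)^{n-p}\big(\tfrac{M-\delta}{M}\big)^{p}>1$ for $\delta$ small, because $M>1$ and $p<n$.

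On the other hand, the classical comparison principle applied to $u\leq w_\delta$ in $D$ with equality on $\p D$ gives $\mu_u(D)\geq\mu_{w_\delta}(D)$. Combined with the display, this yields $\mu_u(D)\geq\theta\mu_u(D)$. So we reach a contradiction unless $\mu_u(D)=0$.

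To rule out the degenerate case, I would perturb $w_\delta$ to $w_\delta+\eta(|x-x_0|^2-R^2)$ with $\eta$ tiny. This adds a mass of at least $(2\eta)^n|D'|$ to the measure of the perturbed function on the corresponding set $D'$, which remains nonempty and compactly contained for $\eta$ small relative to $\delta$. Since $\det$ is superadditive on sums of convex functions, the strict inequality survives, and the contradiction is then strict. Hence $M\leq1$, that is, $u\geq v$.

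\textbf{Consequence for the measures.} From $0\geq u\geq v$ we get $|u|\leq|v|$, so $\mu_u\leq|u|^p\nu\leq|v|^p\nu\leq\mu_v$.
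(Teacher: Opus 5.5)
Your contradiction step is sound once $M=\sup_\Omega u/v$ is known to be finite. The bound $\mu_{(M-\delta)v}\ge (M-\delta)^nM^{-p}\mu_u$ uses only $|u|\le M|v|$, and $\mu_u(\Omega)<\infty$ holds in both cases. With $R>\diam(\Omega)$ the paraboloid perturbation is negative on $\p\Omega$, so $D'\Subset\Omega$ automatically and you do not need $D\Subset\Omega$.

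Case (ii) also works, by a route different from the paper's, and without the $v-\e$ fallback. Since $\mu_u\le|u|^p\nu=0$ off the support $K$ of $\nu$, the comparison principle on the open set $\Omega\setminus K$ with $c=\max_K u/v$ gives $u\ge cv$ there. Hence $M=\max_K u/v$ is attained in the interior. The paper instead builds the subsolution $cv-4m$, which is strictly negative on $\overline\Omega$. This works because $c^n|v|^p\ge(c|v|+4m)^p$ on $K$, where $|v|\ge\alpha>0$.

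The genuine gap is case (i) for $n\ge2$: nothing you propose proves $M<\infty$. This is true a posteriori, but proving it is the whole difficulty.
\begin{itemize}
\item Aleksandrov gives $|u|\lesssim\dist^{1/n}(\cdot,\p\Omega)$, but subsolutions need not satisfy $|v|\gtrsim\dist^{1/n}(\cdot,\p\Omega)$. Convexity only gives $|v|\gtrsim\dist(\cdot,\p\Omega)$, and the lower bound $\mu_v\ge c|v|^p$ degenerates at $\p\Omega$. For example, on $B_1(0)$ with $f\equiv1$, $v=A(|x|^2-1)$ is a subsolution for $A\ge2^{-n/(n-p)}$ and vanishes only linearly. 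So the rate matching cannot close.
\item The fallback is wrong. You have $\mu_{v-\e}=\mu_v\ge|v|^p\nu$, but a subsolution needs $\mu_{v-\e}\ge|v-\e|^p\nu$, and $|v-\e|^p\ge|v|^p$ goes the wrong way.
\item Even $cv-4m$ cannot work in general here. The inequality $c^n|v|^p\ge(c|v|+4m)^p$ fails as $|v|\to0$, while $f\,d\L^n$ charges every neighborhood of $\p\Omega$.
\end{itemize}
Note that you never use the continuity of $f$.

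The missing idea is the paper's dilation. Put the origin at a point where $u(0)=(1+\e)v(0)$ with $\e>0$, and let $v_\gamma(x)=v(x/\gamma)$ for $\gamma>1$. Since $\overline\Omega/\gamma\subset\Omega$, $v_\gamma<0$ on $\overline\Omega$. Hence $u/v_\gamma$ is continuous on $\overline\Omega$, vanishes on $\p\Omega$, and attains an interior maximum $a\ge1+\e$.

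The price is that $v_\gamma$ is only an approximate subsolution. One has $|\p v_\gamma(S)|=\gamma^{-n}|\p v(S/\gamma)|$, and $f(\gamma y)\le(1+\e)^{(n-p)/2}f(y)$ for $\gamma$ near $1$, by uniform continuity and positivity of $f$ on $\overline\Omega$. Choosing $\gamma$ with $\gamma^{2n}(1+\e)^{(n-p)/2}<(1+\e)^{n-p}\le a^{n-p}$ absorbs these errors into the subcritical gap. After that, your sublevel-set and paraboloid argument around the maximum point goes through.
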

 The condition $v<0$ in $\Omega$ in Theorem \ref{compnu} could not be relaxed. Indeed, when $\nu=d\L^n$ is the Lebesgue measure, $u$ is a nonzero solution with zero boundary condition as given by \cite[Theorem 4.2]{L} and $v\equiv 0$, we have $u< v$ in $\Omega$. The first case of Theorem \ref{compnu} is a refinement of the comparison principle in \cite[Lemma 2.1]{L_AFST} where the functions involved were required to be $C^2$. The second case of Theorem \ref{compnu} for a general compactly supported measure $\nu$ is new.
 
  Theorem \ref{compnu} will be proved in Section \ref{com_sec}.

\medskip
All of the above results, except Theorem \ref{compnu}, will be proved using the mixed Monge--Amp\`ere measures. They fully linearize the Monge--Amp\`ere operator. We give here a quick definition; see also Definition \ref{mMAdef} where their multi-linearity can be deduced.
\begin{defn}[Mixed Monge--Amp\`ere measure]
Define the mixed Monge--Amp\`ere measure $\mu_n[u_1,\ldots, u_n]$ of $n$ convex functions $u_1, \cdots, u_n$  on $\Omega\subset\R^n$ 
in term of the Monge--Amp\`ere measures by the polarization formula
\begin{equation} \label{polf}\mu_n[u_1,\ldots, u_n]=\frac{1}{n!}\sum_{k=1}^n\sum_{1\leq i_1<\cdots<i_k\leq n}(-1)^{n-k} \mu_{u_{i_1}+\cdots + u_{i_k}}.
 \end{equation}
 The mixed Monge--Amp\`ere measure is a nonnegative Borel measure that is invariant under permutation. 
 Moreover,   the mixed Monge--Amp\`ere measures are multi-linear; that is 
 \[\mu_n[u_1,\ldots, u_{n-1},\alpha v_1 + \beta v_2]=\alpha\mu_n[u_1,\ldots, u_{n-1}, v_1 ] + \beta\mu_n[u_1,\ldots, u_{n-1}, v_2] \]
 for convex functions $u_1,\cdots, u_{n-1}, v_1, v_2$ and positive numbers $\alpha,\beta$.
 
  If $u$ is convex, then its Monge--Amp\`ere and mixed Monge--Amp\`ere measures are related by
 \begin{equation}
 \label{MixMA1}
 \mu_u= \mu_n[u,\ldots, u].
 \end{equation}
\end{defn}

Many of our arguments are based on the following energy estimates which are refinements of Blocki \cite[Theorem 2.1]{Bl} for the mixed complex Monge--Amp\`ere measures. 
\begin{thm}[Blocki-type maximum principles for mixed Monge--Amp\`ere measures] 
\label{BlemR}
Let $\Omega$  be a bounded (not necessarily convex) domain in $\R^n$. Let $v, w\in C(\overline{\Omega})$ be convex functions satisfying $v\leq w$ in $\Omega$, and $v=w$ on $\p\Omega$. Let $u_1, \tilde u_1, \cdots, u_n, \tilde u_n$ be nonpositive bounded convex functions in $\Omega$. 
\begin{enumerate}
\item[(i)] Then 
\[\int_\Omega (w-v)^n \,d\mu_n[u_1, \cdots, u_n] \leq n! \|u_1\|_{L^{\infty}(\Omega)}\cdots \|u_{n-1}\|_{L^{\infty}(\Omega)} \int_\Omega |u_n| \, (d\mu_v-d\mu_w).\]
\item[(ii)] 
Assume $v\leq 0$ in $\Omega$. Let \[d_k:= \|u_k-\tilde u_k\|_{L^{\infty}(\Omega)} ,\quad m_k:= \prod_{i=1}^{k-1} \|u_i\|_{L^{\infty}(\Omega)}\cdot
\prod_{j=k+1}^n\|\tilde u_j\|_{L^{\infty}(\Omega)}.\] Then  
\[\int_\Omega (w-v)^{n+1} \,(d\mu_n[u_1, \cdots, u_n]-d\mu_n[\tilde u_1, \cdots, \tilde u_n])\\ \leq 2(n+1)!\sum_{k=1}^n 
d_k m_k
 \int_\Omega |v| (d\mu_v-d\mu_w).\]
\end{enumerate}
\end{thm}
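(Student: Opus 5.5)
The plan is to follow Blocki's scheme: reduce to smooth functions by approximation, then integrate by parts repeatedly, moving one factor of $(w-v)$ at a time onto the Monge--Amp\`ere operators. The key pointwise fact is that for smooth $w,v$ and convex $u$, with cofactor matrices, one has
\[
\int (w-v)^k\,d\mu_n[u,\ldots]\quad\text{versus}\quad \int |u|\, d\big(\cdot\,(w-v)^{k-1}\ldots\big),
\]
obtained by integration by parts in the mixed measure. Since the statement is invariant under replacing $\Omega$ by $\{v<w-\varepsilon\}$ (on which $v+\varepsilon=w$ on the boundary), and since $\mu_v-\mu_w$ restricted to that set changes monotonically, I would first prove the estimates for smooth strictly convex $u_i$, $v$, $w$ on a domain where $w-v$ vanishes on the boundary. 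I would then pass to the general case using weak continuity of mixed Monge--Amp\`ere measures under local uniform convergence, via the polarization formula \eqref{polf}, together with lower semicontinuity of the left side.

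For (i), in the smooth case I write $d\mu_n[u_1,\ldots,u_n]=\frac1{n!}\,\mathrm{mixed\ det}$. Let $\rho=w-v\ge0$, which vanishes on $\p\Omega$. Integrating by parts,
\[
\int \rho^n\, D^2u_n\text{-slot}=\int u_n\,\partial_{ij}\big(\rho^n\big)\,\text{cof}(\ldots)_{ij},
\]
and $\partial_{ij}\rho^n = n\rho^{n-1}\rho_{ij}+n(n-1)\rho^{n-2}\rho_i\rho_j$. Because $u_n\le 0$, the gradient-squared term has a favorable sign and can be dropped. The term $\rho_{ij}=w_{ij}-v_{ij}\ge -v_{ij}$ is handled as follows. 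We have $\rho_{ij}\,\mathrm{cof}\le w$-part, and $-v$ contributes positively, giving
\[
\int\rho^n\,d\mu[\ldots,u_n]\le n\int|u_n|\rho^{n-1}\,d\mu[v,u_1,\ldots,u_{n-1}].
\]
I then iterate, replacing each $u_i$ in turn by $v$ (or by $w$). At each step I bound $|u_i|\le\|u_i\|_\infty$ and pull it out, leaving $\rho^{k}$ against measures with more $v$-slots. The cleaner route, as in Blocki, is to use $\rho_{ij}=w_{ij}-v_{ij}$ together with the monotonicity of mixed measures in each slot. This lets the final step produce $\int|u_n|(d\mu_v-d\mu_w)$ after telescoping the differences $\mu[v,\ldots,v]-\mu[w,\ldots,w]=\sum\mu[v-w,\ldots]$. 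The accumulated constants give $n!$.

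For (ii), I telescope the difference $\mu_n[u_1,\ldots,u_n]-\mu_n[\tilde u_1,\ldots,\tilde u_n]$ as a sum over $k$ of terms $\mu_n[u_1,\ldots,u_{k-1},u_k-\tilde u_k,\tilde u_{k+1},\ldots]$. Each term is then treated by the same integration-by-parts argument with $\rho^{n+1}$, putting $u_k-\tilde u_k$ into the integration slot and bounding it by $d_k$. The remaining functions are bounded by their sup norms, yielding $m_k$. The last integration-by-parts step uses $v$ itself as the weight, and this is where $v\le0$ is needed to get $\int|v|(d\mu_v-d\mu_w)$. Because $u_k-\tilde u_k$ is not convex, I split it as a difference of two convex functions, or argue with absolute values; this is the source of the factor $2$ and the $(n+1)!$.

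The main obstacle is making the smooth integration by parts rigorous near the boundary when $v,w$ are merely continuous and $\Omega$ is nonconvex. I would handle this by working on $\{v<w-\varepsilon\}$ and regularizing $v,w,u_i$ by convolution with sup-corrections. I would then rely on weak convergence of the mixed measures and on the sign of $\mu_v-\mu_w$ on the shrinking sets, finally letting $\varepsilon\to0$ by monotone convergence.
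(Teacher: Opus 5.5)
Your overall plan (reduce to smooth functions, integrate by parts repeatedly as in Blocki, telescope the difference in (ii) and finish with (i) using $v$ in the last slot) is the paper's plan. However, two steps as you describe them would not give the stated inequalities.

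\textbf{The reduction to $U_\e=\{v<w-\e\}$ loses a term.} Replacing $\Omega$ by $U_\e$ and using the pair $(v,w-\e)$ does not leave the statement invariant. If $v\le w$ and $v(x)=w(x)$, then every supporting hyperplane of $v$ at $x$ also supports $w$, so $\p v(x)\subset\p w(x)$. Hence $\mu_v\le\mu_w$ on $\{v=w\}\cap\Omega$, and
$\int_{\{v=w\}\cap\Omega}|u_n|\,(d\mu_v-d\mu_w)\le 0$.

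Working on $U_\e$ and letting $\e\to0$ therefore only yields the right-hand side $\int_{\{v<w\}}|u_n|\,(d\mu_v-d\mu_w)$, which is larger than the one claimed. Also, $\mu_v-\mu_w$ has no sign on $U_\e$, so there is no monotone convergence to appeal to.

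The loss is genuine. Take $n=1$, $\Omega=(-2,2)$, $v=x^2-1$, $w=\max\{v,0\}$, $u_1=x^2-4$. Then $\mu_v-\mu_w=2\chi_{(-1,1)}\,dx-2\delta_1-2\delta_{-1}$, and the theorem holds with equality, $8/3=8/3$. Your restricted right-hand side is $44/3$.

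The paper instead keeps all of $\Omega$ and replaces $w$ by $w_\e=\max\{v,w-\e\}$. This function equals $v$ in a neighborhood of $\p\Omega$, so there are no boundary terms, $\mu_v-\mu_{w_\e}$ is compactly supported, and mollification together with Lemma~\ref{mMAloc} can be used. At the same time $\mu_{w_\e}$ keeps the extra mass on $\{w-\e=v\}$. The limit $\e\to0$ then uses Fatou's lemma on the left and $\int|u_n|\,d\mu_w\le\liminf_{\e\to0}\int|u_n|\,d\mu_{w_\e}$ on the right.

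\textbf{In (ii) the gradient term cannot be dropped.} Write $\rho=w-v$ and $\tilde M_k^{ij}=\tilde M^{ij}[u_1,\ldots,u_{k-1},\tilde u_{k+1},\ldots,\tilde u_n]$. After putting both derivatives on $\rho^{n+1}$, the term $n(n+1)\int\rho^{n-1}\tilde M_k^{ij}D_i\rho D_j\rho\,(u_k-\tilde u_k)$ has no sign, because $u_k-\tilde u_k$ has none. So the argument is not ``the same'' as in (i), where this term was discarded using $u_1\le0$.

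Splitting $u_k-\tilde u_k$ into convex pieces does not help: it would replace $d_k$ by $\|u_k\|_{L^\infty}+\|\tilde u_k\|_{L^\infty}$, which defeats the purpose of (ii). What is needed is the following.
\begin{itemize}
\item Bound the gradient term by $d_k$ times the nonnegative quantity $n(n+1)\int\rho^{n-1}\tilde M_k^{ij}D_i\rho D_j\rho$. One more integration by parts shows this equals $-(n+1)\int\rho^n\tilde M_k^{ij}D_{ij}(w-v)$.
\item Bound the Hessian term by $(n+1)d_k\int\rho^n\tilde M_k^{ij}D_{ij}(w+v)$.
\item Add the two: the $w$-contributions cancel, leaving $2(n+1)d_k\int\rho^n\,d\mu_n[u_1,\ldots,u_{k-1},\tilde u_{k+1},\ldots,\tilde u_n,v]$. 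Part (i) applies to this because $v\le0$.
\end{itemize}
This cancellation, not a convex splitting, is the source of the factor $2$.

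\textbf{A smaller ordering issue in (i).} Integrate by parts against $u_1,\ldots,u_{n-1}$, bounding each by its sup norm. Reserve $u_n$ for the final exchange $\int\rho\,\tilde M[v,\ldots,v,u_n]=\int u_n\,\tilde M[v,\ldots,v,\rho]$, followed by Lemma~\ref{Exchcon}. If you keep $|u_n|$ inside the integrand from the first step, as in your display, the next integration by parts differentiates $u_n$.
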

In \cite{Bl}, the term $\int_\Omega |u_n| \, (d\mu_v-d\mu_w)$ was replaced by the larger expression $\int_\Omega |u_n| \, d\mu_v$. See also Wan \cite[Theorem 1.2]{WD} for similar estimates for the mixed Hessian measures.

Theorem \ref{BlemR} will be proved in Section \ref{maxDir_sec1}.
\medskip

 We will repeatedly use the generalized Cauchy--Schwarz inequality and integration by parts with respect to mixed measures induced by convex functions in the energy class.
\begin{thm}[Generalized Cauchy--Schwarz inequality and Integration by parts] 
\label{gCSIBP}
Let $\Omega\subset\R^n$ be a bounded convex domain. 
Let $u_0, u_1,\cdots, u_n\in \E(\Omega)$, and $E$ be as in \eqref{Edef}. 
\begin{enumerate}
\item[(i)] The following generalized Cauchy--Schwarz inequality holds:
\[\int_\Omega |u_0| \,d\mu_n[u_1, \cdots, u_n] \leq [E(u_0)]^\frac{1}{n+1}\cdots [E(u_n)]^\frac{1}{n+1}.\]
\item[(ii)] The following integration by parts formula holds:
\[\int_\Omega u_0 \,d\mu_n[u_1, \cdots, u_n]= \int_\Omega u_n \,d\mu_n[u_0, u_1, \cdots, u_{n-1}].\]
\end{enumerate}
\end{thm}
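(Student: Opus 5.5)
We prove (ii) first, since it is the engine for (i), and we begin with smooth data. Suppose $u_0,\dots,u_n\in\E(\Omega)\cap C^\infty(\overline\Omega)$ are strictly convex near $\overline\Omega$ (or merely $C^2$ up to the boundary) and vanish on $\p\Omega$. In that case $\mu_n[u_1,\dots,u_n]=\frac{1}{n}\sum_{i,j} \mathrm{cof}_{ij}[u_1,\dots,u_{n-1}]\,\p_{ij}u_n\,dx$ is the mixed determinant. Here the mixed cofactor matrix $A=\mathrm{cof}[D^2u_1,\dots,D^2u_{n-1}]$ is divergence free: $\p_i A^{ij}=0$.

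Integrating by parts twice then gives
\[\int_\Omega u_0\,A^{ij}\p_{ij}u_n\,dx=-\int_\Omega A^{ij}\p_iu_0\,\p_ju_n\,dx=\int_\Omega u_n\,A^{ij}\p_{ij}u_0\,dx .\]
The boundary terms vanish because $u_0=u_n=0$ on $\p\Omega$. Symmetry of the mixed measure under permutation then yields (ii) for smooth functions.

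For general $u_i\in\E(\Omega)$ we approximate. On a general convex $\Omega$ the boundary is not smooth, so we exhaust $\Omega$ by smooth uniformly convex domains. On each of them we take $u_i^k$ solving suitable Dirichlet problems, or we mollify and then take convex envelopes and correct with a barrier, so that $u_i^k\to u_i$ uniformly on $\overline\Omega$ with $E(u_i^k)\to E(u_i)$. A more robust route is to approximate monotonically: let $u_i^k$ solve $\mu_{u_i^k}=\chi_{\Omega_k}\mu_{u_i}$ (smoothed), so that $u_i^k\searrow u_i$ by comparison. We then need to pass to the limit in $\int u_0^k\,d\mu_n[u_1^k,\dots,u_n^k]$. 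Weak convergence of the mixed measures follows from the polarization formula \eqref{polf} and the weak continuity of $\mu_u$ under uniform convergence. This convergence is only local, however, and $u_0^k$ does not have compact support.

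Here Theorem \ref{BlemR}(ii) controls the tails. It bounds the difference of the integrals of $(w-v)^{n+1}$ against two mixed measures by $\sum_k d_km_k\int|v|(d\mu_v-d\mu_w)$, which tends to zero as the approximations converge uniformly with energies bounded. Combined with the smallness of $|u_0|$ near $\p\Omega$, this gives the limit. I expect this step, justifying the passage to the limit up to the boundary uniformly for energy-class functions, to be the main obstacle. The plan is to write $\int u_0\,d\mu_n[\cdots]$ as a sum over the region $\{u_0<-\e\}$, which is compactly contained in $\Omega$, and the strip $\{u_0\ge-\e\}$. On the strip we use part (i) applied to $\max(u_0,-\e)$, which has small energy, so the strip contribution is uniformly small.

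For (i) we use the standard Cauchy--Schwarz argument, first for smooth functions and then by the same approximation. By (ii) together with multilinearity, the form $B(u_0,\dots,u_n)=\int(-u_0)\,d\mu_n[u_1,\dots,u_n]$ is symmetric in all $n+1$ entries. The quadratic-in-$\nabla$ expression above shows that for fixed $u_2,\dots,u_n$ the bilinear form $(a,b)\mapsto\int A^{ij}\p_ia\,\p_jb$ is positive semidefinite, since $A\ge0$ by convexity. Cauchy--Schwarz then gives
\[B(u_0,u_1,u_2,\dots)^2\le B(u_0,u_0,u_2,\dots)\,B(u_1,u_1,u_2,\dots).\]
Iterating this over pairs of slots, using symmetry, gives $B(u_0,\dots,u_n)\le\prod_i B(u_i,\dots,u_i)^{1/(n+1)}=\prod_i E(u_i)^{1/(n+1)}$. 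The iteration uses the standard trick for symmetric multilinear forms: induct on the number of distinct entries, or use a normalization $E(u_i)=1$ and maximize. The general case follows by lower semicontinuity and the approximation above.
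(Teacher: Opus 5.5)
Your smooth-case computations and your route to (i) match the paper's proof of Theorem \ref{gCS}. Both use divergence-free mixed cofactors, the positive semidefinite form $(a,b)\mapsto\int A^{ij}\p_i a\,\p_j b$, Cauchy--Schwarz, a symmetrization step, approximants with $E(u_i^k)\to E(u_i)$, and lower semicontinuity. For symmetrization the paper uses Persson's induction, Theorem \ref{Pethm}, which needs symmetry only in the last $n$ slots; a finite-maximum argument using full symmetry from smooth (ii) also works. The paper gets the approximants from Lemma \ref{lemH} plus an exhaustion by smooth uniformly convex domains; your sketch of this is vaguer but aims at the same thing. Because (i) is an inequality, lower semicontinuity is enough there.

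\textbf{The gap.} It is in extending (ii) to $\E(\Omega)$. There you need genuine convergence of both sides of the smooth identity up to $\p\Omega$, and your argument does not deliver it.
\begin{enumerate}
\item Theorem \ref{BlemR}(ii) is the wrong tool. It weights differences of mixed measures by $(w-v)^{n+1}$, e.g.\ $|u_0|^{n+1}$. On the strip $\{|u_0|\le\e\}$ you only have $|u_0|^{n+1}\le\e^n|u_0|$, so the smallness of $|u_0|$ near $\p\Omega$ works against you.
\item Your strip estimate bounds $\int\min\{|u_0|,\e\}\,d\mu_n[u_1^k,\dots,u_n^k]$ with $u_0$ fixed. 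The identity you pass to the limit contains $u_0^k$ (and $u_n^k$ on the other side). So you need $\sup_k E(\max\{u_0^k,-\e\})\to0$ as $\e\to0$. This uniformity is not addressed, and even $E(\max\{u,-\e\})\to0$ is asserted without proof.
\item The monotone scheme $\mu_{u_i^k}=\chi_{\Omega_k}\mu_{u_i}$ would give $|u_0^k|\le|u_0|$. However, its solutions are not smooth up to a nonsmooth $\p\Omega$, so the base identity is unavailable for them. Smoothing the data destroys the monotonicity.
\item The strip estimate applies (i) to the nonsmooth $\max\{u_0^k,-\e\}$. So the logical order must be: smooth (ii), smooth (i), general (i), general (ii). It is not ``(ii) is the engine for (i)''.
\end{enumerate}

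\textbf{How to repair it.} Work with the energy-convergent approximants from (i). Compare $\max\{w,-\e\}$ with $(1+\eta)w$ via Corollary \ref{mpcor} and let $\eta\to0$; this gives $E(\max\{w,-\e\})\le\int\min\{|w|,\e\}\,d\mu_w=E(w)-\int(|w|-\e)^+\,d\mu_w$. Weak convergence on a fixed compact set then yields $\limsup_k E(\max\{u_0^k,-\e\})\le\int\min\{|u_0|,\e\}\,d\mu_{u_0}$, which tends to $0$.

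The paper avoids the boundary limit altogether. Once (i) is known, it applies the exchange identity of Lemma \ref{Exlem} to $u_n$ and $v_\e=\max\{u_n,-\e\}$. These agree near $\p\Omega$, so only interior mollification and locality are needed. The two leftover terms, $\int v_\e\,d\mu_n[u_0,\dots,u_{n-1}]$ and $\int u_0\,d\mu_n[u_1,\dots,u_{n-1},v_\e]$, are bounded by (i) times $E(v_\e)^{1/(n+1)}$, which tends to $0$ by Lemma \ref{cutlem}. Your strip idea is the same mechanism. Placing the cut-off on $u_n$ inside an exchange identity means the paper never has to pass to a limit up to the boundary for (ii).
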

Theorem \ref{gCSIBP} (i) (respectively Theorem \ref{gCSIBP} (ii)) is the real Monge--Amp\`ere version of Theorem 3.4 in Persson \cite{Per} (respectively Theorem 3.2 in Cegrell \cite{Ceg}) for complex Monge--Amp\`ere equations.
When $u_0$, $u_1=\cdots= u_n$ are $C^2$ convex functions vanishing at the boundary of smooth and uniformly convex domain $\Omega\subset\R^n$, Theorem \ref{gCSIBP} (i) can be found in 
Verbitsky \cite[Theorem 3.1]{V}.  In Huang \cite[Lemma 4.1]{H}, the smoothness in \cite{V} was removed.
\medskip

Theorem \ref{gCSIBP} will be proved in Section \ref{pol_sec}. 
\medskip

For uniqueness results of the Monge--Amp\`ere eigenvalue problem, we combine integrations by parts with mixed Monge--Amp\`ere inequalities.
\begin{thm}[Mixed Monge--Amp\`ere inequality]
\label{mMAnu0}
Let $\Omega$ be a bounded convex domain in $\R^n$, $\nu$ be a Borel measure in $\Omega$, and
 $0\leq f_i\in L_{\text{loc}}^1(\Omega, d\nu)$ $(1\leq i\leq  n)$. Let $u_1,\cdots u_n \in C(\Omega)$ be convex functions satisfying 
 \[\mu_{u_i}\geq f_i\nu\quad (1\leq i\leq n).\]
Then their mixed Monge--Amp\`ere measure satisfies
 \[\mu_n[u_1, \cdots, u_n] \geq \Big(\prod_{i=1}^{n}f_i^{\frac{1}{n}}\Big) \nu.
\]
Consequently,
\[\mu_{u_1+ u_2}\geq (f_1^{1/n} + f_2^{1/n})^n\nu,\]
and if equality occurs, it is necessary that $\mu_{u_i}= f_i\nu$ for $i=1, 2$.
\end{thm}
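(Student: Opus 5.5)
We reduce to the absolutely continuous case by Lebesgue decomposition of the Monge--Amp\`ere measures against $\nu$, and then use a mixed-discriminant (Minkowski/Gårding) inequality at the density level.

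\textbf{Step 1: decomposition and the key inequality.} Set $\sigma:=\nu+\sum_i\mu_{u_i}$, locally finite on $\Omega$. Write $\mu_{u_i}=g_i\sigma$ and $\nu=h\sigma$. The hypothesis $\mu_{u_i}\ge f_i\nu$ gives $g_i\ge f_ih$ $\sigma$-a.e. So the first claim reduces to
\begin{equation*}
\mu_n[u_1,\ldots,u_n]\ \ge\ \Big(\prod_{i=1}^n g_i^{1/n}\Big)\sigma\qquad(\ast)
\end{equation*}
for arbitrary convex $u_i$. Indeed, $(\ast)$ yields $\prod g_i^{1/n}\ge \prod f_i^{1/n}h$, hence the stated bound after multiplying by $\sigma$.

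\textbf{Step 2: proving $(\ast)$ by approximation.} Since $(\ast)$ is local, restrict to $\Omega'\Subset\Omega$. For smooth convex $u_i$, polarization \eqref{polf} gives $\mu_n[u_1,\ldots,u_n]=D(D^2u_1,\ldots,D^2u_n)\,dx$, the mixed discriminant. The classical inequality $D(A_1,\ldots,A_n)\ge\prod(\det A_i)^{1/n}$ for nonnegative symmetric matrices then gives $(\ast)$ with $\sigma$ replaced by Lebesgue measure.

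For general $u_i$ we mollify, $u_i^\varepsilon=u_i*\rho_\varepsilon$. Weak continuity of Monge--Amp\`ere measures under local uniform convergence, applied to each term of \eqref{polf}, gives $\mu_n[u^\varepsilon]\rightharpoonup\mu_n[u]$. The difficulty is that $\prod(\det D^2u_i^\varepsilon)^{1/n}dx$ need not converge to $\prod g_i^{1/n}\sigma$. We need lower semicontinuity of the concave functional $(\mu_1,\ldots,\mu_n)\mapsto\int\phi\,\prod(d\mu_i/d\sigma)^{1/n}d\sigma$, with $\phi\ge0$ continuous and compactly supported. This functional is a 1-homogeneous concave function of the measures (the geometric mean), so it is weakly upper semicontinuous (Reshetnyak). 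That is the wrong direction.

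\textbf{Alternative for Step 2.} The safer route is a direct proof via sections. First, $\mu_n$ is monotone in each argument along the measure order in the following sense. Take a Borel set $S$ and apply Minkowski's mixed volume inequality to the gradient images. For $\lambda_i>0$, the Monge--Amp\`ere measure of $\sum\lambda_iu_i$ satisfies the Brunn--Minkowski inequality $\mu_{\sum\lambda_iu_i}(S)^{1/n}\ge\sum\lambda_i\mu_{u_i}(S)^{1/n}$. This holds because $\partial(\sum\lambda_iu_i)(x)\supseteq\sum\lambda_i\partial u_i(x)$, by Brunn--Minkowski applied pointwise, then localized through the area formula on the set where all $u_i$ are differentiable twice (Aleksandrov). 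A cleaner way to get this is to apply it to sets $S$ that are small and pass to densities by differentiation of measures: $\mu_{\sum\lambda_iu_i}\ge(\sum\lambda_ig_i^{1/n})^n\sigma$. I expect justifying this density inequality for singular parts to be the main obstacle. I would first try proving it on small balls using the Brunn--Minkowski inequality for unions of subgradient images, together with Besicovitch differentiation with respect to $\sigma$.

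\textbf{Step 3: from the sum inequality to $(\ast)$.} Expand $\mu_{\sum\lambda_iu_i}=\sum\binom{n}{\alpha}\lambda^\alpha\mu_n[u^\alpha]$ by multilinearity, and compare with the expansion of $(\sum\lambda_ig_i^{1/n})^n$. Choose $\lambda_i=t_i/g_i^{1/n}$, rescaled, and use homogeneity: the identity $\prod a_i^{1/n}=\min\{\frac{1}{n}\sum\lambda_ia_i^{1/n}\cdots\}$ can be replaced by the standard trick of polynomial inequality in $\lambda$. This gives $(\ast)$ $\sigma$-a.e. after restricting to a countable dense set of $\lambda$.

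\textbf{Step 4: the consequences.} Taking $n=2$ blocks, $\mu_{u_1+u_2}=\sum_k\binom nk\mu_n[u_1^{(k)},u_2^{(n-k)}]\ge\sum\binom nk f_1^{k/n}f_2^{(n-k)/n}\nu=(f_1^{1/n}+f_2^{1/n})^n\nu$, using the first claim with the $u_i$ repeated. If equality holds, then every term is an equality. In particular the $k=n$ term gives $\mu_{u_1}=f_1\nu$, provided the others are individually $\ge$. Otherwise the excess $\mu_{u_1}-f_1\nu\ge0$ would make the sum strictly larger. The same holds for $u_2$.
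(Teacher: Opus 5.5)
\textbf{There is a genuine gap in Step 2.} Your reduction to $(\ast)$, the smooth case via the mixed-discriminant inequality, and the consequence and equality argument in Step 4 are fine. You also correctly see why plain mollification fails: the geometric mean of densities is only weakly upper semicontinuous.

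The replacement argument, however, rests on a false lemma. The inequality $\mu_{\sum\lambda_iu_i}(S)^{1/n}\ge\sum_i\lambda_i\mu_{u_i}(S)^{1/n}$ does not hold for Borel sets, nor for balls. Take $n=2$, $u_1=|x-p|$, $u_2=|x-q|$ with $p\neq q$, so $\mu_{u_1}=\pi\delta_p$ and $\mu_{u_2}=\pi\delta_q$.
\begin{itemize}
\item At the atoms, $\mu_{u_1+u_2}(\{p,q\})=2\pi<4\pi=(\sqrt\pi+\sqrt\pi)^2$.
\item $u_1+u_2$ is smooth off $\{p,q\}$, and its gradient image over $\R^2$ has area $4\pi$. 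So its absolutely continuous part has total mass $2\pi$, and $\mu_{u_1+u_2}(B)<4\pi$ for every ball $B$ containing $p$ and $q$.
\end{itemize}

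The reason is that $\partial(u_1+u_2)(S)=\bigcup_{x\in S}\big(\partial u_1(x)+\partial u_2(x)\big)$ pairs subgradients at the \emph{same} point. It is typically much smaller than $\partial u_1(S)+\partial u_2(S)$, so Brunn--Minkowski gives nothing for sets. Pointwise it only handles atoms. The Aleksandrov-Hessian/area-formula argument only sees the parts absolutely continuous with respect to $\mathcal{L}^n$, where subdifferentials are singletons. Since $\nu$ is an arbitrary Borel measure, the density inequality on singular parts is precisely the content of the theorem. Once the ball inequality is unavailable, ``Besicovitch differentiation'' supplies no mechanism for it.

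\textbf{Step 3 also fails for $n\ge 3$.} A polynomial inequality $P(\lambda)\ge(\sum_i\lambda_ig_i^{1/n})^n$ with matching pure coefficients does not control the coefficient of $\lambda_1\cdots\lambda_n$. For example, with $n=3$ and $g_i=1$, the polynomial $\sum_i\lambda_i^3+6\sum_{i\ne j}\lambda_i^2\lambda_j$ satisfies the inequality (by AM--GM) yet has zero mixed coefficient. The fully mixed bound is of G\r{a}rding/Alexandrov--Fenchel type and must be obtained directly.

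\textbf{The missing idea, as in the paper, is to apply the concave operation at a discrete level \emph{before} taking limits.}
\begin{itemize}
\item The paper first treats $\mu_{u_i}=f_i\,d\mathcal{L}^n$ with $f_i\in L^1_{\text{loc}}$. It mollifies the densities and solves Dirichlet problems to get smooth $u_{i,\e}\to u_i$ locally uniformly. It applies G\r{a}rding's inequality for the mixed operator pointwise. It then passes to the limit using $L^1$ convergence of the densities, under which the geometric mean does converge.
\item For general $\nu$, it replaces $\mu_{u_i}$ in a ball by its canonical approximation, i.e.\ averages over the cubes $K^j_m$ of a common grid. It solves $\mu_{u_{i,m}}=\mu^m_{u_i}$ with boundary values $u_i$. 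The first case then gives $\mu_n[u_{1,m},\ldots,u_{n,m}]\ge\sum_j|K^j_m|^{-1}\big(\prod_i\mu_{u_i}(K^j_m)\big)^{1/n}\chi_{K^j_m}$.
\item On each cube, H\"older's inequality $\prod_i\big(\int_Kf_i\,d\nu\big)^{1/n}\ge\int_K\prod_if_i^{1/n}\,d\nu$ turns the right-hand side into the canonical approximation of $(\prod_if_i^{1/n})\nu$. This converges weakly to that measure. The left-hand side converges weakly to $\mu_n[u_1,\ldots,u_n]$.
\end{itemize}
This bypasses both the semicontinuity issue and any analysis of singular parts, and it yields the fully mixed inequality directly.
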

Theorem \ref{mMAnu0}, to be proved in Section \ref{mMA_sec2},  is a real mixed Monge--Amp\`ere version of the complex mixed Monge--Amp\`ere inequality established by Dinew \cite[Theorem 1.3]{D09}.

\medskip
\noindent
{\bf Notation.} Throughout, we use the following notation for partial derivatives: $D_i u =\frac{\p u}{\p x_i}$ and $D_{ij} u= \frac{\p^2 u}{\p x_i\p x_j}$  for $1\leq i, j\leq  n$.
 For a Lebesgue measurable set $K \subset \R^n$,  we denote its $n$-dimensional Lebesgue measure by $|K|$. The characteristic function of a set $A$ is denoted by $\chi_A$.   The Dirac measure giving the unit mass to $z$ is denoted by  $\delta_z$. 
 For a Borel measure $\nu$ on a Borel set $\Omega\subset\R^n$ and $p\in [1,\infty)$, we denote $\|u\|_{L^p(\Omega, d\nu)}= \big(\int_\Omega|u|^p\, d\nu\big)^{1/p}$.
 We use $\dist(\cdot, S)$ to denote the distance from a closed set $S$. The set of positive integers is denoted by $\N$. We use $c=(\ast,\ldots,\star)$ and $C= C(\ast,\ldots,\star)$ to denote positive constants $c, C$ depending on the quantities appearing in the parentheses; they may change from line to line.

\medskip
The rest of the paper is organized as follows. In Section \ref{mMA_sec}, we recall some basic results in the Monge--Amp\`ere equations, study properties of the mixed Monge--Amp\`ere measures, and prove the mixed Monge--Amp\`ere measure inequality.
In Section \ref{maxDir_sec}, we prove maximum principles and apply them to establish solvability of the Dirichlet problem. We also prove a comparison principle for degenerate subcritical Monge--Amp\`ere equations in Section \ref{maxDir_sec}. We prove the generalized Cauchy--Schwarz inequality and establish an integration by parts formula in the energy class in Section \ref{pol_sec}.
We study convex envelopes and variational derivatives in Section \ref{env_sec}. In Section \ref{deg_sec}, we use results in previous sections to study degenerate Monge--Amp\`ere equations, including the Monge--Amp\`ere eigenvalue problem.

\section{Mixed Monge--Amp\`ere measures and inequalities}
\label{mMA_sec}
In this section, we study several basic properties of the mixed Monge--Amp\`ere measures and energies. We will prove the mixed Monge--Amp\`ere inequality in Theorem \ref{mMAnu0} which is restated here as Theorem \ref{mMAnu}.

\subsection{Mixed Monge--Amp\`ere measures and energies}
For (not necessarily convex) functions $u_1,\cdots, u_n\in C^2(\Omega)$ where $\Omega\subset\R^n$, we introduce the mixed Monge--Amp\`ere operator  $\tilde{M}[u_1,\ldots, u_n] $ by the explicit representation formula
 \begin{equation}
 \label{tMpol1}
 \tilde{M}[u_1,\ldots, u_n] = \frac{1}{n!} \sum \delta^{i_1,\cdots, i_n}_{j_1,\cdots, j_n} D_{i_1 j_1} u_1 \cdots  D_{i_n j_n} u_n, \end{equation}
 where $\delta^{i_1,\cdots, i_n}_{j_1,\cdots, j_n}$ denotes the generalized Kronecker delta, which vanishes if $\{i_1,\cdots, i_n\}\neq \{j_1,\cdots, j_n\}$ and equals $\pm 1$ depending whether $(i_1,\cdots, i_n)$ is an even or odd permutation of $(j_1,\cdots, j_n)$. 
 
 The mixed Monge--Amp\`ere operator  $\tilde{M}[u_1,\ldots, u_n] $ can be given in term of the Monge--Amp\`ere operators by the polarization formula
\begin{equation} \label{MApolf}\tilde{M}[u_1,\ldots, u_n] =\frac{1}{n!}\sum_{k=1}^n\sum_{1\leq i_1<\cdots<i_k\leq n}(-1)^{n-k} \det D^2(u_{i_1}+\cdots + u_{i_k}).
 \end{equation}
 
 Moreover, we have the following expansion
 \[\det (t_1 D^2 u_1 +\cdots+ t_n D^2 u_n)=\sum_{i_1, \cdots, i_n =1}^n t_{i_1}\cdots t_{i_n} \tilde{M}[u_{i_1},\ldots, u_{i_n}]\quad \text{for all } t_1,\cdots, t_n\in\R.\]
 
 For  $u_1,\cdots, u_n\in C^2(\Omega)$, we denote
\begin{equation}
\label{tMcof1}
\tilde M^{ij} [u_1, \cdots, u_{n-1}] = \frac{\p }{\p (D_{ij} u_n)}\tilde{M}[u_1,\ldots, u_{n-1}, u_n]. \end{equation}
 If $A$ is an $n\times n$ matrix, then let $\text{Cof } A$ be its cofactor matrix whose
 the $(i, j)$ entry $(\text{Cof } A)_{ij}$ is $(-1)^{i+ j}\det A(i\mid j)$ where $A(i\mid j)$ is the $(n-1)\times (n-1)$ matrix obtained by deleting
the $i$-th row and $j$-th column of $A$.
Then 
\begin{equation}
\label{tMcof2} \tilde M^{ij} [u_1, \cdots, u_{n-1}]=\frac{1}{n!}\sum_{k=1}^n\sum_{1\leq i_1<\cdots<i_k= n}(-1)^{n-k}[\text{Cof}\, D^2(u_{i_1}+\cdots + u_{i_k})]_{ij}. \end{equation}
In particular, for a $C^2$ function $u$,
\[n\tilde M^{ij}[\underbrace{u, \cdots, u}_{(n-1)-\text{times}}]=[\text{Cof}\, D^2 u]_{ij},\]
and 
$n\tilde M^{ij}[\underbrace{u, \cdots, u}_{(n-1)-\text{times}}] D_{ij}$
is the linearized operator of the Monge--Amp\`ere operator $\det D^2 u$.

\medskip
Since the cofactor matrix of the Hessian matrix of a $C^3$ function is divergence-free (see \cite[Lemma 2.56]{Lbook}), the matrix
$(\tilde M^{ij} [u_1, \cdots, u_{n-1}])_{1\leq i, j\leq n}$ is divergence-free; that is
\begin{equation}
\label{div0}
\sum_{i=1}^nD_i \tilde M^{ij} [u_1, \cdots, u_{n-1}]=0\quad \text{for each } j=1,\cdots, n.\end{equation}
We thus write $\tilde M[u_1,\ldots, u_n] $ in the divergence form when $u_1,\cdots, u_n\in C^3(\Omega)$:
\begin{equation}
\label{divf}
\begin{split}\tilde M [u_1, \cdots, u_{n-1}, u_n]&=\sum_{i, j=1}^n D_{ij} u_n\tilde M^{ij} [u_1, \cdots, u_{n-1}] \\
&=\sum_{i, j=1}^n D_i\big(\tilde M^{ij} [u_1, \cdots, u_{n-1}] D_j u_n\big).
\end{split}
\end{equation}
 
 \begin{rem}
 \label{mMpos}
 When $u_1, \cdots, u_n$ are $C^2$ convex functions, then we have these crucial facts.
 \begin{enumerate}
 \item By G\r{a}rding's inequality \cite[Theorem 5]{Gd}, we have
 \[\tilde M [u_1, \cdots, u_{n-1}, u_n]\geq \prod_{i=1}^n \Big(\tilde M [u_i, \cdots, u_i, u_i]\Big)^{1/n}= \prod_{i=1}^n  (\det D^2 u_i)^{1/n}\geq 0.\]
  \item In the sense of symmetric matrices
 \begin{equation}
 \label{tMpos}
 \Big(\tilde M^{ij} [u_1, \cdots, u_{n-1}]\Big)_{1\leq i, j\leq n}\geq 0.\end{equation}
 To see this, let $\xi=(\xi_1,\cdots, \xi_n)\in\R^n$ and $v(x) =e^{\xi\cdot x}>0$. Then $v$ is convex with $D^2 v= v\xi\otimes \xi $. The assertion \eqref{tMpos} now follows from
 \[0\leq \tilde M [u_1, \cdots, u_{n-1}, v] = v \tilde M^{ij} [u_1, \cdots, u_{n-1}]\xi_i\xi_j.\]
 \end{enumerate}
 \end{rem}
 Following Trudinger--Wang \cite[Lemma 2.3 and Theorem 2.4]{TW02} and Passare--Rullg\r{a}rd \cite[formula (14)]{PR}, we can give another definition of the mixed Monge--Amp\`ere measures of $n$ convex functions as follows.
 \begin{defn}[Mixed Monge--Amp\`ere measures]
 \label{mMAdef}
Let $\Omega$ be a bounded domain in $\R^n$. 
 Let $u_1, \cdots, u_n$ be convex functions on $\Omega$. Then, 
 there exists a Borel measure $\mu_n[u_1, \cdots, u_n]$, called {\it the mixed Monge--Amp\`ere measure} of $u_1,\cdots, u_n$, such that
 \[\mu_n[u_1, \cdots, u_n]=\tilde M[u_1,\cdots, u_n] \quad \text{when }u_1,\cdots, u_n\in C^2(\Omega),\] and 
 if $\{u_{1, k}\}_{k=1}^{\infty}, \cdots$, $\{u_{n, k}\}_{k=1}^{\infty}$ are $n$ sequences of convex functions 
 converging locally uniformly in $\Omega$ to convex functions
  $u_1, \cdots, u_n$, respectively, then $\mu_n[u_{1,k},\cdots, u_{n,k}]$ converges weakly to $\mu_n[u_1,\cdots, u_n]$; that is,
  \[\lim_{k\to\infty} \int_\Omega\varphi\, d\mu_n[u_{1,k},\cdots, u_{n,k}]=\int_\Omega\varphi\, d\mu_n[u_1,\cdots, u_n]\quad\text{for all } \varphi \in C_c(\Omega).\]
  \end{defn} 
 
\begin{rem}
Trudinger--Wang \cite{TW02} analyzed the general case of mixed Hessian measures. For the mixed Monge--Amp\`ere measures, the 
polarization formula \eqref{polf} allows us to deduced most of their convergence properties from those of Monge--Amp\`ere measures as exposed in \cite[Chapter 3]{Lbook}.
\end{rem}
An important concept in our analysis is that of mixed Monge--Amp\`ere energy.
\begin{defn}[Mixed Monge--Amp\`ere energy] 
\label{mEdef}
Let $\Omega$ be a bounded convex domain in $\R^n$.
For $u_0, u_1, \cdots, u_n\in \E(\Omega)$, we define their mixed Monge--Amp\`ere energy by
\[E(u_0, u_1, \cdots, u_n)=\int_\Omega -u_0\, d\mu_n[u_1,\cdots, u_n]. \]
Clearly,
\[E(u)=E(\underbrace{u, \cdots, u}_{(n+1)-\text{times}})\quad \text{for } u\in \E(\Omega).\]
\end{defn}
The generalized Cauchy--Schwarz inequality and integration by parts formula in Theorems \ref{gCS} and \ref{IBPf} will confirm the finiteness of $E(u_0, u_1, \cdots, u_n)$ and its symmetry in all arguments.

\begin{exa} Here is an example computing the mixed Monge--Amp\`ere measures.
 Consider the following convex functions in $\R^n$: 
\[u(x)=|x|^2/2, \quad v(x)=|x|.\]
To compute $\mu_{u+ v}$, we first compute $\det D^2 w_\e$ where $w_\e(x) =|x|^2/2 +\sqrt{|x|^2 +\e}$, and then take the weak limit of $\det D^2 w_\e$ when $\e\to 0^+$. 

Since $w_\e(x)= W_\e(r)$ where $r=|x|$ and $W_\e(r) = r^2/2 +\sqrt{r^2 +\e}$, we have
\[\det D^2 w_\e = W''_\e(r)(W'_\e/r)^{n-1}=[1+ \e(r^2 +\e)^{-3/2}] [1 + (r^2 +\e)^{-1/2}]^{n-1}.\]
Taking the limit $\e\to 0^+$, we find
\[\mu_{u+ v} = (1+r^{-1})^{n-1}d\L^n + |B_1(0)|\delta_0.\]
Since 
\[\mu_{u+ v}=\mu_n[u+ v, \cdots, u+ v]= \sum_{k=0}^n {n \choose k}\mu_n[\underbrace{u,\cdots,u}_{(n-k)-\text{times}}, \underbrace{v, \cdots, v}_{k-\text{times}}],\]
we obtain
\[\mu_n[\underbrace{u,\cdots,u}_{(n-k)-\text{times}}, \underbrace{v, \cdots, v}_{k-\text{times}}]= {n \choose k}^{-1}{n-1\choose k}r^{-k}\, d\L^n= \frac{n-k}{n} r^{-k}\, d\L^n\quad (0\leq k\leq n-1).\]
\end{exa}

\begin{rem}[Finite masses of mixed Monge--Amp\`ere measures for convex functions with finite Monge--Amp\`ere masses]
\label{MLiptot}
 Let $\Omega\subset\R^n$ $(n\geq 1)$ be a bounded convex domain.
\begin{enumerate}
 \item If  $u\in C^{0, 1}(\overline{\Omega})$ is convex with Lipschitz constant not greater than $L$ then $\p u(\Omega)\subset B_L(0)$ and thus the total measure $\mu_u(\Omega)$ satisfies \[\mu_u(\Omega) =|\p u(\Omega)|\leq |B_L(0)|=C(n, L).\]
 \item If  $u_1, \cdots, u_n\in  C^{0, 1}(\overline{\Omega})$ where each convex function $u_i$ has a Lipschitz constant not greater than $L_i$, then using the polarization formula \eqref{polf}, we also have \[\mu_n[u_1,\cdots, u_n](\Omega)\leq C(n, L_1, \cdots, L_n)<\infty.\]
 \item If  $u_1, \cdots, u_n\in  C(\overline{\Omega})$ are convex functions with $u_1=\cdots=u_n=0$ on $\p\Omega$ and \[\mu_{u_1}(\Omega)+\cdots+ \mu_{u_n}(\Omega)<\infty,\]
 then the polarization formula \eqref{polf} and the first assertion of Lemma \ref{coneMA} give
 \[\mu_n[u_1,\cdots, u_n](\Omega)\leq C(n)[\mu_{u_1}(\Omega)+\cdots+ \mu_{u_n}(\Omega)]<\infty.\]
 \end{enumerate}
\end{rem}

\medskip
Convex functions with finite Monge--Amp\`ere masses form a cone. Cegrell \cite{Ceg1} seems to be the first to observe this phenomenon in the complex Monge--Amp\`ere setting. 
\begin{lem}[Cone property]
\label{coneMA}
 Let $\Omega\subset\R^n$ be a bounded convex domain. Let $u, v\in C(\overline{\Omega})$ be convex functions with $u=v=0$ on $\p\Omega$. 
 Then
 \[\mu_{u+ v}(\Omega)\leq 3^n[\mu_u(\Omega)+\mu_v(\Omega)]. \] 
 More generally, if $w\in C(\overline{\Omega})$ is a nonnegative convex function, then
 \[\int_\Omega (-w)\, d\mu_{u+ v} \leq 3^n \Big[\int_\Omega (-w)\, d\mu_u  + \int_\Omega (-w)\, d\mu_ v \Big].\]
\end{lem}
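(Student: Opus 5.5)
The plan is to reduce the cone property to the classical comparison principle for Monge--Amp\`ere masses on open subsets. That principle says: if $U\subset\Omega$ is open and $h,g\in C(\overline U)$ are convex with $h\le g$ in $U$ and $h=g$ on $\p U$, then $\p g(U)\subset\p h(U)$, so $\mu_g(U)\le\mu_h(U)$. I would apply it with $g=u+v$ and $h$ a positive multiple of $u$ or of $v$, on sets where $u$ (respectively $v$) is ``dominant''. Throughout, $u,v\le 0$ by convexity and the zero boundary data.

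\textbf{Step 1 (regions where one function dominates).} Take $U=\{x\in\Omega: u(x)<v(x)\}$ and $h=2u$. In $U$ we have $u+v>2u$. On $\p U\cap\Omega$ we have $u=v$, hence $u+v=2u$, and on $\p U\cap\p\Omega$ both sides vanish. The comparison principle then gives $\mu_{u+v}(U)\le 2^n\mu_u(U)$. The symmetric choice $V=\{v<u\}$ gives $\mu_{u+v}(V)\le 2^n\mu_v(V)$.

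\textbf{Step 2 (the contact set $\{u=v\}$).} This is the step I expect to be the main obstacle, and it is where the constant should degrade from $2^n$ to $3^n$. Naive perturbations such as $\{u<v+t\}$ with $h=2u-t$ break the boundary matching on $\p\Omega$. My first attempt is to use the enlarged regions
\[
U'=\{2u<v\},\qquad V'=\{2v<u\},
\]
with $h=3u$ and $h=3v$ respectively. In $U'$ we have $u+v>3u$, with equality on $\p U'\cap\Omega$ and both sides zero on $\p\Omega$, so $\mu_{u+v}(U')\le 3^n\mu_u(U')$, and similarly on $V'$. The leftover set $\{u/2\le v\le 2u\}$ contains the contact set. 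I would cover it by replacing $u$ with $su$ for $s\in(1/2,2)$ and using $\{su<v\}$, $\{sv<u\}$. If one can choose $s$ so that the common zero set $\{su=v\}\cap\{u<0\}$ has zero $\mu_{u+v}$-mass, the two regions cover $\Omega$ up to a null set. For this I would argue that the sets $\{su=v\}$ are disjoint for different $s$, so $\mu_{u+v}$, being locally finite, charges at most countably many of them. For such $s$ the constants are $(1+s)^n\le 3^n$ against $\mu_u$, and $(1+1/s)^n\le 3^n$ against $\mu_v$, which needs $s\ge 1/2$. Here one must check that $\{su<v\}\cup\{v<su\}$ together with the null set covers $\Omega$; that is immediate. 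Summing the two pieces gives $\mu_{u+v}(\Omega)\le 3^n[\mu_u(\Omega)+\mu_v(\Omega)]$.

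\textbf{Step 3 (weighted version).} I read the hypothesis as $w\le 0$, so that $-w\ge 0$ is the weight. To pass from masses to integrals I would use the layer-cake formula
\[
\int_\Omega(-w)\,d\mu=\int_0^\infty \mu(\{-w>r\})\,dr .
\]
It then suffices to get the same inequality on each open convex sublevel set $\{w<-r\}$. Since $u,v$ do not vanish on $\p\{w<-r\}$, I would apply Steps 1--2 directly on the pieces $\{su<v\}\cap\{w<-r\}$. Equivalently, I would run the inclusion $\p(u+v)(\{su<v\}\cap\{w<-r\})\subset(1+s)\,\p u(\{su<v\}\cap\{w<-r\})$. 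I would prove this inclusion by the same supporting-hyperplane argument that underlies the comparison principle: a supporting plane of $u+v$ at $x$ is lowered until it touches $(1+s)u$ from below inside the region $\{su<v\}$, where $(1+s)u<u+v$. This touching needs care; one possibly has to apply the comparison principle on $\{su<v\}\cap\{w<-r\}$ with the boundary inequality $h\le g$ only, which still yields the inclusion for slopes whose contact occurs in the interior. Integrating in $r$ then gives the weighted estimate with the same constant $3^n$.
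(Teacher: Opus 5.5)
\textbf{Unweighted inequality.} This part works, but you misjudged your own first attempt. When neither $u$ nor $v$ vanishes identically, both are negative in $\Omega$. Then $U'=\{2u<v\}$ and $V'=\{2v<u\}$ already cover $\Omega$: if $v\le 2u$, then $2v\le 4u<u$. In particular, a point with $u=v$ lies in both sets, since $2u<u=v$. Your ``leftover set'' $\{u/2\le v\le 2u\}$ is empty, because $2u<u/2$ wherever $u<0$. Adding $\mu_{u+v}(U')\le 3^n\mu_u(U')$ and $\mu_{u+v}(V')\le 3^n\mu_v(V')$ therefore finishes the proof. This overlapping cover is exactly the paper's argument, which uses $\{u<v\}$ with $h=2u$ and $\{u>2v\}$ with $h=3v$. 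Your perturbation in $s$ is also valid, with complementary piece $\{v<su\}$, the countability argument, and the fact that $\mu_{u+v}$ is $\sigma$-finite. Letting $s\to1$ through admissible values even yields the constant $2^n$, but none of this is needed.

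\textbf{The gap is Step 3.} The layer-cake reduction needs $\mu_{u+v}(D_r)\le 3^n[\mu_u(D_r)+\mu_v(D_r)]$ on every sublevel set $D_r=\{w<-r\}$. That inequality is false: the cone property uses the zero boundary values globally and has no local version. Take $n=2$, $\Omega=B_R(0)$, constants with $R^2/2<C<KR^2$, and
\[
u=\max\{x_1^2/2-C,\,K(|x|^2-R^2)\},\qquad v=\max\{x_2^2/2-C,\,K(|x|^2-R^2)\}.
\]
Both functions vanish on $\partial\Omega$, and near $0$ one has $u=x_1^2/2-C$ and $v=x_2^2/2-C$. Let $w=|x|^2-R^2$ and take a small ball $B_\rho=\{w<\rho^2-R^2\}$. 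Then $\mu_u(B_\rho)=\mu_v(B_\rho)=0$, while $\mu_{u+v}(B_\rho)=|B_\rho|>0$.

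The same example breaks your proposed inclusion $\partial(u+v)(E)\subset 2\,\partial u(E)$ for $E=\{u<v\}\cap B_\rho$. The left side is $E$ itself, which has positive measure, while the right side lies on a line. The reason is that on $\partial D_r$ you only have $h\le g$, not $h=g$. The slopes whose contact occurs there are not controlled by your interior-contact argument, and they carry positive mass.

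\textbf{What is missing.} You need a weighted comparison principle: if $g,h\le 0$ are convex, $U=\{g>h\}$, $g=h$ on $\partial U$, and $w\le 0$ is convex, then $\int_U(-w)\,d\mu_g\le\int_U(-w)\,d\mu_h$. This is Corollary~\ref{mpcor}. It follows from the Blocki-type estimate Theorem~\ref{BlemR}(i), which is proved by integrating by parts against mixed Monge--Amp\`ere measures. The convexity of the weight $w$ itself enters that proof, not merely its sublevel sets. Applying it with $g=u+v$ on $U'$ and $V'$ (or on the paper's sets) gives the weighted estimate immediately. Your reading of the hypothesis as $w\le 0$ is the correct one.
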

\begin{proof} The proof here is inspired by Cegrell \cite[Section 2]{Ceg1}.
It suffices to consider the case when both $u$ and $v$ are not identically zero. Then, by convexity, $u, v<0$ in $\Omega$ and therefore
\[\Omega=\{x\in\Omega: u(x)<v(x)\}\cup \{x\in\Omega: u(x)>2v(x)\}\equiv \Omega_1 \cup\Omega_2.\]
We have 
$u+ v> 2u$ in $\Omega_1$ while $u+ v=2u$ on $\p\Omega_1$. By the maximum principle for the Monge--Amp\`ere equation (\cite[Lemma 3.11]{Lbook}), we have 
$\p (u+ v)(\Omega_1)\subset \p (2u)(\Omega_1)$. Therefore
\[\mu_{u+ v}(\Omega_1)\leq 2^n \mu_u(\Omega_1).\]
Similarly, we have $u+ v> 3v$ in $\Omega_2$ while $u+ v=3v$ on $\p\Omega_2$, and
\[\mu_{u+ v}(\Omega_2)\leq 3^n \mu_v(\Omega_2).\]
Adding these two inequalities proves the first assertion of the lemma.

Assume now $w\in C(\overline{\Omega})$.  Then, by Corollary \ref{mpcor}, we have
\[\int_{\Omega_1} (-w) \, d\mu_{u+ v} \leq \int_{\Omega_1} (-w) \, d\mu_{2u}= 2^n \int_{\Omega_1} (-w) \, d\mu_{u}, \]
and 
\[\int_{\Omega_2} (-w) \, d\mu_{u+ v} \leq \int_{\Omega_2} (-w) \, d\mu_{3v}= 3^n \int_{\Omega_1} (-w) \, d\mu_{v}. \]
The second assertion of the lemma follows.
\end{proof}

In \cite[Proposition 11.1]{Lbook}, we proved the following convergence result for integrals of Monge--Amp\`ere measures: 
 \begin{prop}
 \label{I_conti}
 Let $\Omega$ be a bounded convex domain in $\R^n$.
 Let $\{u_k\}_{k=1}^{\infty}, \{f_k\}_{k=1}^{\infty} \subset C(\overline\Omega)$ be sequences of functions converging uniformly on $\overline\Omega$ to
  $u, f\in C(\overline{\Omega})$, respectively. 
 Furthermore, assume that $f=0$ on $\p\Omega$, $u_k$ and $u$ are convex with
 $\sup_{k\geq 1}\mu_{u_k}(\Omega)<\infty$.
Then 
  \[\lim\limits_{k \to \infty} \int_\Omega f_k\, d \mu_{u_k} =\int_\Omega f\, d\mu_u.\]
 \end{prop}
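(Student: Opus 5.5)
The plan is to split the integral into a compact interior part and a thin boundary layer. On the compact part I will use weak convergence of the Monge--Amp\`ere measures. On the boundary layer I will use the uniform mass bound together with the smallness of $f_k$ near $\p\Omega$.

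\textbf{Step 1: the limit measure.} Since $u_k\to u$ uniformly, the measures $\mu_{u_k}$ converge weakly to $\mu_u$ in $\Omega$; this is the standard weak continuity of Monge--Amp\`ere measures, Definition \ref{mMAdef} with all $u_{i,k}=u_k$. Set $M:=\sup_k\mu_{u_k}(\Omega)<\infty$. By lower semicontinuity of mass on open sets under weak convergence, $\mu_u(\Omega)\le M$.

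\textbf{Step 2: the cutoff.} Fix $\e>0$. Because $f$ is uniformly continuous on $\overline\Omega$ and vanishes on $\p\Omega$, there is $\delta>0$ with $|f|<\e$ on $\Omega_\delta:=\{x\in\Omega:\dist(x,\p\Omega)<2\delta\}$. By uniform convergence, $|f_k|<2\e$ on $\Omega_\delta$ for all large $k$. Choose a cutoff $\eta\in C_c(\Omega)$ with $0\le\eta\le1$ and $\eta=1$ on $\{\dist(\cdot,\p\Omega)\ge 2\delta\}$; then $1-\eta$ is supported in $\Omega_\delta$. Write
\[
\int_\Omega f_k\,d\mu_{u_k}=\int_\Omega \eta f_k\,d\mu_{u_k}+\int_\Omega(1-\eta)f_k\,d\mu_{u_k},
\]
and split $\int_\Omega f\,d\mu_u$ the same way.

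\textbf{Step 3: the boundary terms.} These are bounded by $2\e M$ and $\e M$ respectively, using Step 1 for the latter.

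\textbf{Step 4: the interior terms.} Since $\eta f\in C_c(\Omega)$, weak convergence gives $\int\eta f\,d\mu_{u_k}\to\int\eta f\,d\mu_u$. The remaining error satisfies
\[
\Big|\int\eta(f_k-f)\,d\mu_{u_k}\Big|\le M\|f_k-f\|_{L^\infty}\to0.
\]
Hence $\limsup_k\big|\int f_k\,d\mu_{u_k}-\int f\,d\mu_u\big|\le 3\e M$, and letting $\e\to0$ finishes the argument.

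\textbf{Main obstacle.} The only delicate point is Step 1: justifying weak convergence of $\mu_{u_k}$ without any boundary data assumption on $u_k$, together with the mass bound $\mu_u(\Omega)\le M$. Local uniform convergence in $\Omega$ suffices for weak convergence in the sense of $C_c(\Omega)$ test functions. The mass bound follows from $\mu_u(K)\le\liminf_k\mu_{u_k}(U)$ for compact $K$ contained in an open $U\subset\Omega$, which I will apply to an exhaustion of $\Omega$ by compact sets.
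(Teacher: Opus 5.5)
Your proposal is correct and follows essentially the same route as the paper, which takes this result from \cite[Proposition 11.1]{Lbook}. That argument uses a cutoff equal to $1$ away from a thin boundary layer, weak convergence $\mu_{u_k}\to\mu_u$ against compactly supported test functions, and the uniform mass bound (passed to $\mu_u$ by lower semicontinuity on open sets) to control the layer where $f$ and $f_k$ are small; the same splitting appears in the paper's proof of Lemma \ref{I_lsc1}.
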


 Using the first assertion of Lemma \ref{coneMA} and the polarization formula \eqref{polf}, we have the following consequence of Proposition \ref{I_conti} for mixed Monge--Amp\`ere measures.
 \begin{prop}
 \label{I_conti2}
 Let $\Omega$ be a bounded convex domain in $\R^n$.
 Let $\{u_{0, k}\}_{k=1}^{\infty}$, $\{u_{1, k}\}_{k=1}^{\infty}, \cdots$, $\{u_{n, k}\}_{k=1}^{\infty}$ be sequences of functions in $C(\overline\Omega)$
 converging uniformly on $\overline\Omega$ to
  $u_0, u_1, \cdots, u_n\in C(\overline{\Omega})$, respectively. 
 Furthermore, assume that $u_0=0$ on $\p\Omega$, $u_{i, k}$ and $u_i$ ($1\leq i\leq n, \, k\geq 1$) are convex, and there is a constant $C$ such that
 $\sup_{k\geq 1,1\leq i\leq n}\mu_{u_{i, k}}(\Omega)\leq C<\infty$.
Then 
  \[\lim\limits_{k \to \infty} \int_\Omega u_{0, k}\, d \mu_n[u_{1, k},\cdots, u_{n, k}] =\int_\Omega u_0\, d\mu_n[u_1,\cdots, u_n].\]
 \end{prop}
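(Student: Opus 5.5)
The plan is to reduce Proposition \ref{I_conti2} to Proposition \ref{I_conti} via the polarization formula \eqref{polf}. For each $k$,
\[
\int_\Omega u_{0,k}\, d\mu_n[u_{1,k},\ldots,u_{n,k}]=\frac{1}{n!}\sum_{j=1}^n\sum_{1\le i_1<\cdots<i_j\le n}(-1)^{n-j}\int_\Omega u_{0,k}\, d\mu_{u_{i_1,k}+\cdots+u_{i_j,k}},
\]
and the same identity holds for the limiting functions. Since the sum is finite, with coefficients independent of $k$, it suffices to prove the convergence of each term separately:
\[
\int_\Omega u_{0,k}\,d\mu_{w_k}\to\int_\Omega u_0\,d\mu_w,\qquad w_k:=u_{i_1,k}+\cdots+u_{i_j,k},\quad w:=u_{i_1}+\cdots+u_{i_j}.
\]

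To justify the splitting, I first check that every integral involved is finite. I take the uniform mass bound on the $w_k$ (established below) as given. Then $w_k\to w$ uniformly, so the Monge--Amp\`ere measures converge weakly, and by lower semicontinuity of mass on open sets, $\mu_w(\Omega)\le\liminf_k\mu_{w_k}(\Omega)<\infty$. Since $u_0$ and $u_{0,k}$ are bounded, all the integrals are finite.

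For each term I apply Proposition \ref{I_conti} with $f_k=u_{0,k}$, $f=u_0$ (which vanishes on $\partial\Omega$) and the convex functions $w_k\to w$ uniformly on $\overline\Omega$. The only hypothesis left to verify is
\[
\sup_k\mu_{w_k}(\Omega)<\infty .
\]
This is the main obstacle. I intend to use the first assertion of Lemma \ref{coneMA} inductively: $\mu_{a+b}(\Omega)\le 3^n[\mu_a(\Omega)+\mu_b(\Omega)]$. Applied $j-1$ times, it gives
\[
\mu_{w_k}(\Omega)\le C(n)\sum_{l=1}^{j}\mu_{u_{i_l,k}}(\Omega)\le C(n)\,C .
\]

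The difficulty is that Lemma \ref{coneMA} requires zero boundary values, whereas the proposition does not state $u_{i,k}=0$ on $\partial\Omega$. In the intended applications the functions lie in $\mathcal E(\Omega)$, so I would first read the statement in that setting. If boundary values are genuinely arbitrary, I would fall back on the direct inclusion $\partial(a+b)(x)\supset\partial a(x)+\partial b(x)$ together with a Brunn--Minkowski-type bound. Failing that, I would use the fact that a finite total Monge--Amp\`ere mass confines the subgradient images to sets of bounded measure, and bound $|\partial(a+b)(\Omega)|$ by comparing with sums of gradient images via Lipschitz bounds on compact subsets. I expect the zero-boundary case to be the intended one, in which case Lemma \ref{coneMA} closes the argument.

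Once the uniform bound holds, each of the finitely many terms converges by Proposition \ref{I_conti}. Summing them with the polarization coefficients gives the claimed limit.
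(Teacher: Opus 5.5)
Your argument is the paper's own: apply the polarization formula \eqref{polf}, use the first assertion of Lemma~\ref{coneMA} inductively to bound $\mu_{u_{i_1,k}+\cdots+u_{i_j,k}}(\Omega)$ uniformly in $k$, and apply Proposition~\ref{I_conti} term by term; you are also right that this needs $u_{i,k}=0$ on $\p\Omega$, which the paper's one-line proof likewise assumes tacitly by invoking Lemma~\ref{coneMA}. Drop the fallbacks, however: the inclusion $\p a(x)+\p b(x)\subset\p(a+b)(x)$ only bounds $\mu_{a+b}$ from below, and without zero boundary data the statement is false. On $\Omega=(-1,1)^2$, the functions $a=-\sqrt{1-x_1^2}$ and $b=-\sqrt{1-x_2^2}$ have $\mu_a=\mu_b=0$, but $\mu_2[a,b]=\tfrac12(1-x_1^2)^{-3/2}(1-x_2^2)^{-3/2}\,dx$ has infinite mass, so the constant sequences $u_{1,k}=a$, $u_{2,k}=b$ with $u_0=-(1-x_1^2)(1-x_2^2)$ and $u_{0,k}=u_0-1/k$ give $-\infty$ on the left for every $k$ and $-\pi^2/2$ on the right.
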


\medskip
We now state a null Lagrangian property of the mixed Monge--Amp\`ere operator. 
\begin{lem} 
\label{IBPC3}
Let $\Omega$ be a bounded domain in $\R^n$ and $u_0, u_1,\cdots, u_n\in C^3(\overline{\Omega})$. 
\begin{enumerate}
\item[(i)] Assume $u_0$ vanishes in a neighborhood of  $\p\Omega$. Then
\[\int_\Omega u_0\tilde M[u_1, \cdots, u_n]\,dx=\int_\Omega u_n\tilde M[u_1, \cdots, u_{n-1}, u_0]\,dx.\]
\item[(ii)] Assume $u_0=u_1$ in a neighborhood of  $\p\Omega$. Then
\[\int_\Omega \tilde M[u_0, u_2, \cdots, u_n]\, dx=\int_\Omega \tilde M[u_1, u_2, \cdots, u_n]\, dx.\]
\end{enumerate}
\end{lem}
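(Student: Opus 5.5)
Both parts will follow from the divergence structure \eqref{divf} and the divergence-free identity \eqref{div0}, via integration by parts twice. No boundary terms appear because in each case the relevant function (or difference of functions) vanishes in a neighborhood of $\p\Omega$. The $C^3$ assumption guarantees that the coefficients $\tilde M^{ij}[\cdots]$ are $C^1$, so \eqref{div0} holds classically.

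\emph{Part (i).} By \eqref{divf},
\[\int_\Omega u_0\tilde M[u_1,\ldots,u_n]\,dx=\int_\Omega u_0\sum_{i,j}D_i\big(\tilde M^{ij}[u_1,\ldots,u_{n-1}]D_ju_n\big)\,dx.\]
Since $u_0$ vanishes near $\p\Omega$, one integration by parts gives
\[-\int_\Omega \sum_{i,j}\tilde M^{ij}[u_1,\ldots,u_{n-1}]\,D_iu_0\,D_ju_n\,dx.\]
The coefficient matrix $\tilde M^{ij}$ is symmetric, because it is a symmetric polynomial in the entries of the symmetric Hessians; one can also see this from \eqref{tMcof2}, since cofactor matrices of symmetric matrices are symmetric. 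So the expression above is symmetric in $u_0$ and $u_n$. We now reverse the step with the roles of $u_0$ and $u_n$ exchanged: write $\sum_i\tilde M^{ij}D_iu_0=\sum_i D_i(\tilde M^{ij}u_0)-\big(\sum_iD_i\tilde M^{ij}\big)u_0$. The second term vanishes by \eqref{div0}, after using symmetry to put the divergence on the correct index. Then integrate by parts in $x_j$ against $u_n$, moving the derivative off $u_n$. The boundary term vanishes because $\tilde M^{ij}D_iu_0$ is zero near $\p\Omega$. This gives
\[\int_\Omega u_n\sum_{i,j}\tilde M^{ij}D_{ij}u_0\,dx=\int_\Omega u_n\tilde M[u_1,\ldots,u_{n-1},u_0]\,dx,\]
by the first line of \eqref{divf} together with permutation invariance of $\tilde M$.

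\emph{Part (ii).} Set $w=u_0-u_1$. It is $C^3$ and vanishes near $\p\Omega$. By multilinearity of $\tilde M$ (clear from \eqref{tMpol1}) and its symmetry, placing $w$ in the last slot,
\[\int_\Omega\tilde M[u_0,u_2,\ldots,u_n]-\tilde M[u_1,u_2,\ldots,u_n]\,dx=\int_\Omega\sum_{i,j}D_i\big(\tilde M^{ij}[u_2,\ldots,u_n]D_jw\big)\,dx.\]
The integrand is the divergence of a vector field that vanishes near $\p\Omega$, so the right-hand side is zero by the divergence theorem. Alternatively, apply part (i) with a cutoff $\eta\equiv1$ on the support of $w$: the left side becomes $\int\eta\,\tilde M[\ldots,w]=\int w\,\tilde M[\ldots,\eta]$, and this vanishes because $D^2\eta=0$ on the support of $w$.

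The only delicate point is bookkeeping: we need the symmetry of $(\tilde M^{ij})$ so that the divergence-free identity \eqref{div0}, which is stated for the column index, also applies to the row index. This symmetry follows from \eqref{tMcof2} and the symmetry of cofactor matrices of Hessians. The remaining steps are routine.
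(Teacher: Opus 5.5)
Your proposal is correct and takes essentially the paper's approach: the paper writes $\tilde M[u_1,\dots,u_n]=\sum_{i,j}D_{ij}\big(\tilde M^{ij}[u_1,\dots,u_{n-1}]\,u_n\big)$ and integrates by parts twice onto $u_0$, while you pass through the symmetric form $-\int_\Omega\sum_{i,j}\tilde M^{ij}D_iu_0\,D_ju_n\,dx$, which amounts to the same two integrations by parts via \eqref{div0} and \eqref{divf}, and your divergence-theorem argument for (ii) fills in the case the paper skips as ``similar.'' One small bookkeeping correction: in your displayed rewriting, \eqref{div0} applies to $\sum_i D_i\tilde M^{ij}$ as written, and the symmetry of $(\tilde M^{ij})$, which the paper also uses implicitly, is actually needed to kill $\sum_j D_j\tilde M^{ij}$ when you differentiate $\sum_i\tilde M^{ij}D_iu_0$ in $x_j$.
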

\begin{proof} We prove part (i) where $u_0=0$ and $Du_0=0$ on $\p\Omega$. We use \eqref{div0} and \eqref{divf} and integrating by parts twice to obtain
\begin{equation*}
\begin{split}
\int_\Omega u_0\tilde M[u_1, \cdots, u_n]\,dx &=\int_\Omega \sum_{i, j=1}^n D_{ij}\big(\tilde M^{ij} [u_1, \cdots, u_{n-1}] u_n\big) u_0\, dx\\
&=\int_\Omega \sum_{i, j=1}^n \tilde M^{ij} [u_1, \cdots, u_{n-1}]  u_n  D_{ij} u_0\, dx \\
&=\int_\Omega u_n\tilde M[u_1, \cdots, u_{n-1}, u_0]\,dx.
\end{split}
\end{equation*}
The proof of part (ii) is similar so we skip it. 
\end{proof}
We have the following monotonicity result.
\begin{lem}[Monotonicity]
\label{Exchcon}
 Let $\Omega\subset\R^n$ be a bounded domain. Let $u, v, w\in C^3(\overline{\Omega})$ be convex functions with $w\geq v$ in $\Omega$ and $w=v$ in a neighborhood of $\p\Omega$.
Then 
 \begin{equation}
 \label{monoB}
 \int_{\Omega}  \tilde M [v, \cdots, v, w] u\,dx\leq  \int_{\Omega}  \tilde M [w, \cdots, w, w] u\,dx.\end{equation}
\end{lem}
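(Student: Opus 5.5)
The plan is to write the difference of the two sides as a telescoping sum of mixed operators that each contain the nonnegative, compactly supported function $w-v$ once. Lemma \ref{IBPC3}(i) then moves $w-v$ out of the operator and moves $u$ in, and positivity finishes the argument. No approximation is needed, since everything is $C^3$ up to the boundary.

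\emph{Step 1 (telescoping).} For $C^2$ functions, $\tilde M$ is given by the explicit formula \eqref{tMpol1}. So it is symmetric and genuinely multilinear in each slot, with arbitrary real coefficients and no convexity needed. Write $v^{(j)}$ for $j$ copies of $v$. For $1\leq k\leq n-1$, linearity in the $k$-th slot gives
\[
\tilde M[v^{(k-1)}, w^{(n-k+1)}]-\tilde M[v^{(k)}, w^{(n-k)}]=\tilde M[v^{(k-1)}, w^{(n-k)}, w-v].
\]
Summing over $k=1,\dots,n-1$ gives
\[
\tilde M[w,\dots,w]-\tilde M[v,\dots,v,w]=\sum_{k=1}^{n-1}\tilde M[v^{(k-1)}, w^{(n-k)}, w-v].
\]
Hence \eqref{monoB} is equivalent to
\[
\sum_{k=1}^{n-1}\int_\Omega u\,\tilde M[v^{(k-1)},w^{(n-k)},w-v]\,dx\geq 0 .
\]
When $n=1$ the two sides of \eqref{monoB} coincide, so there is nothing to prove.

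\emph{Step 2 (integration by parts).} Fix $k$ and apply Lemma \ref{IBPC3}(i) with the following choices: $u_0:=w-v\in C^3(\overline\Omega)$, which vanishes in a neighborhood of $\partial\Omega$ by hypothesis; $u_1,\dots,u_{n-1}$ equal to the $k-1$ copies of $v$ and $n-k$ copies of $w$; and $u_n:=u$. By the symmetry of $\tilde M$, the lemma gives
\[
\int_\Omega u\,\tilde M[v^{(k-1)},w^{(n-k)},w-v]\,dx=\int_\Omega (w-v)\,\tilde M[v^{(k-1)},w^{(n-k)},u]\,dx .
\]

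\emph{Step 3 (positivity).} The functions $v$, $w$ and $u$ are $C^2$ and convex. By Remark \ref{mMpos}(1), which is G\aa rding's inequality, we have $\tilde M[v^{(k-1)},w^{(n-k)},u]\geq 0$ pointwise. Since also $w-v\geq 0$, each integral on the right of Step 2 is nonnegative. Summing over $k$ gives \eqref{monoB}.

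The only point requiring care is Step 2. There one must check that Lemma \ref{IBPC3}(i) really places the compactly supported factor $w-v$ in the weight position and $u$ inside the operator. The boundary terms then vanish regardless of the boundary behavior of $u$, because $u$ is not assumed to vanish on $\partial\Omega$. Everything else is algebra and sign bookkeeping.
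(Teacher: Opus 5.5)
Your proof is correct and is essentially the paper's argument. The paper also telescopes $\tilde M[w,\dots,w]-\tilde M[v,\dots,v,w]$ into terms that each contain $w-v$ exactly once, moves $w-v$ out and $u$ in via Lemma~\ref{IBPC3}(i), and concludes with G\r{a}rding's inequality (Remark~\ref{mMpos}); the only difference is that you write the telescoping sum out explicitly, whereas the paper peels off one term at a time.
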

\begin{proof} The difference $B$ between the right-hand side and the left-hand sides of \eqref{monoB} equals 
\[B= \int_{\Omega}  \tilde M [w, \cdots, w-v, w] u\,dx +\int_{\Omega}  \big(\tilde M [w, \cdots, w, v, w] - \tilde M [v, \cdots, v, v, w]\big)u\,dx.\]
Using the exchange result in Lemma \ref{IBPC3} (i) for  $w-v=0$ in a neighborhood of $\p\Omega$ and $w\geq v$ together with Remark \ref{mMpos} (i), we have 
 \begin{equation*}
\begin{split}
B &=   \int_{\Omega}  \tilde M [w, \cdots, u, w] (w-v)\,dx +\int_{\Omega}  \big(\tilde M [w, \cdots, w, v, w] - \tilde M [v, \cdots, v, v, w]\big)u\,dx\\
&\geq \int_{\Omega}  \big(\tilde M [w, \cdots, w, v, w] - \tilde M [v, \cdots, v, v, w]\big)u\,dx.
\end{split}
\end{equation*}
Continuing, we find
 \begin{equation*}
\begin{split}
B&\geq \int_{\Omega}  \big(\tilde M [w, v, \cdots, v, v, w] - \tilde M [v, v, \cdots, v, v, w]\big)u\,dx \\&=   \int_{\Omega}  \tilde M [u, v, \cdots,v, v, w] (w-v)\,dx\geq 0.
\end{split}
\end{equation*}
Therefore, \eqref{monoB} is proved.
\end{proof}
As a consequence of the maximum principle and the locality of the normal mapping \cite[Lemma 3.11 and Remark 2.31]{Lbook}, we have the following locality property of the mixed Monge--Amp\`ere measures.
\begin{lem}[Locality of mixed Monge--Amp\`ere measure] 
\label{mMAloc}
Let $\Omega\subset\R^n$ be a bounded  
domain. Let $u_1, \bar u_1, u_2, \cdots, u_n\in C(\overline{\Omega})$ be convex functions where $u_1= \bar u_1$ in 
a compact subset $K\subset \overline\Omega$. 
\begin{enumerate}
\item[(i)] Then, in the interior $\text{int}(K)$ of $K$, we have
\[ \mu_n[u_1,u_2, \ldots, u_n](\text{int}(K)) = \mu_n[\bar u_1,u_2, \ldots, u_n](\text{int}(K)).\]
\item[(ii)] Moreover, if $K$ contains a neighborhood of $\p\Omega$, then
\[ \mu_n[u_1,u_2, \ldots, u_n](\Omega) = \mu_n[\bar u_1,u_2, \ldots, u_n](\Omega).\]
\end{enumerate}
\end{lem}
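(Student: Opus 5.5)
We reduce to the polarization formula \eqref{polf}: every term $\mu_{u_{i_1}+\cdots+u_{i_k}}$ with $1\notin\{i_1,\ldots,i_k\}$ is the same on both sides. So it suffices to compare $\mu_{u_1+w}$ with $\mu_{\bar u_1+w}$, where $w=u_{i_2}+\cdots+u_{i_k}$ is a convex function in $C(\overline{\Omega})$, possibly $w\equiv 0$. For part (i), the functions $u_1+w$ and $\bar u_1+w$ coincide on $K$. By the locality of the normal mapping \cite[Remark 2.31]{Lbook}, at each interior point $x\in\text{int}(K)$ the subdifferentials of two convex functions that agree on a neighborhood of $x$ coincide. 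Hence $\partial(u_1+w)(\text{int}(K))=\partial(\bar u_1+w)(\text{int}(K))$ and the Monge--Amp\`ere measures agree on $\text{int}(K)$. Summing over the polarization formula then gives (i).

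For part (ii), let $U\subset K$ be an open neighborhood of $\partial\Omega$ (relative to $\overline\Omega$). Choose an open set $\Omega'\Subset\Omega$ with $\overline\Omega\setminus\Omega'\subset U$, so $\partial\Omega'\subset U\cap\Omega\subset\text{int}(K)$. Write $\Omega=\Omega'\cup(\Omega\setminus\Omega')$. On $\Omega\setminus\Omega'$, after shrinking $\Omega'$ slightly so that a neighborhood of $\Omega\setminus\Omega'$ lies in $\text{int}(K)$, part (i) shows that the two mixed measures agree. It remains to show that
\[\mu_{u_1+w}(\Omega')=\mu_{\bar u_1+w}(\Omega').\]
This is the main step.

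Set $f=u_1+w$ and $g=\bar u_1+w$. These are convex functions on $\overline{\Omega'}$ that coincide in a neighborhood of $\partial\Omega'$. I would use the maximum principle \cite[Lemma 3.11]{Lbook}, in the form: if two convex functions agree near $\partial\Omega'$, then $|\partial f(\Omega')|=|\partial g(\Omega')|$. Concretely, take $V\Subset\Omega'$ open with $f=g$ on $\Omega'\setminus V$. A slope $p\in\partial f(\Omega')$ corresponds to a supporting affine function of $f$ on $\Omega'$. Up to a null set of slopes (those supporting at more than one point), the total image $\partial f(\overline{\Omega'})$ is determined by the convex envelope data. For the part of the image supported near the boundary, where $f=g$, the slopes agree by locality.

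To avoid a delicate pointwise argument, I would instead approximate. Mollify $u_1,\bar u_1,u_2,\ldots,u_n$ by a common mollifier on a slightly smaller domain containing $\overline{\Omega'}$. The mollified $u_1^\e$ and $\bar u_1^\e$ still coincide in a neighborhood of $\partial\Omega'$ for small $\e$. Lemma \ref{IBPC3}(ii) then gives
\[\int_{\Omega'}\tilde M[u_1^\e,u_2^\e,\ldots]\,dx=\int_{\Omega'}\tilde M[\bar u_1^\e,u_2^\e,\ldots]\,dx.\]
Passing to the limit via the weak convergence in Definition \ref{mMAdef} is the obstacle, since mass may concentrate on $\partial\Omega'$. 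I would handle this by testing against a cutoff $\varphi\in C_c(\Omega)$ with $\varphi=1$ on $\overline{\Omega'}$ and support in $\Omega'\cup\text{int}(K)$. Then
\[\int_{\Omega}\varphi\,d\mu_n[u_1^\e,\ldots]-\int_{\Omega}\varphi\,d\mu_n[\bar u_1^\e,\ldots]\]
splits into an integral over $\Omega'$, which vanishes by Lemma \ref{IBPC3}(ii), and an integral over the collar. On the collar the integrands coincide pointwise for small $\e$, so that difference vanishes too. Letting $\e\to0$ gives
\[\int\varphi\,d\mu_n[u_1,\ldots]=\int\varphi\,d\mu_n[\bar u_1,\ldots].\]
Combining this with part (i) on $\Omega\setminus\Omega'$, where $\varphi$ is irrelevant because the measures agree there, yields equality of total masses on $\Omega$.
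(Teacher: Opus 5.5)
Your proposal is correct and follows the paper's proof. Part (i) goes through the polarization formula \eqref{polf} and the locality of the normal mapping; part (ii) mollifies, applies Lemma~\ref{IBPC3}(ii) on an interior subdomain whose boundary lies in $\text{int}(K)$, passes to the weak limit, and uses part (i) on the collar near $\partial\Omega$. The one variation is how you pass to the limit: you test against a cutoff $\varphi\in C_c(\Omega)$ with $\varphi=1$ on $\overline{\Omega'}$ and support in $\Omega'\cup\text{int}(K)$, whereas the paper chooses $\Omega''$ satisfying \eqref{locp0}. Your version handles possible concentration of mass on the boundary of the subdomain more explicitly.
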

\begin{proof} By \cite[Lemma 3.11 and Remark 2.31]{Lbook}, at any $x\in \text{int}(K)$, the normal mapping of any sum of convex functions $u_{i_1} + \cdots+ u_{i_k}$ is the same when $u_1$ is replaced by $\bar u_1$. Thus, using the polarization formula \eqref{polf}, we easily deduce the conclusion of part (i). 

\medskip
For part (ii), assume that there is an open set $\Omega'\Subset\Omega$ such that $u_1=\bar u_1 \quad \text{in }\overline{\Omega}\setminus \Omega'$. Choose an open set $\Omega''$ such that $\Omega'\Subset\Omega''\Subset\Omega$ and
\begin{equation}
\label{locp0}
\mu_n[u_1,u_2, \ldots, u_n](\p\Omega'') = \mu_n[\bar u_1,u_2, \ldots, u_n](\p\Omega'').\end{equation}
For each $0<\e<\min\{\dist(\p\Omega'', \p\Omega), \dist(\p\Omega', \p\Omega'')\}$, let $u_{i, \e}$ be the convolution of $u_i$ with the standard mollifier $\varphi_\e\in C_c^\infty (B_\e(0))$. We define $\bar u_{1,\e}$ similarly. Then $u_{1,\e}= \bar u_{1,\e}$ in a neighborhood of $\p\Omega''$. By Lemma \ref{IBPC3} (ii), we have
\[ \mu_n[u_{1,\e},u_{2,\e}, \ldots, u_{n,\e}](\Omega'') = \mu_n[\bar u_{1,\e},u_{2,\e}, \ldots, u_{n,\e}](\Omega'').\]
Letting $\e\to 0$ and using \eqref{locp0}, we obtain
\[\mu_n[u_1,u_2, \ldots, u_n](\Omega'') = \mu_n[\bar u_1,u_2, \ldots, u_n](\Omega'').\]
By part (i), we have
\[\mu_n[u_1,u_2, \ldots, u_n](\Omega\setminus\overline{\Omega''}) = \mu_n[\bar u_1,u_2, \ldots, u_n](\Omega\setminus\overline\Omega'').\]
Combining these two equalities, we obtain the conclusion of part (ii).
\end{proof}
\begin{rem}
In many arguments in this paper, we will extensively use approximations. However,
it is generally hard to directly carry out the approximation procedure since we do not know how to control the Monge--Amp\`ere masses near the boundary $\p\Omega$. This difficulty can be overcome if we know more information in a neighborhood of $\p\Omega$ as in Lemma \ref{mMAloc} (ii).
{\it Here is a useful trick for working with integrands involving $w-v$ where $w=v$ on $\p\Omega$ and $w\geq v$ in $\Omega$;} see Blocki \cite{Bl}.
For any $\e>0$, let $w_\e=\max\{v, w-\e\}$. Then $w_\e$ is convex, $w_\e\geq v$,  and $w_\e=v$ in a neighborhood of $\p\Omega$. This way, we can do approximations using locality. 
\end{rem}
We recall the basic existence and uniqueness result for solutions to the Dirichlet problem with convex boundary data for the Monge--Amp\`ere equation;  see Blocki \cite[Proposition 2.1]{Bl} and Hartenstine \cite[Theorem 1.1]{Har1}  (and also  
 \cite[Theorem 3.39]{Lbook}).
\begin{thm}[The Dirichlet problem] 
\label{Dir_thm}
Let $\Omega$ be a bounded convex domain in $\R^n$, and let $\nu$ be a Borel measure in $\Omega$ with $\nu(\Omega)<\infty$.
Let $\varphi\in C(\overline{\Omega})$ be a convex function.
Then there exists a unique convex function $u\in C(\overline{\Omega})$ that is an Aleksandrov solution of
\begin{equation*}
   \det D^{2} u=\nu \quad\text{in} ~\Omega, \quad
u =\varphi\quad\text{on}~\p \Omega.
\end{equation*}
\end{thm}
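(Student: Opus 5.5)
The plan is to prove uniqueness by the comparison principle and existence by approximation, with uniform boundary control coming from the Aleksandrov maximum principle and a barrier built from the convex boundary datum $\varphi$.

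\emph{Uniqueness.} I would apply the standard comparison principle for Aleksandrov solutions (\cite[Chapter 3]{Lbook}): if $u,v\in C(\overline\Omega)$ are convex, $\mu_u\ge\mu_v$ in $\Omega$ and $u\le v$ on $\p\Omega$, then $u\le v$ in $\Omega$. The proof is the usual one. If $\{u>v\}$ were nonempty, one perturbs to $u_\delta=u+\delta(|x-x_0|^2-d^2)$ with $d=\diam(\Omega)$, so that $u_\delta\le u$ on $\p\Omega$ and $\mu_{u_\delta}\ge\mu_u+\delta^n 2^n\,d\L^n$. On the open set $\{u_\delta>v+\epsilon\}$, for small $\epsilon>0$, the maximum principle gives $\p u_\delta\subset\p v$. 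This contradicts $\mu_{u_\delta}>\mu_v$ there. Applying this in both directions to two solutions yields uniqueness.

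\emph{Existence with zero boundary data.} First I solve $\mu_v=\nu$, $v=0$ on $\p\Omega$.
\begin{enumerate}
\item Approximate $\nu$ weakly by measures $\nu_k$ that are finite sums of Dirac masses supported in $\Omega$, with $\nu_k(\Omega)\le\nu(\Omega)$.
\item For each discrete measure, solve the problem with a piecewise-affine (polyhedral) convex function. This can be done by the classical Aleksandrov/Perron construction: take the supremum of convex subsolutions $w$ with $\mu_w\ge\nu_k$ and $w\le 0$ on $\p\Omega$, and use the fact that the heights at finitely many vertices are determined by a finite-dimensional continuity/degree argument.
\item Obtain uniform estimates from the Aleksandrov maximum principle,
\[|v_k(x)|^n\le C(n)\,[\diam(\Omega)]^{n-1}\dist(x,\p\Omega)\,\nu(\Omega).\]
This gives uniform boundedness and uniform decay to $0$ at $\p\Omega$. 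Interior Lipschitz bounds follow from convexity, so the family $\{v_k\}$ is equicontinuous on $\overline\Omega$.
\item By Arzel\`a--Ascoli, a subsequence converges uniformly to a convex $v\in C(\overline\Omega)$ with $v=0$ on $\p\Omega$. Weak continuity of Monge--Amp\`ere measures under uniform convergence then gives $\mu_v=\nu$.
\end{enumerate}

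\emph{General convex $\varphi$.} Let $w_0$ be the supremum of all affine functions $\ell$ with $\ell\le\varphi$ on $\p\Omega$. Convexity of $\varphi$ on $\overline\Omega$ gives $\varphi\le w_0$ in $\Omega$ and $w_0=\varphi$ on $\p\Omega$. Moreover $w_0\in C(\overline\Omega)$ with $\mu_{w_0}=0$. Boundary continuity is where convexity and continuity of $\varphi$ up to $\p\Omega$ are used, and this needs care on flat parts of the boundary. I then construct $u$ as the Perron supremum
\[u=\sup\{w \text{ convex}:\ \mu_w\ge\nu,\ w\le\varphi \text{ on }\p\Omega\}.\]
Since $\mu_{a+b}\ge\mu_a+\mu_b$, the function $w_0+v$ is admissible, and by comparison every admissible $w$ satisfies $w\le w_0$. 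Hence
\[w_0+v\le u\le w_0,\]
which pins $u$ to $\varphi$ continuously at $\p\Omega$. The remaining step is that $\mu_u=\nu$. This follows by the standard lifting argument: on small balls, replace $u$ by the solution of the Dirichlet problem with the local data, which only increases it. Alternatively, one can approximate as in the zero-boundary case with $\nu_k$ and pass to the limit using the two barriers.

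\emph{Main obstacle.} I expect the main obstacle to be the boundary behaviour for nonstrictly convex $\Omega$: showing that $w_0$ is continuous up to $\p\Omega$ and attains $\varphi$, and that the approximations do not lose mass at the boundary. Both reduce to the two-sided barrier $w_0+v\le u\le w_0$ together with the Aleksandrov estimate above, so I would establish these first.
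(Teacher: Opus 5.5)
The paper does not prove this theorem. It quotes it from Blocki, Hartenstine and \cite[Theorem 3.39]{Lbook}, so your plan has to stand on its own. Most of it does: uniqueness by comparison, the zero-data solution via atomic approximations, Aleksandrov's estimate and Arzel\`a--Ascoli, and the two-sided barrier $w_0+v\le u\le w_0$ are all fine.

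\textbf{The gap.} The step that fails as written is the ``standard lifting argument'' for $\mu_u=\nu$. On a ball $B\Subset\Omega$ it needs a convex $\tilde u$ with $\mu_{\tilde u}=\chi_B\nu$ in $B$ and $\tilde u=u$ on $\p B$. That is the Dirichlet problem with \emph{nonzero} continuous boundary data, i.e.\ the statement you are proving, on a strictly convex domain. Nothing earlier in your plan provides it: your items 1--4 only treat zero data. Running your own construction on $B$ just produces another Perron supremum that needs another lifting. So this route is circular unless you import the strictly convex case from the literature (the classical Perron argument, e.g.\ in Guti\'errez's book).

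\textbf{The fix.} Your fallback (approximate by $\nu_k$ and pass to the limit with the barriers) works, but it needs the discrete problems $\mu_{u_k}=\nu_k$, $u_k=\varphi$ on $\p\Omega$. Your item 2 only covers $\varphi\equiv 0$, so redo it with data $\varphi$:
\begin{enumerate}
\item Let $u_k$ be the supremum of convex $w\in C(\overline\Omega)$ with $\mu_w\ge\nu_k$ and $w\le\varphi$ on $\p\Omega$. The barriers $w_0+v_k\le u_k\le w_0$, with $v_k$ your zero-data discrete solutions, give $u_k\in C(\overline\Omega)$ and $u_k=\varphi$ on $\p\Omega$.
\item $\mu_{u_k}$ does not charge non-atoms. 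If a supporting plane $\ell$ at a non-atom $x_0$ touched the graph only at $x_0$, then $\max\{u_k,\ell+\delta\}$ would be an admissible competitor exceeding $u_k(x_0)$. Hence every slope in $\p u_k(x_0)$ belongs to a plane touching along a segment, which is a null set by Aleksandrov's lemma.
\item The masses at the atoms are made exact by the usual vertex-raising argument.
\end{enumerate}
These $u_k$ are not polyhedral. Neither is your zero-data solution unless $\Omega$ is a polytope, but nothing uses that.

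To finish, note that
\[0\le w_0-u_k\le|v_k|\le C(n,\diam(\Omega))\,[\nu(\Omega)\dist(\cdot,\p\Omega)]^{1/n}\]
uniformly in $k$. The $u_k$ are uniformly bounded, hence locally uniformly Lipschitz, so the family is equicontinuous on $\overline\Omega$. Arzel\`a--Ascoli and weak continuity of Monge--Amp\`ere measures then give the solution, and the continuous Perron supremum becomes superfluous.

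\textbf{Your anticipated obstacle.} Continuity of $w_0$ up to $\p\Omega$ follows from convexity alone:
\begin{itemize}
\item Supporting planes of $\varphi$ give $w_0\ge\varphi$ in $\Omega$.
\item Convexity of $w_0$ along a segment ending at $y\in\p\Omega$ then gives $w_0(y)\ge\varphi(y)$.
\item Write $x=(1-s)z(x)+s x_0$, with $z(x)\in\p\Omega$ the radial projection from a fixed $x_0\in\Omega$. Then $w_0(x)\le(1-s)\varphi(z(x))+s\,w_0(x_0)\to\varphi(y)$ as $x\to y$.
\item $\mu_{w_0}=0$: a supporting plane at an interior point must touch $\varphi$ somewhere on $\p\Omega$, otherwise it could be raised. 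So it touches along a segment, and Aleksandrov's lemma applies again.
\end{itemize}
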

\begin{rem}[Standard approximants] \label{Stappro}
 Let $w\in C(\overline{\Omega})$ be a convex function on a bounded convex domain $\Omega\subset\R^n$.
For each  $m\in \N$, by Theorem \ref{Dir_thm}, there exists a unique convex function $w_m\in C(\overline{\Omega})$ that is an Aleksandrov solution of
\begin{equation*}
   \det D^{2} w_m=\chi_{\{x\in\Omega: \dist(x, \p\Omega)>1/m\}} \mu_w\quad\text{in} ~\Omega, \quad
w_m =w\quad\text{on}~\p \Omega.
\end{equation*}
Let $h\in C(\overline{\Omega})$ be the harmonic function in $\Omega$ with boundary value $w$. 
By the maximum and comparison principles, we have \[h\geq w_m\geq w\quad\text{in}\quad\Omega.\] The sequence $\{w_m\}_{m=1}^{\infty}$ is nonincreasing and uniformly bounded. It converges locally uniformly to $w$ in $\Omega$.
By Dini's theorem, $\{w_m\}_{m=1}^{\infty}$ converges uniformly to $w$ in $\overline{\Omega}$. We call the functions $w_m$ {\it standard approximants}  of $w$.
\end{rem}
For later references, we state  the celebrated Aleksandrov maximum principle, an energy estimate \cite[Theorem 11.4]{Lbook}, and Jerison's estimate \cite[Lemma 7.3]{J}, together with the comparison principle for the Monge--Amp\`ere equation \cite[Theorem 3.21]{Lbook}.
\begin{thm}
\label{Al_E}
Let $\Omega\subset \R^n$ be a bounded convex domain. Let $u\in C(\overline{\Omega})$ be a convex function 
with $u=0$ on $\p\Omega$. Then the following statements hold for all $x_0\in\Omega$.
\begin{enumerate}
\item[(i)] (Aleksandrov's maximum principle)
\[
|u(x_0)|^{n}\le C(n)[\diam\Omega)]^{n-1}\emph{dist}(x_0,\partial \Omega)\mu_u(\Omega).
\]
\item[(ii)] (Energy estimate)
\[|u(x_0)|^{n+1}\leq C(n) [\diam(\Omega)]^{n-1} \text{dist} (x_0, \p\Omega)\int_\Omega |u|\, d\mu_u.\]
\item[(iii)] (Jerison's estimate) Assume further that $\Omega$ is normalized; that is, 
$B_1(b)\subset \Omega\subset B_n(b)$ for some $b\in\R^n$. Then for each $\alpha\in (0, 1]$, there exists a constant $C(n,\alpha)$ 
such that
\[
|u(x_0)|^{n}\le C(n,\alpha)\delta^{\alpha}(x_0,\Omega)\int_{\Omega}\delta^{1-\alpha}(x, \Omega)\,d\mu_u,
\]
where  $\delta(x,\Omega)$ is the normalized distance from $x\in\Omega$ to the boundary:
\[\delta(x,\Omega) =\min\Big\{ |x-y|/|x-z|: \quad y, z\in\p\Omega\text{ and } x, y, z \text{ are collinear}\Big\}.\]
\end{enumerate}
\end{thm}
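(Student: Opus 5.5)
The three estimates are classical, and in the paper they are quoted from \cite[Theorem 11.4]{Lbook} and \cite[Lemma 7.3]{J}. The plan is to prove (i) and (ii) by the cone comparison argument and to reduce (iii) to (i) by a layer decomposition. We may assume $u\not\equiv 0$, so $u<0$ in $\Omega$ by convexity.

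For (i), fix $x_0\in\Omega$ and let $v$ be the convex cone with vertex $(x_0,u(x_0))$ and base $\p\Omega$, so $v=0$ on $\p\Omega$. Since $u\le v$ in $\Omega$ and $u=v$ at $x_0$ and on $\p\Omega$, the maximum principle for normal mappings \cite[Lemma 3.11]{Lbook} gives $\p v(x_0)\subset\p u(\Omega)$. It therefore suffices to bound $|\p v(x_0)|$ from below. Two families of slopes lie in $\p v(x_0)$:
\begin{itemize}
\item every $p$ with $|p|\le |u(x_0)|/\diam(\Omega)$;
\item the slope $p_0$ of length $|u(x_0)|/\dist(x_0,\p\Omega)$, pointing toward the boundary point nearest to $x_0$.
\end{itemize}
The set $\p v(x_0)$ is convex, so it contains the convex hull of the ball $B_{|u(x_0)|/\diam(\Omega)}(0)$ and the point $p_0$. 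This hull has volume at least $c(n)|u(x_0)|^n/([\diam(\Omega)]^{n-1}\dist(x_0,\p\Omega))$, which gives (i).

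For (ii), keep the same cone $v$ and fix $p\in\frac12\p v(x_0)$. Let $y\in\overline\Omega$ be a minimum point of $u-p\cdot x$. Then $u(y)\le u(x_0)+p\cdot(y-x_0)$. Since $2p\in\p v(x_0)$, we have $p\cdot(y-x_0)\le \frac12(v(y)-v(x_0))\le -\frac12 u(x_0)$, using $v\le 0$. Hence $u(y)\le u(x_0)/2<0$. In particular $y\in\Omega$, and $p\in\p u(y)$ with $y$ in the sublevel set $S=\{u\le u(x_0)/2\}$.

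It follows that $\frac12\p v(x_0)\subset \p u(S)$, so by the volume bound from (i),
\[\mu_u(S)\ge 2^{-n}c(n)\frac{|u(x_0)|^n}{[\diam(\Omega)]^{n-1}\dist(x_0,\p\Omega)}.\]
On $S$ we have $|u|\ge |u(x_0)|/2$, so $\int_\Omega|u|\,d\mu_u\ge \frac{|u(x_0)|}{2}\mu_u(S)$, and this gives (ii).

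For (iii), I would split the mass of $\mu_u$ into dyadic layers $\{2^{-k-1}<\delta(\cdot,\Omega)\le 2^{-k}\}$ and write $u=\sum_k u_k$. Here $u_k$ is the convex solution, given by Theorem \ref{Dir_thm}, of $\mu_{u_k}=\chi_{\text{layer }k}\mu_u$ with zero boundary values. Summing exactly is justified by superadditivity of Monge--Amp\`ere measures and the comparison principle, which give $u\ge \sum_k u_k$. It then suffices to prove the single-layer bound
\[|u_k(x_0)|^n\lesssim \delta^\alpha(x_0,\Omega)\,2^{-k(1-\alpha)}\mu_u(\text{layer }k),\]
and to sum it using the convexity of $t\mapsto t^n$ together with a weighted H\"older inequality.

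To get the single-layer bound I would argue in two regimes. When $x_0$ is deeper than layer $k$, $u_k$ is convex with mass only in the layer, and I would compare it with a cone whose vertex sits in the layer. When $x_0$ is in or outside the layer, (i) applies directly in the normalized domain, since there $\dist$ and $\delta$ are comparable.

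The main obstacle is this single-layer bound. The exponent $\alpha$ requires the decay in $\delta(x_0,\Omega)$ to be captured uniformly across layers, and the affine-invariant quantity $\delta$ must be controlled in the normalized setting $B_1(b)\subset\Omega\subset B_n(b)$. If this step proves too delicate, I would simply cite \cite[Lemma 7.3]{J} for (iii), as the paper does.
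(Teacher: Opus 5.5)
Your proofs of (i) and (ii) are correct. They are the standard arguments behind \cite[Theorem 11.4]{Lbook}, which the paper simply cites. In (ii), your inclusion $\frac12\p v(x_0)\subset\p u(\{u\le u(x_0)/2\})$ amounts to applying (i) to $u-u(x_0)/2$ on that section.

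For (iii), falling back on \cite[Lemma 7.3]{J} is exactly what the paper does. However, the layered sketch would not stand on its own, and the problem is not only the step you flag.

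\textbf{The reduction is wrong as stated.} From $|u(x_0)|\le\sum_k|u_k(x_0)|$ and $|u_k(x_0)|^n\le C a_k$, with $a_k:=\delta^\alpha(x_0)2^{-k(1-\alpha)}m_k$ and $m_k:=\mu_u(\text{layer }k)$, you only get $|u(x_0)|^n\le C\big(\sum_k a_k^{1/n}\big)^n$. This exceeds $\sum_k a_k$ by a factor $N^{n-1}$ when $N$ layers carry comparable $a_k$. Convexity of $t\mapsto t^n$ points the wrong way here. H\"older closes the sum only if the single-layer bound carries an extra factor that is summable in $k$.

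\textbf{The deep regime is not handled.} When $x_0$ is deeper than the layer, a cone with vertex in the layer controls $u_k$ only at that vertex. Nothing in your sketch transfers this to $x_0$.

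\textbf{The missing idea, which repairs both points, is that $u_k$ is most negative on the layer.} Three facts give this:
\begin{itemize}
\item $\mu_{u_k}$ vanishes on the open sets $\{\delta>2^{-k}\}$ and $\{\delta<2^{-k-1}\}$.
\item A convex function with zero Monge--Amp\`ere measure on a bounded open set $D$ attains its minimum over $\overline D$ on $\p D$. Otherwise every sufficiently small slope $p$ keeps the minimum of $u_k-p\cdot x$ inside $D$, so a ball of slopes lies in $\p u_k(D)$.
\item $u_k=0$ on $\p\Omega$.
\end{itemize}
Hence $\min u_k=u_k(z)$ for some $z$ with $2^{-k-1}\le\delta(z)\le 2^{-k}$.

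In a normalized domain, $\dist(\cdot,\p\Omega)\le 2n\,\delta(\cdot,\Omega)$ and $\diam\Omega\le 2n$. Applying (i) at $z$ and at $x_0$ then gives $|u_k(x_0)|^n\le C(n)\min\{\delta(x_0),2^{-k}\}m_k=C(n)\theta_k a_k$. Here $\theta_k=\min\{(2^k\delta(x_0))^{1-\alpha},(2^{-k}/\delta(x_0))^{\alpha}\}$ decays geometrically on both sides of $2^{-k}\approx\delta(x_0)$.

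H\"older then yields $|u(x_0)|^n\le C(n,\alpha)\sum_k a_k\le C(n,\alpha)\delta^\alpha(x_0)\int_\Omega\delta^{1-\alpha}(x,\Omega)\,d\mu_u$ for $0<\alpha<1$. The case $\alpha=1$ is (i) itself. Infinitely many layers are handled by truncating $\mu_u$ and passing to the limit, as for the standard approximants in Remark \ref{Stappro}.

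\textbf{A shorter route.} The same two comparability facts let you read (iii) directly off Theorem \ref{AJ_thm} with $\tilde u\equiv 0$, even with a constant independent of $\alpha$. This is the remark the paper makes right after the statement. It is not circular, because Theorem \ref{AJ_thm} uses only (i) and Theorem \ref{BlemR}.
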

By Theorem \ref{AJ_thm}, the constant $C(n,\alpha)$ in Theorem \ref{Al_E} (iii) can be taken to be independent of $\alpha$ when $\alpha\to 0^+$.
\begin{thm}[Comparison principle]
\label{compa1}
 Let $\Omega\subset \R^n$ be a bounded convex domain. Let $u, v\in C(\overline{\Omega})$ be convex functions such that
\[\mu_u\leq \mu_v  \quad\text{in }\Omega \quad\text{and } \quad u\geq v \quad\text{on }\Omega.\]
Then $u\geq v$ in $\Omega$.
\end{thm}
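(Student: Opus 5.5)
We argue by contradiction: assuming $u-v$ is somewhere negative, we perturb $v$ by a small strictly convex quadratic and compare Monge--Amp\`ere masses on the set where the perturbed function lies above $u$. (The hypothesis ``$u\geq v$ on $\Omega$'' in the statement is read as $u\geq v$ on $\p\Omega$.)

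\emph{Setup.} Suppose $\min_{\overline\Omega}(u-v)=-\delta<0$, attained at some $x_0\in\Omega$. Set $d=2\diam(\Omega)$ and, for $\e>0$,
\[
v_\e(x)=v(x)+\e\big(|x-x_0|^2-d^2\big).
\]
On $\overline\Omega$ we have $|x-x_0|^2\leq \diam(\Omega)^2<d^2$. Hence on $\p\Omega$,
\[
u-v_\e\geq u-v+\e\big(d^2-\diam(\Omega)^2\big)>0 .
\]
At $x_0$ we get $u(x_0)-v_\e(x_0)=-\delta+\e d^2<0$ once $\e d^2<\delta$. Fix such an $\e$ and let $D=\{x\in\Omega: u(x)<v_\e(x)\}$. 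Then $D$ is nonempty and open. By continuity of $u-v_\e$ on $\overline\Omega$ and its strict positivity on $\p\Omega$, $D\Subset\Omega$, and $u=v_\e$ on $\p D$.

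\emph{Step 1: upper bound.} On $D$ we have $v_\e>u$, with equality on $\p D$. By the maximum principle for normal mappings \cite[Lemma 3.11]{Lbook}, $\p v_\e(D)\subset \p u(D)$. Therefore
\[
\mu_{v_\e}(D)\leq \mu_u(D)\leq \mu_v(D),
\]
and $\mu_v(D)<\infty$ because $\overline D$ is a compact subset of $\Omega$ and $\mu_v$ is Radon.

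\emph{Step 2: lower bound.} Write $q=\e(|x-x_0|^2-d^2)$, so that $\mu_q=(2\e)^n\,d\L^n$. Multilinearity of the mixed Monge--Amp\`ere measure and \eqref{MixMA1} give
\[
\mu_{v+q}=\sum_{k=0}^n\binom nk\,\mu_n[\underbrace{v,\ldots,v}_{n-k},\underbrace{q,\ldots,q}_{k}] .
\]
Every term is a nonnegative measure, so keeping only $k=0$ and $k=n$ yields
\[
\mu_{v_\e}\geq \mu_v+(2\e)^n\,d\L^n .
\]
Alternatively, Theorem \ref{mMAnu0} applied with $\nu=\mu_v+\L^n$ gives the same bound.

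\emph{Conclusion.} Evaluating on $D$ and combining with Step 1,
\[
\mu_v(D)+(2\e)^n|D|\leq \mu_v(D).
\]
Since $\mu_v(D)$ is finite and $|D|>0$ ($D$ is nonempty and open), this is a contradiction. Hence $u\geq v$ in $\Omega$.

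The main obstacle is making the perturbation argument legitimate: $D$ must be compactly contained in $\Omega$, so that $\mu_v(D)$ is finite and can be cancelled, and $v_\e$ must lie strictly below $u$ near $\p\Omega$. Both are secured by the choice $d=2\diam(\Omega)$ together with $\e d^2<\delta$. The superadditivity in Step 2 is then immediate from the nonnegativity of mixed measures.
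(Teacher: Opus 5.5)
Your proof is correct and is the standard argument behind \cite[Theorem 3.21]{Lbook}, which the paper cites rather than proves. It is also the same mechanism as the paper's own proof of the domination principle (Lemma \ref{domi_prin}): perturb $v$ by a small strictly convex quadratic, apply the maximum principle \cite[Lemma 3.11]{Lbook} on $\{u<v_\e\}\Subset\Omega$, and use $\mu_{v_\e}\geq\mu_v+(2\e)^n$ to reach a contradiction.

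One caution about your ``alternatively'' via Theorem \ref{mMAnu0}: drop it. Its proof identifies limits of Dirichlet solutions through the uniqueness in Theorem \ref{Dir_thm}, and that uniqueness rests on the comparison principle you are proving. Your main route through multilinearity and nonnegativity of mixed measures (or \cite[Lemma 3.10]{Lbook}) has no such circularity.
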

Another tool to compare two convex functions is the domination principle.
\begin{lem}[Domination principle] 
\label{domi_prin}
Let $\Omega$ be a bounded convex domain in $\R^n$. Let $u, v\in C(\overline{\Omega})$ be convex functions satisfying
\[u\geq v\quad\text{on }\p\Omega,\quad \mu_u(\{u<v\})=0.\]
Then $u\geq v$ in $\Omega$.
\end{lem}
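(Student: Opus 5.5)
Suppose, to the contrary, that the open set $U:=\{x\in\Omega: u(x)<v(x)\}$ is nonempty. The plan is to perturb $v$ by a small strictly convex quadratic so that the Monge--Amp\`ere measure of the perturbed function carries strictly positive mass on a set where it lies above $u$, and then use the maximum principle for normal mappings to transfer that mass to $\mu_u$ on $\{u<v\}$. This contradicts the hypothesis $\mu_u(\{u<v\})=0$.

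First, fix $x_0\in U$ and set $\delta:=v(x_0)-u(x_0)>0$. Take $q(x)=|x-x_0|^2-[\diam(\Omega)]^2$; it is strictly convex, satisfies $q\le 0$ on $\overline\Omega$, and $\mu_q=2^n\,d\L^n$. For small $\e>0$ put $v_\e:=v+\e q$. Then $v_\e\le v\le u$ on $\p\Omega$, and $v_\e(x_0)-u(x_0)\ge \delta-\e[\diam(\Omega)]^2>0$. Hence
\[
U_\e:=\{x\in\Omega: u(x)<v_\e(x)\}
\]
is a nonempty open set, and $U_\e\subset U$ because $v_\e\le v$. Continuity and $v_\e\le u$ on $\p\Omega$ give $u=v_\e$ on $\p U_\e$ (every boundary point of $U_\e$ either lies in $\p\Omega$ with $v_\e\le u$ and a limit of points where $u<v_\e$, or lies in $\Omega$ where $u=v_\e$). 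Here I assume a possibly slightly stronger reduction if needed: replace $U_\e$ by a connected component, which is bounded and open.

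Second, on $U_\e$ we have $v_\e>u$ in the interior and $v_\e=u$ on the boundary. The maximum principle for normal mappings \cite[Lemma 3.11]{Lbook} (already used in the proof of Lemma \ref{coneMA}) then yields $\p v_\e(U_\e)\subset \p u(U_\e)$. Therefore
\[
\mu_u(U_\e)\ \ge\ \mu_{v_\e}(U_\e)\ \ge\ \mu_{\e q}(U_\e)=(2\e)^n|U_\e|>0,
\]
where the middle inequality is the superadditivity $\mu_{v+\e q}\ge \mu_v+\mu_{\e q}\ge\mu_{\e q}$. This superadditivity follows, for example, from Theorem \ref{mMAnu0} with $\nu=d\L^n$ and multilinearity, or from Minkowski's determinant inequality plus approximation. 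On the other hand $U_\e\subset U=\{u<v\}$, so $\mu_u(U_\e)=0$, a contradiction. Hence $U=\emptyset$, that is, $u\ge v$ in $\Omega$.

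The main technical point is the inclusion $\p v_\e(U_\e)\subset\p u(U_\e)$ on a possibly irregular open set $U_\e$ whose boundary may touch $\p\Omega$. The cited lemma requires only boundedness of the open set and comparison of continuous convex functions on its closure, so it should apply directly. If it requires the set to be compactly contained, I would instead use $\{u<v_\e-\eta\}$ for small $\eta>0$; this set is compactly contained in $\Omega$ since $v_\e-\eta<u$ near $\p\Omega$ by uniform continuity, and it is nonempty for $\eta$ small.
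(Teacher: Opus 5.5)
Your proof is correct and is essentially the paper's argument. Both perturb $v$ by $\e$ times a strictly convex quadratic, apply the maximum principle for normal mappings on $\{u<v_\e\}$, and use $\mu_{v_\e}\ge (2\e)^n\,d\L^n$ to force this set to have zero Lebesgue measure. The differences are cosmetic: the paper argues directly, showing $\{u<v_\e\}=\emptyset$ for every $\e$ and then letting $\e\to 0$, and it takes the quadratic $\le -1$ on $\overline{\Omega}$ so that $\{u<v_\e\}\Subset\Omega$ automatically; your $q$ is already strictly negative on $\overline{\Omega}$ because $x_0$ is interior, so the boundary-touching issue you worried about does not arise.
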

\begin{proof} Let $w(x)=|x|^2-M$ where $M$ is large so that $w\leq -1$ in $\overline{\Omega}$. For $\e>0$, let $v_\e= v+ \e w$. Then $\{u<v_\e\}\Subset\{u<v\}\subset\Omega$ (due to $u\geq v$ on $\p\Omega$). It follows that
\[\mu_u(\{u<v_\e\})= \mu_u(\{u<v\})=0.\]
Since $u=v_\e$ on $\p \{u<v_\e\}\Subset\Omega$, the maximum principle gives
\[\mu_{v_\e}(\{u<v_\e\}) \leq \mu_u(\{u<v_\e\})=0.\]
Since
$\mu_{v_\e} \geq \mu_v + (2\e)^n$ (see \cite[Lemma 3.10]{Lbook}), we find 
$|\{u<v_\e\}|=0$.
This implies that $\{u<v_\e\}$ is empty, so $u\geq v_\e= v+ \e w$ in $\Omega$. Letting $\e\to 0$ gives $u\geq v$ in $\Omega$, as desired.
\end{proof}
\subsection{Mixed Monge--Amp\`ere inequalities}
\label{mMA_sec2}
We are now ready to prove the mixed Monge--Amp\`ere inequality in Theorem \ref{mMAnu0} that is restated here.
\begin{thm}[Mixed Monge--Amp\`ere inequality] 
\label{mMAnu}
Let $\Omega$ be a bounded convex domain in $\R^n$, $\nu$ be a Borel measure in $\Omega$,
and $0\leq f_i\in L_{\text{loc}}^1(\Omega, d\nu)$ $(1\leq i\leq n)$. Let $u_1,\cdots u_n \in C(\Omega)$ be convex functions satisfying 
 $\mu_{u_i}\geq f_i\nu\quad (1\leq i\leq n)$.
Then their mixed Monge--Amp\`ere measure satisfies
 \begin{equation}
 \label{mixedineq1}
 \mu_n[u_1, \cdots, u_n] \geq \Big(\prod_{i=1}^{n}f_i^{\frac{1}{n}}\Big) \nu.
\end{equation}
Consequently,
\[\mu_{u_1+ u_2}\geq (f_1^{1/n} + f_2^{1/n})^n\nu,\]
and if equality occurs, it is necessary that $\mu_{u_i}= f_i\nu$ for $i=1, 2$.
\end{thm}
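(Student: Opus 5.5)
The approach is to reduce \eqref{mixedineq1} to a statement about the densities of the absolutely continuous parts. We compare everything with the Lebesgue decomposition of $\nu$, using two facts: Gårding's inequality (Remark \ref{mMpos}) for smooth functions, and a Lebesgue-point characterization of Monge--Amp\`ere measures. I would work locally. Since $f_i\in L^1_{loc}(\Omega,d\nu)$ and $\mu_{u_i}$ are Radon, it suffices to prove $\mu_n[u_1,\ldots,u_n](K)\ge\int_K\prod f_i^{1/n}\,d\nu$ for every compact $K\subset\Omega$.

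\textbf{Step 1: the case $\nu=g\,d\L^n$ with $g$ continuous and positive, and $f_i$ continuous.} Here I would approximate each $u_i$ on a subdomain $\Omega'\Subset\Omega$ by solutions $u_{i,k}$ of Dirichlet problems $\det D^2u_{i,k}=f_{i,k}\,g+\e_k$. The data $f_{i,k}g+\e_k$ are smooth and positive, and the boundary values are mollified versions of $u_i$. By Caffarelli's interior theory, or simply by working with mollifications $u_i*\varphi_\e$ (whose Hessian determinant dominates $(f_ig)*\varphi_\e$ by concavity of $\det^{1/n}$ on positive semidefinite matrices), Gårding's inequality gives $\tilde M[u_{1,\e},\ldots,u_{n,\e}]\ge\prod_i(\det D^2u_{i,\e})^{1/n}$. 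Passing to the limit with Definition \ref{mMAdef} (weak convergence) then yields the inequality. This mollification route is the cleanest. The only nonsmooth point is that $\mu_{u_i}\ge f_i g\,dx$ must imply $\det D^2(u_i*\varphi_\e)\ge (f_ig)*\varphi_\e$ pointwise in the sense of $\det^{1/n}$. I would justify this by Jensen's inequality applied to the measures, using that $\mu_{u_i}\ge$ its absolutely continuous part, whose density dominates the Alexandrov Hessian determinant.

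\textbf{Step 2: general $\nu$.} Write $\nu=g\,d\L^n+\nu_s$ with $\nu_s\perp\L^n$, supported on an $\L^n$-null Borel set $N$. On $\Omega\setminus N$ the measure $f_i\nu$ equals $f_ig\,dx$, and by Step 1 (extended to $L^1_{loc}$ densities by monotone approximation from below with continuous functions and inner regularity) we get $\mu_n[u_1,\ldots,u_n]\ge\prod(f_ig)^{1/n}\,dx$.

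On $N$ we need $\mu_n[u_1,\ldots,u_n]\llcorner N\ge\prod f_i^{1/n}\nu_s$. This is the \textbf{main obstacle}. My plan is to use the polarization formula together with the arithmetic--geometric mean structure. For positive $t_i$, the measure $\mu_{\sum t_iu_i}=\sum t_{i_1}\cdots t_{i_n}\mu_n[u_{i_1},\ldots,u_{i_n}]$ is a polynomial in $t$. Its singular part on $N$ should satisfy $\mu_{\sum t_iu_i}\llcorner N\ge\sum t_i^n\mu_{u_i}\llcorner N$, since the Monge--Amp\`ere measure is superadditive, $\mu_{a+b}\ge\mu_a+\mu_b$ (Minkowski/Brunn in the subdifferential, $\p(a+b)\supset\p a+\p b$).

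Superadditivity alone only controls the sum $\sum t_i^n\mu_{u_i}$, whereas we want the mixed term. So I would first try to prove the inequality via Radon--Nikodym derivatives with respect to $\sigma:=\nu_s+\sum\mu_{u_i}\llcorner N$. At $\sigma$-a.e.\ point, the density of $\mu_{\sum t_iu_i}$ dominates $(\sum t_ih_i^{1/n})^n$, where $h_i=d\mu_{u_i}/d\sigma$. This is a Brunn--Minkowski inequality for the subdifferential images of small balls: $|\p(\sum t_iu_i)(B)|\ge|\sum t_i\p u_i(B)|$, and $|A+B|^{1/n}\ge|A|^{1/n}+|B|^{1/n}$, which I would apply on shrinking balls and then differentiate. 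Comparing the coefficients of $t_1\cdots t_n$ in the polynomial inequality $\sum t^\alpha m_\alpha\ge(\sum t_ih_i^{1/n})^n$ for all $t_i>0$ yields $m_{(1,\ldots,1)}\ge\prod h_i^{1/n}$. Here I would use the standard fact, valid for coefficients of mixed volumes via Alexandrov--Fenchel, that the mixed density itself satisfies Gårding; the cleaner route is to invoke the Brunn--Minkowski inequality for mixed volumes $V(K_1,\ldots,K_n)\ge\prod|K_i|^{1/n}$ directly on $\p u_i(B)$. Then $h_i\ge f_i\,d\nu_s/d\sigma$ gives the claim on $N$.

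\textbf{Consequences.} Expanding $\mu_{u_1+u_2}=\sum_k\binom nk\mu_n[u_1^{(n-k)},u_2^{(k)}]$ and applying \eqref{mixedineq1} to each term (with $\mu_{u_1}$ and $\mu_{u_2}$ themselves in the extreme terms) gives $\ge(f_1^{1/n}+f_2^{1/n})^n\nu$. For the equality case: if equality holds, then every nonnegative difference in the expansion vanishes. In particular, the extreme terms force $\mu_{u_i}=f_i\nu$ for $i=1,2$.
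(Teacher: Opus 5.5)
Your Step 1 is a sound alternative to the paper's smoothing step. The paper solves Dirichlet problems with mollified positive densities; you mollify the $u_i$ and use $D^2(u_i*\varphi_\e)\ge(D^2_{\mathrm{ac}}u_i)*\varphi_\e$, concavity of $\det^{1/n}$ and G\r{a}rding. This needs the fact that the determinant of the Alexandrov Hessian \emph{equals} the Lebesgue density of $\mu_{u_i}$ a.e.; you need it to dominate $f_ig$, which is the reverse of what you wrote. Your treatment of the consequences also matches the paper.

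The gap is the singular part on $N$, and both mechanisms you propose for it fail.

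First, your inclusion goes the wrong way. Since $\p(\sum_it_iu_i)(x)=\sum_it_i\p u_i(x)$, one has $\p(\sum_it_iu_i)(B)=\bigcup_{x\in B}\sum_it_i\p u_i(x)\subset\sum_it_i\p u_i(B)$. The Minkowski sum adds subgradients taken at \emph{different} points of $B$, so it is in general strictly larger. For example, take $u_1=|x|^2/2$, $u_2=x_1^2+x_1x_2+x_2^2$ and the unit square $S\subset\R^2$: then $|\p(u_1+u_2)(S)|=8<10=|\p u_1(S)+\p u_2(S)|$. So Brunn--Minkowski bounds the wrong set. For the same reason $\mu_n[u_1,\ldots,u_n](B)$ is not the mixed volume of the (generally nonconvex) sets $\p u_i(B)$, and Minkowski's inequality $V(K_1,\ldots,K_n)\ge\prod_i|K_i|^{1/n}$ cannot be invoked. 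The identity $\mu_n[u_1,\ldots,u_n](\{x_0\})=V(\p u_1(x_0),\ldots,\p u_n(x_0))$ only handles atoms.

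Second, even granting a density bound $\sum_\alpha t^\alpha m_\alpha\ge(\sum_it_ih_i^{1/n})^n$ for all $t_i>0$, comparing coefficients does not yield $m_{(1,\ldots,1)}\ge\prod_ih_i^{1/n}$ when $n\ge3$. Take $n=3$ and $h_i=1$. The cubic $\sum_it_i^3+6\sum_{i\ne j}t_i^2t_j$ (that is, $m_{iij}=2$ and $m_{123}=0$) dominates $(t_1+t_2+t_3)^3$ on $[0,\infty)^3$, because the difference $3\sum_{i\ne j}t_i^2t_j-6t_1t_2t_3$ is nonnegative by AM--GM. Closing this would require an Alexandrov--Fenchel-type inequality for mixed densities on singular sets, which is essentially the theorem itself.

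The idea you are missing is that no Lebesgue decomposition of $\nu$ is needed. The paper localizes to a ball $B$ with $\mu_{u_i}(\p B)=0$ and replaces each $\mu_{u_i}$ by its averages on a grid of cubes $K^j_m$. It solves $\mu_{u_{i,m}}=\sum_j\frac{\mu_{u_i}(K^j_m\cap B)}{|K^j_m\cap B|}\chi_{K^j_m}$ in $B$ with $u_{i,m}=u_i$ on $\p B$. Then $u_{i,m}\to u_i$ locally uniformly, so $\mu_n[u_{1,m},\ldots,u_{n,m}]\to\mu_n[u_1,\ldots,u_n]$ weakly. The bounded-density case (your Step 1 would do) gives the lower bound $\prod_i\big(\mu_{u_i}(K^j_m\cap B)/|K^j_m\cap B|\big)^{1/n}$ on each cube. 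H\"older's inequality,
\[\prod_{i=1}^n\Big(\int_Kf_i\,d\nu\Big)^{1/n}\ge\int_K\prod_{i=1}^nf_i^{1/n}\,d\nu,\]
together with $\mu_{u_i}(K)\ge\int_Kf_i\,d\nu$, turns this into the cube averages of $\prod_if_i^{1/n}\nu$. These converge weakly to $\prod_if_i^{1/n}\nu$. Averaging on cubes plus H\"older treats the absolutely continuous and singular parts of $\nu$ at once, so the pointwise analysis on $N$ is never needed.
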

Our proof follows the strategy of the proof of the complex mixed Monge--Amp\`ere inequality in Dinew \cite[Theorem 1.1]{D09}.
An approximation procedure for measures is needed.
 Consider a bounded domain $B$ in $\R^n$. 
Let $\mu$ be a Borel measure on $\R^n$ with support in $\overline{B}$ and\[\mu(\p B)=0,\quad \mu(B)<\infty.\] Let $K\subset\R^n$ be a cube  containing $B$. For each $m\in\N$, consider a subdivision $\mathcal{D}_m$ of $K$ into $2^{mn}$ congruent semi open cubes $K_m^j$ $(1\leq j\leq 2^{mn})$. Without loss of generality, we can assume that $\mu(\bigcup_{K^j_m\in\mathcal{D}_m} \p K^j_m)=0$; otherwise we can shift the boundaries a bit for each $K^j_m$.  Define
\begin{equation}\label{canoapp}\mu^m:= \sum_{j=1}^{2^{mn}} \frac{\mu(K^j_m\cap B)}{|K^j_m\cap B|}\chi_{K^j_m}.\end{equation}
Then $\mu^m$ converges weakly to $\mu$ with $\mu^m(B)\leq \mu(B)<\infty$ and each $\mu^m$ is a bounded function. Following Dinew \cite{D09}, we call the sequence $\{\mu^m\}_{m=1}^{\infty}$ a {\it canonical approximation} of $\mu$ in $B$.

\begin{proof}[Proof of Theorem \ref{mMAnu}] Since the inequality \eqref{mixedineq1} is of local nature, it suffices to prove it 
in any ball $B$ such that $B\Subset K\Subset\Omega$ and $\mu_{u_i}(\p B)=0$ for all $1\leq i\leq n$, where $K$ is a cube. 

\medskip
\noindent
{\it Step 1. } Consider the case $\nu$ is the Lebesgue measure and $\mu_{u_i}= f_i\, d\L^n\in L^1_{\text{loc}}(\Omega)$. Redefine $\mu_{u_i}=0$ outside $B$. For $\e\in (0, 1)$, let $\varphi_\e\in C_c^{\infty}(B_\e(0))$ be a standard mollifier and set
\[f_{i, \e}= \mu_{u_i}\ast \varphi_\e +\e.\]
Restricting our discussion to $\overline{B}$, we have $f_{i,\e}\in C^\infty(\overline{B})$, and
\begin{equation}
\label{fie}
\e\leq f_{i, \e},\quad \|f_{i,\e}\|_{L^1(B)} \leq  \|\mu_{u_i}\|_{L^1(B)}+\e\leq C,\quad f_{i,\e}\overset{\e\to 0}\longrightarrow \mu_{u_i}\quad\text{in } L^1_{\text{loc}}(B),\end{equation}
where $C$ is independent of $\e$.
Let $\{g_{i,\e}\}\subset C^{\infty}(\p B)$ be smooth functions such that $g_{i,\e}\to u_i\vert_{\p B}$ uniformly on $\p B$. 
Let $u_{i,\e}\in C(\overline{B})$
be the convex solution to 
\begin{equation*}
   \mu_{u_{i,\e}}~=f_{i, \e} \quad \text{in} ~B, \quad
u_{i,\e} =g_{i,\e}\quad \text{on}~\p B.
\end{equation*}
Then, $u_{i,\e}\in C^\infty(B)$ (see, \cite[Theorem 4.12]{Lbook}) and
$\mu_{u_{i,\e}}(B)=\|f_{i,\e}\|_{L^1(B)}\leq C$.
By the compactness property of the Monge--Amp\`ere equation (see, \cite[Theorem 3.35]{Lbook}), $u_{i,\e}$ converges locally uniformly in $B$ to $u_{i}$ when $\e\to 0$.

By G\r{a}rding's inequality, we have
\[\mu_n[u_{1,\e}, \cdots, u_{n,\e}]=\tilde M[u_{1,\e}, \cdots, u_{n,\e}] \geq \prod_{i=1}^{n} (\det D^2 u_{i,\e})^{1/n}= \prod_{i=1}^{n} f_{i,\e}^{1/n}.\]
  In view of \eqref{fie}, letting $\e\to 0$ gives 
  $\mu_n[u_1, \cdots, u_n] \geq \prod_{i=1}^{n}\mu_{u_i}^{\frac{1}{n}}$, which establishes \eqref{mixedineq1}.
  
 \medskip
\noindent 
{\it Step 2.} Consider the general case. Let $\{\mu_i^m\}_{m=1}^{\infty}$ be a canonical approximation of $\mu_{u_i}$ in $B$, as in \eqref{canoapp}. 
Let $u_{i, m}\in C(\overline{\Omega})$ be the convex solution to 
\[\mu_{u_{i, m}}=\mu_{u_i}^m\quad\text{in } B,\quad u_{i, m}=u_i\quad\text{on } \p B.\]
The functions $u_{i, m}$ exist by Theorem \ref{Dir_thm}, and converge locally uniformly to $u_i$ in $B$ when $m\to\infty$. Therefore, we have, weakly as Borel measures, 
\[\mu_n[u_1, \cdots, u_n] =\lim_{m\to\infty} \mu_n[u_{1,m}, \cdots, u_{n,m}].\]
By the case of $L^\infty$ density Monge--Amp\`ere measures in Step 1, we have
\[\mu_n[u_{1,m}, \cdots, u_{n,m}]\geq \prod_{i=1}^{n}\mu_{u_{i, m}}^{\frac{1}{n}}\geq \sum_{j=1}^{2^{mn}} \frac{\Big(\prod_{i=1}^{n}\int_{K^j_m} d\mu_{u_{i}}\Big)^{1/n}}{|K^j_m\cap \Omega|}\chi_{K^j_m}.\]
Using $\mu_{u_i}\geq f_i\nu$ and the H\"older inequality to the last term, we find 
\begin{equation*}
\mu_n[u_{1,m}, \cdots, u_{n,m}]\geq \sum_{j=1}^{2^{mn}} \frac{\Big(\prod_{i=1}^{n}\int_{K^j_m} f_i d\nu\Big)^{1/n}}{|K^j_m\cap \Omega|}\chi_{K^j_m} 
\geq \sum_{j=1}^{2^{mn}} \frac{\int_{K^j_m} \Big(\prod_{i=1}^{n}f_i \Big)^{1/n}d\nu}{|K^j_m\cap \Omega|}\chi_{K^j_m}. 
\end{equation*}
Letting $m\to\infty$, we obtain \eqref{mixedineq1}.

\medskip
For the consequence, we recall $\mu_{u_1+ u_2}=\mu_n[u_1+ u_2, \cdots, u_1+ u_2]$ and use \eqref{mixedineq1} to deduce
\begin{equation*}
\mu_{u_1+ u_2}= \sum_{m=0}^n  {n\choose m}d\mu_n[\underbrace{u_1,\cdots, u_1}_{m\text{ times}}, \underbrace{u_2,\cdots, u_2}_{(n-m)\text{ times}}]
\geq \sum_{m=0}^n  {n\choose m} f_1^{\frac{m}{n}} f_2^{\frac{n-m}{n}}\nu =(f_1^{1/n} + f_2^{1/n})^n\nu.
\end{equation*}
If equality occurs, it is necessary from the above inequalities that we have equalities for $m=0$ and $m=n$, so $\mu_{u_i}= f_i\nu$ for $i=1, 2$. The theorem is proved.
\end{proof}

\section{Maximum and comparison principles and the Dirichlet problem}
\label{maxDir_sec}
In this section, we will prove the maximum principles in Theorems  \ref{AJ_thm} and \ref{BlemR}, and apply them to establish solvability to the Dirichlet problem in Theorem \ref{Dist0pn} (i). We will also prove a comparison principle for degenerate Monge--Amp\`ere equations in Theorem \ref{compnu}.
\subsection{Maximum principles} 
\label{maxDir_sec1}
In this subsection, we prove Theorems \ref{BlemR} and \ref{AJ_thm} and state some of their consequences.
\begin{proof}[Proof of Theorem \ref{BlemR}] We first prove 
part (i) using the strategy in the proof of the estimates for the complex Monge--Amp\`ere operator in Blocki \cite[Theorem 2.1]{Bl}. 

We begin by reducing the proof to the case of smoothness and equality near the boundary for $w$ and $v$.  Indeed, 
for any $\e>0$, let $w_\e=\max\{v, w-\e\}$. Then $w_\e$ is convex, $w_\e\geq v$,  $w_\e=v$ in a neighborhood of $\p\Omega$, and $w_\e\to w$ uniformly in $\overline{\Omega}$.
If we can prove part (i) for $w_\e$ replacing $w$; that is,
\begin{equation}
\label{Ble1}
\int_\Omega (w_\e-v)^n \,d\mu_n[u_1, \cdots, u_n] \leq n! \|u_1\|_{L^{\infty}(\Omega)}\cdots \|u_{n-1}\|_{L^{\infty}(\Omega)} \int_\Omega |u_n| \, (d\mu_v-d\mu_{w_\e}),\end{equation}
then part (i) is also true with the original $w$. For this, we just
 let $\e\to 0$ in \eqref{Ble1}, and then
 use Fatou's lemma
 for the left-hand side, while in the right-hand side $\int_\Omega |u_n| \, (d\mu_v-d\mu_{w_\e})$ increases to $\int_\Omega |u_n| \, (d\mu_v-d\mu_w)$; this is because $|u_n|\mu_{w_\e}\to |u_n|\mu_w$ weakly, so
 \[\int_\Omega |u_n|\, d\mu_w \leq \liminf_{\e\to 0} \int_\Omega |u_n|\,d\mu_{w_\e}.\]

For the proof of \eqref{Ble1}, by the locality property of the mixed Monge--Amp\`ere measure (Lemma \ref{mMAloc}), we can shrink $\Omega$ a bit. We can then assume by approximations that all functions are smooth and $w_\e=v$ in a neighborhood of $\p\Omega$. Below, we suppress $\e$ in $w_\e$.

\medskip
We use the convention that repeated indices are summed. Let
\begin{equation*}
\begin{split}
A:=\int_\Omega (w-v)^n \,d\mu_n[u_1, \cdots, u_n] 
&= \int_\Omega (w-v)^n \,\tilde M[u_1, \cdots, u_n]\,dx\quad(\text{due to smoothness}) \\&=\int_\Omega (w-v)^n D_{ij}\big(\tilde M^{ij} [u_2, \cdots, u_n] u_1\big)\, dx.
\end{split}
\end{equation*}
Using integration by parts, 
we have
\begin{equation*}
\begin{split}
A
&=\int_\Omega n(w-v)^{n-1} D_{ij}(w-v) \tilde M^{ij} [u_2, \cdots, u_n] u_1\, dx\\
&\quad + \int_\Omega n(n-1)(w-v)^{n-2} D_i(w-v) D_j(w-v) \tilde M^{ij} [u_2, \cdots, u_n] u_1\, dx.
\end{split}
\end{equation*}
Recalling that \[u_1\leq 0,\quad D_i(w-v) D_j(w-v) \tilde M^{ij} [u_2, \cdots, u_n] \geq 0,\quad D_{ij}w \tilde M^{ij} [u_2, \cdots, u_n]\geq 0, \] where Remark \ref{mMpos} (i) was used,  we deduce 
\begin{equation*}
\begin{split}
A
&\leq \int_\Omega n(-u_1)(w-v)^{n-1} D_{ij}v \tilde M^{ij} [u_2, \cdots, u_n] \, dx\\
&\leq n \|u_1\|_{L^{\infty}(\Omega)}\int_\Omega (w-v)^{n-1} \tilde M [v, u_2,\cdots, u_n]\,dx.
\end{split}
\end{equation*}
Continuing, if $u_1, \cdots, u_{n-1}$ are nonpositive, then we have
\begin{equation}
\label{Awvun}
A \leq n! \|u_1\|_{L^{\infty}(\Omega)}\cdots \|u_{n-1}\|_{L^{\infty}(\Omega)} \int_{\Omega} (w-v)  \tilde M [v, \cdots, v, u_n]\,dx.
\end{equation}
Note that, by the exchange result in Lemma \ref{IBPC3} and $w=v$ in a neighborhood of $\p\Omega$,
\begin{equation}
\label{Amuwv}
\begin{split}
\int_{\Omega} (w-v)  \tilde M [v, \cdots, v, u_n]\,dx&= \int_{\Omega}  \tilde M [v, \cdots, v, w-v] u_n\,dx \\&=  \int_{\Omega}  \tilde M [v, \cdots, v, w] u_n\,dx -\int_\Omega u_n \, d\mu_v\\
&\leq \int_\Omega u_n (\, d\mu_w-\, d\mu_v) =\int_\Omega |u_n| (\, d\mu_v-\, d\mu_w),
\end{split}
\end{equation}
where we use the monotonicity result in Lemma \ref{Exchcon} in the last inequality.

Combining \eqref{Amuwv} with \eqref{Awvun}, we obtain \eqref{Ble1}, completing the proof of part (i) 
of the theorem.

\medskip
\noindent
We now prove part (ii) and use shrinking and the smooth approximations as above. Write
\begin{equation*}
\begin{split}
\mu_n[u_1, \cdots, u_n]-\mu_n[\tilde u_1, \cdots, \tilde u_n]&=\sum_{k=1}^n \tilde M[u_1, \cdots, u_{k-1}, u_k-\tilde u_k, \tilde u_{k+1}, \cdots, \tilde u_n]\\
&=\sum_{k=1}^n \tilde M^{ij}[u_1, \cdots, u_{k-1}, \tilde u_{k+1}, \cdots, \tilde u_n] D_{ij} (u_k-\tilde u_k).
\end{split}
\end{equation*}
For each $k=1, \cdots, n$, let \[d_k:=\|u_k-\tilde u_k\|_{L^{\infty}(\Omega)},\quad \tilde M_k^{ij}:= \tilde M^{ij}[u_1, \cdots, u_{k-1}, \tilde u_{k+1}, \cdots, \tilde u_n].\]
Integrating by parts, and recalling \eqref{div0} together with Remark \ref{mMpos} (i), we find 
\begin{equation}
\label{Ble2}
\small
\begin{split} 
\int_\Omega \frac{(w-v)^{n+1}}{n+1} \tilde M_k^{ij} D_{ij} (u_k-\tilde u_k) \,dx&= \int_\Omega (w-v)^n D_{ij} (w-v) \tilde M_k^{ij} (u_k-\tilde u_k) \,dx \\&\,\, + n\int_\Omega (w-v)^{n-1} D_i(w-v) D_j(w-v) \tilde M_k^{ij} (u_k-\tilde u_k) \,dx\\
&\quad\leq  d_k \int_\Omega (w-v)^n D_{ij} (w+v) \tilde M_k^{ij} \,dx \\&\quad + n d_k\int_\Omega (w-v)^{n-1} D_i(w-v) D_j(w-v) \tilde M_k^{ij} \,dx.
\end{split}
\end{equation}
Observe that
\begin{equation}
\label{Ble3}
\begin{split}n\int_\Omega (w-v)^{n-1} D_i(w-v) D_j(w-v) \tilde M_k^{ij}\,dx&=\int_\Omega D_j[(w-v)^n D_i(w-v)\tilde M_k^{ij}]\,dx\\&\quad -\int_\Omega (w-v)^n D_{ij}(w-v)\tilde M_k^{ij}\,dx.
\end{split}
\end{equation}
The first term on the right-hand side of \eqref{Ble3} vanishes. 
Summing up  \eqref{Ble2} and  \eqref{Ble3} gives
\begin{equation*}
\begin{split} 
\int_\Omega \frac{(w-v)^{n+1}}{n+1} \tilde M_k^{ij} D_{ij} (u_k-\tilde u_k) \,dx &\leq 2d_k \int_\Omega (w-v)^n D_{ij} v \tilde M_k^{ij}\,dx \\
&=2d_k \int_\Omega (w-v)^n \tilde M[u_1, \cdots, u_{k-1}, \tilde u_{k+1}, \cdots, \tilde u_n, v]\,dx\\
&\leq 2n! d_k m_k \int_\Omega |v| (d\mu_v- d\mu_w),
\end{split}
\end{equation*}
where we used part (i) in the last inequality. Therefore
\begin{equation*}
\begin{split} 
\int_\Omega (w-v)^{n+1} \,(d\mu_n[u_1, \cdots, u_n]-d\mu_n[\tilde u_1, \cdots, \tilde u_n])&= \sum_{k=1}^n\int_\Omega (w-v)^{n+1} \tilde M_k^{ij} D_{ij} (u_k-\tilde u_k) \,dx \\ &\leq 2(n+1)!\sum_{k=1}^n d_k m_k \int_\Omega |v| (d\mu_v-d\mu_w).
\end{split}
\end{equation*}
The proof of part (ii) is complete. The theorem is proved.
\end{proof}

\medskip
A consequence of Theorem \ref{BlemR} is the following comparison principle. 
\begin{cor}
\label{mpcor}
Let $\Omega$  be a bounded (not necessarily convex) domain in $\R^n$.
Assume $u, v, w\in C(\overline{\Omega})$ are nonpositive convex functions such that $u=v$ on $\p\{u>v\}$. Then
\[\int_{\{u>v\}} (-w) \, d\mu_u \leq \int_{\{u>v\}} (-w) \, d\mu_v. \]
Consequently, if $0\geq u=v$ on $\p\Omega$ and $0\geq u>v$ in $\Omega$, then 
\[\int_\Omega (-u) \, d\mu_u \leq \int_\Omega (-u) \, d\mu_v\leq \int_\Omega (-v) \, d\mu_v.\]
\end{cor}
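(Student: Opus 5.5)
We plan to deduce Corollary \ref{mpcor} from Theorem \ref{BlemR} (i), applied on the open set $U:=\{u>v\}$ with all $n$ mixed slots equal to $w$. Assume $U\neq\emptyset$; otherwise there is nothing to prove. Then $U$ is a bounded open set, not necessarily convex or connected, and Theorem \ref{BlemR} is stated for such domains, since its proof only uses locality and integration by parts. The pair $(v,u)$ satisfies $v\le u$ in $U$ and $v=u$ on $\p U$. Theorem \ref{BlemR} (i), with the roles $v\to v$, $w\to u$, and $u_1=\cdots=u_n=w$, gives
\[\int_U (u-v)^n\,d\mu_w \le n!\,\|w\|_{L^\infty}^{n-1}\int_U |w|\,(d\mu_v-d\mu_u).\]
The left side is nonnegative, so $\int_U (-w)\,d\mu_u\le\int_U(-w)\,d\mu_v$, which is the first assertion.

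There is one technical point. Theorem \ref{BlemR} (i) is written with right side $\int|u_n|(d\mu_v-d\mu_w)$, and this expression only makes sense if the integrals are finite. If $\int_U(-w)\,d\mu_v=\infty$, the inequality is trivial. Otherwise, the proof of Theorem \ref{BlemR} (i) via $w_\e=\max\{v,w-\e\}$ and Lemma \ref{Exchcon} actually yields the inequality $\int_U|u_n|\,d\mu_w\le\int_U|u_n|\,d\mu_v$ (in our roles, with $u$ in place of $w$). I would note this explicitly, perhaps rerunning the approximation with $u_\e=\max\{v,u-\e\}$ and using weak lower semicontinuity, so that the finiteness of $\int_U(-w)\,d\mu_u$ is a conclusion rather than an assumption. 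This bookkeeping is the only real obstacle.

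For the consequence, take $U=\Omega$, which is allowed since $u>v$ in $\Omega$ and $u=v$ on $\p\Omega$. Choosing $w=u$, which is nonpositive, convex and continuous, gives $\int_\Omega(-u)\,d\mu_u\le\int_\Omega(-u)\,d\mu_v$. Since $u>v$ means $-u<-v$, and $-u\ge0$ because $u\le0$, we get $\int_\Omega(-u)\,d\mu_v\le\int_\Omega(-v)\,d\mu_v$ by monotonicity of the integral against the nonnegative measure $\mu_v$. Chaining the two inequalities completes the proof.
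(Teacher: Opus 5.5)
Your proposal is correct and is essentially the paper's proof: apply Theorem \ref{BlemR} (i) on $\{u>v\}$ with $u_1=\cdots=u_n=w$ and use that $\int (u-v)^n\,d\mu_w\geq 0$. The paper additionally sets aside the trivial case $\|w\|_{L^\infty(\{u>v\})}=0$, where the factor $\|w\|^{n-1}$ vanishes for $n\geq 2$. Your finiteness bookkeeping and your explicit derivation of the consequence, using $w=u$ and $0\le -u\le -v$, fill in steps the paper leaves implicit.
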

\begin{proof} Let $\Omega':=\{u>v\}$.
It suffices to consider $\|w\|_{L^{\infty}(\Omega')}>0$. 
 By Theorem \ref{BlemR} (i), we have
\[0\leq \int_{\Omega'} (u-v)^n \, d\mu_w \leq n! \|w\|^{n-1}_{L^{\infty}(\Omega')} \int_{\Omega'}(-w) \, (d\mu_v-d\mu_u).\]
The claimed inequality follows.
\end{proof}

We are now ready to prove the uniform Aleksandrov--Jerison maximum principle.
\begin{proof}[Proof of Theorem \ref{AJ_thm}] 
Let $w\in C^{0, 1}(\overline{\Omega})$ be the convex function whose graph is the cone with vertex $(x_0, -1)$ and the base $\Omega$, with $w=0$ on $\p\Omega$. Then 
\[\p w(\Omega) =\p w(x_0)\quad\text{and}\quad \mu_w= C_{x_0}\delta_{x_0}.\]
From the Aleksandrov maximum principle in Theorem \ref{Al_E} (i), we have
\[1=|w(x_0)|^{n}\leq C(n)[\diam (\Omega))]^{n-1}\dist(x_0,\partial \Omega) C_{x_0}.\]
Therefore, for some $c(n)>0$, 
\begin{equation}
\label{Cx0est}
\dist(x_0,\p\Omega)C_{x_0} \geq c(n)[\diam(\Omega)]^{1-n}.\end{equation}
Observe that, if $p\in \p w(\Omega)= \p w(x_0)$, then 
$|p| \leq [\dist(x_0,\p\Omega)]^{-1}$. Hence, the Lipschitz constant $L$ of $w$ satisfies
$L \leq [\dist(x_0,\p\Omega)]^{-1}$. 
Therefore, $|w| \leq \frac{\dist(\cdot,\p\Omega)}{\dist(x_0,\p\Omega)}$ in  $\Omega$.

Let
\[v:= -(-w)^{1-\alpha}\quad \text{in }\Omega\quad\text{if}\quad 0\leq \alpha<1,\quad \text{and } v:=w\quad \text{in }\Omega\quad\text{if }  \alpha=1.\]
Then $v\in C(\overline{\Omega})$ is convex in $\Omega$ with $v=0$ on $\p\Omega$. Since $v\leq w$ and $v(x_0)= w(x_0)=-1$, we have $\p w(x_0)\subset \p v(x_0)$. It follows that
\begin{equation}\label{muvest} \mu_v\geq C_{x_0}\delta_{x_0}\quad \text{and}\quad |v| \leq \dist^{\alpha-1}(x_0,\p\Omega) \cdot \dist^{1-\alpha}(\cdot,\p\Omega)\quad \text{in }\Omega.\end{equation}
Since $u=\tilde u=0$ on $\p \Omega$ and $\mu_u\geq\mu_{\tilde u}$ in $\Omega$, the comparison principle in Theorem \ref{compa1} gives $u\leq \tilde u$ in $\Omega$. Applying Theorem \ref{BlemR} (i), we have
\begin{equation*}
\begin{split}\int_\Omega |\tilde u-u|^n C_{x_0}\delta_{x_0}  \leq \int_\Omega |\tilde u-u|^n \,d\mu_v  \leq n! \|v\|^{n-1}_{L^{\infty}(\Omega)} \int_{\Omega} |v| \,(d\mu_u-d\mu_{\tilde u}).
  \end{split}
 \end{equation*}
 It follows from \eqref{Cx0est}, $\|v\|_{L^{\infty}(\Omega)} =1$, $\mu_u\geq\mu_{\tilde u}$ in $\Omega$, and \eqref{muvest} that
 \begin{equation*}
 |\tilde u(x_0)-u(x_0)|^n 
 \le C(n)[\diam (\Omega)]^{n-1}\dist^{\alpha}(x_0,\partial \Omega)\int_{\Omega}\dist^{1-\alpha}(\cdot,\partial \Omega)\,(d\mu_u-d\mu_{\tilde u}).
 \end{equation*}
 The theorem is proved.
\end{proof}

\begin{rem} 
\label{AJ_rem}
Some remarks concerning Theorem \ref{AJ_thm} are in order.
\begin{enumerate}
 \item A consequence of Theorem \ref{AJ_thm} is that $u\in C^{\alpha/n}(\overline{\Omega})$ whenever $u\in C(\overline{\Omega})$ is convex,
 \[u=0\quad\text{on }\p\Omega\quad\text{and}\quad \int_\Omega \dist^{1-\alpha}(\cdot,\p\Omega)d\mu_u<\infty.\] 
\item The exponent $\alpha/n$ is sharp when $n\geq 2$ and $0\leq\alpha\leq 1$. Indeed, fix $\alpha/n<\beta\leq 1$.
Consider $\alpha/n<a<\beta$, and 
 \[\Omega=\{(x', x_n): |x'|<1, 0<x_n < 1- |x'|^2\},\quad w= x_n -x_n^{a} (1-|x'|^2)^{1-a}.\]
Then, $w$ is smooth, convex in $\Omega$ with $w=0$ on $\p\Omega$, and $w\not \in C^{\beta} (\overline{\Omega})$, but \[\dist^{1-\alpha}(\cdot,\p\Omega)\det D^2 w\in L^1(\Omega).\] 
Indeed, computing as in \cite[Lemma 2.4]{LDCDS}, we see that $w$ is smooth and convex in $\Omega$ with $w=0$ on $\p\Omega$, and
\[\det D^2 w=a(1-a)(2-2a)^{n-1} x_n^{na-2}(1-|x'|^2)^{1-na}= C(n, a)x_n^{na-2}(1-|x'|^2)^{1-na}.
\]
For $x=(0', x_n)\in\R^n$, we have $|w(0', x_n)|= x_n^{a}- x_n\geq x_n^{a}/2$
for $x_n$ small, depending only on $n$ and $a$. This shows that $w\not \in C^{\beta} (\overline{\Omega})$. Note that 
\begin{eqnarray*}
\int_{\Omega} \dist^{1-\alpha}(\cdot,\p\Omega)\det D^2 w \,dx&\leq &\int_{|x'|<1}\int_0^{1-|x'|^2} C(n, a) x_n^{na-1-\alpha}(1-|x'|^2)^{1-na} \,dx_n dx'\\
&=& \int_{|x'|<1}\frac{C(n, a)}{na-\alpha} (1-|x'|^2)^{1-\alpha} \,dx'<\infty.
\end{eqnarray*}
Therefore, $\dist^{1-\alpha}(\cdot,\p\Omega)\det D^2 w\in L^1(\Omega)$.
\end{enumerate}
\end{rem}

\subsection{The Dirichlet problem}
\label{Dir_sec}
In this subsection, we establish the solvability of the Dirichlet problem for the Monge--Amp\`ere equation with right-hand side being a Borel measure which could be quite singular near the boundary. We will also discuss its consequences and optimality.

The following theorem is a restatement of Theorem \ref{Dist0pn} (ii) and it is  equivalent to Theorem 1.5 in Lu--Zeriahi \cite{LZ}. 
\begin{thm}[Solvability of the Dirichlet problem] 
\label{Dirdist}
Let $\varphi\in C(\overline{\Omega})$ be a convex function on a bounded convex domain $\Omega\subset\R^n$,
and $\nu$ be a Borel measure on $\Omega$ satisfying
\[\int_\Omega \dist(\cdot,\p\Omega)d\nu\leq C<\infty.\]
Then there exists a unique convex Aleksandrov solution 
$u\in C(\overline{\Omega})$ to
\begin{equation*}
   \det D^{2} u=\nu \quad\text{in} ~\Omega, \quad
u =\varphi\quad\text{on}~\p \Omega.
\end{equation*}
Moreover, we have
\begin{equation}
\label{Dirineq1}
\|(\varphi-u)^+\|^n_{L^{\infty}(\Omega)}  \leq C(n, \diam(\Omega))\int_\Omega \dist(\cdot,\p\Omega) d\nu.\end{equation}
\end{thm}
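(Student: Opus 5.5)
We approximate $\nu$ by the truncated measures $\nu_m=\chi_{\{x\in\Omega:\dist(x,\p\Omega)>1/m\}}\nu$ and pass to the limit using monotonicity together with the uniform Aleksandrov--Jerison maximum principle (Theorem \ref{AJ_thm}). Each $\nu_m$ is supported in a compact subset of $\Omega$ on which $\dist(\cdot,\p\Omega)\geq 1/m$. Hence $\nu_m(\Omega)\leq m\int_\Omega\dist(\cdot,\p\Omega)\,d\nu<\infty$, and Theorem \ref{Dir_thm} gives a unique convex $u_m\in C(\overline\Omega)$ with $\mu_{u_m}=\nu_m$ in $\Omega$ and $u_m=\varphi$ on $\p\Omega$. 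Since $\nu_m\leq\nu_{m+1}$, the comparison principle (Theorem \ref{compa1}) shows that $u_m\geq u_{m+1}$ in $\Omega$. Comparison with $\varphi$, whose Monge--Amp\`ere measure may not be comparable with $\nu_m$, is not directly available, so we use the convex solution $u_0$ with $\mu_{u_0}=0$ and $u_0=\varphi$ on $\p\Omega$ (the convex envelope of the boundary data) as the top function. Then $u_0\geq\varphi$ and $u_0\geq u_m$.

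The key step is a uniform Cauchy estimate in $m$. Theorem \ref{AJ_thm} is stated for zero boundary data, so we first extend it to two functions $u,\tilde u$ with the same continuous boundary values and $\mu_u\geq\mu_{\tilde u}$. The proof of Theorem \ref{AJ_thm} uses only Theorem \ref{compa1} and Theorem \ref{BlemR} (i), and the latter requires only $v\leq w$ in $\Omega$ and $v=w$ on $\p\Omega$. The same argument with $\alpha=0$ therefore gives, for $k>m$,
\[
|u_m(x_0)-u_k(x_0)|^n\leq C(n)[\diam(\Omega)]^{n-1}\int_{\{1/k<\dist(\cdot,\p\Omega)\leq 1/m\}}\dist(\cdot,\p\Omega)\,d\nu .
\]
The right-hand side tends to $0$ as $m\to\infty$, uniformly in $k>m$ and in $x_0\in\Omega$, because $\dist(\cdot,\p\Omega)\nu$ is a finite measure. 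Thus $u_m$ converges uniformly on $\overline\Omega$ to a convex $u\in C(\overline\Omega)$ with $u=\varphi$ on $\p\Omega$. By weak continuity of Monge--Amp\`ere measures under uniform convergence, $\mu_{u_m}\to\mu_u$ weakly, while $\nu_m\to\nu$ weakly on compact subsets of $\Omega$; hence $\mu_u=\nu$.

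For \eqref{Dirineq1}, apply the same extended estimate with $\alpha=0$ to the pair $u_0,u$. Since $\mu_u-\mu_{u_0}=\nu$, this gives $\|u_0-u\|^n_{L^\infty}\leq C\int_\Omega\dist(\cdot,\p\Omega)\,d\nu$. Because $\varphi\leq u_0$, we have $(\varphi-u)^+\leq u_0-u$, and the estimate follows.

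For uniqueness, two solutions with the same boundary data and the same measure are comparable both ways by Theorem \ref{compa1}, so they coincide. That comparison principle applies to $C(\overline\Omega)$ functions without any finiteness assumption on the mass, since it is proved through the normal mapping on the set $\{u<v\}$, which is compactly contained in $\Omega$ after perturbing by $\e(|x|^2-M)$.

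The main obstacle is the boundary-data extension of Theorem \ref{AJ_thm}, specifically the step that applies Theorem \ref{BlemR} (i) with $w=u_m$ and $v=u_k$. We must check that the smoothing and locality reductions in its proof go through when the total masses are finite for each fixed pair. They are finite here, since $\nu_k(\Omega)<\infty$.
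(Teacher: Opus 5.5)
Your proof is correct, but it is organized differently from the paper's proof.

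\textbf{The paper's route.} It keeps Theorem \ref{AJ_thm} in its zero-boundary-data form and works in two steps.
\begin{enumerate}
\item Step 1 runs the truncation and uniform-Cauchy argument only for $\varphi\equiv 0$. This produces the zero-data solution, call it $U$, with $|U|^n\le C\int_\Omega\dist(\cdot,\p\Omega)\,d\nu$.
\item Step 2 treats general $\varphi$ by a barrier argument. It solves $\mu_{\tilde u_\e}=\nu_\e$ with data $\varphi$. Superadditivity $\mu_{u_\e+\varphi}\ge\mu_{u_\e}+\mu_\varphi$ and comparison give the sandwich $u_\e+\varphi\le\tilde u_\e\le h$, where $h$ is the harmonic extension of $\varphi$. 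It then passes to the limit in the monotone family $\tilde u_\e$.
\item Estimate \eqref{Dirineq1} is read off from $\varphi-u\le -U$.
\end{enumerate}

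\textbf{Your route.} You observe that the proof of Theorem \ref{AJ_thm} never uses that the common boundary value is zero. Neither Theorem \ref{compa1} nor Theorem \ref{BlemR}(i) needs it. So you obtain a two-function estimate for arbitrary equal boundary data and run a single truncation argument directly with data $\varphi$.

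What this buys you:
\begin{enumerate}
\item A quantitative uniform Cauchy estimate for the approximants on $\overline\Omega$. No barriers or Dini-type argument are needed to get continuity up to the boundary; the paper handles that point rather tersely.
\item A slightly stronger bound than \eqref{Dirineq1}: $\|u_0-u\|_{L^\infty(\Omega)}^n\le C\int_\Omega\dist(\cdot,\p\Omega)\,d\nu$, with your maximal solution $u_0\ge\varphi$.
\end{enumerate}
The price is re-checking the proof of Theorem \ref{AJ_thm} rather than citing it. The paper uses that theorem verbatim and handles nonzero boundary data only with standard comparison tools.

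\textbf{On the step you flag as the main obstacle.} Theorem \ref{BlemR}(i) is stated for arbitrary convex $v\le w$ in $C(\overline\Omega)$ with $v=w$ on $\p\Omega$. It has no zero-boundary and no finite-mass hypothesis, so you can cite it directly without revisiting its smoothing and locality reductions.

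Your finite-mass justification ($\nu_k(\Omega)<\infty$) does not literally cover your last application, to the pair $(u_0,u)$, since $\mu_u=\nu$ may have infinite total mass. Either of two fixes works:
\begin{enumerate}
\item Cite Theorem \ref{BlemR}(i) as stated. The only integral involved, $\int_\Omega|w|\,d\nu$ for the cone function $w$, is finite because $|w|\le\dist(\cdot,\p\Omega)/\dist(x_0,\p\Omega)$.
\item Apply the estimate to $(u_0,u_m)$, which gives $\|u_0-u_m\|^n_{L^\infty(\Omega)}\le C\int_\Omega\dist(\cdot,\p\Omega)\,d\nu$, and let $m\to\infty$.
\end{enumerate}
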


\begin{proof} For $\e>0$, let $\Omega_\e:= \{x\in\Omega: \dist(x,\p\Omega)>\e\}$ and $\nu_\e =\chi_{\Omega_\e} \nu$.
Then
\[\int_\Omega d\nu_\e =\e^{-1}\int_{\Omega_\e} \e\, d\nu\leq \e^{-1} \int_{\Omega_\e} \dist(\cdot,\p\Omega)d\nu \leq \e^{-1}C.\]

\noindent
{\it Step 1.} The case $\varphi\equiv 0$. By Theorem \ref{Dir_thm}, 
there is a unique convex solution $u_\e\in C(\overline{\Omega})$ to
\begin{equation}
\label{Dir1e}
   \mu_{u_\e}=\nu_\e \quad\text{in} ~\Omega, \quad
u_\e =0\quad\text{on}~\p \Omega.
\end{equation}
Pick $x_0\in\Omega$. By Theorem \ref{AJ_thm}, we have
\begin{equation}
\label{uedist}
|u_\e(x_0)|^n  \leq C(n) [\diam(\Omega)]^{n-1}\int_\Omega \dist(\cdot,\p\Omega) d\nu_\e\leq   C(n) [\diam(\Omega)]^{n-1}.\end{equation}
This gives the uniform bound independent of $\e$:
\[\|u_\e\|_{L^{\infty}(\Omega)}\leq C.\]
We now pass to the limit $\e\to 0$.
Observe that if $\e>\e'>0$, then $\nu_{\e'}> \nu_\e$, so by the comparison principle in Theorem \ref{compa1}, we have $u_{\e'}< u_{\e}$.  By Theorem \ref{AJ_thm}, we can estimate
\begin{equation*}
\begin{split}
 |u_\e(x_0)-u_{\e'}(x_0)|^n &\leq C(n) [\diam(\Omega)]^{n-1}  \int_{\Omega} \dist(\cdot,\p\Omega)(d\nu_{\e'}- d\nu_{\e})\\ 
 &\leq C(n) [\diam(\Omega)]^{n-1}  \int_{\Omega_{\e'}\setminus \Omega_{\e}} \dist(\cdot,\p\Omega)d\nu.
 \end{split}
 \end{equation*}
 Thus, $\{u_\e\}$ is monotone and uniformly Cauchy in $C(\overline{\Omega})$.
From this, we can let $\e\to 0$ in \eqref{Dir1e} to obtain a convex solution 
$u_0\in C(\overline{\Omega})$ to
\begin{equation*}
   \mu_{u_0}=\nu \quad\text{in} ~\Omega, \quad
u_0 =0\quad\text{on}~\p \Omega.
\end{equation*}
Here, we used the fact that $\mu_{u_\e}$ converges to $\mu_{u_0}$ weakly as measures when $\e\to 0$.

\medskip
\noindent
{\it Step 2.} The general case. Let $h\in C(\overline{\Omega})$ be a harmonic function in $\Omega$ with boundary value $\varphi$; see Gilbarg--Trudinger \cite[Theorem 2.14 and the discussion following it]{GT}.
By Theorem \ref{Dir_thm}, 
there exists a unique convex solution $\tilde u_\e\in C(\overline{\Omega})$ to
\begin{equation}
\label{Dir2e}
   \mu_{ \tilde u_\e}=\nu_\e \quad\text{in} ~\Omega, \quad
\tilde u_\e =\varphi\quad\text{on}~\p \Omega.
\end{equation}
Note that (see \cite[Lemma 3.10]{Lbook})
$\mu_{u_\e +\varphi}\geq \mu_{u_\e} +\mu_{\varphi}\geq \mu_{u_\e}=\mu_{\tilde u_\e}$.
By the comparison principle, 
\begin{equation}
\label{uvedist}
u_\e + \varphi\leq \tilde u_\e \leq h\quad\text{in }\Omega.\end{equation}
By letting $\e\to 0$ in \eqref{Dir2e}, and recalling that $u_\e$ converges uniformly to $u_0\in C(\overline{\Omega})$ with boundary value $0$, we obtain a convex Aleksandrov solution 
$u\in C(\overline{\Omega})$ solving
\begin{equation*}
   \det D^{2} u=\nu \quad\text{in} ~\Omega, \quad
u =\varphi\quad\text{on}~\p \Omega.
\end{equation*}
Moreover, from \eqref{uedist} and \eqref{uvedist}, we obtain \eqref{Dirineq1} as claimed.
The theorem is proved.
\end{proof}
We make several remarks pertinent to Theorem \ref{Dirdist}.
\begin{rem} 
\label{criticalrem}
Let $n\geq 2$ and $\Omega=\{(x', x_n): |x'|<1, 0<x_n < 1- |x'|^2\}$. The exponent $1$ of $\dist(\cdot, \p\Omega)$ in the condition on the measure $\nu$ in Theorem \ref{Dirdist} is critical in the following sense:
\begin{enumerate}
\item For  $\beta \in (0, 1)$, there exists a convex function $w\in C(\overline{\Omega})$ with $w=0$ on $\p\Omega$ such that
\[\int_\Omega \dist^\beta(\cdot, \p\Omega) \, d\mu_w=+\infty.\]
Indeed, it suffices to choose
 \[w(x)=x_n -x_n^a (1-|x'|^2)^{1-a},\quad a= (1-\beta)/n.\]
Computing as in \cite[Lemma 2.4]{LDCDS}, we have
\[\det D^2 w=a(1-a)(2-2a)^{n-1} x_n^{na-2}(1-|x'|^2)^{1-na}= C(n,\beta) x_n^{-1-\beta} (1-|x'|^2)^\beta.
\]
Thus, 
\begin{eqnarray*}
\int_\Omega \dist^\beta(\cdot, \p\Omega) \, d\mu_w&\geq& 
\int_{\{(x', x_n):\, |x'|<1/4, \,0<x_n < 1/4\}} \dist^\beta(\cdot, \p\Omega) \,  \det D^2 w \,dx\\&=&C(n,\beta) \Big( \int_{|x'|<1/4}  (1-|x'|^2)^{\beta}  dx'\Big) \Big (\int_0^{1/4}x_n^{-1}\,dx_n\Big)=+\infty.
\end{eqnarray*} 
\item For $n\geq 2$, and $\gamma \in (1, 2)$, there is 
a Borel measure $\nu$ such that 
\[\int_\Omega \dist^\gamma(\cdot, \p\Omega) \, d\nu<\infty,\,\, \nu\neq \mu_u\,\,\mbox{for any convex function $u\in C(\overline{\Omega})$ with $u=0$ on $\p\Omega$.}\]
Indeed, let $\nu= \dist^{-(3+\gamma)/2}(\cdot, \p\Omega)$ in a neighborhood of $\p\Omega$ and $\nu=0$ elsewhere. To see that $\nu\neq \mu_u$ for any convex function $u\in C(\overline{\Omega})$ with $u=0$ on $\p\Omega$, it suffices to choose any $\alpha\in (1, (1+\gamma)/2)$ to contradict the following statement:

If $u\in C(\overline{\Omega})$ is a convex function with $u=0$ on $\p\Omega$, then we can show that 
\begin{equation}
\label{namu}
\int_{\{(x', x_n):\, |x'|<1/4, \,0<x_n < 1/4\}} \dist^{\alpha}(\cdot, \p\Omega) \, d\mu_u<\infty\quad\text{for all }\alpha\in (1, 2).
\end{equation}
To see this, consider the convex function
 \[v(x)=x_n -x_n^a (1-|x'|^2)^{1-a},\quad a=\alpha/n,\]
 which vanishes on $\p\Omega$.
Computing as in \cite[Lemma 2.4]{LDCDS}, we have
\[\det D^2 v=a(1-a)(2-2a)^{n-1} x_n^{na-2}(1-|x'|^2)^{1-na}.
\]
Therefore $\det D^2 v\in L^1(\Omega)$.
By Theorem \ref{BlemR}, we have
\[\int_\Omega |v|^n \, d\mu_u \leq n! \|u\|^n_{L^{\infty}(\Omega)}\int_\Omega d\mu_v<\infty.\]
Now, \eqref{namu} follows from the above inequality and 
 \[|v(x)|\geq c(n,\alpha)\dist^{\alpha/n}(\cdot, \p\Omega)\quad\text{in } \{(x', x_n):\, |x'|<1/4, \,0<x_n < 1/4\}.\]
\end{enumerate}
\end{rem}
\begin{rem} 
\label{criticalrem2}
The proof of \eqref{namu} indicates that in order to replace the weight $\dist^{\alpha}(\cdot, \p\Omega)$ by a larger one so that the left-hand side is still finite, we need to construct a convex function $v\in C(\overline{\Omega})$ satisfying $v=0$ on $\p\Omega$ with 
$\mu_v(\Omega)<\infty$ 
 such that
$|v|$ grows faster than $\dist^{\alpha/n}(\cdot, \p\Omega)$ from the flat boundary part. It would be ideal to replace $\alpha>1$ by $1$. When $n=1$, this just follows from the convexity of $v$. 
It is not clear if this is possible when $n\geq 2$. 
\begin{enumerate}
\item[(i)] However, using a construction in the proof of Lemma 3.2 in Jerrard \cite{Jd}
for $n\geq 2$ and $\Omega=\{(x', x_n): |x'|<1, 0<x_n < 1- |x'|\}$,
we can replace
$\dist^{\alpha/n}(\cdot, \p\Omega)$ by $\frac{\dist^{1/n}(\cdot, \p\Omega)}{|\log \dist(\cdot, \p\Omega)|^{\alpha/n}}$ for any $\alpha>1$. This improvement corresponds to choosing
\[f(s) = -s^{1/n}/|\log s|^{\alpha/n}\quad\text{for } 0<s\leq 1/2,\]
after equation (3.8) in \cite{Jd}. Jerrard chose $\alpha=n>1$, but his proof just needs the finiteness of $\int_0^{1/s} [f(s)]^n\frac{1}{s^2}\, ds$, as indicated at the end of his proof. Clearly, the above choice of $f$ with any $\alpha>1$ satisfies this finiteness condition. In particular, the above discussion implies the following statements.
\item[(ii)] If $\Omega$ is a bounded interval in $\R$ and $u\in C(\overline{\Omega})$ is a convex function with $u=0$ on $\p\Omega$, then 
$\int_\Omega  \dist(\cdot, \p\Omega) \, d\mu_u<\infty$.

If $n\geq 2, \Omega=\{(x', x_n): |x'|<1, 0<x_n < 1- |x'|\}$, 
and $u\in C(\overline{\Omega})$ is a convex function with $u=0$ on $\p\Omega$, then 
\begin{equation}
\label{namuJ}
\int_{\{(x', x_n):\, |x'|<1/4, \,0<x_n < 1/4\}} \frac{\dist(\cdot, \p\Omega)}{|\log \dist(\cdot, \p\Omega)|^\alpha} \, d\mu_u<\infty\quad\text{for all }\alpha>1.
\end{equation}
\end{enumerate}
\end{rem}
\begin{rem}
\label{lackrem}
We give an example concerning the lack of compactness for convex solutions $u_m\in C(\overline{\Omega})$ to 
\begin{equation*}
   \det D^{2} u_m=\nu_m \quad\text{in} ~\Omega, \quad
u_m =0\quad\text{on}~\p \Omega,    
\end{equation*}
where $\nu_m$ are Borel measures satisfying
$\int_\Omega \dist(\cdot,\p\Omega)d\nu_m\leq C<\infty$. 
\begin{enumerate}
\item Consider for each positive integer $m$
\[\Omega=(-1, 1)\subset\R,\quad u_m(x)= |x|^{2m}-1,\quad \nu_m= 2m(2m-1) x^{2m-2}\,d\L^1.\]
Then
\begin{equation*}
\int_{-1}^1 \dist(x, \p \Omega) u_m''(x)\, dx\leq \int_{-1}^1 (1-|x|^2) 2m(2m-1) x^{2m-2}\,dx= \frac{8m}{2m+1}\leq 4.
\end{equation*}
We note that $\nu_m$ converges weakly to $\nu=0$, $u_m$ converges to $-1$ uniformly on compact subsets of $(-1, 1)$ while $u_m(\pm 1)=0$, so $u_m$ does not converges to the zero function which is the solution to the Dirichlet problem on $(-1,1)$ with the measure $\nu$.

\item The difference between $\nu_m$ and the measures in the proof of Theorem \ref{Dirdist} is that:  $\nu_m$ is not monotone.
\end{enumerate} 
\end{rem}
We identify a class of measures satisfying the hypothesis of Theorem \ref{Dirdist}.
\begin{lem}
\label{Distex}
Let $\Omega\subset\R^n$ be a bounded convex domain, $u\in \E(\Omega)$, and $v\in C(\overline{\Omega})$ be a nonpositive convex function. Let $E$ be Monge--Amp\`ere energy as in \eqref{Edef}. Then 
\begin{equation}
\label{Direxa1}
\int_\Omega  \dist(\cdot,\p\Omega) |u|^n d\mu_v\leq C(n,\Omega)\|v\|^n_{L^{\infty}(\Omega)} [E(u)]^{\frac{n}{n+1}}.\end{equation}
In this case, there exists a unique convex Aleksandrov solution 
$w\in \E(\Omega)$ to
\begin{equation}
\label{DirE1}
   \mu_w~=|u|^n \mu_v \h~\text{in} ~\Omega, \quad
w =0\h~\text{on}~\p \Omega.
\end{equation}
with
\begin{equation} 
\label{Dirineq2}
E(w) \leq C(n) \|v\|^{n+1}_{L^{\infty}(\Omega)} E(u).\end{equation}
\end{lem}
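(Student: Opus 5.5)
The plan is to derive everything from Blocki's inequality, Theorem \ref{BlemR}, combined with H\"older's inequality. The basic estimate to aim for is
\[
\int_\Omega |\phi|^{n+1}\, d\mu_v\leq 2(n+1)!\,\|v\|^n_{L^\infty(\Omega)}\, E(\phi)\quad\text{for all }\phi\in\E(\Omega).
\]
To get it, I apply Theorem \ref{BlemR} (ii) with the pair $(w,v)$ there replaced by $(0,\phi)$, all $u_i=v$, and all $\tilde u_i\equiv 0$. Then $\mu_n[\tilde u_1,\cdots,\tilde u_n]=0$. Moreover $m_k=0$ for $k<n$, because $m_k$ contains factors $\|\tilde u_j\|_{L^\infty(\Omega)}=0$ with $j>k$. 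For $k=n$ we have $d_n m_n=\|v\|^n_{L^\infty(\Omega)}$. The right-hand side is therefore $2(n+1)!\|v\|^n_{L^\infty(\Omega)}\int_\Omega|\phi|\,d\mu_\phi$, which is the claimed bound.

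\textbf{Step 1: the estimate \eqref{Direxa1}.} Let $h\in C(\overline\Omega)$ be the convex solution of $\mu_h=d\L^n$ in $\Omega$ with $h=0$ on $\p\Omega$, which exists by Theorem \ref{Dir_thm}. Then $h\in\E(\Omega)$ and $E(h)\le\|h\|_{L^\infty(\Omega)}|\Omega|=C(n,\Omega)$. By convexity, $|h|\geq \|h\|_{L^\infty(\Omega)}\dist(\cdot,\p\Omega)/\diam(\Omega)$, so $\dist(\cdot,\p\Omega)\le C(n,\Omega)|h|$. H\"older's inequality with exponents $n+1$ and $(n+1)/n$ gives
\[
\int_\Omega \dist(\cdot,\p\Omega)|u|^n\,d\mu_v\le C\Big(\int_\Omega|h|^{n+1}d\mu_v\Big)^{\frac1{n+1}}\Big(\int_\Omega|u|^{n+1}d\mu_v\Big)^{\frac n{n+1}}.
\]
Applying the basic estimate to $h$ and to $u$ bounds the right-hand side by $C(n,\Omega)\|v\|^n_{L^\infty(\Omega)}E(u)^{n/(n+1)}$, which is \eqref{Direxa1}.

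\textbf{Step 2: existence and uniqueness for \eqref{DirE1}.} By Step 1, the measure $\nu:=|u|^n\mu_v$ satisfies $\int_\Omega\dist(\cdot,\p\Omega)\,d\nu<\infty$. Theorem \ref{Dirdist} with $\varphi=0$ therefore gives a unique convex Aleksandrov solution $w\in C(\overline\Omega)$ with $w=0$ on $\p\Omega$.

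\textbf{Step 3: the energy bound \eqref{Dirineq2}.} Formally, H\"older's inequality and the basic estimate give
\[
E(w)=\int_\Omega|w||u|^n\,d\mu_v\le \Big(\int_\Omega |w|^{n+1}d\mu_v\Big)^{\frac1{n+1}}\Big(\int_\Omega|u|^{n+1}d\mu_v\Big)^{\frac n{n+1}}\le C\|v\|^n_{L^\infty(\Omega)}E(w)^{\frac1{n+1}}E(u)^{\frac n{n+1}}.
\]
Dividing by $E(w)^{1/(n+1)}$ yields \eqref{Dirineq2}. The main obstacle is that this division requires $E(w)<\infty$ a priori, which is exactly the statement $w\in\E(\Omega)$ we are trying to prove.

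I would handle it by truncation, as in the proof of Theorem \ref{Dirdist}. Let $\nu_m=\chi_{\{\dist(\cdot,\p\Omega)>1/m\}}\nu$. These measures have finite total mass, so the corresponding solutions $w_m$ satisfy $E(w_m)\le\|w_m\|_{L^\infty(\Omega)}\nu_m(\Omega)<\infty$. The argument above then applies to $w_m$ and gives the bound uniformly in $m$. The comparison principle shows $w_m$ decreases in $m$, and the proof of Theorem \ref{Dirdist} shows $w_m\to w$ uniformly on $\overline{\Omega}$. Since $|w_m|$ increases to $|w|$ and $\nu_m$ increases to $\nu$, monotone convergence gives
\[
E(w)=\int_\Omega|w|\,d\nu=\lim_{m\to\infty}\int_\Omega|w_m|\,d\nu_m=\lim_{m\to\infty}E(w_m).
\]
Hence $E(w)$ inherits the uniform bound, so $w\in\E(\Omega)$ and \eqref{Dirineq2} holds.
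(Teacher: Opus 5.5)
Your proof is correct and is essentially the paper's argument. You combine H\"older's inequality with Theorem \ref{BlemR}(ii), tested against a fixed energy-class function comparable to $\dist(\cdot,\p\Omega)$: the paper uses the cone $w_0$ with vertex $(x_0,-1)$, where you use the solution $h$ of $\mu_h=d\L^n$. You then truncate and bound the energy of the approximating solutions uniformly. The paper produces the same truncated measures $|u|^n\chi_{\{\dist(\cdot,\p\Omega)>1/m\}}\mu_v$ through the standard approximants $v_m$ of $v$, and passes to the limit by lower semicontinuity of $E$ (Proposition \ref{I_lsc2}); your monotone-convergence argument instead gives the slightly sharper identity $E(w)=\lim_{m\to\infty}E(w_m)$.
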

The last statement is a real version of a related result of Cegrell \cite[Theorem 5.1]{Ceg1}.  
\begin{proof} Fix $x_0\in\Omega$. Let $w_0\in C^{0, 1}(\overline{\Omega})$ be the convex function whose graph is the cone with vertex $(x_0, -1)$ and the base $\Omega$, with $w_0=0$ on $\p\Omega$. Then
$w_0\in\E(\Omega)\cap C^{0,1}(\overline{\Omega})$ and
\begin{equation} 
\label{Dirineq3}\frac{\dist(x,\p\Omega)}{\diam(\Omega)}\leq |w_0(x)| \leq \frac{\dist(x,\p\Omega)}{\dist(x_0,\p\Omega)}\quad\text{for all } x\in\Omega.\end{equation}
By the H\"older inequality and Theorem \ref{BlemR} (ii),
\begin{equation}
\label{Dirineq4}
\begin{split}\int_\Omega  |w_0||u|^n d\mu_v &\leq \Big(\int_\Omega  |w_0|^{n+1} d\mu_v\Big)^{\frac{1}{n+1}} \Big(\int_\Omega  |u|^{n+1} d\mu_v\Big)^{\frac{n}{n+1}}\\
& \leq C(n)\|v\|^n_{L^{\infty}(\Omega)} [E(w_0)]^{\frac{1}{n+1}} [E(u)]^{\frac{n}{n+1}}\\
& \leq C(n,\Omega, x_0)\|v\|^n_{L^{\infty}(\Omega)} [E(u)]^{\frac{n}{n+1}}.
\end{split}
\end{equation}
The estimate \eqref{Direxa1} follows from \eqref{Dirineq3} and \eqref{Dirineq4}.

\medskip
Now, we consider the Dirichlet problem \eqref{DirE1}. 
Let $v_m\in C(\overline{\Omega})$ be standard approximants of $v$; see Remark \ref{Stappro}. So $0\geq v_m\geq v $ in $ \Omega$ and 
the  convex function $v_m$ solves
\begin{equation*}
   \mu_{v_m}=\chi_{\{x\in\Omega: \dist(x, \p\Omega)>1/m\}}\mu_v \quad\text{in} ~\Omega, \quad
v_m =v\quad\text{on}~\p \Omega.
\end{equation*}
Moreover, by Theorem \ref{Dir_thm}, there exists a unique convex function $w_m\in C(\overline{\Omega})$ satisfying
\begin{equation*}
   \mu_{w_m}=|u|^n\mu_{v_m} \quad\text{in} ~\Omega, \quad
w_m=0\quad\text{on}~\p \Omega.
\end{equation*}
 Clearly $w_m\in \E(\Omega)$. The proof of Theorem \ref{Dirdist} shows that $w_m$ converges uniformly  in $\overline{\Omega}$ to a convex function $w\in C(\overline{\Omega})$ solving \eqref{DirE1}.  We compute as above
\[E(w_{m}) =\int_\Omega |w_m| |u|^n\, d\mu_{v_m} \leq C(n)\|v_m\|^n_{L^{\infty}(\Omega)} [E(w_m)]^{\frac{1}{n+1}} [E(u)]^{\frac{n}{n+1}}.\]
It follows that
\[E(w_m)\leq C(n) \|v_m\|^{n+1}_{L^{\infty}(\Omega)} E(u) \leq C(n) \|v\|^{n+1}_{L^{\infty}(\Omega)} E(u).\]
Now, \eqref{Dirineq2} follows from the lower-semicontinuity of the energy $E$ (see Proposition \ref{I_lsc2}). 
\end{proof}
Now next consider the Dirichlet problem where the measure involved satisfies a Poincar\'e-type inequality in $\E(\Omega)$. 
\begin{lem} \label{Distex2}
Let $\Omega\subset\R^n$ be a bounded convex domain. Let $\nu$ be a Borel measure with $\nu(\Omega)>0$ such that there is a constant $\lambda_0>0$ satisfying
\[\int_\Omega |v|^{n+1}\, d\nu\leq \lambda_0^{-1} E(v)\quad \text{for all } v\in \E(\Omega).\]
Let $u\in\E(\Omega)$.
Then, there exists a unique convex Aleksandrov solution 
$w\in \E(\Omega)$ to
\begin{equation*}
   \mu_w~=|u|^n \nu \h~\text{in} ~\Omega, \quad
w =0\h~\text{on}~\p \Omega.
\end{equation*}
with the Monge--Amp\`ere energy (see \eqref{Edef})
$E(w) \leq \lambda_0^{-1/n} \|u\|^{n+1}_{L^{n+1}(\Omega,d\nu)}$.
\end{lem}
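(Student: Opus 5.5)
The plan is to truncate $\nu$ near the boundary, solve the resulting Dirichlet problems with Theorem \ref{Dir_thm}, get a uniform energy bound from the Poincar\'e-type hypothesis, and pass to a monotone limit, as in the proof of Theorem \ref{Dirdist}.

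\textbf{Finiteness of the truncated measures.} For $m\in\N$ let $\nu_m=\chi_{\{x\in\Omega:\dist(x,\p\Omega)>1/m\}}\nu$. Fix $x_0\in\Omega$ and let $w_0$ be the cone function used in the proof of Lemma \ref{Distex}. Then $w_0\in\E(\Omega)$ and $|w_0|\geq \dist(\cdot,\p\Omega)/\diam(\Omega)$. The hypothesis applied to $v=w_0$ gives
\[\int_\Omega \dist^{n+1}(\cdot,\p\Omega)\,d\nu<\infty,\]
so $\nu_m(\Omega)<\infty$. Since $u$ is bounded, $|u|^n\nu_m$ also has finite mass.

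\textbf{Truncated solutions and the uniform energy bound.} By Theorem \ref{Dir_thm} there is a unique convex $w_m\in C(\overline\Omega)$ with $\mu_{w_m}=|u|^n\nu_m$ in $\Omega$ and $w_m=0$ on $\p\Omega$. Since $w_m$ is bounded and $\mu_{w_m}(\Omega)<\infty$, we have $w_m\in\E(\Omega)$. Using H\"older's inequality and then the hypothesis with $v=w_m$,
\[E(w_m)=\int_\Omega|w_m||u|^n\,d\nu_m\leq\Big(\int_\Omega|w_m|^{n+1}d\nu\Big)^{\frac1{n+1}}\|u\|^n_{L^{n+1}(\Omega,d\nu)}\leq \lambda_0^{-\frac1{n+1}}[E(w_m)]^{\frac1{n+1}}\|u\|^n_{L^{n+1}(\Omega,d\nu)}.\]
Because $E(w_m)<\infty$, we may divide and obtain
\[E(w_m)\leq\lambda_0^{-1/n}\|u\|^{n+1}_{L^{n+1}(\Omega,d\nu)},\]
uniformly in $m$.

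\textbf{Passage to the limit.} The measures $\mu_{w_m}$ are nondecreasing in $m$, so the comparison principle (Theorem \ref{compa1}) shows that $w_m$ is nonincreasing in $m$. The energy estimate of Theorem \ref{Al_E}(ii) together with the uniform energy bound gives
\[|w_m(x)|^{n+1}\leq C(n,\Omega)\,\dist(x,\p\Omega),\]
uniformly in $m$. This is a uniform $L^\infty$ bound and a uniform boundary barrier. Hence $w_m$ decreases to a convex $w$ that satisfies the same barrier, so $w\in C(\overline\Omega)$ and $w=0$ on $\p\Omega$. By Dini's theorem the convergence is uniform on $\overline\Omega$. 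Weak convergence of Monge--Amp\`ere measures gives $\mu_{w_m}\to\mu_w$ weakly. On the other hand, $|u|^n\nu_m$ increases to $|u|^n\nu$, and this limit is locally finite by the first step. Testing against $C_c(\Omega)$ and using monotone convergence identifies $\mu_w=|u|^n\nu$. The energy bound passes to $w$ by the lower semicontinuity of $E$ (Proposition \ref{I_lsc2}), so in particular $w\in\E(\Omega)$.

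\textbf{Uniqueness and the main obstacle.} Uniqueness follows from Theorem \ref{compa1}: two solutions have the same measure and the same boundary values, so each dominates the other. I expect the delicate step to be the limit passage. One must check that the monotone limit is genuinely continuous up to $\p\Omega$ (handled by the uniform barrier) and that the weak limit of $\mu_{w_m}$ coincides with $|u|^n\nu$ even though $\nu$ may have infinite total mass near the boundary. The local finiteness obtained in the first step is what makes this identification work.
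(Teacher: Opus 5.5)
Your proof is correct and follows essentially the paper's route. The paper's proof just says to repeat the argument of Lemma \ref{Distex}: truncate $\nu$ to $\nu_m$, solve the Dirichlet problems via Theorem \ref{Dir_thm}, bound $E(w_m)$ uniformly by H\"older plus the Poincar\'e hypothesis, pass to the monotone limit, and conclude by lower semicontinuity of $E$. The only minor differences are two: you check explicitly, via the cone function, that the hypothesis forces $\nu$ to be locally finite, and you get uniform convergence from the energy barrier $|w_m|^{n+1}\le C\dist(\cdot,\p\Omega)$ of Theorem \ref{Al_E}(ii) plus Dini, whereas the paper routes this through the Aleksandrov--Jerison estimate as in Theorem \ref{Dirdist} (using $\int_\Omega \dist(\cdot,\p\Omega)|u|^n\,d\nu<\infty$, which H\"older also gives from the hypothesis).
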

\begin{proof} The proof is the same as that of Lemma \ref{Distex} with  $\lambda_0^{-1}$ playing the role of the quantity $C(n) \|v\|^n_{L^{\infty}(\Omega)}$ there.
\end{proof}

It is worth recording that mixed Monge--Amp\`ere measures are also Monge--Amp\`ere measures. This shows that mixed measures, though very useful,  are not more general than unmixed ones.
\begin{lem} Let $\Omega\subset\R^n$ be a bounded convex domain. Let $u_1,\cdots, u_n\in C(\overline{\Omega})$ be convex functions. Then there exists a convex function $u\in C(\overline{\Omega})$ such that \[\mu_u =\mu_n[u_1, \cdots, u_n].\]
\end{lem}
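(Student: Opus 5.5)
The plan is to realize $\mu_n[u_1,\cdots,u_n]$ as the right-hand side of a Dirichlet problem and solve it with zero boundary data. The natural candidate is the solution $u\in C(\overline{\Omega})$ of
\[\det D^2 u=\mu_n[u_1,\cdots,u_n]\quad\text{in }\Omega,\qquad u=0\quad\text{on }\p\Omega.\]
By Theorem \ref{Dirdist}, such a $u$ exists (and is unique) as soon as
\[\int_\Omega \dist(\cdot,\p\Omega)\, d\mu_n[u_1,\cdots,u_n]<\infty.\]
So the whole task reduces to verifying this finiteness.

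To check it, I will use the polarization formula \eqref{polf}. It expresses $\mu_n[u_1,\cdots,u_n]$ as a signed combination of the measures $\mu_{u_{i_1}+\cdots+u_{i_k}}$. Since the mixed measure is nonnegative, it is bounded above by $\frac{1}{n!}$ times the sum of the positive terms. Hence it suffices to show that $\int_\Omega \dist(\cdot,\p\Omega)\,d\mu_w<\infty$ for every convex $w\in C(\overline{\Omega})$, where $w$ ranges over the sums $u_{i_1}+\cdots+u_{i_k}$.

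For a general convex $w\in C(\overline{\Omega})$ with nonzero boundary values, this integral can be infinite (Remark \ref{criticalrem}(i)). So before applying the bound I will modify the $u_i$ near $\p\Omega$ without changing the mixed measure in the interior.

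\textbf{Main obstacle.} The difficulty is that the boundary behaviour of the $u_i$ is arbitrary. I would try an exhaustion. Take $\Omega_k=\{\dist(\cdot,\p\Omega)>1/k\}$. Use the locality Lemma \ref{mMAloc} to replace each $u_i$, on a neighborhood of $\overline{\Omega_k}$, by a Lipschitz convex function agreeing with $u_i$ there. For example, take the supremum of the supporting affine functions of $u_i$ at points of $\Omega_{2k}$. Then $\mu_n[u_1,\cdots,u_n]\llcorner\Omega_k$ is a finite measure, by Remark \ref{MLiptot}(ii).

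This gives a solution $v_k$ of $\mu_{v_k}=\chi_{\Omega_k}\mu_n[u_1,\cdots,u_n]$ with $v_k=0$ on $\p\Omega$, and the $v_k$ are monotone decreasing by Theorem \ref{compa1}. To pass to the limit I need a uniform bound on the $v_k$, in the spirit of the proof of Theorem \ref{Dirdist}. Here I expect trouble, since the total mass near $\p\Omega$ may genuinely blow up.

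If the limit fails to stay bounded, I will change the boundary data instead. Take the Dirichlet data $\varphi=u_1+\cdots+u_n$ (or a suitable multiple). This is convex and continuous on $\overline{\Omega}$, and its Monge--Amp\`ere measure dominates a constant multiple of the mixed measure, by the expansion of $\mu_{u_1+\cdots+u_n}$ into nonnegative mixed terms.

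The comparison principle then sandwiches $v_k$ between $\varphi$ and the harmonic extension $h$ of $\varphi$. This is because $\mu_{v_k}\le n!\,\mu_{\varphi}/(\text{multinomial factor})$, and after rescaling $\varphi$ by a constant $c$ one gets $\mu_{v_k}\le\mu_{c\varphi}$. With boundary data $c\varphi$ this gives uniform bounds and equicontinuity at $\p\Omega$. The monotone limit $u$ then satisfies $\mu_u=\mu_n[u_1,\cdots,u_n]$ by weak continuity of Monge--Amp\`ere measures, with $u\in C(\overline{\Omega})$ and $u=c\varphi$ on $\p\Omega$.
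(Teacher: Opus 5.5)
Your final route is the paper's proof: truncate $\nu=\mu_n[u_1,\ldots,u_n]$ to $\chi_{\Omega_k}\nu$, solve the Dirichlet problem with boundary data $\varphi=u_1+\cdots+u_n$, use $\nu\le\mu_\varphi$ and the comparison principle to get $\varphi\le v_k\le h$ with $h$ the harmonic extension of $\varphi$, and pass to the monotone limit. The zero-boundary-data attempt and the Lipschitz replacement are unnecessary, since $\chi_{\Omega_k}\nu$ is already finite from $\nu\le\mu_\varphi$ and the local finiteness of $\mu_\varphi$; and no rescaling is needed, because the expansion gives $\mu_\varphi\ge n!\,\nu\ge\nu$, so $c=1$ works.
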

\begin{proof} Let $w:=u_1 + \cdots + u_n$.
The measure $\nu:=\mu_n[u_1, \cdots, u_n]$ has finite local masses in $\Omega$ and satisfies $\nu\leq \mu_w$.
By Theorem \ref{Dir_thm}, for each $m\in \N$,
there exists a unique convex function $u_m\in C(\overline{\Omega})$ satisfying
\begin{equation*}
   \mu_{u_m}=\chi_{\{x\in\Omega: \dist(x, \p\Omega)>1/m\}} \nu \quad \text{in} ~\Omega, \quad
u_m =w\quad\text{on}~\p \Omega.
\end{equation*}
Let $h\in C(\overline{\Omega})$ be a harmonic function in $\Omega$ with boundary value $w$.
By the comparison principle, we have $h\geq u_m\geq w$ in $\Omega$. Letting $m\to\infty$, we obtain a convex 
function $u\in C(\overline{\Omega})$ such that $\mu_u=\nu$ in $\Omega$ with boundary value $w$.
\end{proof}

 \subsection{A comparison principle for degenerate Monge--Amp\`ere equations}
 Using maximum principle techniques and rescaling, we prove a comparison principle for degenerate Monge--Amp\`ere equations involving general Borel measures.
\label{com_sec}
\begin{proof}[Proof of Theorem \ref{compnu}] It suffices to prove $u\geq v$ in $\Omega$. We consider several cases.

\medskip
\noindent
{\bf Case 1: $\nu=f \, d\L^n$ where $f\in C(\overline{\Omega})$ with $f\geq c>0$ in $\overline\Omega$.}

  Suppose otherwise that  $v-u$ is positive somewhere in $\Omega$.  Note that $v-u\leq 0$ on $\p\Omega$.
By changing coordinates, we can assume that $0\in\Omega$ satisfies
$v(0)- u(0)=\tau>0.$
Since $v<0$ in $\Omega$, we have  \[ u(0)/v(0) =[v(0)-\tau]/v(0) = 1+\e\quad \text{for some }\e>0.\]
From the uniform continuity of $f\geq c>0$ in $\overline{\Omega}$, we can find $\gamma_0\in (1, 2)$ such that 
\begin{equation}
\label{fga1}
f(\gamma y)\leq (1+\e)^{\frac{n-p}{2}} f(y)\quad \text{whenever } \gamma\in (1,\gamma_0],\, y\in \Omega/\gamma:=\{x/\gamma: x\in\Omega\}.\end{equation}
Let us choose \[\gamma:= \Big[\min\Big\{\frac{(1+\e)^{\frac{n-p}{2}} +1}{2}, \gamma_0\Big\}\Big]^{\frac{1}{2n}}\in (1, 2).\] 
Consider for $x\in\Omega$, \[v_{\gamma}(x)= v(x/\gamma),\quad
\text{and }\eta_{\gamma}(x) = u(x)/v_{\gamma}(x).\] If $\dist(x,\p\Omega)\rightarrow 0$, then $\limsup u(x)\geq 0$, so $\liminf \eta_{\gamma}(x)\leq 0$. Recall that $\eta_\gamma(0)= 1+ \e$. 
Therefore, $\eta_{\gamma}$ attains its maximum value in $\overline{\Omega}$ at $x_{\gamma}\in\Omega$ with $\eta_{\gamma}(x_{\gamma})=:a\geq 1+\e$.
We have
\begin{equation}
\label{uvga1}
\frac{|u(x)|}{|v_\gamma(x)|} = \frac{-u(x)}{-v_\gamma(x)} =\eta_\gamma(x)\leq a\quad\text{in }\Omega. \end{equation}
From our assumption that $v<0$ in $\Omega$, we find
$m:= \min_{\p\Omega/\gamma}|v|/4>0$.

Let
\[\delta:= m/[\diam(\Omega)]^2,\quad
w(x):= u(x)- av_\gamma(x) -\delta|x-x_\gamma|^2\equiv  u(x)+ a|v_\gamma(x)| -\delta|x-x_\gamma|^2.\]
Then
$w(x_\gamma)=0$,
while 
\[\min_{\p\Omega} w \geq a\min_{\p\Omega}|v_\gamma| -\delta [\diam(\Omega)]^2 =a\min_{\p\Omega/\gamma}|v| -\delta [\diam(\Omega)]^2\geq 4m-m>2m.   \]
Let
$S:= \{x\in\overline{\Omega}: w(x)<m\}$.
Then $S$ is an open subset of $\overline{\Omega}$ with positive Lebesgue measure and $w=m$ on $\p S$. The function $u$ is below $av_\gamma+ \delta |x-x_\gamma|^2 + m $ in $S$, but they coincide on $\p S$. 
Consequently, the maximum principle in \cite[Lemma 3.11]{Lbook} gives
\[\p u(S)\supset \p (av_\gamma+ \delta |x-x_\gamma|^2 + m) (S)= \p (av_\gamma+ \delta |x-x_\gamma|^2) (S).\]
It follows from $\mu_u(S)=|\p u(S)| $ and \cite[Lemma 3.10]{Lbook} that
\begin{equation}
\label{muuvS1}
\begin{split}
\mu_u(S)\geq |\p (av_\gamma+ \delta |x-x_\gamma|^2) (S)| &\geq |\p [av_\gamma](S)| + |\p (\delta |x-x_\gamma|^2) (S)|\\&= |\p [av_\gamma](S)| + (2\delta)^n |S|> a^n|\p v_\gamma(S)|.
\end{split}
\end{equation}
We compute from $\mu_u\leq |u|^p fd\L^n$ with the help of \eqref{uvga1} and \eqref{fga1}
\begin{equation*}
\begin{split}
\mu_u(S) \leq \int_S |u|^pf\, dx\leq a^p  \int_S |v_\gamma(x)|^p f(x)\, dx&= a^p \gamma^n \int_{S/\gamma} |v(y)|^p f(\gamma y)\, dy\\&\leq a^p(1+\e)^{\frac{n-p}{2}} \gamma^n \int_{S/\gamma} |v(y)|^p f( y)\, dy. 
\end{split}
\end{equation*}
Since $\p v_\gamma(x) = \gamma^{-1} \p v (x/\gamma)$, we have from $\mu_v\geq |v|^p fd\L^n$
\[ |\p v_\gamma(S)|=\gamma^{-n} |\p v (S/\gamma)|\geq \gamma^{-n}\int_{S/\gamma}|v(y)|^p f(y)\, dy.
  \]
Putting all these together, we find from $\mu_u(S)> a^n|\p v_\gamma(S)|$ that
\[a^p(1+\e)^{\frac{n-p}{2}} \gamma^n \int_{S/\gamma} |v(y)|^p f( y)\, dy>
a^n\gamma^{-n}\int_{S/\gamma}|v(y)|^p f(y)\, dy.\]
This is impossible since 
$\gamma^{2n}(1+\e)^{\frac{n-p}{2}} <(1+\e)^{n-p}\leq a^{n-p}$.
Therefore, $u\geq v$ in $\Omega$.

\medskip
\noindent
{\bf Case 2: $\nu$ is a Borel measure supported in  $\Omega'\Subset\Omega$.}

We first localize the problem. Observe that if $u\geq v$ in $\Omega'$, then from $u\leq 0$ in $\Omega$, we have \[\mu_u \leq |u|^p d\nu \leq |v|^p d\nu \leq \mu_v\quad \text{in }\Omega,\] so the standard comparison principle in Theorem \ref{compa1} gives 
$u\geq v$ in $\Omega$. 

Suppose now  there exists a point, say $0\in \Omega'$, such that 
$u(0)< v(0)<0$.
From the assumption $v<0$ in $\Omega$, we can find a constant $\alpha>0$ such that $|v|\geq \alpha$ in $\Omega'$.
From $n>p$, we can find constants $c>1$ and $m>0$ such that
\[c^n\geq (c+4m/\alpha)^p, \quad u(0)< cv(0)-4m.\]
Let
\[\tilde v:= cv-4m.\]
Then \[\tilde v\leq -4m<0 \quad\text{on }\p\Omega,\quad\text{and}\quad 
 c^n|v|^p\geq (c|v|+4m)^p=|\tilde v|^p\quad \text{in }\Omega'.\]
It follows that
\[\mu_{\tilde v}=c^n\mu_v \geq c^n|v|^p\nu \geq |\tilde v|^p \nu \quad \text{in }\Omega.\]
The above inequalities clearly hold in $\Omega'$ but since $\nu$ is supported in  $\Omega'$, they also hold in $\Omega$.
From $u(0)<\tilde v(0)<0$, we have  $u(0)/\tilde v(0)  = 1+\e$ for some $\e>0$.
Let $\eta = u/\tilde v$.
Since $\eta(0)= 1+ \e$ and $\eta=0$ on $\p\Omega$, $\eta$ attains its maximum value in $\overline{\Omega}$ at $x_{0}\in\Omega$ with $\eta(x_0)=:a\geq 1+\e$.
We have
\begin{equation}
\label{uvga2}
\frac{|u(x)|}{|\tilde v(x)|} = \frac{-u(x)}{-\tilde v(x)} =\eta(x)\leq a\quad\text{in }\Omega. \end{equation}
Let
\[\delta:= m/[\diam(\Omega)]^2,\quad
w(x):= u(x)- a\tilde v(x) -\delta|x-x_0|^2\equiv  u(x)+ a|\tilde v(x)| -\delta|x-x_0|^2.\]
Then
$w(x_0)=0$,
while $\min_{\p\Omega} w \geq a\min_{\p\Omega}|\tilde v| -\delta [\diam(\Omega)]^2 \geq 4m-m>2m. $

Let $S:= \{x\in\overline{\Omega}: w(x)<m\}$.
Then $S$ is an open subset of $\overline{\Omega}$ with positive Lebesgue measure and $w=m$ on $\p S$.
Arguing as in \eqref{muuvS1}, we have
$\mu_u(S)> a^n|\p \tilde v(S)|$.

We compute with the help of \eqref{uvga2}
\[\mu_u(S) \leq \int_S |u|^p\, d\nu\leq a^p  \int_S |\tilde v|^p\, d\nu, \quad
\text{and } 
 |\p \tilde v(S)|\geq \int_{S }|\tilde v|^p\, d\nu. \]
Putting all these together, we find 
\[a^p  \int_S |\tilde v|^p\, d\nu >a^n\int_{S}|\tilde v|^p\, d\nu.\]
This is impossible since $a>1$ and $p<n$.
Therefore, $u\geq v$ in $\Omega$. The theorem is proved.
\end{proof}

\section{Mixed Monge--Amp\`ere energies}
\label{pol_sec}
In this section, we will prove the generalized Cauchy--Schwarz inequality and the integration by parts formula in Theorem  \ref{gCSIBP}.
They are restated as Theorems \ref{gCS} and \ref{IBPf}, respectively.

\subsection{Generalized Cauchy--Schwarz inequality}
For the proof of the generalized Cauchy--Schwarz inequality, we need a couple of preparatory results.
The first result is a polarized version of \cite[Proposition 11.1(ii)]{Lbook}.
\begin{lem}
 \label{I_lsc1}
 Let $\Omega$ be a bounded convex domain in $\R^n$.
 Let $\{u_{0, k}\}_{k=1}^{\infty}$, $\{u_{1, k}\}_{k=1}^{\infty}, \cdots$, $\{u_{n, k}\}_{k=1}^{\infty}$ be sequences of nonpositive functions in $C(\overline\Omega)$
 converging uniformly on $\overline\Omega$ to
  $u_0$, $u_1, \cdots, u_n\in C(\overline{\Omega})$, respectively. 
 Furthermore, assume that  $u_{i, k}$ and $u_i$ ($1\leq i\leq n, \, k\geq 1$) are convex with
 \begin{equation}
\label{massbLk}
\sup_{k\geq 1}\mu_n[u_{1, k},\cdots, u_{n, k}] (\Omega)\leq C<\infty.\end{equation}
 This is satisfied if  $u_{i, k}$ have Lipschitz constants bounded from above by $L<\infty$.
 Then 
  \[\liminf\limits_{k \to \infty} \int_\Omega -u_{0, k}\, d \mu_n[u_{1, k},\cdots, u_{n, k}] \geq \int_\Omega -u_0\, d\mu_n[u_1,\cdots, u_n].\]
\end{lem}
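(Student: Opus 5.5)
The plan is to reduce to weak convergence of measures against compactly supported test functions, using nonnegativity of the integrand. Write $\mu_k:=\mu_n[u_{1,k},\cdots,u_{n,k}]$ and $\mu:=\mu_n[u_1,\cdots,u_n]$. By Definition \ref{mMAdef}, uniform convergence of $u_{i,k}$ to $u_i$ implies $\mu_k\to\mu$ weakly, that is, against every $\varphi\in C_c(\Omega)$. Since the functions $-u_{0,k}\ge 0$ are not compactly supported, we first cut off. For a compact set $K\subset\Omega$, choose $\chi\in C_c(\Omega)$ with $0\le\chi\le 1$ and $\chi=1$ on $K$. Then
\[
\int_\Omega -u_{0,k}\,d\mu_k\;\ge\;\int_\Omega \chi(-u_{0,k})\,d\mu_k\;\ge\;\int_\Omega \chi(-u_0)\,d\mu_k-\|u_{0,k}-u_0\|_{L^\infty(\Omega)}\,\mu_k(\Omega).
\]
The mass bound \eqref{massbLk} makes the last term tend to $0$. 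The middle term converges to $\int_\Omega\chi(-u_0)\,d\mu$ by weak convergence, because $\chi(-u_0)\in C_c(\Omega)$. Hence $\liminf_k\int_\Omega -u_{0,k}\,d\mu_k\ge\int_K(-u_0)\,d\mu$.

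Next we let $K$ increase to $\Omega$ along an exhaustion. Since $-u_0\ge 0$, monotone convergence gives $\int_K(-u_0)\,d\mu\nearrow\int_\Omega(-u_0)\,d\mu$, which yields the claim. It would be enough to know $\mu(\Omega)<\infty$ here, but this is not needed: monotone convergence works even if the limit integral is infinite. Note that the assumption $u_0=0$ on $\p\Omega$ is not required, in contrast to Proposition \ref{I_conti2}. We only need nonnegativity, and this is exactly why the statement is a $\liminf$ inequality rather than a limit.

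Finally we verify the remark about Lipschitz constants. If each $u_{i,k}$ has Lipschitz constant at most $L$, then every partial sum $u_{i_1,k}+\cdots+u_{i_j,k}$ has Lipschitz constant at most $nL$. By Remark \ref{MLiptot} (i), its Monge--Amp\`ere measure therefore has total mass at most $|B_{nL}(0)|$. The polarization formula \eqref{polf} then bounds $\mu_k(\Omega)$ by a constant $C(n,L)$, as in Remark \ref{MLiptot} (ii).

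The only delicate point is the use of weak convergence of mixed measures under uniform convergence, which Definition \ref{mMAdef} supplies. The polarization formula reduces this to the standard weak convergence of Monge--Amp\`ere measures, so I expect no real obstacle. The cutoff step, which handles the mismatch between $u_{0,k}$ and $u_0$ via the uniform mass bound, is the main thing to get right.
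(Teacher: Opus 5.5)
Your proof is correct and is essentially the paper's argument: a cutoff equal to $1$ on a compact set, weak convergence of $\mu_n[u_{1,k},\cdots,u_{n,k}]$ against the compactly supported function $\chi(-u_0)$, and an exhaustion of $\Omega$. The one difference is in how the boundary layer is handled. You discard it by positivity and finish with monotone convergence. The paper instead subtracts $\int_\Omega u_0\,d\mu_n[u_1,\cdots,u_n]$ and uses dominated convergence, which is why it needs $\mu_n[u_1,\cdots,u_n](\Omega)\le C$. Your version does not need this. Moreover, if you bounded the error term by $\mu_n[u_{1,k},\cdots,u_{n,k}](\mathrm{spt}\,\chi)$ using local Lipschitz bounds, as the paper does when proving \eqref{Icont1}, your argument would not need \eqref{massbLk} at all.
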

\begin{proof}
We first show, without appealing to \eqref{massbLk}, that for all $\varphi \in C_c(\Omega)$, 
\begin{equation}\label{Icont1}
\lim_{k \to \infty} \int_{\Omega} \varphi u_{0, k}\, d \mu_n[u_{1, k},\cdots, u_{n, k}]  = \int_{\Omega} \varphi u_0\, d\mu_n[u_1,\cdots, u_n].
\end{equation}
Indeed, write 
\begin{equation*}
\begin{split}
&\int_{\Omega} \varphi u_{0, k}\, d \mu_n[u_{1, k},\cdots, u_{n, k}]  - \int_{\Omega} \varphi u_0\, d\mu_n[u_1,\cdots, u_n] \\
&=  \int_{\Omega} \varphi u_{0}\, d \mu_n[u_{1, k},\cdots, u_{n, k}]  - \int_{\Omega} \varphi u_0\, d\mu_n[u_1,\cdots, u_n]- \int_{\Omega} \varphi(u_0- u_{0, k}) d \mu_n[u_{1, k},\cdots, u_{n, k}].
\end{split}
 \end{equation*}
If $\varphi \in C_c(\Omega)$, then the uniform convergences of $u_{i, k}$ to $u_i$, 
and  Definition \ref{mMAdef} give
 $$ \int_{\Omega} \varphi u_{0}\, d \mu_n[u_{1, k},\cdots, u_{n, k}]  - \int_{\Omega} \varphi u_0\, d\mu_n[u_1,\cdots, u_n]
\rightarrow 0~\quad\text{as } k\rightarrow \infty.$$
Moreover, $u_{i, k}$'s are uniformly Lipschitz on the support $\text{spt }\varphi$ of $\varphi$, so by Remark \ref{MLiptot}, 
\[\sup_{k\geq 1}\mu_n[u_{1, k},\cdots, u_{n, k}] (\text{spt }\varphi))\leq C(\varphi)<\infty,\]
from which we find
\[\big| \int_{\Omega} \varphi (u_0- u_{0, k}) d \mu_n[u_{1, k},\cdots, u_{n, k}] \big|\leq  \|\varphi(u_0 - u_{0, k})\|_{L^{\infty}(\Omega)} \mu_n[u_{1, k},\cdots, u_{n, k}] (\text{spt }\varphi)\to 0 
\]
as  $k\rightarrow \infty.$ Therefore,  (\ref{Icont1}) holds. 

\medskip
On the other hand, the uniform convergence of $u_{i, k}$ to $u_i$ implies that $\mu_n[u_{1, k},\cdots, u_{n, k}]$ converges weakly to $\mu_n[u_{1},\cdots, u_{n}]$. Hence,  the hypothesis  \eqref{massbLk} implies
\begin{equation}
\label{massbL1}
\mu_n[u_{1},\cdots, u_{n}] (\Omega)\leq C.\end{equation}
Let us denote
\[A_k:=\int_\Omega -u_{0, k}\, d \mu_n[u_{1, k},\cdots, u_{n, k}] + \int_\Omega u_0\, d\mu_n[u_1,\cdots, u_n].\]
 Let us fix $\e > 0$ and let $\Omega_\e:=\{x\in\Omega:\dist(x, \p\Omega)>\e\}$.
 Let $\varphi_{\e} \in C_c(\Omega)$ be such that $0 \leq \varphi_{\e} \leq 1$ in $\Omega$ and $\varphi_{\e} \equiv 1$ on $\overline{\Omega_{\e}}$. Then, for any $k \geq 1$, we have
\begin{equation*}
\begin{split}
A_k  &= \int_{\Omega} \varphi_{\e} u_0\, d\mu_n[u_1,\cdots, u_n]  - \int_{\Omega} \varphi_{\e} u_{0, k}\, d \mu_n[u_{1, k},\cdots, u_{n, k}]  \\ &\quad+ \int_{\Omega  \setminus \overline{\Omega_{\e}}} (1 - \varphi_{\e}) u_0\, d\mu_n[u_1,\cdots, u_n] + \int_{\Omega  \setminus \overline{\Omega_{\e}}} (\varphi_{\e}-1) u_{0, k}\, d \mu_n[u_{1, k},\cdots, u_{n, k}].
\end{split}
\end{equation*}
Note that the last term is nonnegative. Thus, invoking (\ref{Icont1}), we find
\begin{equation*}
\begin{split}
\liminf_{k\to\infty} A_k \geq \int_{\Omega  \setminus \overline{\Omega_{\e}}} (1 - \varphi_{\e}) u_0\, d\mu_n[u_1,\cdots, u_n].
\end{split}
\end{equation*}
Upon letting $\e\to 0$ and using the Dominated Convergence Theorem while recalling \eqref{massbL1}, we obtain the lower semicontinuity inequality of the lemma.
\end{proof}
The following lower semicontinuity result is a polarized version of \cite[Proposition 11.2]{Lbook}.
\begin{prop}[Lower semicontinuity of mixed Monge--Amp\`ere energy]
 \label{I_lsc2}
 Let $\Omega$ be a bounded convex domain in $\R^n$.
 Let $\{u_{0, k}\}_{k=1}^{\infty}$, $\{u_{1, k}\}_{k=1}^{\infty}, \cdots$, $\{u_{n, k}\}_{k=1}^{\infty}$ be uniformly bounded sequences of nonpositive convex functions in $C(\Omega)$
 converging uniformly on compact subsets of $\Omega$ to
  $u_0, u_1, \cdots, u_n\in C(\Omega)$, respectively. 
Then 
  \[\liminf\limits_{k \to \infty} \int_\Omega -u_{0, k}\, d \mu_n[u_{1, k},\cdots, u_{n, k}] \geq \int_\Omega -u_0\, d\mu_n[u_1,\cdots, u_n].\]
\end{prop}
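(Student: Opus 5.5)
Since the sequences now converge only locally uniformly in $\Omega$, and we have neither uniform convergence on $\overline\Omega$ nor any mass bound, I will reduce to Lemma \ref{I_lsc1} by truncating near the boundary. The integrands are nonnegative, because $-u_{0,k}\geq 0$ and mixed measures are nonnegative. So it suffices to show, for each fixed $\Omega'\Subset\Omega$ and each $\varphi\in C_c(\Omega)$ with $0\leq\varphi\leq 1$,
\[
\liminf_{k\to\infty}\int_\Omega -u_{0,k}\,d\mu_n[u_{1,k},\cdots,u_{n,k}]\geq \int_\Omega -\varphi u_0\,d\mu_n[u_1,\cdots,u_n].
\]
Taking the supremum over such $\varphi$ then gives the claim by monotone convergence; this holds whether or not the right-hand side is finite.

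\textbf{Main step.} Fix $\varphi\in C_c(\Omega)$ with $0\leq \varphi\leq 1$. Since $-u_{0,k}\geq 0$, we have
\[
\int_\Omega -u_{0,k}\,d\mu_n[u_{1,k},\cdots,u_{n,k}]\geq \int_\Omega -\varphi\, u_{0,k}\,d\mu_n[u_{1,k},\cdots,u_{n,k}].
\]
I then pass to the limit on the right using \eqref{Icont1}. Its proof in Lemma \ref{I_lsc1} only uses uniform convergence on the compact set $\text{spt}\,\varphi$. It also uses weak convergence of the mixed measures, which holds by Definition \ref{mMAdef} under locally uniform convergence. Finally, it needs a uniform bound on $\mu_n[u_{1,k},\cdots,u_{n,k}](\text{spt}\,\varphi)$. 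This bound comes from uniform local Lipschitz bounds: convex functions that are uniformly bounded on $\Omega$ are uniformly Lipschitz on a compact $K\Subset\Omega$, with constant at most $2\sup|u_{i,k}|/\dist(K,\p\Omega)$. Remark \ref{MLiptot} (ii), applied on a slightly larger convex neighborhood of $\text{spt}\,\varphi$, then bounds the masses. Hence
\[
\lim_{k\to\infty}\int_\Omega -\varphi u_{0,k}\,d\mu_n[u_{1,k},\cdots,u_{n,k}]=\int_\Omega -\varphi u_0\,d\mu_n[u_1,\cdots,u_n],
\]
which gives the displayed inequality.

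\textbf{Main obstacle.} The delicate point is justifying the local mass bound and weak convergence when the functions are only in $C(\Omega)$. To handle it, I will work on a convex subdomain $\Omega''$ with $\text{spt}\,\varphi\Subset\Omega''\Subset\Omega$, for example a sublevel set of a cone function or a slightly shrunk copy of $\Omega$. On $\overline{\Omega''}$ all the $u_{i,k}$ are continuous, uniformly Lipschitz, and converge uniformly. I then use the locality of mixed measures (Lemma \ref{mMAloc} (i)) to identify the measures on $\Omega''$ with those computed within $\Omega$. The last step is the supremum over $\varphi\nearrow 1$, which follows from monotone convergence as noted above.
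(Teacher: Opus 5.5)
Your proposal is correct and is essentially the paper's argument. The paper fixes a compact $K\subset\Omega$, chooses a convex open $U$ with $K\subset U\Subset\Omega$ on which all $u_{i,k}$ are uniformly Lipschitz (with constant $2M/\dist(\overline{U},\p\Omega)$), applies Lemma \ref{I_lsc1} on $U$, and takes the supremum over $K$; you instead invoke the convergence \eqref{Icont1} from that lemma's proof with a cutoff $\varphi$ and then let $\varphi\nearrow 1$ by monotone convergence. One small quibble: identifying the measures computed on $\Omega''$ with their restrictions from $\Omega$ rests on the locality of the normal mapping (the fact underlying Lemma \ref{mMAloc}), not on Lemma \ref{mMAloc}(i) as stated, and the uniform local mass bound can also be read off directly from the weak convergence $\mu_n[u_{1,k},\cdots,u_{n,k}]\to\mu_n[u_1,\cdots,u_n]$.
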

\begin{proof}
Let $K$ be any compact subset of $\Omega$. Choose an open convex set $U$ such that $K\subset U\subset \overline{U}\subset\subset \Omega$. Let $M:= \sup_{i, k} \|u_{i, k}\|_{L^{\infty}(\Omega)}<\infty$. Note that, on $U$, each $u_{i, k}$ is Lipschitz with Lipschitz constant bounded from above by $L:= \frac{2M}{\dist (\overline{U},\p \Omega)}$. By Lemma \ref{I_lsc1}, we have
\begin{equation*}
 \int_U -u_0\, d\mu_n[u_1,\cdots, u_n]
 \leq \liminf\limits_{k \to \infty} \int_U -u_{0, k}\, d \mu_n[u_{1, k},\cdots, u_{n, k}].
\end{equation*}
It follows that
 \[\int_K -u_0\, d\mu_n[u_1,\cdots, u_n] \leq \liminf\limits_{k \to \infty} \int_\Omega -u_{0, k}\, d \mu_n[u_{1, k},\cdots, u_{n, k}].\]
Taking the supremum of the left-hand side over $K$ gives the claimed lower-semicontinuity property. 
\end{proof}
The proof of Proposition \ref{I_lsc2} in combination with Egorov's theorem gives the next result.
\begin{prop}
 \label{I_lsc3}
 Let $\Omega$ be a bounded convex domain in $\R^n$.
 Let $\{u_{0, k}\}_{k=1}^{\infty}$, $\{u_{1, k}\}_{k=1}^{\infty}, \cdots$, $\{u_{n, k}\}_{k=1}^{\infty}$ be uniformly bounded sequences of nonpositive convex functions in $C(\Omega)$
 converging uniformly on compact subsets of $\Omega$ to
  $u_0, u_1, \cdots, u_n\in C(\Omega)$, respectively. 
Let $\{v_k\}_{k=1}^{\infty}$ be a sequence of nonnegative,  locally uniformly bounded functions that converges almost everywhere to $v\in L_{\text{loc}}^{\infty}(\Omega)$. 
Then 
  \[\liminf\limits_{k \to \infty} \int_\Omega -u_{0, k} v_k\, d \mu_n[u_{1, k},\cdots, u_{n, k}] \geq \int_\Omega -u_0 v\, d\mu_n[u_1,\cdots, u_n].\]
\end{prop}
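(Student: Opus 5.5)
The plan is to repeat the localization of Proposition \ref{I_lsc2} and to use Egorov's theorem, together with Lusin's theorem, to replace the weights $v_k$ by a single continuous weight. The only new difficulty compared with Proposition \ref{I_lsc2} is that $v_k$ is not continuous and $v$ is only measurable. Throughout, ``almost everywhere'' is used in the sense of the statement, and the exceptional sets produced below are measured by the same reference measure.

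First reduce to compact sets. Put $\mu_k:=\mu_n[u_{1,k},\cdots,u_{n,k}]$ and $\mu:=\mu_n[u_1,\cdots,u_n]$. The integrand $-u_0v\,d\mu$ is nonnegative, so it suffices to prove
\[\int_K -u_0 v\, d\mu\le \liminf_{k\to\infty}\int_\Omega -u_{0,k}v_k\,d\mu_k\]
for every compact $K\subset\Omega$, and then take the supremum over $K$. Fix $K$, choose an open convex $U$ with $K\subset U\Subset\Omega$, and let $M:=\sup_{i,k}\|u_{i,k}\|_{L^\infty(\Omega)}$. On $U$ all $u_{i,k}$ are Lipschitz with constant at most $2M/\dist(\overline U,\p\Omega)$. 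By Remark \ref{MLiptot}, $\sup_k\mu_k(\overline U)<\infty$ and $\mu(\overline U)<\infty$, and $\mu_k\to\mu$ weakly on $U$. Let $\Lambda:=\sup_k\|v_k\|_{L^\infty(U)}+\|v\|_{L^\infty(U)}<\infty$.

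Next fix $\eta>0$. By Egorov's theorem on $\overline U$, and by Lusin's theorem applied to $v$, there is a compact $F\subset K$ such that $v_k\to v$ uniformly on $F$, $v|_F$ is continuous, and $K\setminus F$ is small. Then, for $k$ large,
\[\int_\Omega -u_{0,k}v_k\,d\mu_k\ \ge\ \int_F -u_{0,k}\,(v-\eta)^+\,d\mu_k .\]
I would extend $(v-\eta)^+|_F$ by Tietze to a continuous $g$ with $0\le g\le\Lambda$, compactly supported in $U$. Since $u_{0,k}\to u_0$ uniformly and $\mu_k\to\mu$ weakly with uniformly bounded mass on $\mathrm{spt}\, g$, the argument giving \eqref{Icont1} yields
\[\int_U -u_{0,k}\,g\,d\mu_k\ \longrightarrow\ \int_U -u_0\,g\,d\mu .\]
The error from integrating over $U$ rather than over $F$ is at most $M\Lambda\,\mu_k(\{g>0\}\setminus F)$. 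I would control this by choosing the extension $g$ supported in a thin open neighbourhood $G$ of $F$, and using the outer-regularity estimate $\limsup_k\mu_k(\overline G\setminus F)\le\mu(\overline G\setminus F)$, which can be made small by shrinking $G$ towards $F$.

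Finally, letting $k\to\infty$, then shrinking $G$, then $\eta\to0$ and enlarging $F$ to exhaust $K$, gives the inequality on $K$, by monotone convergence and the finiteness of $\mu(\overline U)$ (which makes $-u_0 v\in L^1(K,d\mu)$).

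The main obstacle is the step where $F$ exhausts $K$. Here one needs $\mu(K\setminus F)\to0$, but Egorov and Lusin only make $K\setminus F$ small for the reference measure with respect to which $v_k\to v$ almost everywhere. If that reference measure is Lebesgue measure, this must be reconciled with a possibly singular $\mu$. The first thing I would try is to take the Egorov and Lusin sets relative to the measure $\mu+\sum_k2^{-k}\mu_k/(1+\mu_k(\overline U))$ on $\overline U$, with respect to which the almost-everywhere convergence in the statement is understood.
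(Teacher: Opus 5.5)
\paragraph{Verdict: genuine gap.} Your argument breaks at the ``outer-regularity estimate'' $\limsup_k\mu_k(\overline G\setminus F)\le\mu(\overline G\setminus F)$. Weak convergence gives $\limsup_k\mu_k(C)\le\mu(C)$ only for \emph{closed} $C$, and $\overline G\setminus F$ is not closed. Mass of $\mu_k$ lying just outside $F$ can converge onto $F$: there $\mu$ records it, while $\mu_k(\overline G\setminus F)$ stays large for every open $G\supset F$.

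This cannot be repaired by a better reference measure, because the proposition as literally stated is false, even with everywhere convergence. Take $n=1$, $\Omega=(-1,1)$, and
\[
u_{0,k}=u_0=|x|-2,\qquad u_{1,k}=|x-1/k|-2\ (k\ge 2),\qquad u_1=|x|-2,
\]
so that $\mu_k=2\delta_{1/k}$ and $\mu=2\delta_0$. Let $v_k=1-\chi_{\{1/k\}}$ and $v\equiv 1$. Every hypothesis holds and $v_k\to v$ at every point, yet
\[
\int_\Omega -u_{0,k}v_k\,d\mu_k=0<4=\int_\Omega -u_0 v\,d\mu .
\]
The same construction with $|x-x_k|$ and $x_k\to 0$ works in $\R^n$.

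In this example you can see exactly where your estimate fails. Uniform convergence on $F$ forces $1/k\notin F$ for all large $k$, while smallness of $\mu(K\setminus F)$ forces $0\in F$. Hence $\mu_k(\overline G\setminus F)=2$ for large $k$, but $\mu(\overline G\setminus F)=0$.

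Your proposed measure $\sigma=\mu+\sum_k2^{-k}\mu_k/(1+\mu_k(\overline U))$ does not help. Convergence here is already everywhere, and $\sigma(A)<\delta$ only yields $\mu_k(A)\le 2^k(1+\mu_k(\overline U))\delta$, which is not uniform in $k$. Also, if ``almost everywhere'' means Lebesgue, the right-hand side is not even well defined for singular $\mu$. Indeed, $v\in L^\infty_{\mathrm{loc}}$ is only a class modulo Lebesgue-null sets, and changing $v$ at one point changes $\int -u_0v\,d\mu$ when $\mu=2\delta_0$.

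\paragraph{Comparison with the paper, and what would fix it.} The paper's justification is the single sentence ``the proof of Proposition \ref{I_lsc2} in combination with Egorov's theorem,'' and it has the same hole. Egorov controls the exceptional set for one measure, not uniformly along the sequence $\mu_k$, so an extra hypothesis is needed. Two options:
\begin{itemize}
\item[(a)] \emph{Uniform domination.} Suppose that on each $U\Subset\Omega$ there is a finite Borel measure $\sigma_U$ with $\mu_k\le\sigma_U$ for all $k$ and $v_k\to v$ $\sigma_U$-a.e. Weak convergence then gives $\mu\le\sigma_U$ as well. Your Egorov--Lusin--Tietze scheme, taken relative to $\sigma_U$, closes verbatim, because $\mu_k(\overline G\setminus F)\le\sigma_U(\overline G\setminus F)\to0$ as $G\downarrow F$, uniformly in $k$, and $\mu(K\setminus F)\le\sigma_U(K\setminus F)$.
\item[(b)] \emph{Continuous weights.} If the $v_k$ are continuous and converge locally uniformly to $v$, the proof of Proposition \ref{I_lsc2} applies directly with $-u_{0,k}v_k$ in place of $-u_{0,k}$.
\end{itemize}
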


We recall \cite[Lemma 11.8]{Lbook} which is based on Lemma 3.1 in Hartenstine \cite{Har1}.
\begin{lem}
\label{lemH}
Let $\Omega$ be a bounded convex domain and let $u\in \E(\Omega)$. Then for any $\e>0$, there exists 
 a convex function $v\in C(\overline{\Omega})\cap C^2(\Omega)$ with $v=0$ on $\p\Omega$ and $D^2 v>0$ in $\Omega$ such that $$\|v-u\|_{L^{\infty}(\Omega)}+ |E(v)-E(u)|<\e.$$
 If $\p \Omega$ is uniformly convex and $C^{4,\alpha}$ for some $\alpha\in (0, 1)$, we can choose $v\in C^{4,\alpha}(\overline{\Omega})$ with $D^2 v>0$ in $\overline{\Omega}$. Here, $E$ is the Monge--Amp\`ere energy defined in \eqref{Edef}.
\end{lem}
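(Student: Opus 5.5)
The plan is to approximate $u$ in two stages: first by convex functions whose Monge--Amp\`ere measures are compactly supported, then by smooth strictly convex solutions of Dirichlet problems with smooth positive densities. A diagonal argument then combines the two stages.

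\emph{Stage 1 (truncation).} Let $u_m$ be the standard approximants of $u$ from Remark \ref{Stappro}, so that $\mu_{u_m}=\chi_{\{\dist(\cdot,\p\Omega)>1/m\}}\mu_u$, $u_m=0$ on $\p\Omega$, $u\leq u_m\leq 0$, and $u_m\to u$ uniformly on $\overline\Omega$. For the energies, Corollary \ref{mpcor} (applied on $\{u_m>u\}$, with $\mu_{u_m}\leq\mu_u$) gives
\[E(u_m)\leq \int_\Omega (-u_m)\,d\mu_u\leq E(u).\]
The lower semicontinuity in Proposition \ref{I_lsc2} gives $\liminf_m E(u_m)\geq E(u)$. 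Hence $E(u_m)\to E(u)$. It therefore suffices to approximate each fixed $u_m$, whose measure is finite and compactly supported in $\Omega$.

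\emph{Stage 2 (smoothing for fixed $m$).} Set $f_k:=\mu_{u_m}\ast\varphi_{1/k}+1/k$ on $\Omega$, where $\varphi_{1/k}$ is a standard mollifier and $k$ is large enough that the support stays inside $\Omega$. Then $f_k\in C^\infty(\overline\Omega)$ and $f_k\geq 1/k>0$. Let $v_k$ be the convex Aleksandrov solution of $\mu_{v_k}=f_k$ in $\Omega$ with $v_k=0$ on $\p\Omega$, given by Theorem \ref{Dir_thm}.

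Since the boundary data are affine, Caffarelli's strict convexity and interior $C^{2,\alpha}$ theory for smooth positive densities give $v_k\in C^\infty(\Omega)$ with $D^2v_k>0$. If $\p\Omega$ is uniformly convex and $C^{4,\alpha}$, the global regularity of Caffarelli--Nirenberg--Spruck gives instead $v_k\in C^{4,\alpha}(\overline\Omega)$ with $D^2v_k>0$ up to the boundary. I expect this regularity input, which is the only non-elementary ingredient, to be the main obstacle, and I would cite it directly.

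\emph{Convergence of $v_k$.} The masses are uniformly bounded: $\mu_{v_k}(\Omega)\leq\mu_u(\Omega_{1/m})+|\Omega|$. Aleksandrov's maximum principle (Theorem \ref{Al_E}(i)) then gives $|v_k(x)|^n\leq C\dist(x,\p\Omega)$ uniformly in $k$. So the $v_k$ are uniformly bounded, equicontinuous up to $\p\Omega$, and uniformly Lipschitz on compact subsets. By Arzel\`a--Ascoli, a subsequence converges uniformly on $\overline\Omega$ to a convex $w$ with $w=0$ on $\p\Omega$. Weak continuity of Monge--Amp\`ere measures gives $\mu_w=\lim_k f_k=\mu_{u_m}$. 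Uniqueness in Theorem \ref{Dir_thm} yields $w=u_m$, so the whole sequence converges uniformly to $u_m$.

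\emph{Convergence of energies.} Proposition \ref{I_conti}, applied with $f_k=-v_k\to -u_m$ (which vanishes on $\p\Omega$) and with uniformly bounded total masses, gives $E(v_k)\to E(u_m)$.

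\emph{Conclusion.} Given $\e>0$, first choose $m$ with $\|u_m-u\|_{L^\infty(\Omega)}+|E(u_m)-E(u)|<\e/2$. Then choose $k$ with $\|v_k-u_m\|_{L^\infty(\Omega)}+|E(v_k)-E(u_m)|<\e/2$, and take $v:=v_k$.
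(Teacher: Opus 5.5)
The paper does not prove this lemma itself. It quotes \cite[Lemma 11.8]{Lbook}, which rests on \cite[Lemma 3.1]{Har1}, so there is no in-paper argument to compare against.

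Your two-stage argument is correct. You truncate $\mu_u$ via the standard approximants $u_m$. You then replace the finite, compactly supported measure $\mu_{u_m}$ by the smooth positive densities $\mu_{u_m}\ast\varphi_{1/k}+1/k$ and solve the Dirichlet problems. Finally you pass to the limit using Aleksandrov's maximum principle, uniqueness in Theorem \ref{Dir_thm}, and Proposition \ref{I_conti}. This is the natural route for this statement. As you note, the only outside input is the regularity theory: Caffarelli's interior theory for affine boundary data, and the global $C^{4,\alpha}$ theory on uniformly convex $C^{4,\alpha}$ domains.

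Stage 1 can be simplified. You need neither Corollary \ref{mpcor} nor lower semicontinuity, because $E(u_m)=\int_\Omega (-u_m)\chi_{\{\dist(\cdot,\p\Omega)>1/m\}}\,d\mu_u$. The integrand is nonnegative and nondecreasing in $m$, since the $u_m$ decrease, and its pointwise limit is $-u$. Hence $E(u_m)\uparrow E(u)$ by monotone convergence.
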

The following result on abstract Cauchy--Schwarz inequalities is from Persson \cite[Theorem 4.1]{Per} whose proof is included for the reader's convenience.
\begin{thm} 
\label{Pethm}
Let $X$ be a set and $F$ be a nonnegative real-valued function on $X^n$ where $n\geq 2$ such that $F$ is symmetric with respect to the last $(n-1)$ variables. Assume that
\[F(u_1, u_2, u_3, \cdots, u_n) \leq [F(u_1, u_1, u_3, \cdots, u_n) ]^{\frac{1}{2}}  [F(u_2, u_2, u_3, \cdots, u_n) ]^{\frac{1}{2}}\quad\text{in } X^n.\]
Then
\[F(u_1, u_2, u_3, \cdots, u_n)\leq \prod_{i=1}^n [F(\underbrace{u_i, \cdots, u_i}_{n-\text{times}}) ]^{\frac{1}{n}} \quad\text{in } X^n.\]
\end{thm}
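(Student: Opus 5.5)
Set $a_i:=F(u_i,\ldots,u_i)$ and argue by induction on $n$. For $n=2$ the conclusion is exactly the hypothesis, since $F(u_1,u_2)\le F(u_1,u_1)^{1/2}F(u_2,u_2)^{1/2}$. We may assume every $a_i>0$; the degenerate case can be handled by perturbation or by a limiting argument in the exponents below. The pure algebra does not use any positivity, but it does use the hypothesis at all arguments.

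\textbf{Key intermediate claim.} For each $k\in\{1,\ldots,n\}$,
\[
F(u_1,\ldots,u_n)\le \prod_{i=1}^n a_i^{1/n}
\]
should follow from inequalities obtained by repeatedly applying the hypothesis in the first two slots and using symmetry in the last $n-1$ slots. A clean way to organize this is to define
\[
G(k_1,\ldots,k_m):=F\bigl(\underbrace{u_1,\ldots,u_1}_{k_1},\ldots,\underbrace{u_m,\ldots,u_m}_{k_m}\bigr),
\]
with the first slot occupied by some $u_j$ with $k_j\ge1$. By symmetry of the last $n-1$ slots, the value depends only on the multiplicities and on which function sits in the first slot.

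\textbf{Step 1: log-concavity.} Fix a profile and an index pair $i\neq j$. Apply the hypothesis with $u_i$ in slot 1 and $u_j$ in slot 2, keeping the other slots fixed. Writing $\phi(t)=\log G(\ldots,k_i+t,\ldots,k_j-t,\ldots)$, this gives midpoint concavity
\[
\phi(0)\le \tfrac12\bigl(\phi(1)+\phi(-1)\bigr).
\]
Here one must also use symmetry to move $u_i$ into slot 1, which requires $k_i\ge1$ after the shift.

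\textbf{Step 2: iteration.} A sequence that is midpoint convex in this sense on integers $t\in[-k_i,k_j]$ lies below its linear interpolation between the endpoints. Hence
\[
\phi(0)\le \frac{k_j}{k_i+k_j}\,\phi(k_j)+\frac{k_i}{k_i+k_j}\,\phi(-k_i),
\]
which merges all copies of $u_i$ and $u_j$ into a single function. Repeating this merging across pairs, by induction on the number of distinct functions present, yields
\[
\log G\le \sum_i \frac{k_i}{n}\log a_i.
\]
Applying it with all $k_i=1$ gives the theorem.

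\textbf{Main obstacle.} The delicate point is the endpoint situations. When one multiplicity is $0$, the function in slot 1 must still be available, so the hypothesis can only be applied in the form where both slot-1 and slot-2 contents are present; I would check that the interpolation argument still works at the boundary values. I also need that the exponents combine exactly to $k_i/n$, which I would verify by tracking that the interpolation weights are proportional to multiplicities.
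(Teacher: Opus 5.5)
There is a genuine gap, located at exactly the point that makes the statement nontrivial: $F$ is symmetric only in slots $2,\ldots,n$, not in slot $1$.

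\textbf{Step 1 fails.} Applying the hypothesis to $F(u_i,u_j,S)$, with $S$ the other $n-2$ arguments, produces $F(u_i,u_i,S)$ and $F(u_j,u_j,S)$. The latter has $u_j$ in slot $1$, while $\phi(0)$ and $\phi(1)$ have $u_i$ there. ``Using symmetry to move $u_i$ into slot 1'' is not allowed, since no symmetry acts on slot $1$. Hence $\phi(-1),\phi(0),\phi(1)$ are values of two different functions of the profile, there is no single midpoint-convex sequence, and Step 2 has nothing to interpolate. The problem you postponed as an ``endpoint'' issue already occurs at $t=-1$.

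\textbf{The pairwise merging fails for the same reason.} ``Merging across pairs'' is impossible for a pair neither of which sits in slot $1$, because the hypothesis only ever couples the slot-$1$ argument with one other argument.

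\textbf{Two smaller slips.} The chord in Step 2 has its weights swapped: it is $\frac{k_i}{k_i+k_j}\phi(k_j)+\frac{k_j}{k_i+k_j}\phi(-k_i)$, as needed for the pure-$u_i$ term to carry an exponent proportional to $k_i$. Also, no ``perturbation'' exists for an abstract set $X$. None is needed, though, since in a chain with $h_k^2\le h_{k-1}h_{k+1}$ a vanishing term forces all interior terms to vanish.

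\textbf{How to repair it.} The chain idea can be rescued by tracking both slot-$1$ choices at once. Fix $u,v\in X$, $2\le p\le n$, and a frozen list $R$ of $n-p$ arguments. Writing $w^{(k)}$ for $k$ copies of $w$, let $f_k=F(u,u^{(k-1)},v^{(p-k)},R)$ for $1\le k\le p$ and $g_k=F(v,u^{(k)},v^{(p-1-k)},R)$ for $0\le k\le p-1$. Apply the hypothesis to $(u,v,S_k)$ and to $(v,u,S_k)$, with $S_k=(u^{(k-1)},v^{(p-k-1)},R)$. This gives
\[
\max\{f_k,g_k\}\le (f_{k+1}\,g_{k-1})^{1/2}\qquad (1\le k\le p-1).
\]
So $h_k:=\max\{f_k,g_k\}$ (with $h_0=g_0$, $h_p=f_p$) satisfies $h_k^2\le h_{k-1}h_{k+1}$, and therefore $h_k\le h_p^{k/p}h_0^{(p-k)/p}$. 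At $k=p-1$ this reads
\[
F(u,u^{(p-2)},v,R)\le F(u^{(p)},R)^{\frac{p-1}{p}}F(v^{(p)},R)^{\frac1p},
\]
which is the paper's \eqref{Pe3} with a frozen block.

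The merging must then always go into the slot-$1$ function. Freeze the last active slot, apply the $(p-1)$-variable case, and finish with this two-function bound. That is exactly the paper's induction \eqref{Pen} followed by \eqref{Pe3}.

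The only real difference is how \eqref{Pe3} is obtained. The paper uses the hypothesis once, then the induction hypothesis on $F(v,u,\ldots,u,v)$, and absorbs the resulting power of $F(u,\ldots,u,v)$. The repaired chain gets \eqref{Pe3} directly from the hypothesis, without invoking the induction hypothesis.
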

\begin{proof} We prove by induction on $n\geq 2$. The case $n=2$ is obvious. 
Assume the theorem is verified for $(n-1)\geq 2$. We prove it for $n$.  For a fixed $u_n$, we apply the induction hypothesis to the function
$(u_1, \cdots, u_{n-1})\mapsto F(u_1, \cdots, u_{n-1}, u_n)$ to obtain
\begin{equation}
\label{Pen}F(u_1, u_2, u_3, \cdots, u_n)\leq \prod_{i=1}^{n-1} [F(\underbrace{u_i, \cdots, u_i}_{(n-1)-\text{times}}, u_n) ]^{\frac{1}{n-1}}.
\end{equation}
By the symmetry and our assumption,
\begin{equation}
\label{Pe1}
F(\underbrace{u, \cdots, u}_{(n-1)-\text{times}}, v) =F(u, v, \underbrace{u, \cdots, u}_{(n-2)-\text{times}}) \leq [F(\underbrace{u, \cdots, u, u}_{n-\text{times}})  ]^{\frac{1}{2}}  [F(v, v,
\underbrace{u, \cdots, u}_{(n-2)-\text{times}}) ]^{\frac{1}{2}}, \end{equation}
and together with the induction hypothesis
\begin{equation}
\label{Pe2}F(v, v,
\underbrace{u, \cdots, u}_{(n-2)-\text{times}}) =F(v, 
\underbrace{u, \cdots, u}_{(n-2)-\text{times}},v) \leq [F( 
\underbrace{v, \cdots, v}_{n-\text{times}})]^{\frac{1}{n-1}} [F( 
\underbrace{u, \cdots, u}_{(n-1)-\text{times}}, v)]^{\frac{n-2}{n-1}}. \end{equation}
Substituting \eqref{Pe2} into \eqref{Pe1}, we find
\begin{equation}
\label{Pe3}
F(\underbrace{u, \cdots, u}_{(n-1)-\text{times}}, v) \leq [F(\underbrace{u, \cdots,  u}_{n-\text{times}})  ]^{\frac{n-1}{n}}  [F(\underbrace{v, \cdots, v}_{n-\text{times}})  ]^{\frac{1}{n}}. 
\end{equation}
Substituting \eqref{Pe3} into \eqref{Pen}, we have the conclusion of the theorem for $n$.
\end{proof}

We are now ready to prove the following  generalized Cauchy--Schwarz inequality, which is Theorem \ref{gCSIBP} (i). It asserts in particular the finiteness of the mixed Monge--Amp\`ere energy (see Definition \ref{mEdef}) in the energy class. 
\begin{thm}[Generalized Cauchy--Schwarz inequality] 
\label{gCS}
Let $\Omega\subset\R^n$ be a bounded convex domain.
Let $u_0, u_1,\cdots, u_n\in \E(\Omega)$ and $E$ be the Monge--Amp\`ere energy as in \eqref{Edef}. Then
\begin{equation}
\label{gCSu}
\int_\Omega |u_0| \,d\mu_n[u_1, \cdots, u_n] \leq [E(u_0)]^\frac{1}{n+1}\cdots [E(u_n)]^\frac{1}{n+1}.\end{equation}
\end{thm}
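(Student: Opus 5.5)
The approach is to reduce to smooth functions and then invoke Persson's abstract inequality (Theorem \ref{Pethm}) with $n+1$ variables. Define, for nonpositive convex $u_0,\dots,u_n$ in a suitable class $X$,
\[F(u_0,u_1,\dots,u_n):=\int_\Omega (-u_0)\,d\mu_n[u_1,\dots,u_n].\]
This functional is symmetric in the last $n$ variables because the mixed measure is invariant under permutation. Theorem \ref{Pethm}, applied with $n+1$ in place of $n$, then yields \eqref{gCSu}, provided we check two things: full symmetry in the first two slots, which follows from integration by parts, and the two-variable Cauchy--Schwarz inequality
\[F(u_0,u_1,u_2,\dots,u_n)^2\le F(u_0,u_0,u_2,\dots,u_n)\,F(u_1,u_1,u_2,\dots,u_n).\]

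\emph{Smooth case.} Let $X$ be the set of convex $v\in C^3(\overline\Omega)$ that are nonpositive and vanish near (or on) $\p\Omega$. Concretely, we can work on a slightly smaller domain, or use functions that are $C^3$ up to the boundary with zero boundary values on a smooth uniformly convex domain. Using \eqref{divf}, \eqref{div0} and integration by parts, with the boundary terms vanishing since $u_0=u_1=0$ on $\p\Omega$, we get
\[F(u_0,u_1,u_2,\dots)=\int_\Omega \tilde M^{ij}[u_2,\dots,u_n]\,D_iu_0\,D_ju_1\,dx.\]
This is symmetric in $(u_0,u_1)$. Since $\tilde M^{ij}\ge0$ by \eqref{tMpos}, this expression is a nonnegative semidefinite bilinear form in $(Du_0,Du_1)$, and the pointwise Cauchy--Schwarz inequality followed by H\"older gives the two-variable inequality. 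Theorem \ref{Pethm} then yields \eqref{gCSu} for smooth $u_i$. For the boundary issue in the integration by parts, I would first take $\Omega$ smooth and uniformly convex and $u_i\in C^{4,\alpha}(\overline\Omega)$, as provided by Lemma \ref{lemH}. The boundary term in $\int u_0\,D_i(\tilde M^{ij}D_ju_1)$ vanishes because $u_0=0$ on $\p\Omega$; the gradient form is then manifestly symmetric.

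\emph{Approximation.} For general bounded convex $\Omega$, exhaust $\Omega$ by smooth uniformly convex domains $\Omega_k\nearrow\Omega$. Let $u_{i,k}$ solve $\mu_{u_{i,k}}=\chi_{\Omega_k'}\mu_{u_i}$ with zero data on $\p\Omega$, in the style of the standard approximants (Remark \ref{Stappro}); this gives monotone uniform convergence and energies converging to $E(u_i)$ by Corollary \ref{mpcor}. Then apply Lemma \ref{lemH} to get smooth approximants $v_{i,k}$ with $\|v_{i,k}-u_i\|_{L^\infty}+|E(v_{i,k})-E(u_i)|\to0$. Lemma \ref{lemH} directly gives $v\in C^2(\Omega)$ with zero boundary values and energy close. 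To run the smooth argument on such $v$, I would restrict to sublevel sets $\{v<-\delta\}$, which are smooth and convex, with $v+\delta$ vanishing on their boundary. Apply the smooth inequality there and let $\delta\to0$, using monotone convergence for the left side and $E$-continuity for the right side.

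\emph{Passing to the limit.} The inequality for the $v_{i,k}$ passes to the limit by the lower semicontinuity Proposition \ref{I_lsc2}: the left side is lower semicontinuous and bounded by the product of the energies $E(v_{i,k})\to E(u_i)$. This gives \eqref{gCSu}.

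The main obstacle is the smooth-case integration by parts near a possibly nonsmooth boundary, where $Du$ may blow up. This is why I plan to do the computation on the sublevel sets $\{v<-\delta\}$ and rely on Lemma \ref{lemH} together with energy convergence and lower semicontinuity.
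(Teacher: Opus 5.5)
\textbf{Where the proposal breaks: reducing a general domain to the smooth case.}

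Your smooth-case argument matches the paper's Steps 1 and 2. That argument integrates by parts to $\int \tilde M^{ij}[u_2,\dots,u_n]D_iu_0D_ju_1\,dx$, applies pointwise Cauchy--Schwarz, and then Theorem \ref{Pethm} with $n+1$ variables. It then passes to $\E(\Omega)$ on a smooth uniformly convex domain via the $C^{4,\alpha}(\overline\Omega)$ approximants of Lemma \ref{lemH} and Proposition \ref{I_lsc2}. The gap is in going from a general bounded convex $\Omega$ to that case.

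First, your standard approximants solve $\mu_{u_{i,k}}=\chi_{\Omega_k'}\mu_{u_i}$ with zero data on $\p\Omega$. They therefore still live on the nonsmooth domain $\Omega$ and do nothing for the boundary problem. Second, Lemma \ref{lemH} on a general $\Omega$ only gives $v\in C(\overline\Omega)\cap C^2(\Omega)$, for which the integration by parts on $\Omega$ is not justified.

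Third, the sublevel-set fix does not work. The smooth inequality needs all of $u_0,\dots,u_n$ to vanish on the boundary of one common domain $D$. Otherwise the boundary term $\int_{\p D}u_0\tilde M^{ij}D_ju_1\nu_i$ survives and the symmetric nonnegative bilinear form is lost. But $\{v_i<-\delta\}$ depends on $i$, and for $j\neq 0$, $v_j+\delta$ does not vanish on $\p\{v_0<-\delta\}$. If you instead modify the $v_j$ to vanish on a common $\p D$ (for instance by re-solving Dirichlet problems there), then $\mu_n[v_1,\dots,v_n]$ changes, since mixed measures are not determined by the individual Monge--Amp\`ere measures. Your ``monotone convergence for the left side'' step is then no longer available.

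\textbf{The missing step (the paper's Step 3).}

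Take smooth uniformly convex $\Omega_k\nearrow\Omega$ and let $u_{i,k}$ solve $\det D^2u_{i,k}=\chi_{\Omega_k}\mu_{u_i}$ in $\Omega_k$ with $u_{i,k}=0$ on $\p\Omega_k$. All $n+1$ approximants now vanish on the common smooth boundary $\p\Omega_k$, so the smooth-domain case applies on $\Omega_k$.

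Comparison gives $u_i\le u_{i,k}\le 0$. Hence $E(u_{i,k};\Omega_k)=\int_{\Omega_k}|u_{i,k}|\,d\mu_{u_i}$, with integrand dominated by $|u_i|$. Combined with local uniform convergence $u_{i,k}\to u_i$, this gives $E(u_{i,k};\Omega_k)\to E(u_i;\Omega)$.

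The left side is handled by lower semicontinuity rather than monotonicity. Apply Proposition \ref{I_lsc2} on each convex $U\Subset\Omega$, which lies in $\Omega_k$ for large $k$, and then take the supremum over $U$.

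A minor point: a convex function vanishing \emph{near} $\p\Omega$ on a convex domain is identically zero. Your class $X$ should therefore use vanishing \emph{on} $\p\Omega$.
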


\begin{proof}
We divide the proof into several steps.

\medskip
\noindent
{\it Step 1. } Consider the case where  $\p\Omega\in C^3$, $u_0, \cdots, u_n\in C^3(\overline{\Omega})$ and vanish on the boundary $\p\Omega$. 
We recall Definition \ref{mEdef}, use \eqref{div0} and \eqref{divf} and integrating by parts to obtain
\begin{equation*}E(u_0, u_1, \cdots, u_n) =\int_\Omega -u_0\tilde M[u_1, \cdots, u_n]\,dx =\int_\Omega \sum_{i, j=1}^n \tilde M^{ij} [u_1, \cdots, u_{n-1}] D_ju_n D_i u_0\, dx.\end{equation*}
From \eqref{tMpos} and the Cauchy--Schwarz inequality, we have
\begin{equation*}
\begin{split}
\int_\Omega \sum_{i, j=1}^n \tilde M^{ij} [u_1, \cdots, u_{n-1}] D_j u_n  D_i u_0\, dx
&\leq \Big(\int_\Omega \sum_{i, j=1}^n \tilde M^{ij} [u_1, \cdots, u_{n-1}] D_j u_0  D_i u_0\, dx\Big)^{\frac{1}{2}}\\
&\quad \times  \Big(\int_\Omega \sum_{i, j=1}^n \tilde M^{ij} [u_1, \cdots, u_{n-1}] D_j u_n  D_i u_n\, dx\Big)^{\frac{1}{2}}.
\end{split}
\end{equation*}
This means 
\[E(u_0, u_1, \cdots, u_n)\leq  \big[E(u_0, u_0, u_1,\cdots, u_{n-1})\big]^{\frac{1}{2}}\cdot   \big[E(u_n, u_n, u_1,\cdots, u_{n-1})\big]^{\frac{1}{2}}.\]
Now, using Theorem \ref{Pethm}, we obtain
\[E(u_0, u_1,\cdots, u_n) \leq \prod_{i=0}^{n} \big[E(\underbrace{u_i, \cdots, u_{i}}_{(n+1)-\text{times}})\big]^{\frac{1}{n+1}}=[E(u_0)]^\frac{1}{n+1}\cdots [E(u_n)]^\frac{1}{n+1}.\]
The theorem is proved for this case.

\medskip
\noindent
{\it Step 2.} Consider the case  $u_i\in \E(\Omega)$ where $\Omega$ is a bounded uniformly convex domain with $\p\Omega \in C^{4,\alpha}$ for some $\alpha\in (0, 1)$. By Lemma \ref{lemH}, for each $0\leq i\leq n$ and $k\in\N$, we can find a convex function $u_{i, k}\in C^{4,\alpha}(\overline{\Omega})$ with $u_{i, k}=0$ on $\p\Omega$ and $D^2 u_{i, k}>0$ in $\overline{\Omega}$ such that 
$$\|u_{i, k}-u_i\|_{L^{\infty}(\Omega)}+ |E(u_{i, k})-E(u_i)|<1/k.$$
From Step 1, we have
\[\int_\Omega |u_{0, k}| \,d\mu_n[u_{1, k}, \cdots, u_{n, k}] \leq [E(u_{0, k})]^\frac{1}{n+1}\cdots [E(u_{n, k})]^\frac{1}{n+1}.\]
Letting $k\to\infty$ and using the lower semicontinuity result in Proposition \ref{I_lsc2}, we obtain \eqref{gCSu}. 

\medskip
\noindent
{\it Step 3.} The general case. Consider a sequence of smooth, uniformly convex domains $\Omega_k\subset\Omega$ such that $\bigcup_{k=1}^\infty\Omega_k =\Omega$. For each $i$, let $u_{i, k}$ solves
\[\det D^2 u_{i, k}=\chi_{\Omega_k}\mu_{u_i} \quad\text{in }\Omega_k,\quad u_{i, k}=0  \quad\text{on }\p\Omega_k.\]
By the comparison principle (Theorem \ref{compa1}), we have $u_i\leq u_{i, k}\leq 0$ in $\Omega_k$.
Hence
\begin{equation}
\label{g0} M:= \sup_{i, k} \|u_{i, k}\|_{L^{\infty}(\Omega_k)}\leq \sup_{0\leq i\leq n} \|u_i\|_{L^{\infty}(\Omega)}<\infty.\end{equation}
By Step 2, we have for each $k$
\begin{equation}
\label{g1}
\int_{\Omega_k} |u_{0, k}| \,d\mu_n[u_{1, k}, \cdots, u_{n, k}] \leq [E(u_{0, k};\Omega_k)]^\frac{1}{n+1}\cdots [E(u_{n, k};\Omega_k)]^\frac{1}{n+1}.\end{equation}
Here we emphasize the domain dependence of the Monge--Amp\`ere energy $E$.
Fix $0\leq i\leq n$. We now show that $u_{i, k}$ converges locally uniformly in $\Omega$ to $u_i$ (this comes from \eqref{g0}) and 
\begin{equation}
\label{g2}\lim_{k\to\infty} E(u_{i, k};\Omega_k) = E(u_i;\Omega).\end{equation}
Assume  \eqref{g2} holds. For each open convex set $U\Subset\Omega$, there exists a positive integer $k_U$ such that for all $k>k_U$, we have $U\subset \Omega_k$. By Proposition \ref{I_lsc2}, we have
\begin{equation}
\label{g3} \int_U |u_0|\, d\mu_n[u_1,\cdots, u_n] \leq \liminf_{k\to\infty}\int_{\Omega_k} |u_{0, k}|\, d \mu_n[u_{1, k},\cdots, u_{n, k}].\end{equation}
Combining \eqref{g1}--\eqref{g3}, we obtain \eqref{gCSu}.

\medskip
It remains to prove \eqref{g2}. For each fixed $\Omega'\Subset\Omega$, and $k$ large such that $\Omega'\subset\Omega_k$, we have
\begin{equation*}
\begin{split}
|E(u_{i, k};\Omega_k) - E(u_i;\Omega)| \leq \int_{\Omega\setminus\Omega_k}|u_i| d\mu_{u_i}  + |\int_{\Omega_k\setminus\Omega'} (u_i-u_{i, k}) d\mu_{u_i}| + |\int_{\Omega'} (u_i-u_{i, k}) d\mu_{u_i}|.
 \end{split}
\end{equation*}
Since $ \int_{\Omega}|u_i| d\mu_{u_i}<\infty$ and $u_{i, k}$ converges locally uniformly in $\Omega$ to $u_i$, the first and third terms go to $0$ when $k\to\infty$. For the second term, we have
\[ |\int_{\Omega_k\setminus\Omega'} (u_i-u_{i, k}) d\mu_{u_i}|  =  \int_{\Omega_k\setminus\Omega'} (u_{i, k}-u_i) d\mu_{u_i} \leq \int_{\Omega\setminus\Omega'} |u_i| d\mu_{u_i}.\]
Therefore, we have
\[\limsup_{k\to\infty} |E(u_{i, k};\Omega_k) - E(u_i;\Omega)| \leq \int_{\Omega\setminus\Omega'} |u_i| d\mu_{u_i}.\]
Letting $\Omega'$ tend to $\Omega$, we obtain \eqref{g2}.
\end{proof}
A simple consequence of Theorem \ref{gCS} is the convexity of the cone $\E(\Omega)$ of convex functions and the convexity of the $(n+1)$-th root of the Monge--Amp\`ere energy $E$ defined in \eqref{Edef}. 
\begin{prop}
\label{cvxlem}
 Let $\Omega$ be a bounded convex domain in $\R^n$ and $u, v\in \E(\Omega)$. Then
\[[E(u+ v)]^{\frac{1}{n+1}} \leq  [E(u)]^{\frac{1}{n+1}} + [E(v)]^{\frac{1}{n+1}}.\]
Consequently, $u+ v\in \E(\Omega)$ and $E^{\frac{1}{n+1}}$ is convex on $\E(\Omega)$.
\end{prop}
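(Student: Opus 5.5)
The plan is to expand $E(u+v)$ multilinearly in the mixed Monge--Amp\`ere measure and then bound each term by the generalized Cauchy--Schwarz inequality of Theorem \ref{gCS}. First, $u+v\in C(\overline{\Omega})$ is convex and vanishes on $\p\Omega$, and $|u+v|=|u|+|v|$ because both functions are nonpositive. Using \eqref{MixMA1} and multilinearity,
\[\mu_{u+v}=\mu_n[u+v,\ldots,u+v]=\sum_{k=0}^n\binom{n}{k}\mu_n[\underbrace{u,\ldots,u}_{k},\underbrace{v,\ldots,v}_{n-k}].\]
Multilinearity is stated for positive coefficients, which suffices here. Multiplying by $|u|+|v|$ and integrating, we obtain
\[E(u+v)=\sum_{k=0}^n\binom{n}{k}\Big(\int_\Omega|u|\,d\mu_n[u^{(k)},v^{(n-k)}]+\int_\Omega|v|\,d\mu_n[u^{(k)},v^{(n-k)}]\Big),\]
where $u^{(k)}$ means $u$ repeated $k$ times. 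All terms are nonnegative, so the identity holds in $[0,\infty]$.

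Next we apply Theorem \ref{gCS}, which is valid since $u,v\in\E(\Omega)$. Writing $a=E(u)^{1/(n+1)}$ and $b=E(v)^{1/(n+1)}$, the first integral in the $k$-th term is at most $a^{k+1}b^{n-k}$ and the second is at most $a^kb^{n-k+1}$. Summing gives
\[E(u+v)\leq \sum_{k=0}^n\binom{n}{k}(a^{k+1}b^{n-k}+a^kb^{n-k+1})=(a+b)(a+b)^n=(a+b)^{n+1}.\]
This is finite, so $u+v\in\E(\Omega)$, and taking $(n+1)$-th roots gives the inequality.

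For convexity, note that $\E(\Omega)$ is closed under multiplication by $t>0$, and $E(tu)=t^{n+1}E(u)$ since $\mu_{tu}=t^n\mu_u$. Hence $\E(\Omega)$ is a convex cone and $E^{1/(n+1)}$ is positively $1$-homogeneous. For $t\in[0,1]$, apply the triangle inequality to $tu$ and $(1-t)v$, treating $t=0,1$ trivially, to get
\[E^{1/(n+1)}(tu+(1-t)v)\le tE^{1/(n+1)}(u)+(1-t)E^{1/(n+1)}(v).\]
The only delicate point is justifying the multilinear expansion of $\mu_{u+v}$ into mixed measures for merely continuous convex functions. This follows from the polarization formula \eqref{polf} together with the stated multilinearity, or by approximating with smooth functions via Definition \ref{mMAdef}; the rest is bookkeeping.
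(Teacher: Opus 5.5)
Your proposal is correct and follows essentially the same route as the paper: expand $\mu_{u+v}$ multilinearly into mixed measures, bound each term $\int_\Omega|u|\,d\mu_n[\cdots]$ and $\int_\Omega|v|\,d\mu_n[\cdots]$ by the generalized Cauchy--Schwarz inequality, sum to $(a+b)^{n+1}$, and deduce convexity from $(n+1)$-homogeneity. Your grouping of the sum as $a(a+b)^n+b(a+b)^n$ is a slightly tidier way of doing the bookkeeping than the paper's reindexing with Pascal's rule.
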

\begin{proof} Using the convention that  ${n\choose k}=0$ when $k<0$ or $k>n$, we have
\begin{equation*}
\begin{split}
E(u+ v) &=\int_\Omega -(u+ v) d\mu_n[u+ v, \cdots, u+ v]\\ &=\sum_{m=0}^n (-u) {n\choose m}d\mu_n[\underbrace{u,\cdots, u}_{m\text{ times}}, \underbrace{v,\cdots, v}_{(n-m)\text{ times}}] + 
\sum_{k=0}^n (-v) {n\choose k} d\mu_n[\underbrace{u,\cdots, u}_{k\text{ times}}, \underbrace{v,\cdots, v}_{(n-k)\text{ times}}]\\
&\leq \sum_{m=0}^n  {n\choose m} [E(u)]^{\frac{m+1}{n+ 1}}  [E(v)]^{\frac{n-m}{n+ 1}} +  \sum_{k=0}^n {n\choose k} [E(u)]^{\frac{k}{n+ 1}}  [E(v)]^{\frac{n-k+1}{n+ 1}} \\
&= \sum_{k=0}^{n+1}  {n\choose k-1} [E(u)]^{\frac{k}{n+ 1}}  [E(v)]^{\frac{n-k+1}{n+ 1}} +  \sum_{k=0}^{n+1} {n\choose k} [E(u)]^{\frac{k}{n+ 1}}  [E(v)]^{\frac{n-k+1}{n+ 1}}\\
&= \sum_{k=0}^{n+1} {n+1\choose k} [E(u)]^{\frac{k}{n+ 1}}  [E(v)]^{\frac{n-k+1}{n+ 1}}= \Big[ [E(u)]^{\frac{1}{n+1}} + [E(v)]^{\frac{1}{n+1}}\Big]^{n+1}.
 \end{split}
\end{equation*}
The convexity of $E^{\frac{1}{n+1}}$ follows from homogeneity and the above inequality.
The proposition is proved.
\end{proof}
We have the following lattice property of the finite energy class $\E(\Omega)$.
\begin{prop}[Lattice and monotonicity property of energy class]
\label{latticelem}
 Let $\Omega$ be a bounded convex domain in $\R^n$ and $u\in \E(\Omega)$. Assume that $\tilde u \in C(\overline{\Omega})$ is a 
convex function with $\tilde u=0$ on $\p\Omega$ and $\tilde u\geq u$ in $\Omega$. Then $\tilde u\in \E(\Omega)$ with Monge--Amp\`ere energy (see \eqref{Edef}) $E(\tilde u)\leq E(u)$.
\end{prop}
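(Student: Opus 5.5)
The natural route is to first establish the energy estimate
\[\int_\Omega (-\tilde u)\, d\mu_{\tilde u}\leq \int_\Omega (-u)\,d\mu_u,\]
which immediately gives both $\tilde u\in\E(\Omega)$ and $E(\tilde u)\leq E(u)$. The tool is Corollary \ref{mpcor}, which says that if $0\geq u=v$ on $\p\Omega$ and $0\geq u>v$ in $\Omega$, then $\int(-u)d\mu_u\le\int(-u)d\mu_v\le\int(-v)d\mu_v$. We cannot apply it directly to $\tilde u\geq u$, since it needs strict inequality, and possibly $\tilde u=u$ on a large set. So I will perturb.

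\emph{Step 1 (strict perturbation).} For $\e\in(0,1)$, set $\tilde u_\e:=(1-\e)\tilde u$. If $u\equiv 0$, then $\tilde u\equiv 0$ and there is nothing to prove, so assume $u\not\equiv0$. By convexity, $u<0$ in $\Omega$, and hence $(1-\e)\tilde u\geq(1-\e)u>u$ in $\Omega$. Also $(1-\e)\tilde u=u=0$ on $\p\Omega$, and $(1-\e)\tilde u\leq 0$. Corollary \ref{mpcor} (second assertion, with $u$ there replaced by $\tilde u_\e$ and $v$ by $u$) then gives
\[(1-\e)^{n+1}\int_\Omega(-\tilde u)\,d\mu_{\tilde u}=\int_\Omega(-\tilde u_\e)\,d\mu_{\tilde u_\e}\leq\int_\Omega(-u)\,d\mu_u=E(u).\]

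\emph{Step 2 (limit).} Let $\e\to0$. The left side is a fixed quantity times $(1-\e)^{n+1}$, so we get $E(\tilde u)\leq E(u)<\infty$, and therefore $\tilde u\in\E(\Omega)$.

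The main point needing care is whether Corollary \ref{mpcor} applies as stated. Its first assertion requires $u=v$ on $\p\{u>v\}$. Here $\{\tilde u_\e>u\}=\Omega$, whose boundary is $\p\Omega$, where both functions vanish, so the hypothesis holds. Since the corollary rests on Theorem \ref{BlemR}(i), which allows infinite right-hand sides only in a harmless way (the inequality is stated for nonnegative integrands), no finiteness of $E(\tilde u)$ is needed a priori. If one is worried about this, a fallback is to run the argument with $\tilde u$ replaced by standard approximants (Remark \ref{Stappro}), which have finite mass, and then pass to the limit using the lower semicontinuity in Proposition \ref{I_lsc2}.
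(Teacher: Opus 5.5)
Your argument is correct. It differs from the paper's only in how it produces the strict inequality that Corollary \ref{mpcor} requires.

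The paper keeps $\tilde u$ fixed and pushes $u$ down. It takes the cone $w$ (vertex $(x_0,-1)$, zero on $\p\Omega$), so that $\tilde u>u+\e w$, and Corollary \ref{mpcor} gives $E(\tilde u)\le E(u+\e w)$. To let $\e\to0$ it then needs Proposition \ref{cvxlem}, i.e. $E(u+\e w)\le\bigl([E(u)]^{\frac{1}{n+1}}+\e[E(w)]^{\frac{1}{n+1}}\bigr)^{n+1}$. That in turn rests on the generalized Cauchy--Schwarz inequality, Theorem \ref{gCS}.

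You instead shrink the upper function to $(1-\e)\tilde u>u$, which uses $u<0$ in $\Omega$. After that you only need the homogeneity $\mu_{c\tilde u}=c^n\mu_{\tilde u}$. This removes both the auxiliary cone and the dependence on Theorem \ref{gCS} and Proposition \ref{cvxlem}. It also gives the explicit bound $E(\tilde u)\le(1-\e)^{-(n+1)}E(u)$ for each fixed $\e$. Your proof is therefore the more elementary of the two.

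Your justification for needing no a priori finiteness is vague; here is the precise reason. The integral against the lower measure, $\int_\Omega(-\tilde u_\e)\,d\mu_u\le E(u)$, is finite. Run the proof of Theorem \ref{BlemR}(i) with $\max\{u,\tilde u_\e-\delta\}$, which equals $u$ near $\p\Omega$, so every term is finite. Letting $\delta\to0$ via the lower semicontinuity in Proposition \ref{I_lsc2} then gives $\int_\Omega(-\tilde u_\e)\,d\mu_{\tilde u_\e}\le\int_\Omega(-\tilde u_\e)\,d\mu_u$, and the left side is forced to be finite.

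The paper applies Corollary \ref{mpcor} in exactly the same way; there the finite quantity is $\int_\Omega(-\tilde u)\,d\mu_{u+\e w}\le E(u+\e w)$. Your fallback through standard approximants settles the point cleanly in any case.
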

\begin{proof} Pick $x_0\in\Omega$. Let $w\in C^{0, 1}(\overline{\Omega})$ be the convex function whose graph is the cone with vertex $(x_0, -1)$ and the base $\Omega$, with $w=0$ on $\p\Omega$. Then
$w\in\E(\Omega)$ and $w<0$ in $\Omega$. For any $\e>0$,  $\tilde u> u+\e w$ in $\Omega$, so by Corollary \ref{mpcor} and Proposition \ref{cvxlem}, we have 
\[E(\tilde u)\leq E(u+\e w)\leq \big( [E(u)]^{\frac{1}{n+1}} + \e [E(w)]^{\frac{1}{n+1}}\big)^{n+1}.\]
Letting $\e\to 0$ concludes the proof of the proposition.
\end{proof}

\subsection{Integration by parts in energy class}
For the proof of the integration by parts formula in Theorem \ref{gCSIBP} (ii), we need several preparatory results. We have the following exchange result:
\begin{lem}[Exchange]
\label{Exlem}
Let $u_1,\cdots, u_n, v_1, v_2\in C(\overline{\Omega})$ be convex functions on a bounded domain $\Omega\subset\R^n$ where $v_1=v_2$ in a neighborhood of $\p\Omega$. Then
\begin{equation}
\label{Exch1}
\int_\Omega (v_1-v_2) \,d\mu_n[u_1, \cdots, u_n]= \int_\Omega u_n \,\big(d\mu_n[v_1, u_1, \cdots, u_{n-1}]- d\mu_n[v_2, u_1, \cdots, u_{n-1}]\big).\end{equation}
Consequently, if $\Omega$ is a bounded convex domain, $u_0, u_1,\cdots, u_n\in \E(\Omega)$ and $u_0=u_n$ in a neighborhood of $\p\Omega$, then
\begin{equation}
\label{Exch1c}\int_\Omega u_0 \,d\mu_n[u_1, \cdots, u_n]= \int_\Omega u_n \,d\mu_n[u_0, u_1, \cdots, u_{n-1}].\end{equation}
\end{lem}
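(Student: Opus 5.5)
The plan is to reduce \eqref{Exch1} to the smooth null Lagrangian identity of Lemma \ref{IBPC3} (i) by mollification, using the locality in Lemma \ref{mMAloc} to keep boundary effects under control.

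\textbf{Step 1 (localization).} Let $v_1=v_2$ outside a compact set $K\subset\Omega$. Choose open sets $\Omega'\Subset\Omega''\Subset\Omega$ with $K\subset\Omega'$ and $v_1=v_2$ on $\Omega\setminus\overline{\Omega'}$. The left-hand side of \eqref{Exch1} is then an integral over $\Omega'$ only.

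For the right-hand side, put $\nu_j:=\mu_n[v_j,u_1,\cdots,u_{n-1}]$ for $j=1,2$. By Lemma \ref{mMAloc} (i), $\nu_1=\nu_2$ on $\Omega\setminus\overline{\Omega'}$. Since $u_n$ is bounded and each $\nu_j$ has finite mass on $\overline{\Omega''}$, the difference $\int_\Omega u_n\,(d\nu_1-d\nu_2)$ is well defined and equals $\int_{\Omega''}u_n\,(d\nu_1-d\nu_2)$. So everything reduces to $\Omega''$.

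\textbf{Step 2 (smoothing).} Let $u_{i,\e}$ and $v_{j,\e}$ be standard mollifications. For small $\e$:
\begin{itemize}
\item these functions are smooth and convex on a neighborhood of $\overline{\Omega''}$;
\item $v_{1,\e}=v_{2,\e}$ near $\p\Omega''$, so $v_{1,\e}-v_{2,\e}$ vanishes near $\p\Omega''$.
\end{itemize}
Apply Lemma \ref{IBPC3} (i) on $\Omega''$ with $u_0=v_{1,\e}-v_{2,\e}$ and use multilinearity of $\tilde M$. This gives
\[
\int_{\Omega''}(v_{1,\e}-v_{2,\e})\,\tilde M[u_{1,\e},\cdots,u_{n,\e}]\,dx=\int_{\Omega''}u_{n,\e}\big(\tilde M[v_{1,\e},u_{1,\e},\cdots]-\tilde M[v_{2,\e},u_{1,\e},\cdots]\big)dx,
\]
where symmetry of $\tilde M$ lets me place $v_{j,\e}$ in the first slot.

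\textbf{Step 3 (passing to the limit, the main obstacle).} I use weak convergence of mixed measures from Definition \ref{mMAdef}, together with uniform convergence of the integrands and uniform mass bounds on $\overline{\Omega''}$. The mass bounds hold because the mollified functions are uniformly Lipschitz there (Remark \ref{MLiptot}).

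The left integrand has compact support in $\Omega''$ uniformly in $\e$, so it converges. On the right, the integrand $u_{n,\e}$ does not vanish near $\p\Omega''$. However, the two measures coincide near $\p\Omega''$ for the smooth functions too. I therefore insert a cutoff $\varphi\in C_c(\Omega'')$ equal to $1$ on a neighborhood of $\overline{\Omega'}$ and its $\e$-neighborhood; the part where $\varphi\neq 1$ contributes zero before and after the limit. The integrands $\varphi u_{n,\e}\to\varphi u_n$ uniformly with compact support, and together with the mass bounds (as in the proof of \eqref{Icont1}) this gives convergence. Hence \eqref{Exch1} holds.

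\textbf{Step 4 (the consequence \eqref{Exch1c}).} Apply \eqref{Exch1} with $v_1=u_0$ and $v_2=u_n$:
\[
\int_\Omega(u_0-u_n)\,d\mu_n[u_1,\cdots,u_n]=\int_\Omega u_n\,d\mu_n[u_0,u_1,\cdots,u_{n-1}]-\int_\Omega u_n\,d\mu_n[u_n,u_1,\cdots,u_{n-1}].
\]
All four integrals involved are finite by Theorem \ref{gCS}, since every function lies in $\E(\Omega)$. The term $\int_\Omega u_n\,d\mu_n[u_1,\cdots,u_n]$ appears on both sides (on the right via symmetry of $\mu_n$), so it cancels and yields \eqref{Exch1c}.
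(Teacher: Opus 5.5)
Your proposal is correct and follows essentially the same route as the paper. Both arguments mollify, apply the smooth identity of Lemma \ref{IBPC3} (i) on a slightly shrunken domain where $v_{1,\e}=v_{2,\e}$ near the boundary, and insert a cutoff in front of $u_{n,\e}$ so that weak convergence of mixed measures applies. Both then return to $\Omega$ via the locality Lemma \ref{mMAloc} and obtain \eqref{Exch1c} by cancelling the common term, which is finite by Theorem \ref{gCS}. Your Step 1 also spells out what $\int_\Omega u_n(d\nu_1-d\nu_2)$ means when the individual masses may be infinite, a point the paper leaves implicit.
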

\begin{proof} 
We first prove \eqref{Exch1}. 
Approximate $u_i$, $v_1$ and $v_2$ by smooth functions $u_i^\e$, $v_1^\e$ and $v_2^\e$ using standard regularizations for $\e\to 0^+$. If we shrink $\Omega$ a little to $\Omega'$, we have $v_1=v_2$ and $v_1^\e= v_2^\e$ in a neighborhood $N$ of  $\p\Omega'$ for small $\e>0$. By Lemma \ref{IBPC3} (i), we have
\[\int_{\Omega'} (v_1^\e- v_2^\e)\tilde M[u^\e_1, \cdots, u^\e_n]\,dx= \int_{\Omega'} u^\e_n\Big(\tilde M[v_1^\e, u^\e_1, \cdots, u^\e_{n-1}]-\tilde M[v_2^\e, u^\e_1, \cdots, u^\e_{n-1}]\Big)\,dx.
 \]
 The above right-hand side is unchanged if we replace $u_n^\e$ by $u_n^\e\varphi$ for any $\varphi\in C^3_c(\Omega')$ with $\varphi \equiv 1$ in $\Omega'\setminus N$. Consequently, we can 
 let $\e\to 0$ and use  the weak convergence of mixed Monge--Amp\`ere measures 
  to obtain
\begin{equation}
\label{Exch2}
\int_{\Omega'} (v_1-v_2)\, d\mu_n[u_1, \cdots, u_n]
=\int_{\Omega'} u_n\, \big(d\mu_n[v_1, u_1, \cdots, u_{n-1}]-d\mu_n[v_2, u_1, \cdots, u_{n-1}]\big).
\end{equation}
By  the locality result in Lemma \ref{mMAloc}, we can replace $\Omega'$ by $\Omega$ in  \eqref{Exch2}. Thus \eqref{Exch1} holds. 

\medskip
For $u_0, u_1,\cdots, u_n\in \E(\Omega)$ and $u_0=u_n$ in a neighborhood of $\p\Omega$,  \eqref{Exch1} gives
\[\int_\Omega (u_0-u_n) \,d\mu_n[u_1, \cdots, u_n]= \int_\Omega u_n \,\big(d\mu_n[u_0, u_1, \cdots, u_{n-1}]-d\mu_n[u_n, u_1, \cdots, u_{n-1}]\big).\]
This gives  \eqref{Exch1c}, since we can cancel the nonnegative finite term $\int_\Omega -u_n \,d\mu_n[u_1, \cdots, u_n]$
by an application of Theorem \ref{gCS}.
\end{proof}

We next study the  Monge--Amp\`ere energy of convex functions truncated from below.
\begin{lem} [Cut-off]
\label{cutlem}
Let $u\in\E(\Omega)$ where $\Omega$ is a bounded convex domain in $\R^n$. For $\e>0$, let $u_\e=\max\{u, -\e\}$. Then the Monge--Amp\`ere energy $E(u_\e)\to 0$ when $\e\to 0$.
\end{lem}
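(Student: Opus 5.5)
The plan is to split the energy of $u_\e$ according to the set where the truncation is active. Since $u_\e=\max\{u,-\e\}\geq u$ is convex, vanishes on $\p\Omega$ and belongs to $C(\overline{\Omega})$, Proposition \ref{latticelem} gives $u_\e\in\E(\Omega)$ with $E(u_\e)\leq E(u)$. We may assume $u\not\equiv 0$, so $u<0$ in $\Omega$. Writing
\[E(u_\e)=\int_{\{u>-\e\}}|u|\,d\mu_{u_\e}+\int_{\{u\leq-\e\}}\e\, d\mu_{u_\e},\]
we note that $u_\e\equiv-\e$ on the open set $\{u<-\e\}$, so $\mu_{u_\e}$ gives no mass there. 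The second term is therefore $\e\,\mu_{u_\e}(\{u=-\e\})$, and this is the only genuinely new piece.

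\textbf{First term.} On the open set $\{u>-\e\}$ we have $u_\e=u$, so the locality of the Monge--Amp\`ere measure (Lemma \ref{mMAloc}(i) with all entries equal) gives $\mu_{u_\e}=\mu_u$ there. Hence the first term is $\int_{\{u>-\e\}}|u|\,d\mu_u$, which tends to $0$ by dominated convergence: $|u|\chi_{\{u>-\e\}}\to 0$ pointwise and is dominated by $|u|\in L^1(d\mu_u)$.

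\textbf{Second term.} The main obstacle is the mass $\mu_{u_\e}(\{u=-\e\})$ concentrating on the level set. I would control it with Corollary \ref{mpcor}, using the test function $-w$ with $w:=\max\{u,-2\e\}$ in place of the constant $\e$. On $\{u\leq -\e\}$ we have $-w\geq\e$, so
\[\e\,\mu_{u_\e}(\{u\leq-\e\})\leq\int_{\{u>-2\e\}}(-w)\,d\mu_{u_\e}.\]
On the open set $\{u>-2\e\}$ the functions $u_\e$ and $w$ are both convex and coincide on its boundary (both equal $-\e$ there, or $0$ on $\p\Omega$). Moreover $u_\e\geq w$, with strict inequality exactly on $\{-2\e<u<-\e\}$, and equality elsewhere, where locality identifies the measures. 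Applying Corollary \ref{mpcor} on $\{u_\e>w\}$, and locality on the rest, yields
\[\int_{\{u>-2\e\}}(-w)\,d\mu_{u_\e}\leq\int_{\{u>-2\e\}}(-w)\,d\mu_w.\]
Since $w=u$ on the open set $\{u>-2\e\}$, we get $\mu_w=\mu_u$ there and $-w=|u|$. The right-hand side is therefore $\int_{\{u>-2\e\}}|u|\,d\mu_u$, which tends to $0$ as $\e\to 0$ for the same dominated-convergence reason as the first term.

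\textbf{Conclusion.} Combining the two estimates gives
\[E(u_\e)\leq 2\int_{\{u>-2\e\}}|u|\,d\mu_u\to 0.\]
The delicate step is justifying Corollary \ref{mpcor} for the pair $u_\e\geq w$ on a non-convex open set with a possibly irregular boundary. If that proves problematic, I would use the Blocki trick of replacing $w$ by $\max\{w,u_\e-\delta\}$, so that the two functions agree near the boundary, and then let $\delta\to0$ using lower semicontinuity (Proposition \ref{I_lsc2}).
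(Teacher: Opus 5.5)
\textbf{There is a genuine gap in your estimate of the level-set term.} Your splitting of $E(u_\e)$ and your treatment of the first term are correct. The comparison of $u_\e$ with $w=\max\{u,-2\e\}$ cannot see the mass $\mu_{u_\e}(\{u=-\e\})$, for three reasons:
\begin{itemize}
\item The strict set $\{u_\e>w\}$ is all of $\{u<-\e\}$, where $\mu_{u_\e}=0$, so Corollary \ref{mpcor} only gives a trivial inequality there.
\item The two functions do not agree on $\p\{u>-2\e\}$, since $u_\e=-\e\neq-2\e=w$ on $\{u=-2\e\}$.
\item On the level set $\{u=-\e\}$ they agree pointwise but on no neighborhood, so locality gives nothing there.
\end{itemize}
In fact $\mu_w=\mu_u$ near $\{u=-\e\}$, while $\mu_{u_\e}(\{u=-\e\})$ carries the mass $\mu_u(\{u\le-\e\})$ of the whole sublevel set, transported onto the level set.

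Consequently both the inequality $\int_{\{u>-2\e\}}(-w)\,d\mu_{u_\e}\le\int_{\{u>-2\e\}}(-w)\,d\mu_w$ and your final bound $E(u_\e)\le 2\int_{\{u>-2\e\}}|u|\,d\mu_u$ are false. Take $\Omega=B_1(0)$ and $u=|x|^2-1$. The subgradients of $u_\e$ along the sphere $|x|=\sqrt{1-\e}$ fill the ball of radius $2\sqrt{1-\e}$, so $E(u_\e)\ge \e\,2^n(1-\e)^{n/2}|B_1(0)|$. On the other hand, $\int_{\{u>-2\e\}}|u|\,d\mu_u\le 2\e\cdot 2^n|\{1-2\e<|x|^2<1\}|=O(\e^2)$. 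The regularization you offer as a fallback cannot repair this, because the inequality itself is wrong, not merely hard to justify.

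\textbf{The missing idea} is to compare the level-set mass with $\mu_u$ on the entire sublevel set. Let $p\in\p u_\e(x_0)$ with $u(x_0)\le-\e$, and let $\ell$ be the corresponding supporting affine function of $u_\e$ at $x_0$. Then $\ell$ already lies below $u_\e=u$ on $\{u\ge-\e\}$, which contains a neighborhood of $\p\Omega$. Translating $\ell$ down until it touches $u$ therefore gives a contact point in the compact set $\{u\le-\e\}$. Hence $\p u_\e(\{u\le-\e\})\subset\p u(\{u\le-\e\})$, and so $\mu_{u_\e}(\{u\le-\e\})\le\mu_u(\{u\le-\e\})$. Inserting this into your decomposition gives
\[E(u_\e)\le\int_{\{u>-\e\}}|u|\,d\mu_u+\e\,\mu_u(\{u\le-\e\})=\int_\Omega\min\{|u|,\e\}\,d\mu_u,\]
which tends to $0$ by dominated convergence.

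This is exactly the paper's bound $E(u_\e)\le\int_\Omega(-u_\e)\,d\mu_u$. The paper derives it instead from the exchange formula (Lemma \ref{Exlem}) together with the nonnegativity of mixed Monge--Amp\`ere measures. The sliding-plane argument is a more elementary route to the same inequality and keeps your overall structure intact.
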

\begin{proof} First, Proposition \ref{latticelem} implies $u_\e\in\E(\Omega)$. We give here another simple proof.
Let $A_\delta=\{x\in\Omega: u(x)<-\delta\}$. Then $u_\e= u$ in $\Omega\setminus A_\e$. By the locality of the Monge--Amp\`ere measure (Lemma \ref{mMAloc}), we have
\[\int_\Omega |u_\e| \, d\mu_{u_\e} 
= \int_{A_{\e/2}} |u_\e| \, d\mu_{u_\e}  + \int_{\Omega\setminus \overline{A_{\e/2}}} |u| \, d\mu_u \leq \int_{A_{\e/2}} |u_\e| \, d\mu_{u_\e}  + E(u).\]
In $A_{\e/2}$, $u_\e$ is globally Lipschitz so $\int_{A_{\e/2}} |u_\e| \, d\mu_{u_\e} <\infty$. Hence $E(u_\e)<\infty$ and $u_\e\in\E(\Omega)$.

\medskip
Similar to Corollary \ref{mpcor}, we now prove that
\begin{equation}
\label{uuecomp}
E(u_\e)=\int_\Omega -u_\e \,d\mu_{u_\e} \leq \int_\Omega -u_\e \,d\mu_u.\end{equation}
The setting here is a bit different because $u_\e$ is not strictly greater than $u$, so we give the detailed proof.
We use the consequence of the exchange result in Lemma \ref{Exlem} to estimate
\begin{equation*}
\begin{split}
D:=\int_\Omega -u_\e \,d\mu_u + \int_\Omega u_\e \,d\mu_{u_\e} 
&=\int_\Omega (u_\e-u) \,d\mu_{n}[u_\e,\cdots, u_\e]\\&\quad\quad + \int_\Omega u_\e\,d\mu_{n}[u_\e,\cdots, u_\e, u] - \int_\Omega u_\e \,d\mu_{n}[u,\cdots, u]  \\
&\geq  \int_\Omega u_\e\,d\mu_{n}[u_\e,\cdots, u_\e, u] - \int_\Omega  u_\e\,d\mu_{n}[u,\cdots, u].
\end{split}
\end{equation*}
Continuing this process, we find
\begin{equation*}
D\geq \int_\Omega u_\e\,d\mu_{n}[u_\e, u\cdots, u] - \int_\Omega u_\e \,d\mu_{n}[u,\cdots, u]  = \int_\Omega (u_\e-u)\,d\mu_{n}[u_\e, u\cdots, u] \geq 0.
\end{equation*}
Since $|u_\e|=\min\{ |u|, \e\}$ and $\int_\Omega |u|\, d\mu_u<\infty$, 
the Dominated Convergence Theorem tells us from \eqref{uuecomp} that $E(u_\e)\to 0$ when $\e\to 0$.
\end{proof}

We are now ready to prove the integration by parts formula in Theorem \ref{gCSIBP} (ii).  
\begin{thm}[Integration by parts] 
\label{IBPf}
Let $u_0, u_1,\cdots, u_n\in \E(\Omega)$ where  $\Omega$ is a bounded convex domain in $\R^n$. Then
\[\int_\Omega u_0 \,d\mu_n[u_1, \cdots, u_n]= \int_\Omega u_n \,d\mu_n[u_0, u_1, \cdots, u_{n-1}].\]
\end{thm}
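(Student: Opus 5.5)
The plan is to reduce to the exchange formula \eqref{Exch1c} of Lemma \ref{Exlem}, which already gives the identity when $u_0=u_n$ in a neighborhood of $\p\Omega$. The reduction uses cut-offs, with the error controlled by Lemma \ref{cutlem} and the generalized Cauchy--Schwarz inequality (Theorem \ref{gCS}).

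If $u_0\equiv 0$ or $u_n\equiv0$, both sides vanish. So assume both are nonzero; then they are negative in $\Omega$. For $\e>0$ set
\[
u_{0,\e}=\max\{u_0,-\e\},\qquad u_{n,\e}=\max\{u_n,-\e\}.
\]
Consider the function $\max\{u_0,u_n\}$, or rather an intermediate choice such as $v_\e:=\max\{u_0, u_n-\e\}$. The plan is to write $u_0$ as something equal to $u_n$ near the boundary plus a small remainder.

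The cleanest decomposition I would try is the following. Let $w_\e:=\max\{u_0,\, u_n+u_{0,\e}\}$, or alternatively use the convex function $\max\{u_0, u_n - \e\}$. Near $\p\Omega$ we have $u_n-\e<u_0$, since $u_0\to0$ while $u_n-\e\to-\e$, so $\max\{u_0,u_n-\e\}=u_0$ there. That is the wrong way round for \eqref{Exch1c}. Instead I would take
\[
\psi_\e:=\max\{u_n, u_0-\e\},
\]
which equals $u_n$ near $\p\Omega$. It is convex, lies in $\E(\Omega)$ by Proposition \ref{latticelem} (since $\psi_\e\ge u_n$), and satisfies $u_0-\e\le\psi_\e$. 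Applying \eqref{Exch1c} with $u_0$ replaced by $\psi_\e$ gives
\[
\int_\Omega \psi_\e\,d\mu_n[u_1,\dots,u_n]=\int_\Omega u_n\,d\mu_n[\psi_\e,u_1,\dots,u_{n-1}].
\]
Then I would let $\e\to0$. We have $\psi_\e\downarrow\max\{u_n,u_0\}$, not $u_0$, so this alone is insufficient.

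Therefore I would combine it with an inclusion--exclusion trick: apply the same argument with the roles of $u_0$ and $u_n$ swapped on both sides. An easier route is to use bilinearity in the $u_0$ slot. Write $u_0=\max\{u_0,u_n\}+\min$-type corrections; since $\min$ is not convex, I would instead use scaling. For $t>1$, the function $\max\{u_n, t u_0\}$ equals $u_n$ near the boundary only if $|tu_0|\ge|u_n|$ there, which fails in general.

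So the fallback, which I expect to be the real route, is an approximation by Lipschitz cones. Choose $\e$-cutoffs $u_{0,\e}$ and consider $u_0-u_{0,\e}=\min\{u_0+\e,0\}$, which vanishes near $\p\Omega$ (it is supported where $u_0<-\e$, a compact set). Expressing $\min\{u_0+\e,0\}$ as a difference of convex functions coinciding near the boundary (namely $u_0$ and $u_{0,\e}$), \eqref{Exch1} gives
\[
\int_\Omega (u_0-u_{0,\e})\,d\mu_n[u_1,\dots,u_n]=\int_\Omega u_n\,\big(d\mu_n[u_0,u_1,\dots,u_{n-1}]-d\mu_n[u_{0,\e},u_1,\dots,u_{n-1}]\big).
\]
It then suffices to show that the two $u_{0,\e}$-terms tend to $0$. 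The first satisfies
\[
\int|u_{0,\e}|\,d\mu_n[u_1,\dots,u_n]\le \e\cdot{}
\]
(something), but more robustly Theorem \ref{gCS} bounds it by $E(u_{0,\e})^{1/(n+1)}\prod_{i\ge1}E(u_i)^{1/(n+1)}$, which tends to $0$ by Lemma \ref{cutlem}. The second is bounded by $E(u_n)^{1/(n+1)}E(u_{0,\e})^{1/(n+1)}\prod_{i<n}E(u_i)^{1/(n+1)}$, which also tends to $0$. Letting $\e\to0$ yields the theorem.

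The main obstacle is justifying \eqref{Exch1} when only $u_0=u_{0,\e}$ near $\p\Omega$ and the integrals involve functions merely in $\E(\Omega)$. The finiteness needed to split the difference of integrals into separate finite terms is supplied by Theorem \ref{gCS}, exactly as in the proof of \eqref{Exch1c}.
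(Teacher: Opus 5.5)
Your final route is correct and is essentially the paper's proof. You apply \eqref{Exch1} to the pair $u_0$, $\max\{u_0,-\e\}$ (which agree near $\p\Omega$), split into finite terms via Theorem \ref{gCS}, and send the cut-off terms to zero with Lemma \ref{cutlem}; the paper does the same but places the cut-off $\max\{u_n,-\e\}$ on $u_n$ instead of $u_0$, and the earlier false starts ($\psi_\e$, scaling) can simply be dropped, since the ``obstacle'' you flag is already covered by Lemma \ref{Exlem}.
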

\begin{proof}
For $\e>0$, let $v_\e:=\max\{u_n, -\e\}$. Then $v_\e$ is convex and $v_\e=v_n$ in a neighborhood of $\p\Omega$. 
By Lemma \ref{Exlem} 
\[\int_\Omega u_0 \,\big(d\mu_n[u_1, \cdots, u_n]-d\mu_n[u_1, \cdots, v_\e]\big)= \int_\Omega (u_n-v_\e) \,d\mu_n[u_0, u_1, \cdots, u_{n-1}].\] 
Using Theorem \ref{gCS}
and the cut-off Lemma \ref{cutlem}, we conclude that, as $\e\to 0$, 
\[\int_\Omega v_\e \,d\mu_n[u_0, u_1, \cdots, u_{n-1}]\to 0\quad \text{and}\quad \int_\Omega u_0 \,d\mu_n[u_1, \cdots, v_\e]\to 0.\]
The conclusion of the theorem follows.
\end{proof}
We give some useful consequences of the integration by parts formula in Theorem \ref{IBPf}. The first one
allows us to factorize the difference of Monge--Amp\`ere energies defined in \eqref{Edef}.
\begin{lem} [Factorization formula]
\label{cocyc}
Let $\Omega$ be a bounded convex domain in $\R^n$. Then,
for $u, v\in\E(\Omega)$, we have 
%the following identity concerning the difference of their Monge--Amp\`ere energies
\[E(u)-E(v)=\int_\Omega v\, d\mu_v- \int_\Omega u  \, d\mu_u= \sum_{k=0}^n \int_\Omega (v-u) \, d\mu_n[v,\cdots, v, \underbrace{u,\cdots, u}_{k\, \text{times}}].\]
\end{lem}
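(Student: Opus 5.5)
The identity is a telescoping argument in which each term is rewritten by the integration by parts formula of Theorem \ref{IBPf}. All mixed energies involved are finite by Theorem \ref{gCS}, since $u,v\in\E(\Omega)$. This allows us to split integrals freely.

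For $k=0,\ldots,n+1$ set
\[a_k:=\int_\Omega u\, d\mu_n[\underbrace{v,\cdots,v}_{n-k},\underbrace{u,\cdots,u}_{k}] \quad (0\le k\le n),\]
and think of the $(n+1)$-tuple $(w_0;w_1,\ldots,w_n)$ with integrand $w_0$. More symmetrically, let
\[I_j:=\int_\Omega w^{(j)}_0\, d\mu_n[w^{(j)}_1,\ldots,w^{(j)}_n],\]
where the tuple $(w^{(j)}_0,\ldots,w^{(j)}_n)$ consists of $n+1-j$ copies of $v$ followed by $j$ copies of $u$. By Theorem \ref{IBPf} combined with the permutation invariance of $\mu_n$, the quantity $\int w_0\,d\mu_n[w_1,\ldots,w_n]$ is fully symmetric in all $n+1$ arguments. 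Hence $I_j$ depends only on the multiset $\{v^{n+1-j},u^j\}$. Moreover $I_0=\int_\Omega v\,d\mu_v$ and $I_{n+1}=\int_\Omega u\,d\mu_u$.

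Next, telescope:
\[\int_\Omega v\,d\mu_v-\int_\Omega u\,d\mu_u=\sum_{k=0}^{n}(I_k-I_{k+1}).\]
For each $k$, use symmetry to place the differing argument in the integrand slot. That is, write $I_k=\int_\Omega v\,d\mu_n[v,\ldots,v,u^{(k)}]$ and $I_{k+1}=\int_\Omega u\,d\mu_n[v,\ldots,v,u^{(k)}]$, where $u^{(k)}$ denotes $k$ copies of $u$ and the remaining $n-k$ measure slots are $v$. Subtracting gives $I_k-I_{k+1}=\int_\Omega (v-u)\,d\mu_n[v,\cdots,v,\underbrace{u,\cdots,u}_{k}]$. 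Summing over $k=0,\ldots,n$ yields exactly the claimed formula. The first equality $E(u)-E(v)=\int v\,d\mu_v-\int u\,d\mu_u$ is just the definition \eqref{Edef}.

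The only delicate point is justifying the full symmetry. Theorem \ref{IBPf} swaps the integrand with the last measure argument, and the permutation invariance of $\mu_n$ then gives all transpositions. We also need every integral to be finite, so that the subtractions and the splitting $\int(v-u)=\int v-\int u$ are legitimate. This finiteness follows from Theorem \ref{gCS}: $\int|w_0|\,d\mu_n[\ldots]\le\max(E(u),E(v))<\infty$.
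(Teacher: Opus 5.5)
Your proposal is correct and is essentially the paper's argument: the paper also telescopes, adding and subtracting $\int_\Omega u\,d\mu_n[v,\ldots,v,u,\ldots,u]$ at each step and using the integration by parts formula of Theorem \ref{IBPf} to move a copy of $v$ into the integrand slot. Your symmetric bookkeeping via $I_j$ and your explicit finiteness check through Theorem \ref{gCS} just make that iteration precise. The stray definition of $a_k$ is unused and can be dropped.
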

\begin{proof}
As a consequence of the integration by parts formula in Theorem \ref{IBPf}, we have 
\begin{equation*}
\begin{split}
E(u)-E(v)
&=\int_\Omega (v-u)\,d\mu_n[v, \cdots, v]  +\int_\Omega u \,d\mu_n[v, \cdots, v]- \int_\Omega u \,d\mu_n[u, \cdots, u]\\
&=\int_\Omega (v-u) \,d\mu_n[v, \cdots, v] +\int_\Omega v \,d\mu_n[v, \cdots, v, u]- \int_\Omega u \,d\mu_n[u, \cdots, u].
\end{split}
\end{equation*}
Continuing this process, we obtain the conclusion of the lemma.
\end{proof}

A second  consequence of Theorem \ref{IBPf} is the monotonicity  of the Monge--Amp\`ere energy.
\begin{cor}[Monotonicity] 
\label{monof}
Let $\Omega$ be a bounded convex domain in $\R^n$.
If $u_n\geq \tilde u_n$ in $\Omega$ and $u_0, u_1,\cdots, u_n, \tilde u_n\in\E(\Omega)$, then
\[\int_\Omega -u_0 \,d\mu_n[u_1, \cdots, u_{n-1}, u_n]\leq \int_\Omega -u_0 \,d\mu_n[u_1, \cdots, u_{n-1}, \tilde u_n].\]
\end{cor}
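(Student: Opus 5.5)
Use the integration by parts formula of Theorem \ref{IBPf} to move $u_n$ and $\tilde u_n$ out of the measure and into the integrand, where the inequality $u_n\geq\tilde u_n$ can be used directly. Applying Theorem \ref{IBPf} to both sides gives
\[\int_\Omega -u_0 \,d\mu_n[u_1, \cdots, u_{n-1}, u_n]=\int_\Omega -u_n\,d\mu_n[u_0,u_1,\cdots,u_{n-1}],\]
and likewise with $\tilde u_n$ in place of $u_n$. Since the mixed Monge--Amp\`ere measure $\mu_n[u_0,u_1,\cdots,u_{n-1}]$ is a nonnegative Borel measure and $-u_n\leq -\tilde u_n$ pointwise in $\Omega$, the claimed inequality follows by integrating this pointwise inequality against it.

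All functions involved lie in $\E(\Omega)$, so the hypotheses of Theorem \ref{IBPf} hold. Both integrals are finite by the generalized Cauchy--Schwarz inequality (Theorem \ref{gCS}), so comparing them is legitimate and no cancellation of infinite quantities occurs. The only point needing care is the order of arguments: Theorem \ref{IBPf} exchanges $u_0$ with the last slot $u_n$. This is exactly the slot in which $u_n$ and $\tilde u_n$ differ, so the remaining entries $u_0,u_1,\cdots,u_{n-1}$ of the measure are the same on both sides. No approximation is required beyond what is already contained in Theorem \ref{IBPf}, so I do not expect any step to be a real obstacle.
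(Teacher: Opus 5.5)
Your proof is correct and is essentially the paper's own argument: apply Theorem~\ref{IBPf} to exchange $u_0$ with $u_n$ (respectively $\tilde u_n$), then integrate the pointwise inequality $-u_n\le -\tilde u_n$ against the common nonnegative measure $\mu_n[u_0,u_1,\cdots,u_{n-1}]$. Your remark that Theorem~\ref{gCS} makes all integrals finite, so the comparison is legitimate, is a useful detail that the paper leaves implicit.
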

\begin{proof} By Theorem \ref{IBPf}, the difference between the left-hand side and the right-hand side of the claimed inequality is
\[ \int_\Omega -u_n \,d\mu_n[u_1, \cdots, u_{n-1}, u_0]+
 \int_\Omega \tilde u_n \,d\mu_n[u_1, \cdots, u_{n-1}, u_0]\leq 0.\]
The corollary is proved.
\end{proof}
The third consequence of Theorem \ref{IBPf} is the nonlinear integration by parts inequality.
 \begin{cor}[Nonlinear integration by parts inequality]
 \label{NIBP30} Let $\Omega$ be a bounded convex domain in $\R^n$ and
 $u_0, u_1, \cdots, u_n\in \E(\Omega)$ with
 $\mu_{u_i}= f_i\nu\,(0\leq i\leq n)$
 where $f_0, f_1, \cdots, f_n\in C(\Omega)$ are nonnegative functions and $\nu$ is a Borel measure in $\Omega$.
Then
\begin{equation*} \int_{\Omega} |u_0|\, d\mu_n[u_1, \cdots, u_n] \geq \int_{\Omega} |u_n|\Big(\prod_{i=0}^{n-1}f_i^{\frac{1}{n}}\Big) \,d\nu.
\end{equation*}
\end{cor}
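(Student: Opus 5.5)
The natural route is to combine the integration by parts formula (Theorem \ref{IBPf}) with the mixed Monge--Amp\`ere inequality (Theorem \ref{mMAnu}). The roles of $u_0$ and $u_n$ need to be swapped so that the mixed measure involves $u_0,\dots,u_{n-1}$, whose densities are $f_0,\dots,f_{n-1}$. Since every $u_i\in\E(\Omega)$ is nonpositive, Theorem \ref{IBPf} gives
\[
\int_\Omega |u_0|\,d\mu_n[u_1,\cdots,u_n]=\int_\Omega -u_0\,d\mu_n[u_1,\cdots,u_n]=\int_\Omega -u_n\,d\mu_n[u_0,u_1,\cdots,u_{n-1}]=\int_\Omega |u_n|\,d\mu_n[u_0,\cdots,u_{n-1}].
\]
Both sides are finite by the generalized Cauchy--Schwarz inequality (Theorem \ref{gCS}). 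This step is legitimate because all $n+1$ functions lie in the energy class, which is exactly the hypothesis of Theorem \ref{IBPf}.

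Next I would apply Theorem \ref{mMAnu} to the $n$ convex functions $u_0,u_1,\dots,u_{n-1}$. Each satisfies $\mu_{u_i}=f_i\nu\ge f_i\nu$, so we get the measure inequality
\[
\mu_n[u_0,\cdots,u_{n-1}]\ \ge\ \Big(\prod_{i=0}^{n-1}f_i^{\frac1n}\Big)\nu .
\]
That theorem requires $f_i\in L^1_{\rm loc}(\Omega,d\nu)$. The only point needing a check is this local integrability. Since $f_i$ is continuous and nonnegative, $f_i\nu=\mu_{u_i}$ is a Radon measure, so $\int_K f_i\,d\nu=\mu_{u_i}(K)<\infty$ for every compact $K\subset\Omega$, and the hypothesis holds. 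Continuity of $f_i$ also guarantees Borel measurability of the product density.

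Finally, integrate the nonnegative Borel function $|u_n|$ against both sides of the measure inequality. This yields
\[
\int_\Omega |u_n|\,d\mu_n[u_0,\cdots,u_{n-1}]\ \ge\ \int_\Omega |u_n|\Big(\prod_{i=0}^{n-1}f_i^{\frac1n}\Big)d\nu,
\]
and combining it with the first display gives the claim. No step is a real obstacle: the substantive analytic inputs, namely integration by parts in $\E(\Omega)$ and the mixed inequality, are already available.
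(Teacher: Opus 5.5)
Your proposal is correct and follows the paper's proof exactly: swap $u_0$ and $u_n$ via the integration by parts formula (Theorem \ref{IBPf}), then apply the mixed Monge--Amp\`ere inequality (Theorem \ref{mMAnu}) to $u_0,\dots,u_{n-1}$. Your check that $f_i\in L^1_{\text{loc}}(\Omega,d\nu)$, because $f_i\nu=\mu_{u_i}$ is Radon, is a welcome detail that the paper leaves implicit.
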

\begin{proof}
By Theorem \ref{IBPf}
 and the mixed Monge--Amp\`ere inequality in Theorem \ref{mMAnu}, 
 \[ \int_{\Omega} |u_0|\, d\mu_n[u_1, \cdots, u_n] =  \int_{\Omega} |u_n|\, d\mu_n[u_1, \cdots, u_{n-1}, u_0] \geq \int_{\Omega} |u_n|\Big(\prod_{i=0}^{n-1}f_i^{\frac{1}{n}}\Big) \,d\nu. \]
 The corollary is proved. 
\end{proof}
Another consequence of Theorem \ref{IBPf} is the reverse Aleksandrov estimate.
\begin{prop}[Reverse Aleksandrov estimate]
\label{ReA}
 Let $\Omega$ be a bounded convex domain in $\R^n$.  Let $\lambda[\Omega]$ be the Monge--Amp\`ere eigenvalue of $\Omega$ and let $w\in C(\overline{\Omega})\cap C^{\infty}(\Omega)$ be a nonzero Monge--Amp\`ere eigenfunction of $\Omega$; that is, 
  \begin{equation*}
   \det D^{2} w=\lambda[\Omega] |w|^{n} \quad\text{in} ~\Omega, \quad
w =0\quad\text{on}~\p \Omega.
\end{equation*}
Assume that $u\in C(\overline{\Omega})$ is a convex function in $\Omega$ with $\mu_u\in L_{\text{loc}}^1(\Omega)$. Then
\begin{equation}
\label{ReA2}
\int_\Omega (\lambda[\Omega])^{1/n} |u| |w|^n\,dx\geq \int_\Omega \mu_u^{1/n} |w|^n\,dx.
\end{equation}
\end{prop}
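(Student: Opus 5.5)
The plan is to combine the integration by parts formula (Theorem \ref{IBPf}) with the mixed Monge--Amp\`ere inequality (Theorem \ref{mMAnu}), exactly as in the nonlinear integration by parts inequality of Corollary \ref{NIBP30}. The model computation is the following. Suppose $u\in\E(\Omega)$ and $\mu_u=g\,d\L^n$ with $g\in L^1_{\rm loc}$. Note that $\mu_w=\lambda[\Omega]|w|^n\,d\L^n$, and $w\in\E(\Omega)$ because $w$ is Lipschitz, so $\mu_w(\Omega)<\infty$. Then
\[
\int_\Omega \lambda[\Omega]\,|u|\,|w|^n\,dx=\int_\Omega |u|\,d\mu_n[w,\cdots,w]=\int_\Omega |w|\,d\mu_n[u,w,\cdots,w],
\]
and by Theorem \ref{mMAnu} with $\nu=d\L^n$, $f_1=g$ and $f_2=\cdots=f_n=\lambda[\Omega]|w|^n$, the last integral is at least
\[
\int_\Omega |w|\,g^{1/n}\,\lambda[\Omega]^{\frac{n-1}{n}}|w|^{n-1}\,dx .
\]
Dividing by $\lambda[\Omega]^{(n-1)/n}$ gives \eqref{ReA2}.

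The main obstacle is that $u$ is only assumed convex and continuous with $\mu_u\in L^1_{\rm loc}$. It need not vanish on $\p\Omega$ nor lie in $\E(\Omega)$, so the integration by parts is not directly available. I would handle this by the truncation used throughout the paper, working first with $u\le 0$ on $\p\Omega$ (for instance $u\le0$ in $\Omega$; I would check separately how the statement is meant to be read when $u$ is positive somewhere). For each $m$, let $g_m:=\chi_{\{\dist(\cdot,\p\Omega)>1/m\}}\,\mu_u$. This is a compactly supported measure with finite mass, since $\mu_u\in L^1_{\rm loc}$. By Theorem \ref{Dir_thm}, let $u_m\in C(\overline\Omega)$ solve
\[
\mu_{u_m}=g_m \text{ in }\Omega,\qquad u_m=0 \text{ on }\p\Omega .
\]
Since $g_m$ is compactly supported with finite mass, $u_m\in\E(\Omega)$: indeed $\int_\Omega |u_m|\,d\mu_{u_m}\le\|u_m\|_{L^\infty}\,g_m(\Omega)$.

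The model computation applied to $u_m$ yields
\[
\int_\Omega \lambda[\Omega]^{1/n}|u_m|\,|w|^n\,dx\ \ge\ \int_\Omega g_m^{1/n}|w|^n\,dx .
\]
By the comparison principle (Theorem \ref{compa1}), since $\mu_{u_m}\le\mu_{u_{m+1}}\le\mu_u$ and $u\le 0=u_m$ on $\p\Omega$, we get $u\le u_{m+1}\le u_m\le 0$. Hence $|u_m|\le|u|$, which bounds the left-hand side by $\int_\Omega\lambda[\Omega]^{1/n}|u||w|^n\,dx$ uniformly in $m$.

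Finally, I would let $m\to\infty$. The densities $g_m^{1/n}$ increase pointwise to $\mu_u^{1/n}$, so the monotone convergence theorem gives convergence of the right-hand side to $\int_\Omega\mu_u^{1/n}|w|^n\,dx$, and \eqref{ReA2} follows.

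The delicate points I expect to check are two. First, Theorem \ref{mMAnu} must be applied with $f_1=g_m$ only locally integrable; this is allowed by its hypotheses. Second, the case where $u$ takes positive boundary values must either be excluded or be reduced to the above by replacing $u$ with $\min\{u,0\}$-type modifications. In the latter case I would need to verify that such a modification does not decrease $\mu_u^{1/n}$ on the relevant set, which I regard as the step most likely to require extra care.
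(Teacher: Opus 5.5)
Your argument is the paper's proof: truncate $\mu_u$ to $\{\dist(\cdot,\p\Omega)>\e\}$, solve with zero boundary data to get $u_\e\in\E(\Omega)$ with $|u_\e|\le|u|$, integrate by parts against $\mu_n[w,\ldots,w]$ (Theorem \ref{IBPf}), apply Theorem \ref{mMAnu}, and let $\e\to0$ by monotone convergence. On your open point, the paper's ``clearly $|u_\e|\le|u|$'' tacitly uses $u\le 0$ on $\p\Omega$, and no $\min\{u,0\}$-type reduction can remove this hypothesis, because the inequality is false without it: for $u=w+c$ with small $c>0$ one has $\mu_u=\mu_w$, so the right side is $\lambda[\Omega]^{1/n}\int_\Omega|w|^{n+1}dx$, while the left side $\lambda[\Omega]^{1/n}\int_\Omega|w+c|\,|w|^n\,dx$ is strictly smaller (it differs from the right side by at most $\lambda[\Omega]^{1/n}c\big(c^n|\Omega|-\int_{\{|w|\ge c\}}|w|^n\,dx\big)<0$).
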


\begin{proof}
For each $\e\in (0, 1)$, let 
$u_{\e}\in C(\overline{\Omega})$
be the convex solution to 
\begin{equation*}
   \mu_{u_{\e}}=\chi_{\{x\in\Omega: \dist(x, \p\Omega)>\e\}}\mu_u \quad\text{in} ~\Omega, \quad
u_{\e} =0\quad\text{on}~\p \Omega.
\end{equation*}
Clearly, $u_\e\in \E(\Omega)$ and $|u_\e|\leq |u|$.
By Theorem \ref{IBPf}, we have
\[\int_\Omega  |u_\e| \det D^2 w\,dx=\int_\Omega  |u_\e| \, d\mu_n[w,\cdots, w]=\int_\Omega|w|\, d\mu_n[u_\e, w, \cdots, w].\]
Then, using $\det D^2 w= \lambda[\Omega] |w|^n$ together with Theorem \ref{mMAnu}, we get
\begin{eqnarray*}\int_\Omega  |u| \lambda[\Omega] |w|^n\, dx\geq \int_\Omega  |u_\e| \det D^2 w\,dx&\geq& \int_\Omega |w|\mu_{u_\e}^{1/n} \mu_w^{\frac{n-1}{n}}\,dx
\\&=&  \int_\Omega ( \lambda[\Omega])^{\frac{n-1}{n}} (\mu_u\chi_{\{\dist(\cdot, \p\Omega)>\e\}} )^{1/n} |w|^n\,dx.
\end{eqnarray*}
Letting $\e\to 0$, and
dividing 
the above estimates by $  ( \lambda[\Omega])^{\frac{n-1}{n}}$, we obtain (\ref{ReA2}).
\end{proof}
\begin{rem}
Compared to Theorem \ref{Al_E} (i), the function $u$ appears on the dominating side in (\ref{ReA2}). For this reason,
(\ref{ReA2}) can be viewed as a sort of reverse Aleksandrov estimate. When $u$ is a Monge--Amp\`ere eigenfunction of $\Omega$, (\ref{ReA2}) is an equality, so it  is sharp.
\end{rem}

\section{Convex envelopes and variational derivatives} 
\label{env_sec}
In this section, we study convex envelopes of continuous functions and variational derivatives of  
Monge--Amp\`ere energies of convex envelopes. We will prove Theorem \ref{envED}, restated as Theorem \ref{envelopeED}.
 
  \begin{defn}[Convex envelope of a continuous function]
\label{cvx_env_defn}
Let $v\in C(\overline{\Omega})$ be a continuous function on a bounded domain $\Omega$ in $\R^n$. 
The {\it convex envelope} $\Gamma_v$ of $v$ in $\overline{\Omega}$ is defined to be
\begin{equation*}\Gamma_v(x):=\sup\big\{\varphi(x): \varphi\leq v\text{ in }\overline{\Omega},\, \varphi \text{ is convex in }\overline{\Omega}\big\}.
\end{equation*}
\end{defn}
\subsection{Derivative of the Monge--Amp\`ere energy of convex envelopes }
One of our key technical ingredients in the variational approach to \eqref{eqp1} is concerned with the variational derivative at $0$ of $E\circ \Gamma$ for variations of the form $u+ tv$ where $u\in \E(\Omega)$ and $v$ is a globally Lipschitz convex function and $t\in\R$.
\begin{thm} 
\label{envelopeED}
 Let $\Omega$ be a bounded convex domain in $\R^n$,
$u\in \E(\Omega)$ and $v\in \E(\Omega) \cap C^{0, 1}(\overline{\Omega})$. Let $E$ be the Monge--Amp\`ere energy as in \eqref{Edef}. Then 
\[\frac{d}{dt}\mid_{t=0} E (\Gamma_{u + tv})=(n+ 1)\int_\Omega (-v) \, d\mu_u.\]
\end{thm}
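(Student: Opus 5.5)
The plan is to follow the Berman--Boucksom ``projection theorem'' strategy. It has three ingredients: the sandwich $u\le \Gamma_{u+tv}\le u+tv$ (for $t<0$), the factorization formula of Lemma \ref{cocyc}, and the orthogonality property that $\mu_{\Gamma_{u+tv}}$ is carried by the contact set $\{\Gamma_{u+tv}=u+tv\}$. Write $\Gamma_t:=\Gamma_{u+tv}$.

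\emph{Setup.} For $t\ge 0$ the function $u+tv$ is convex, so $\Gamma_t=u+tv$. Then $E(\Gamma_t)$ is a polynomial in $t$ by multilinearity:
\[E(u+tv)=\sum_{k}\binom{n+1}{k}t^k E(\underbrace{v,\dots,v}_{k},u,\dots,u).\]
Here we use the symmetry of mixed energies from Theorem \ref{IBPf}, and all terms are finite by Theorem \ref{gCS}. So the right derivative at $0$ equals $(n+1)E(v,u,\dots,u)=(n+1)\int_\Omega(-v)\,d\mu_u$.

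For $t<0$ we have $u+tv\ge u$, and $u$ is convex. Hence $u\le\Gamma_t\le u+tv$, so $\Gamma_t=0$ on $\p\Omega$ and $\|\Gamma_t-u\|_{L^\infty}\le |t|\,\|v\|_{L^\infty}$. By Proposition \ref{latticelem}, $\Gamma_t\in\E(\Omega)$ with $E(\Gamma_t)\le E(u)$.

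We also establish orthogonality: $\mu_{\Gamma_t}(\{\Gamma_t<u+tv\})=0$. This is the standard fact that the convex envelope is affine, hence has zero Monge--Amp\`ere measure, near non-contact points. Locally, if $\Gamma_t<u+tv$ on a small ball, then replacing $\Gamma_t$ there by the solution of the homogeneous Dirichlet problem (Theorem \ref{Dir_thm}) produces a larger convex minorant.

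\emph{Sandwich via factorization (case $t<0$).} By Lemma \ref{cocyc},
\[E(\Gamma_t)-E(u)=\sum_{k=0}^n I_k,\qquad I_k:=\int_\Omega (u-\Gamma_t)\,d\mu_n[u,\dots,u,\underbrace{\Gamma_t,\dots,\Gamma_t}_{k}].\]
I will show that $k\mapsto I_k$ is monotone. The difference $I_k-I_{k+1}$ is $\int(u-\Gamma_t)\,d\mu_n[\dots,u-\Gamma_t,\dots]$, and after the integration by parts of Theorem \ref{IBPf} this is a nonpositive quadratic form. That is the energy-class analogue of $-\int \tilde M^{ij}D_i(u-\Gamma_t)D_j(u-\Gamma_t)\le 0$, justified by the approximation route used in Lemma \ref{Exlem}, or more simply by Corollary \ref{monof}-type comparisons. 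Consequently $E(\Gamma_t)-E(u)$ lies between $(n+1)I_0$ and $(n+1)I_n$.

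By orthogonality, $I_n=\int(u-\Gamma_t)\,d\mu_{\Gamma_t}=t\int(-v)\,d\mu_{\Gamma_t}$. For $I_0$, the sandwich gives $0\le (u-\Gamma_t)/t\le -v$, so $I_0/t\le\int(-v)\,d\mu_u$. Dividing by $t<0$ flips the inequalities. The sandwich then yields
\[\int_\Omega(-v)\,d\mu_u\ \ge\ \frac{E(\Gamma_t)-E(u)}{(n+1)t}\ \ge\ \int_\Omega(-v)\,d\mu_{\Gamma_t},\]
provided the monotonicity of $I_k$ goes in the direction making $I_n$ the extreme term. If it goes the other way, the roles are swapped, but the same two quantities bound the quotient. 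Hence the left derivative equals $(n+1)\int(-v)\,d\mu_u$ once we know $\int(-v)\,d\mu_{\Gamma_t}\to\int(-v)\,d\mu_u$ as $t\to0^-$.

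\emph{Main obstacle: this last continuity.} Proposition \ref{I_conti} does not apply directly, because $\mu_u$ and $\mu_{\Gamma_t}$ may have infinite total mass. I would first try integration by parts (Theorem \ref{IBPf}):
\[\int(-v)\,d\mu_{\Gamma_t}=\int(-\Gamma_t)\,d\mu_n[v,\Gamma_t,\dots,\Gamma_t].\]
Here $v\in C^{0,1}$, so the relevant mixed measures have controlled mass near $\p\Omega$ by Remark \ref{MLiptot}. I would then telescope the difference with the $u$-version and control each piece. The pieces fall into two kinds:
\begin{itemize}
\item Terms like $\int(\Gamma_t-u)\,d\mu_n[v,\dots]$ are bounded by $|t|\,\|v\|_\infty\,\mu_n[v,\dots](\Omega)$, or via Theorem \ref{gCS}.
\item Differences of mixed measures are handled by Theorem \ref{BlemR}(ii), with the uniformly small $d_k=\|\Gamma_t-u\|_\infty\le|t|\|v\|_\infty$. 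Alternatively, I would use lower semicontinuity (Proposition \ref{I_lsc2}) for one inequality, and for the other the uniform energy bound $E(\Gamma_t)\le E(u)$ together with the cut-off Lemma \ref{cutlem}, which makes boundary contributions uniformly small.
\end{itemize}
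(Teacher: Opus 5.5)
Your proof is correct, but it takes a different route from the paper's.

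\textbf{How the two routes differ.} The paper also starts from Lemma \ref{cocyc}. It then proves in Proposition \ref{envelopeD} an orthogonality relation for \emph{each} mixed measure $\mu_n[\Gamma_t,\dots,\Gamma_t,u,\dots,u]$, using the monotonicity of $t\mapsto(\Gamma_t-u-tv)/t$, Corollary \ref{mpcor}, Lemma \ref{Enlinear}, and an expansion in powers of $-s$ controlled by Theorem \ref{gCS}. In this way each of the $n+1$ terms converges separately to $\int_\Omega(-v)\,d\mu_u$. You instead use the convexity sandwich
\[
(n+1)\int_\Omega(u-\Gamma_t)\,d\mu_u\;\le\;E(\Gamma_t)-E(u)\;\le\;(n+1)\int_\Omega(u-\Gamma_t)\,d\mu_{\Gamma_t},
\]
so you need orthogonality only for the pure measure $\mu_{\Gamma_t}$. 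That is exactly Lemma \ref{Enlinear}; cite it instead of your ball-replacement sketch. Also note that $\Gamma_t$ need not be affine off the contact set; only $\mu_{\Gamma_t}$ vanishes there. Your argument is the convexity-plus-orthogonality argument of Lu--Nguyen that the paper cites as an alternative. It is shorter and never uses the Lipschitz hypothesis on $v$. The paper's route, in exchange, gives the stronger term-by-term limits \eqref{GD1}--\eqref{GD2}.

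\textbf{First point to tighten: the sandwich.} Do not derive it from monotonicity of the individual $I_k$. That would need a bilinear Cauchy--Schwarz inequality with a mixed background in $\E(\Omega)$, which the paper proves only for smooth functions (Step 1 of Theorem \ref{gCS}). Corollary \ref{monof} does not by itself fix the sign of that quadratic form. Your fallback ``if it goes the other way'' is also wrong: the lower bound would then be $I_0/t$, which is only known to be $\le\int_\Omega(-v)\,d\mu_u$. The sandwich follows at once from Theorem \ref{gCS} and Young's inequality:
\[
(n+1)\int_\Omega|u|\,d\mu_{\Gamma_t}\le E(u)+nE(\Gamma_t),\qquad (n+1)\int_\Omega|\Gamma_t|\,d\mu_u\le E(\Gamma_t)+nE(u).
\]
These are equivalent to the right and left inequalities of the sandwich, respectively.

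\textbf{Second point: your ``main obstacle'' is not one.} The difference quotient is already bounded above by $\int_\Omega(-v)\,d\mu_u$. So you only need
\[
\liminf_{t\to0^-}\int_\Omega(-v)\,d\mu_{\Gamma_t}\ge\int_\Omega(-v)\,d\mu_u,
\]
which is Proposition \ref{I_lsc2} with $u_{0,k}=v$ and $u_{i,k}=\Gamma_{t_k}$. The total-mass bound in your first bullet would fail anyway, since $\mu_n[v,\Gamma_t,\dots,\Gamma_t]$ need not have finite mass.
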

 
The rest of this subsection is devoted to proving Theorem \ref{envelopeED}. Our proof is inspired by that of Theorem 4.11 in Lu \cite{Lu} for the case of complex Hessian energies; see also Lu--Nguyen \cite[Lemma 6.12]{LN} for an exposition using the convexity of Monge--Amp\`ere energies along affine curves (which can be deduced from Proposition \ref{cvxlem})  and the orthogonal relation in Proposition \ref{envelopeD}.
\medskip

Observe that the convex envelope has zero Monge--Amp\`ere measure on the set where it is below the original function.
\begin{lem}
\label{Enlinear}
Let $v\in C(\overline{\Omega})$ be a continuous function on a bounded domain $\Omega$ in $\R^n$. Then
\[\int_{\{\Gamma_v<v\}} d\mu_{\Gamma_v} =0.\]
\end{lem}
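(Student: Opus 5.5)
The plan is the standard local argument: near any point where $\Gamma_v<v$, the envelope is locally affine in the sense of subgradients, so its normal mapping image is negligible. Set $U:=\{x\in\Omega:\Gamma_v(x)<v(x)\}$. Since $\Gamma_v$ is convex on $\overline{\Omega}$ with $\Gamma_v\le v$ and $v$ is continuous, $\Gamma_v$ is lower semicontinuous on $\overline{\Omega}$ and continuous in $\Omega$. Hence $U$ is open. As $\mu_{\Gamma_v}$ is a Radon measure, it suffices to show $\mu_{\Gamma_v}(B)=0$ for every small closed ball $B=\overline{B_r(x_0)}\subset U$. We may pick such balls with $\Gamma_v+\delta\le v$ on $B$ for some $\delta>0$, using continuity of $v-\Gamma_v$ at $x_0$, and then cover $U$ by countably many of them.

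The key step is the following claim: for every $x\in B_r(x_0)$, every $p\in\partial\Gamma_v(x)$ is also a supporting slope at some point of $\partial B$, or more precisely $\partial\Gamma_v(B_{r/2}(x_0))$ has measure zero. I would argue by contradiction via the maximal-ness of the envelope. Suppose $x\in B_{r/2}(x_0)$ and $p\in\partial\Gamma_v(x)$, and let $\ell(y)=\Gamma_v(x)+p\cdot(y-x)$, so $\ell\le\Gamma_v\le v$ on $\overline{\Omega}$. If the contact set $\{\Gamma_v=\ell\}$ (a convex set containing $x$) were contained in $B_r(x_0)$, then by compactness and lower semicontinuity $\Gamma_v-\ell\ge\eta>0$ on $\overline\Omega\setminus B_r(x_0)$. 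In that case the function $\max\{\Gamma_v,\ell+\min(\delta,\eta)\}$ would be convex, would lie below $v$ (on $B$ because $\ell+\delta\le\Gamma_v+\delta\le v$, and off $B$ because it equals $\Gamma_v$ there), and would exceed $\Gamma_v$ at $x$. This contradicts the definition of $\Gamma_v$. Hence the contact set, being convex and containing $x$, reaches $\partial B_r(x_0)$, so it contains a segment of length $\ge r/2$ through $x$ on which $\Gamma_v$ is affine with slope $p$.

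From this I would deduce $|\partial\Gamma_v(B_{r/2}(x_0))|=0$ using the classical fact that the set of slopes $p$ that are subgradients at two distinct points has Lebesgue measure zero (see \cite[Lemma 2.? / Aleksandrov's lemma]{Lbook}, the same lemma used to show $\mu_u$ is a measure). Indeed, each $p$ in our image is a subgradient at $x$ and at another point of the segment. Covering $U$ by countably many such half-balls then gives $\mu_{\Gamma_v}(U)=0$.

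The main obstacle is the contradiction step, specifically ensuring uniform positivity of $\Gamma_v-\ell$ away from the ball despite possible boundary behavior of $\Gamma_v$ on $\partial\Omega$. Lower semicontinuity of $\Gamma_v$ on the compact set $\overline\Omega\setminus B_r(x_0)$ handles this, since a lower semicontinuous positive function attains a positive minimum there. If $\Gamma_v$ fails to be lower semicontinuous at the boundary, I would replace $\Gamma_v$ by its lower semicontinuous regularization, which is still convex and below $v$ and therefore equals $\Gamma_v$.
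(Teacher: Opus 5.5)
This is the paper's argument. The paper shows that for $x_0\in\{\Gamma_v<v\}$ every $p\in\partial\Gamma_v(x_0)$ is the slope of a supporting hyperplane at more than one point, citing \cite[Lemma 6.7]{Lbook}, and then concludes with Aleksandrov's lemma \cite[Lemma 2.45]{Lbook}. Your bump construction $\max\{\Gamma_v,\ell+\min(\delta,\eta)\}$ is a self-contained proof of that first step, and it is correct once the compactness step is justified.

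The one justification that fails as written is the lower semicontinuity of $\Gamma_v$ on $\overline\Omega$. Being convex on $\overline\Omega$ and lying below a continuous $v$ does not imply it: on $[0,1]$, the function equal to $-1$ in $(0,1)$ and $0$ at the endpoints is convex and below $v\equiv 0$, but it is not lower semicontinuous at the endpoints. Your fallback is also misjustified. The lower semicontinuous regularization $\underline{\Gamma}$ satisfies $\underline{\Gamma}\le\Gamma_v$, so the fact that it is a convex minorant of $v$ gives nothing new, and "therefore equals $\Gamma_v$" does not follow.

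The claim is nonetheless true, and either of two fixes works.
\begin{enumerate}
\item By Carath\'eodory, $\Gamma_v(y)=\min\{\sum_{i=1}^{n+2}t_iv(y_i): y_i\in\overline\Omega,\ t_i\ge 0,\ \sum_i t_i=1,\ \sum_i t_iy_i=y\}$. This is lower semicontinuous by compactness of the parameter set and continuity of $v$.
\item Keep $\underline{\Gamma}$, but note that it agrees with $\Gamma_v$ in $\Omega$, since convex functions are continuous in the interior, and that $\ell\le\underline{\Gamma}$. Then run your argument with the contact set $\{\underline{\Gamma}=\ell\}$. The bump $\max\{\underline{\Gamma},\ell+\min(\delta,\eta)\}$ is still a convex minorant of $v$ exceeding $\Gamma_v$ at $x$. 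In the other case, $\Gamma_v=\underline{\Gamma}=\ell$ on $[x,y)\subset\Omega$, which supplies the second point for Aleksandrov's lemma.
\end{enumerate}
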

\begin{proof} This is a consequence of Lemmas 6.7 and 2.45 in \cite{Lbook}. Indeed, 
if $p\in\p \Gamma_v(x_0)$ where $x_0\in\{\Gamma_v<v\}$, then by Lemma 6.7 in \cite{Lbook},  
$p$ is also the slope of a supporting hyperplane to $v$ at more than one point. By Aleksandrov's lemma (see Lemma 2.45 in \cite{Lbook}), the set of all such slopes has Lebesgue measure zero. Hence
\[\int_{\{\Gamma_v<v\}} d\mu_{\Gamma_v} =|\p\Gamma_v(\{\Gamma_v<v\})|=0,\]
completing the proof of the lemma.
\end{proof}

The following consequence of the maximum principle will be useful.
 \begin{lem}
 \label{Lipcom}
 Let $u, v\in C(\overline{\Omega})$ be convex functions on a bounded domain $\Omega$ in $\R^n$.  If $u=v$ on $\p\Omega$, $v\geq u$ in $\Omega$, and $u\in C^{0, 1}(\overline{\Omega})$, then
 $v\in C^{0, 1}(\overline{\Omega})$.
 \end{lem}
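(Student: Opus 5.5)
The plan is to bound the subgradients of $v$ uniformly by the Lipschitz constant $L$ of $u$, using only the supporting-hyperplane inequality and the comparison $v\geq u$ with equality on $\p\Omega$. No measure-theoretic input is needed; everything follows from convexity along a single ray.

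\textbf{Step 1: bounding a subgradient.} Fix $x\in\Omega$ and $p\in\p v(x)$ with $p\neq 0$, and set $e=p/|p|$. Consider the ray $x+te$, $t\geq 0$. Since $\Omega$ is bounded, it leaves $\Omega$; let $t_0>0$ be the first exit time and $z=x+t_0e\in\p\Omega$, so that the half-open segment $[x,z)$ lies in $\Omega$. For $y=x+te$ with $0\leq t<t_0$ the subgradient inequality gives
\[v(x+te)\geq v(x)+|p|\,t .\]
Letting $t\nearrow t_0$ and using $v\in C(\overline{\Omega})$, we get $v(z)\geq v(x)+|p|t_0$. Now use the hypotheses $v(z)=u(z)$ (since $z\in\p\Omega$) and $v(x)\geq u(x)$:
\[|p|\,t_0\leq u(z)-u(x)\leq L|z-x|=L\,t_0 .\]
Hence $|p|\leq L$. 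Since $x$ and $p$ were arbitrary, $\p v(\Omega)\subset \overline{B_L(0)}$.

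\textbf{Step 2: from bounded subgradients to a Lipschitz bound.} Take $x,y$ such that the segment $[x,y]$ lies in $\Omega$. Pick $p_x\in\p v(x)$ and $p_y\in\p v(y)$; these sets are nonempty because $v$ is convex. The two supporting inequalities give
\[p_x\cdot(y-x)\leq v(y)-v(x)\leq p_y\cdot(y-x),\]
so $|v(y)-v(x)|\leq L|x-y|$. When $\Omega$ is convex, which is the situation in which the lemma is applied to the convex envelopes in this section, every pair of points of $\Omega$ is joined by such a segment. Thus $v$ is $L$-Lipschitz on $\Omega$, and by continuity the same bound holds on $\overline{\Omega}$, giving $v\in C^{0,1}(\overline{\Omega})$.

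\textbf{Expected obstacle.} The only delicate point is a non-convex $\Omega$. There Step 2 yields only $|Dv|\leq L$ almost everywhere, i.e.\ a Lipschitz bound along segments contained in $\Omega$. To obtain a global bound I would exploit the convention that a convex function on $\Omega$ extends to a convex function on $\R^n$. Concretely, I would replace $v$ on the convex hull of $\Omega$ by the supremum of the affine supporting functions $v(x)+p\cdot(\cdot-x)$ taken over $x\in\Omega$. Each of these has slope $|p|\leq L$ by Step 1, so their supremum is $L$-Lipschitz on all of $\R^n$. It coincides with $v$ on $\Omega$ because $v$ is convex there, and this gives the global Lipschitz bound on $\overline{\Omega}$.
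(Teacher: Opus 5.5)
Your proof is correct. Its skeleton matches the paper's: first confine $\p v(\Omega)$ to a bounded set, then turn bounded subgradients into a global Lipschitz bound. Your Step 2 is the ``bounded $\p v(\Omega)$ $\Rightarrow$ Lipschitz'' half of the fact the paper quotes.

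The key step, however, is argued differently. The paper invokes the maximum principle, which gives the full inclusion $\p v(\Omega)\subset\p u(\Omega)$: one lowers a supporting hyperplane of $v$ until it touches $u$ from below at an interior point. It then uses that $\p u(\Omega)$ is bounded because $u$ is Lipschitz. You never place $p$ inside $\p u(\Omega)$. Instead you evaluate the supporting hyperplane of $v$ at the exit point $z\in\p\Omega$ of the ray in direction $p$. The two hypotheses $v(z)=u(z)$ and $v(x)\geq u(x)$ then give $|p|\,t_0\leq u(z)-u(x)\leq L\,t_0$ at once.

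What each route buys: yours is more elementary and self-contained, using only convexity along a single segment, and it shows exactly where each hypothesis enters. The paper's route yields the stronger inclusion of subgradient images, a tool it reuses elsewhere (e.g.\ in Lemma \ref{coneMA}), at the cost of quoting the maximum principle.

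Your ``expected obstacle'' is not really an obstacle under the paper's conventions. There, $\p v(x)$ is defined by an inequality required for all $y\in\Omega$. Moreover, $v$ extends to a convex function on $\R^n$ that is finite on the open convex hull of $\Omega$, so $\p v(x)\neq\emptyset$ for every $x\in\Omega$. Hence the two-sided inequality in your Step 2 already holds for all $x,y\in\Omega$, with no segment condition. Your supremum-of-supporting-affine-functions construction is a correct restatement of exactly this.
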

 \begin{proof} This is a consequence of the maximum principle which gives $\p v(\Omega)\subset \p u(\Omega)$ and the fact that $u\in C^{0, 1}(\overline{\Omega})$ if and only if $\p u(\Omega)$ is bounded.
 \end{proof}

To compute the left derivative of $E(\Gamma_{u+ tv})$ at $0$, we need the following proposition. 
\begin{prop}[Convex envelope and orthogonal relation]
\label{envelopeD}
 Let $\Omega$ be a bounded convex domain in $\R^n$, $u\in\E(\Omega)$, $v\in \E(\Omega) \cap C^{0, 1}(\overline{\Omega})$, and $0\leq k\leq n$. Then
\begin{equation}
\label{GD1}
\lim_{t\to 0^-} \int_{\Omega} \frac{\Gamma_{u+ tv}- u-tv}{t} \, d\mu_n[\Gamma_{u+ tv,\cdots}, \Gamma_{u+ tv}, \underbrace{u,\cdots, u}_{k\, \text{times}}] =0.\end{equation}
As a consequence,
\begin{equation}
\label{GD2}\lim_{t\to 0^-} \int_{\Omega} \frac{\Gamma_{u+ tv}- u}{t} \, d\mu_n[\Gamma_{u+ tv,\cdots}, \Gamma_{u+ tv}, \underbrace{u,\cdots, u}_{k\, \text{times}}]  =\int_{\Omega} v\,d\mu_u.\end{equation}
\end{prop}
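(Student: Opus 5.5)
Write $w_t:=\Gamma_{u+tv}$ for $t<0$. Since $v\le 0$ and $t<0$, we have $u+tv\ge u$. Because $u$ is convex and lies below $u+tv$, we get $u\le w_t\le u+tv$. Moreover $w_t=0$ on $\p\Omega$, since $u+tv$ vanishes there and is continuous. By Proposition~\ref{latticelem}, $w_t\in\E(\Omega)$ with $E(w_t)\le E(u)$.

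The integrand in \eqref{GD1} is
\[\frac{w_t-u-tv}{t}\in\Big[0,\ \frac{|tv|}{|t|}\Big]=[0,|v|].\]
It is nonpositive in the numerator, and dividing by $t<0$ makes it nonnegative. By Lemma~\ref{Enlinear}, it is supported on the contact-free set $\{w_t<u+tv\}$, which is $\mu_{w_t}$-null.

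The plan is to use this null set together with the polarization/mixed structure. We need the mixed measure $\mu_n[w_t,\dots,w_t,u,\dots,u]$ (with $n-k$ copies of $w_t$) to give small mass to $\{w_t<u+tv\}$, and $\mu_{w_t}$ alone does not suffice when $k\ge1$. I would therefore bound the integral by exchanging. Write $g_t:=(w_t-u-tv)/t$, so $0\le g_t\le |v|$ and $g_t\to 0$ pointwise, since $w_t\to u$ uniformly as $t\to0^-$.

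First I would treat $k=0$, where the claim is immediate because $g_t=0$ $\mu_{w_t}$-a.e. For $k\ge1$, I would rewrite
\[g_t=\frac{w_t-u}{t}-v,\]
noting that $(w_t-u)/t\in[v,0]$. The plan is then to prove \eqref{GD2} directly by induction on $k$, from which \eqref{GD1} follows by subtracting $\int v\,d\mu_n[\cdots]$. That subtraction is justified because $\int v\,d\mu_n[w_t,\dots,u,\dots]\to\int v\,d\mu_u$: the functions $w_t$ converge uniformly to $u$, $v$ is Lipschitz and vanishes on the boundary, and Theorem~\ref{IBPf} converts $\int v\,d\mu_n[w_t,\dots,u]$ into $\int w_t\,d\mu_n[v,\dots]$, where $v$ is Lipschitz so the masses are bounded; Proposition~\ref{I_conti2} then applies.

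For the induction step, I would apply integration by parts (Theorem~\ref{IBPf}) to move one $u$ slot:
\[\int (w_t-u)\,d\mu_n[w_t^{(n-k)},u^{(k)}]=\int (w_t-u)\,d\mu_n[w_t^{(n-k+1)},u^{(k-1)}]+\int\big(u\,d\mu_n[\ldots,w_t,\ldots]-w_t\,d\mu_n[\ldots,u,\ldots]\big).\]
Up to sign, the correction term equals the mixed energy difference
\[\int (w_t-u)\,d\mu_n[w_t^{(n-k)},u^{(k-1)},\,\cdot\,]\]
evaluated against a slot swapped between $w_t$ and $u$. Corollary~\ref{monof} applies since $w_t\ge u$. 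This yields two-sided bounds: the level-$k$ quantity is sandwiched between the level-$(k-1)$ quantity and a chain ending at level $0$, where the case $k=0$ gives the exact value $t\int v\,d\mu_{w_t}$.

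Dividing by $t$ and using the uniform convergence of $\int v\,d\mu_n[\dots]$ to $\int v\,d\mu_u$ for every mixture, both bounds tend to $\int v\,d\mu_u$.

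The main obstacle is making this sandwich rigorous. We need monotonicity in each slot, $\int -f\,d\mu_n[\dots,a]\le\int -f\,d\mu_n[\dots,b]$ for $a\ge b$, applied with $f=(w_t-u)/t$, which is a difference of energy-class functions rather than a single one. I would handle this by splitting it as $(-u)-(-w_t)$ and applying Corollary~\ref{monof} to each piece, possibly with Lemma~\ref{cocyc} to organize the telescoping. That lemma gives
\[E(u)-E(w_t)=\sum_k\int (w_t-u)\,d\mu_n[w_t^{(n-k)},u^{(k)}],\]
and the terms in it are monotone in $k$.
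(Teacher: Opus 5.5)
Your route differs from the paper's, and its outline is sound. Set
\[a_k(t):=\int_\Omega (w_t-u)\,d\mu_n[w_t^{(n-k)},u^{(k)}]\ge 0 .\]
Lemma~\ref{Enlinear} gives the exact value $a_0(t)=t\int_\Omega v\,d\mu_{w_t}$. The bound $w_t-u\le tv$ gives $a_k(t)/t\ge\int_\Omega v\,d\mu_n[w_t^{(n-k)},u^{(k)}]$. (The paper instead uses concavity of $t\mapsto\Gamma_{u+tv}$ to make the difference quotient monotone in $t$, freezes it at a fixed $s$, lets the measure tend to $\mu_u$, and kills $\int_\Omega w(s)\,d\mu_u$ with Corollary~\ref{mpcor} and Lemma~\ref{Enlinear}.)

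\textbf{The gap.} The step you flag as the main obstacle is not resolved. That step is the lower bound $a_k(t)\ge a_0(t)$, which is the nontrivial half of \eqref{GD1}, and the fix you propose does not work. Corollary~\ref{monof} gives slot-monotonicity only for integrands $-u_0$ with $u_0$ convex. If you split $w_t-u=(-u)-(-w_t)$ and apply it to each piece, you get two inequalities in the same direction, and their difference has no sign. Concretely, with $M[\cdot]:=\mu_n[w_t^{(n-k)},u^{(k-1)},\cdot\,]$,
\[a_k-a_{k-1}=\Big(\int_\Omega w_t\,dM[u]-\int_\Omega w_t\,dM[w_t]\Big)+\Big(\int_\Omega u\,dM[w_t]-\int_\Omega u\,dM[u]\Big),\]
and Corollary~\ref{monof} makes the first bracket $\le 0$ and the second $\ge 0$.

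Termwise monotonicity is really the positivity of $-\int_\Omega f\,(dM[w_t]-dM[u])$ with $f=w_t-u$. That is a two-slot Cauchy--Schwarz inequality with $n-1$ frozen slots. The paper proves it only for smooth functions (Step 1 of Theorem~\ref{gCS}). Extending it to $\E(\Omega)$ would need convergence, not just lower semicontinuity, of mixed energies. Your displayed identity does not supply it either: if the dots denote the same frozen slots, Theorem~\ref{IBPf} makes the correction term vanish.

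\textbf{The missing idea.} You only need the sum, not termwise monotonicity. By Proposition~\ref{cvxlem}, $E^{1/(n+1)}$ is convex on $\E(\Omega)$. Hence $\phi(s):=E((1-s)w_t+su)$ is a convex polynomial on $[0,1]$ with $\phi'(0)=(n+1)a_0(t)$. By Lemma~\ref{cocyc}, $\phi(1)-\phi(0)=\sum_{k=0}^n a_k(t)$. So $\sum_k a_k(t)\ge (n+1)a_0(t)$, that is,
\[\sum_k a_k(t)/t\le (n+1)\int_\Omega v\,d\mu_{w_t}.\]
You also have $\liminf_{t\to0^-}a_j(t)/t\ge\int_\Omega v\,d\mu_u$ for every $j$. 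Together these force $a_k(t)/t\to\int_\Omega v\,d\mu_u$ for each $k$, which is \eqref{GD2}, and \eqref{GD1} follows. This is the Berman--Boucksom/Lu--Nguyen argument the paper alludes to; it replaces the comparison-principle step by convexity of $E$.

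\textbf{Two smaller points.}

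First, you need $\int_\Omega \psi\,d\mu_n[w_t^{(n-k)},u^{(k)}]\to\int_\Omega\psi\,d\mu_u$ for $\psi\in\E(\Omega)$, used above with $\psi=v$. Prove it with Corollary~\ref{monof} applied slot by slot for the upper bound (since $w_t\ge u$) and Proposition~\ref{I_lsc2} for the lower bound. Your route via Theorem~\ref{IBPf} and Proposition~\ref{I_conti2} fails: integration by parts moves only one slot, and the remaining $w_t$ and $u$ need not have bounded Monge--Amp\`ere mass.

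Second, the claim that $g_t\to0$ pointwise is false. On $(-1,1)$ with $u=|x|-1$ and $v=x^2-1$, one has $\Gamma_{u+tv}=(1-|t|)(|x|-1)$, so $g_t=|x|-x^2$ for every $t$. You do not use this claim, but drop it.
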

\begin{proof} Since $v\in C^{0, 1}(\overline{\Omega})$ and $v=0$ on $\p\Omega$,  there exists a positive constant $C$ such that $|v|\leq C\dist (\cdot,\p\Omega)$  in $ \overline{\Omega}$.
The convexity of $u$ gives 
$|u(x)|\geq \frac{\text{dist}(x,\p\Omega)}{\text{diam}(\Omega)}\|u\|_{L^{\infty}(\Omega)}$ for all  $x\in\Omega$.

It follows that for $-c<t< 1$ where $c=\|u\|_{L^{\infty}(\Omega)}/(C \cdot \text{diam}(\Omega))>0$, we have
\[u(x) + t v(x) =-|u(x)| - t|v(x)| \leq 0 \quad \text{for all  }x\in\Omega.\]
For $-c<t<0$, we have $0\geq u+ tv\geq u$ so $0\geq u+ tv\geq\Gamma_{u + tv}\geq u$.  Hence $\Gamma_{u + tv}\in\E(\Omega)$, by Proposition \ref{latticelem}.
Denote
\[w(t) = \frac{\Gamma_{u+ tv}-(u+ tv)}{t},\quad -c<t<0. \]
Then $w(t)$ is decreasing in $t$ and $0\leq w(t) \leq -v=|v|$. We have $w(t)=0$ on $\p\Omega$, and $\Gamma_{u+ tv}$ converges uniformly to $u$ on $\overline{\Omega}$ when $t\to 0^-$.
Note that $0\leq \Gamma_{u+ tv} -u \leq |t||v|$.

Fixing $-c<s<0$, we have $-w(s)\geq v$ so $-w(s)\in \E(\Omega)$, and
\begin{equation}
\label{GD1a}
\begin{split}
0&\leq \limsup_{t\to 0^-} \int_{\Omega} w(t) \, d\mu_n[\Gamma_{u+ tv,\cdots}, \Gamma_{u+ tv}, \underbrace{u,\cdots, u}_{k\, \text{times}}] \\
&\leq \limsup_{t\to 0^-} \int_{\Omega} w(s) \, d\mu_n[\Gamma_{u+ tv,\cdots}, \Gamma_{u+ tv}, \underbrace{u,\cdots, u}_{k\, \text{times}}]  
= \int_\Omega w(s) \, d\mu_n[u, \cdots, u]=:I[u,v, s], 
\end{split}
\end{equation}
 using the multi-linearity of $\mu_n$ and the generalized Cauchy--Schwarz inequality (Theorem \ref{gCS}). 
 
 Because $w(s)>0$ if and only if $\Gamma_{u+ sv}-sv<u$, we use Corollary \ref{mpcor} to obtain
 \begin{equation*}
I[u, v, s]\leq \int_{\{\Gamma_{u+ sv}-sv<u\}} |v| \, d\mu_u
\leq \int_{\{\Gamma_{u+ sv}-sv<u\}}  |v| \, d\mu_{\Gamma_{u+ sv}- sv}.
\end{equation*}
It follows from the multi-linearity of $\mu_n$, Lemma \ref{Enlinear}, 
\[\mu_{\Gamma_{u+ sv}- sv}=\mu_n [\Gamma_{u+ sv}- sv, \cdots, \Gamma_{u+ sv}- sv]\quad\text{and}\quad  \int_{\{\Gamma_{u+ sv}<u+ sv\}}  |v| \,d \mu_{\Gamma_{u+ sv}}=0,\]
that 
 \begin{equation}
 \label{GD1b}
I[u, v, s]\leq\sum_{k=1}^n \int_{\{\Gamma_{u+ sv}<u+ sv\}} C(n, k) |v| (-s)^k \, d\mu_n [\Gamma_{u+ sv}, \cdots, \Gamma_{u+ sv}, \underbrace{v,\cdots, v}_{k\,\text{times}}].
\end{equation}
For each $1\leq k\leq n$,  the generalized Cauchy--Schwarz inequality in Theorem \ref{gCS} easily gives
\begin{equation}
\label{GD1c}
 \int_{\{\Gamma_{u+ sv}<u+ sv\}}  |v| (-s)^k \, d\mu_n [\Gamma_{u+ sv}, \cdots, \Gamma_{u+ sv}, \underbrace{v,\cdots, v}_{k\,\text{times}}]\to 0\quad\text{when } s\to 0^{-}.\end{equation}
Combining \eqref{GD1a}--\eqref{GD1c}, we obtain \eqref{GD1}.

\medskip

To prove \eqref{GD2}, we just use \eqref{GD1} and  
the generalized Cauchy--Schwarz inequality in 
Theorem \ref{gCS}.
The proposition is proved.
\end{proof}

For the proof of Theorem \ref{envelopeED}, we will also use the following lemma, which extends a result due to Krylov \cite{K76} for globally Lipschitz functions.
\begin{lem}
\label{Kr_lem}
Let $\Omega\subset\R^n$ be a bounded convex domain. Let $u, v\in \E(\Omega)$.
Then
$$\frac{d}{dt}\mid_{t=0^+} E(u+ tv)= (n+ 1) \int_{\Omega} -v\, d\mu_u.$$
\end{lem}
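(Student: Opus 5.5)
Since $u,v\in\E(\Omega)$, Proposition \ref{cvxlem} gives $u+tv\in\E(\Omega)$ for $t\geq 0$. We expand $E(u+tv)$ as a polynomial in $t$ by multilinearity of the mixed Monge--Amp\`ere measure:
\[
E(u+tv)=\int_\Omega -(u+tv)\,d\mu_n[u+tv,\dots,u+tv]=\sum_{j=0}^{n+1} t^j \binom{n+1}{j}\, E(\underbrace{u,\dots,u}_{n+1-j},\underbrace{v,\dots,v}_{j}),
\]
where the mixed energies are those of Definition \ref{mEdef}. Two facts justify writing the expansion in this symmetric form. First, each mixed energy is finite by the generalized Cauchy--Schwarz inequality (Theorem \ref{gCS}). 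Second, the value of $\int_\Omega -w_0\,d\mu_n[w_1,\dots,w_n]$ does not depend on which argument plays the role of the integrand; this follows from the integration by parts formula (Theorem \ref{IBPf}) combined with permutation invariance of $\mu_n$.

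Concretely, expanding $-(u+tv)$ and the $n$ measure slots yields terms of the form $\int -u\,d\mu_n[\cdots]$ and $t\int -v\,d\mu_n[\cdots]$. Collecting the coefficients of $t^1$, there are two contributions. One is $\int -v\,d\mu_n[u,\dots,u]=\int -v\,d\mu_u$, from the integrand $v$ with no $v$ in the measure. The other is $n$ copies of $\int -u\,d\mu_n[v,u,\dots,u]$, from the integrand $u$ with one $v$ in the measure. By Theorem \ref{IBPf}, applied with $u_0=u$, $u_n=v$ after permuting slots, we have $\int -u\,d\mu_n[u,\dots,u,v]=\int -v\,d\mu_n[u,\dots,u]$. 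The linear coefficient is therefore $(n+1)\int_\Omega -v\,d\mu_u$.

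It remains to check that the polynomial identity holds as a genuine finite expansion, so that the derivative at $0^+$ is the linear coefficient. Every coefficient is finite, with $|E(u,\dots,v,\dots)|\leq E(u)^{\frac{n+1-j}{n+1}}E(v)^{\frac{j}{n+1}}$ by Theorem \ref{gCS}. Splitting $\int -(u+tv)\,d\mu_n[\cdots]$ into the sum of integrals of the nonnegative integrands $-u$ and $-tv$ against nonnegative measures is legitimate, since all terms are nonnegative and finite. So $E(u+tv)$ is a polynomial in $t\geq 0$ of degree $n+1$, and its right derivative at $0$ equals the $t^1$ coefficient.

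The main technical point is multilinearity of $\mu_n$ in each slot with nonnegative coefficients $1$ and $t\geq0$, which the paper states. The only delicate step is applying Theorem \ref{IBPf}, which needs all functions in $\E(\Omega)$; this holds by hypothesis. No envelope argument is needed, because $u+tv$ is convex for $t\geq 0$.
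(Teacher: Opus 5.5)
Your argument is correct and uses the same ingredients as the paper's proof: multilinearity of $\mu_n$, the integration by parts formula (Theorem \ref{IBPf}) to identify the linear coefficient as $(n+1)\int_\Omega -v\,d\mu_u$, and the generalized Cauchy--Schwarz inequality (Theorem \ref{gCS}) to keep every mixed energy finite. The only difference is packaging. The paper goes through the factorization formula of Lemma \ref{cocyc} and shows the $O(t)$ remainder vanishes as $t\to 0^+$, whereas you write $E(u+tv)$ out as a polynomial of degree $n+1$ in $t\geq 0$ and read off its $t^1$ coefficient.
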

\begin{proof} Compute using the factorization formula (see Lemma \ref{cocyc})
 \begin{equation*}
\begin{split}\frac{E(u + tv)- E(u)}{t}& = \sum_{k=0}^n \int_\Omega -v \, d\mu_n[u+ tv,\cdots, u+ tv, \underbrace{u,\cdots, u}_{k\, \text{times}}]\\
& =(n+ 1) \int_{\Omega} -v\, d\mu_u -  \sum_{k=0}^{n-1} \int_\Omega C(n, k)tv \, d\mu_n[\underbrace{u+ tv,\cdots, u+ tv}_{n-k-1\, \text{times}}, v, \underbrace{u,\cdots, u}_{k\, \text{times}}].
\end{split}
\end{equation*}
We now let $t\to 0^+$ using 
the generalized Cauchy--Schwarz inequality in 
Theorem \ref{gCS}
to conclude.
\end{proof}
We are now ready to give the proof of Theorem \ref{envelopeED}.
\begin{proof}[Proof of Theorem \ref{envelopeED}] If $t>0$, then $\Gamma_{u+ tv}= u+ tv$. Applying Lemma \ref{Kr_lem}
we have
\[\frac{d}{dt}\mid_{t=0^+} E (\Gamma_{u + tv}) = \frac{d}{dt}\mid_{t=0^+} E(u+ tv) =(n+ 1)\int_\Omega (-v) \, d\mu_u.\]
Consider now $t<0$ and the difference quotient (see Lemma \ref{cocyc})
\[\frac{E (\Gamma_{u + tv}) - E(u)}{t} = \sum_{k=0}^n \int_\Omega \frac{u-\Gamma_{u + tv}}{t} \, d\mu_n[\Gamma_{u + tv},\cdots, \Gamma_{u + tv}, \underbrace{u,\cdots, u}_{k\, \text{times}}].\]
By Proposition \ref{envelopeD}, we have
\[\frac{d}{dt}\mid_{t=0^-} E (\Gamma_{u + tv}) =\lim_{t\to 0^-} \frac{E (\Gamma_{u + tv}) - E(u)}{t}  =(n+ 1)\int_\Omega (-v) \, d\mu_u. \]
The theorem is proved.
\end{proof}

 \subsection{Test functions} Theorem \ref{envelopeED} allows us to compute the variational derivative of the Monge--Amp\`ere energy in the direction of globally Lipschitz convex functions $v$.
 In applications, we need to consider variations with compactly supported nonzero test functions. These are not convex, but interestingly, they are differences of globally Lipschitz convex functions.

\medskip
 The following lemma, motivated by Cegrell \cite[Lemma 3.1]{Ceg}, allows us to consider test functions in the class of globally Lipschitz convex functions. 
\begin{lem} 
 \label{2cvx}
 Let $\varphi\in C^\infty_c(\Omega)$ where $\Omega$ is a bounded convex domain in $\R^n$. Then there exist $\varphi_1,\varphi_2\in \E(\Omega)\cap C^{0, 1}(\overline{\Omega})$ such that 
 $\varphi=\varphi_1-\varphi_2$.
 \end{lem}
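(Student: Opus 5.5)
The plan is to write $\varphi$ as the difference of two convex functions, both vanishing on $\p\Omega$ and both Lipschitz on $\overline{\Omega}$. Membership in $\E(\Omega)$ will then be automatic: by Remark \ref{MLiptot} (i) a globally Lipschitz convex function has finite Monge--Amp\`ere mass, and it is bounded, so $\int_\Omega|\varphi_i|\,d\mu_{\varphi_i}<\infty$.

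We start from a strictly convex Lipschitz function vanishing on the boundary that is uniformly bounded away from zero on $K:=\text{spt}\,\varphi$. Pick $x_0\in\Omega$ and let $w\in C^{0,1}(\overline{\Omega})$ be the cone function with vertex $(x_0,-1)$ over $\Omega$, as in the proof of Theorem \ref{AJ_thm}. By \eqref{Dirineq3} it satisfies $w\le -\delta:=-\dist(K,\p\Omega)/\diam(\Omega)<0$ on $K$. The cone is not strictly convex, so we also introduce $q(x):=|x-x_0|^2-R^2$ with $R\ge\diam(\Omega)$, so that $q\le 0$ on $\overline{\Omega}$.

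The first candidate is
\[\varphi_2:=\max\{A(w+\tfrac{\delta}{2}),\; \beta q - M\}\quad\text{near }K.\]
A simpler form that achieves the same thing is
\[\varphi_2:=\max\{A w,\ B q + D\},\]
with constants chosen in the following order:
\begin{enumerate}
\item Choose $B$ so large that $Bq-\varphi$ is convex, i.e.\ $2B\ge\|D^2\varphi\|_{L^\infty}$.
\item Choose $D$ so that $Bq+D$ is an affine-plus-quadratic function which is negative on $\p\Omega$ but not too negative on $K$.
\item Choose $A$ so large that $Aw<Bq+D$ on $K$ while $Aw>Bq+D$ near $\p\Omega$.
\end{enumerate}
Near $\p\Omega$ we have $w\to 0$ and $Bq+D\le -\eta<0$ once $D<BR^2-\dots$ is tuned, so $\varphi_2=Aw$ in a neighborhood of $\p\Omega$. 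Hence $\varphi_2=0$ on $\p\Omega$ and $\varphi_2$ is Lipschitz, being a max of Lipschitz functions.

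The delicate point is the requirement on $K$. We need $Bq+D>Aw$ on $K$, and it suffices that $Bq+D>-A\delta$ there. So fix $B$ first, then $D:=-\eta$ with $\eta$ small and $Bq+D$ replaced by $B(q+R^2)$... Rather than juggle signs, the clean procedure is:
\begin{enumerate}
\item Set $g:=Bq$, which satisfies $-BR^2\le g\le 0$.
\item Take $\varphi_2:=\max\{Aw,\ g-1\}$ with $A:=(BR^2+1)/\delta+1$. Then on $K$ we have $Aw\le -A\delta<-BR^2-1\le g-1$, so $\varphi_2=g-1$ on a neighborhood of $K$, by continuity.
\item Near $\p\Omega$ we have $Aw\to 0>g-1$, so $\varphi_2=Aw$ there, giving zero boundary values.
\end{enumerate}

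Finally set $\varphi_1:=\varphi_2+\varphi$. It vanishes on $\p\Omega$ and is Lipschitz. The main step is its convexity, which we check locally on two open sets that cover $\Omega$. On the open neighborhood $U$ of $K$ where $\varphi_2=g-1$, we have $\varphi_1=Bq-1+\varphi$, which is convex since $D^2\varphi_1=2BI+D^2\varphi\ge 0$. On $\Omega\setminus K$ we have $\varphi_1=\varphi_2$, which is convex. Since convexity is a local property on the convex domain $\Omega$ (a function that is locally convex on a convex open set is convex), $\varphi_1$ is convex. Then $\varphi=\varphi_1-\varphi_2$, and both $\varphi_1,\varphi_2\in\E(\Omega)\cap C^{0,1}(\overline{\Omega})$.
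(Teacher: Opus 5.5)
Your construction is correct and is essentially the paper's. The paper also takes $\varphi_2=\max\{C_0|x|^2-C_1,\,C_2 v\}$ with $v$ the cone function. The main difference is that it writes $\varphi_1=\max\{\varphi+C_0|x|^2-C_1,\,C_2 v\}$, with $C_1=\max_{\overline{\Omega}}(|\varphi|+C_0|x|^2+1)$ and $C_2v<-2C_1$ on the support of $\varphi$. With these choices that maximum coincides with $\varphi_2+\varphi$, so convexity of $\varphi_1$ is immediate as a maximum of convex functions. You instead set $\varphi_1:=\varphi_2+\varphi$ and obtain convexity from the local-to-global principle on the convex set $\Omega$; both work. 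You should delete the abandoned first attempts (the ``first candidate'' and the $Bq+D$ discussion), since only the final ``clean procedure'' constitutes the proof.
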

 \begin{proof} Fix $x_0\in\Omega$. Let $v\in C(\overline{\Omega})$ be a convex function whose graph is the cone with vertex $(x_0, -1)$ and the base $\Omega$, with $v=0$ on $\p\Omega$. Then
 $v\in \E(\Omega)\cap C^{0, 1}(\overline{\Omega})$. 
 
 First, we choose a large positive constant $C_0$ such that $\varphi + C_0|x|^2$ is convex. Next, we choose positive constants $C_1, C_2$ satisfying
 \[C_1=\max_{\overline{\Omega}} \big( |\varphi| + C_0|x|^2 + 1\big),\quad C_2 v<-2C_1  \quad\text{on the support of  } \varphi.\]
 The conclusion of the lemma holds for
 \[\varphi_1 =\max\{\varphi + C_0|x|^2-C_1, C_2v\},\quad \varphi_2= \max\{ C_0|x|^2-C_1, C_2 v\}.\]
 Indeed, it is easy to verify that $\varphi_1$ and $\varphi_2$ are convex functions in $\overline\Omega$, vanish on $\p\Omega$, and $\varphi=\varphi_1-\varphi_2$. Since $\varphi_1= \varphi_2=C_2 v=0$  on $\p\Omega$, $\min\{\varphi_1,\varphi_2\}\geq C_2 v$ in $\Omega$,  and $v\in C^{0, 1}(\overline{\Omega})$,  we also have $\varphi_1,\varphi_2\in C^{0, 1}(\overline{\Omega})$ by Lemma \ref{Lipcom}.  The lemma is proved.
 \end{proof}

   The following approximation result is inspired by Cegrell \cite[Theorem 2.1]{Ceg}.
      \begin{lem}
 \label{appro_lem}
  Let $\Omega$ be a bounded convex domain in $\R^n$ and let $u\in\K(\Omega)$ where 
  \[\K(\Omega) = \{  w \in C(\overline{\Omega}):   ~w~\text{is convex, nonzero in } \Omega,~ w=0~\text{on}~\p\Omega \}.\]
  Then there is a decreasing sequence of functions $\{u_k\}_{k=1}^\infty\subset  \K(\Omega)\cap C^{0, 1}(\overline{\Omega})$ such that $u_k$ converges uniformly to $u$ in $\overline{\Omega}$.
  \end{lem}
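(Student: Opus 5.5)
The plan is to regularize $u$ from above by solving Dirichlet problems with truncated, smoothed data, in the spirit of the standard approximants of Remark \ref{Stappro}.

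First attempt: set $\Omega_k:=\{x\in\Omega:\dist(x,\p\Omega)>1/k\}$ and let $u_k$ solve $\mu_{u_k}=\chi_{\Omega_k}\mu_u$ in $\Omega$ with $u_k=0$ on $\p\Omega$. Remark \ref{Stappro} gives $u_k\ge u$, a nonincreasing sequence in $k$, and uniform convergence to $u$ on $\overline\Omega$. However, a finite Monge--Amp\`ere mass does not force a global Lipschitz bound near flat parts of $\p\Omega$, so this alone will not give $u_k\in C^{0,1}(\overline\Omega)$.

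I will therefore use a simpler and more robust construction based on the cone function. Fix $x_0\in\Omega$ and let $v\in\E(\Omega)\cap C^{0,1}(\overline\Omega)$ be the cone with vertex $(x_0,-1)$ and base $\Omega$, so $v<0$ in $\Omega$. Define
\[
u_k:=\max\{u,\,kv\}.
\]
The following properties are then checked in order.
\begin{itemize}
\item[(a)] Each $u_k$ is convex, continuous on $\overline\Omega$, and vanishes on $\p\Omega$.
\item[(b)] Each $u_k$ is nonzero. Indeed $u_k\ge u$ and $u\le 0$, and for $k\ge1$ we have $u_k(x_0)=\max\{u(x_0),-k\}<0$ because $u(x_0)<0$. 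Here $u<0$ in $\Omega$ holds since $u$ is convex, nonzero, and vanishes on the boundary.
\item[(c)] The sequence is decreasing: $kv$ decreases in $k$ because $v\le0$.
\item[(d)] Each $u_k$ is globally Lipschitz. Since $u_k\ge kv$, $u_k=kv=0$ on $\p\Omega$, and $kv\in C^{0,1}(\overline\Omega)$, Lemma \ref{Lipcom} gives $u_k\in C^{0,1}(\overline\Omega)$. Alternatively, note directly that the maximum of $u$ with a Lipschitz function vanishing on $\p\Omega$ agrees with $kv$ near the boundary wherever $kv>u$; however, the lemma already handles this cleanly.
\end{itemize}

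The main point is uniform convergence, and I would argue as follows. Fix $\e>0$. By uniform continuity of $u$ on $\overline\Omega$ and $u=0$ on $\p\Omega$, choose $\delta>0$ with $|u|<\e$ on $\{\dist(\cdot,\p\Omega)<\delta\}$. On the compact set $K_\delta=\{\dist(\cdot,\p\Omega)\ge\delta\}$ we have $v\le -c_\delta<0$ by \eqref{Dirineq3}, while $u\ge-\|u\|_{L^\infty}$. Hence for $k\ge \|u\|_{L^\infty}/c_\delta$ we get $kv\le u$ on $K_\delta$, so $u_k=u$ there. Near the boundary, $0\ge u_k\ge u>-\e$, so $|u_k-u|<\e$. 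Combining the two regions gives $\|u_k-u\|_{L^\infty(\Omega)}\le\e$ for large $k$. Dini's theorem is then not even needed.

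I expect no serious obstacle. The only subtle point is the Lipschitz property at the boundary, and Lemma \ref{Lipcom}, applied with the Lipschitz lower barrier $kv$, resolves it.
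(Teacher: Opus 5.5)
Your proof is correct, and it takes a genuinely simpler route than the paper's. Both arguments get the Lipschitz property the same way: the Lipschitz cone $v$ with vertex $(x_0,-1)$ is a lower barrier with the same boundary values, and Lemma~\ref{Lipcom} does the rest.

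The constructions differ. The paper builds the approximants by regularization:
it mollifies $u$ at scales $r_m$ chosen so that the strip $\{\dist(\cdot,\p\Omega)\le r_m\}$ lies in $\{v\ge -1/(2m^2)\}$;
it extends these mollifications to negative convex functions on $\Omega$ and sets $u_k=\sup_{m\ge k}\max\{u_{r_m}-1/m,\,mv\}$, each term of which equals $mv$ near $\p\Omega$;
and it gets uniform convergence from pointwise monotone convergence via Dini's theorem.

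That is a transcription of Cegrell's approximation scheme for plurisubharmonic functions, where convolution is indispensable because the target need not be continuous. Here $u$ is already continuous on $\overline\Omega$, so truncating $u$ itself by $kv$ suffices. Since the lemma asks only for Lipschitz regularity, nothing is lost; the maximum with $mv$ and the supremum over $m$ would spoil any smoothness from mollification anyway.

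Your version avoids the extension step and Dini's theorem, and it is quantitative. You have $u_k=u$ wherever $k|v|\ge\|u\|_{L^\infty(\Omega)}$, in particular on $\{\dist(\cdot,\p\Omega)\ge\diam(\Omega)\|u\|_{L^\infty(\Omega)}/k\}$. You also get the explicit bound $\|u_k-u\|_{L^\infty(\Omega)}\le\sup\{|u(x)|:\dist(x,\p\Omega)<\diam(\Omega)\|u\|_{L^\infty(\Omega)}/k\}$.

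Incidentally, your discarded first attempt would also have produced Lipschitz functions. The reason is that $\chi_{\Omega_k}\mu_u$ has compact support, not merely finite mass. Proving this requires an additional contact-set argument, so your choice is the cleaner one.
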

 \begin{proof} 
 Fix $v\in \K(\Omega)\cap C^{0, 1}(\overline{\Omega})$ as in the proof of Lemma \ref{2cvx}.
 
 For each $r>0$, we denote $\Omega_r=\{x\in\Omega: \dist(x,\p\Omega)>r\}$. We choose a decreasing sequence of positive numbers $\{r_k\}_{k=1}^\infty$ such that 
 $0< r_k< \dist \big(\big\{x\in \Omega: v(x)<-\frac{1}{2k^2}\big\}, \p\Omega\big)$.
 
  Let $\varphi \in C_c^{\infty}(\R^n)$ be a standard mollifier. For $\e>0$, let $\varphi_\e(x):= \e^{-n}\varphi(x/\e).$
 Let $u_{r_k}=u\ast \varphi_{r_k}$ be the mollification of $u$ in $\Omega_{r_k}$. Then $u_{r_k}$ is negative and convex in  $\Omega_{r_k}$. We extend it to be a negative and convex function in  $\Omega$. It suffices to choose 
 \[u_{k}=\sup_{k\leq m}\max\Big(u_{r_m}-1/m, mv\Big).\]
 
 If $\dist(x_0,\p\Omega)\leq r_m$ then $v(x_0)\geq -\frac{1}{2m^2}$ and thus $mv(x_0)>-\frac{1}{m}\geq u_{r_m}-1/m$. Hence 
 \[\max\Big(u_{r_m}-1/m, mv\Big) = mv\quad \text{in }\overline\Omega\setminus \Omega_{r_m}.\]
 It follows that $u_k\in\K(\Omega)$. Since $u_k=kv$ on $\p\Omega$, $u_k\geq kv$ in $\Omega$, and $v\in C^{0, 1}(\overline{\Omega})$, we have $u_k\in C^{0, 1}(\overline{\Omega})$ by Lemma \ref{Lipcom}.
It is easy to see that for each $x\in\overline{\Omega}$, $\{u_k(x)\}_{k=1}^\infty$ decreases to $u(x)$. By Dini's theorem, $\{u_k(x)\}_{k=1}^\infty$ converges uniformly to $u$ on $\overline\Omega$.
 \end{proof}
 \section{Degenerate Monge--Amp\`ere equations}
\label{deg_sec}
In this section, we study the solvability and uniqueness for several degenerate Monge--Amp\`ere equations including the Monge--Amp\`ere eigenvalue problem in real Euclidean spaces that involve singular Borel measures. 
In particular, we will prove Theorems \ref{Dist0pn} and \ref{EVP1}.
 \subsection{Solvability of degenerate Monge--Amp\`ere equations}
 Using variational method, we first establish the existence part of Theorem \ref{Dist0pn} (i), (iii) and (iv), but under some restrictions of the measure $\nu$ when $p\in (0, n)$. Let $E$ be the Monge--Amp\`ere energy as in \eqref{Edef}.
\begin{thm} 
\label{Dirdistp}
Let $\Omega\subset\R^n$ be a bounded convex domain, $p\in (-1, \infty)$, and
$\nu$ be a locally finite Borel measure on $\Omega$ satisfying
\begin{equation}
\label{Distnu}
\nu(\Omega)>0\quad \text{and}\quad \int_\Omega \dist^{\frac{p+1}{n+1}}(\cdot,\p\Omega)d\nu\leq C<\infty.\end{equation}
When $p=n$, we can assume more generally that 
 \begin{equation}
\label{Distnu2} 
\small
\int_{\{x\in\Omega: \dist(x, \p\Omega)\leq1/m\}}|v|^{n+1}\,d\nu\to 0 \,\mbox{when $m\to\infty$ uniformly in $v$ on bounded subsets of $\E(\Omega)$}.\end{equation}
Let $E$ be as in \eqref{Edef}. Denote
\[K(\Omega)=\bigg\{ v\in\E(\Omega),\quad \int_\Omega |v|^{p+1}\, d\nu=1\bigg\}.\]
Then, there exists a convex function $u\in K(\Omega)$ such that 
\[E(u)=\min_{v\in K(\Omega)} E(v):=\lambda_0.\]
Moreover, $\lambda_0>0$ and each minimizer $u$ is a nonzero Aleksandrov solution to
\begin{equation}
\label{Distnu3}
   \det D^{2} u=\lambda_0 |u|^p\nu \quad\text{in} ~\Omega, \quad
u =0\quad\text{on}~\p \Omega.
\end{equation}
\end{thm}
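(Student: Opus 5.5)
The plan is the direct method, followed by a first variation computed through convex envelopes (Theorem \ref{envelopeED}).

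\textbf{Step 1: continuity of the constraint functional.} I first show that $J(v):=\int_\Omega |v|^{p+1}\,d\nu$ is finite and continuous, under uniform convergence, on bounded subsets of $\E(\Omega)$.
\begin{itemize}
\item For $p\neq n$, combine Theorem \ref{Al_E}(ii) with the energy bound. It gives $|v(x)|^{n+1}\le C\dist(x,\p\Omega)E(v)$, hence $|v|^{p+1}\le C\,E(v)^{\frac{p+1}{n+1}}\dist^{\frac{p+1}{n+1}}(\cdot,\p\Omega)$.
\item Together with \eqref{Distnu}, this domination lets us split $\Omega$ into $\{\dist(\cdot,\p\Omega)>1/m\}$ and its complement. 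On the inner set we use uniform convergence and $\nu(\{\dist>1/m\})<\infty$. The tail $\int_{\{\dist\le 1/m\}}\dist^{\frac{p+1}{n+1}}\,d\nu$ tends to $0$.
\item For $p\in(-1,0)$ the map $s\mapsto s^{p+1}$ is still continuous, and the domination handles everything.
\item For $p=n$, the vanishing-mass hypothesis \eqref{Distnu2} plays the role of the tail estimate.
\end{itemize}

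\textbf{Step 2: positivity of $\lambda_0$ and existence of a minimizer.}
\begin{itemize}
\item The same bound gives $1=J(v)\le C\,E(v)^{\frac{p+1}{n+1}}\int\dist^{\frac{p+1}{n+1}}d\nu$, so $\lambda_0>0$. For $p=n$, I would instead use \eqref{Distnu2} with a compactness argument: a minimizing sequence with $E\to0$ tends to $0$ uniformly by Theorem \ref{Al_E}(ii), so $J\to0$, a contradiction.
\item Let $u_k\in K(\Omega)$ be a minimizing sequence. Theorem \ref{Al_E}(ii) makes it uniformly bounded, with uniform control $|u_k|\le C\dist^{1/(n+1)}$ near the boundary. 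Local Lipschitz bounds and Arzel\`a--Ascoli then give uniform convergence on $\overline\Omega$ to a convex $u$ vanishing on $\p\Omega$.
\item Proposition \ref{I_lsc2} gives $E(u)\le\liminf E(u_k)=\lambda_0$. Step 1 gives $J(u)=1$. Hence $u\in K(\Omega)$ is a minimizer, and nonzero.
\end{itemize}

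\textbf{Step 3: the Euler--Lagrange equation.} The functional $F(v)=E(v)/J(v)^{\frac{n+1}{p+1}}$ is homogeneous of degree $0$, and $u$ minimizes it over $\E(\Omega)\setminus\{0\}$.
\begin{itemize}
\item Fix $\varphi\in C_c^\infty(\Omega)$. By Lemma \ref{2cvx}, write $\varphi=\varphi_1-\varphi_2$ with $\varphi_i\in\E(\Omega)\cap C^{0,1}(\overline\Omega)$.
\item Consider $t\mapsto F(\Gamma_{u+t\varphi})$ for small $|t|$. The envelope $\Gamma_{u+t\varphi}$ is convex, vanishes on $\p\Omega$, and lies in $\E(\Omega)$, since it is squeezed between $u-|t|\,|\varphi|$-type Lipschitz perturbations; Proposition \ref{latticelem} gives the energy class membership.
\item Since $\Gamma_{u+t\varphi}\le u+t\varphi$, and $u$ minimizes $F$, I want $h(t)=E(\Gamma_{u+t\varphi})-\lambda_0 J(u+t\varphi)^{\frac{n+1}{p+1}}$ to satisfy $h(t)\ge h(0)=0$. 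This uses the monotonicity of $J$ in $|v|$: the inequality $\Gamma\le u+t\varphi\le0$ (for $t$ small on the support) gives $J(\Gamma)\ge J(u+t\varphi)$.
\item Differentiating at $t=0$ should give the weak equation
\[(n+1)\int(-\varphi)\,d\mu_u=\lambda_0\frac{n+1}{p+1}(p+1)\int|u|^p(-\varphi)\,d\nu,\]
that is, $\mu_u=\lambda_0|u|^p\nu$.
\end{itemize}

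\textbf{Main obstacle.} The hard step is justifying the derivative of $E(\Gamma_{u+t\varphi})$ when $\varphi$ is not convex. Theorem \ref{envelopeED} only covers convex Lipschitz directions. I plan to write $u+t\varphi=(u+t\varphi_1)-t\varphi_2$ and rerun the argument of Proposition \ref{envelopeD} with mixed measures, using Theorem \ref{gCS} for the error terms. If that fails, the fallback is to treat the one-sided inequalities separately for $\pm\varphi_i$, which still yields $\mu_u\ge\lambda_0|u|^p\nu$ and $\le$. The differentiation of $J$ is routine by dominated convergence on the compact support of $\varphi$, except for $p\in(-1,0)$, where $|u|^p$ is still bounded on that support because $u<0$ there.
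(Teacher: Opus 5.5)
Your Steps 1 and 2 follow the paper's argument. That is: Theorem~\ref{Al_E}(ii) to dominate $|v|^{p+1}$ by $\dist^{\frac{p+1}{n+1}}(\cdot,\p\Omega)$, Arzel\`a--Ascoli, lower semicontinuity of $E$, and truncation/vanishing mass when $p=n$. Your comparison $J(\Gamma)\ge J(u+t\varphi)$, giving $h(t)\ge h(0)$, is also how the paper sets up the first variation.

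The one divergence is in Step 3, and the ``main obstacle'' you name is self-inflicted. You never need to differentiate $E(\Gamma_{u+t\varphi})$ along a non-convex $\varphi$. The Euler--Lagrange identity $\int_\Omega v\,[d\mu_u-\lambda_0|u|^p\,d\nu]=0$ is linear in $v$. So the paper runs the variation only in the directions $v=\varphi_1$ and $v=\varphi_2$ separately, where $\varphi_i\in\E(\Omega)\cap C^{0,1}(\overline\Omega)$ come from Lemma~\ref{2cvx}. For such $v$, Theorem~\ref{envelopeED} gives the full two-sided derivative of $t\mapsto E(\Gamma_{u+tv})$ at $0$:
\begin{itemize}
\item for $t>0$, $u+tv$ is already convex;
\item for $t<0$, the envelope is handled by Proposition~\ref{envelopeD}.
\end{itemize}
Since $t=0$ minimizes $h$, this forces $h'(0)=0$, so the identity holds with equality for each $\varphi_i$. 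Subtracting gives it for $\varphi$. Your primary plan, rerunning Proposition~\ref{envelopeD} for $u+t\varphi$ with non-convex $\varphi$, is unnecessary and, as written, unjustified.

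Your fallback is close to this, but its phrasing hides the point that matters. One-sided information does not suffice. The $t>0$ side, where no envelope is needed, only gives $\int_\Omega|v|\,d\mu_u\ge\lambda_0\int_\Omega|v|\,|u|^p\,d\nu$ for convex Lipschitz $v\le 0$ vanishing on $\p\Omega$. The functions $|v|$ are nonnegative and concave, and testing against them cannot be localized to a measure inequality. For example, on $(-1,1)$, concavity gives $g(0)\ge\frac12\bigl(g(-\tfrac12)+g(\tfrac12)\bigr)$, yet $\delta_0\not\ge\frac12(\delta_{-1/2}+\delta_{1/2})$. The argument works because the $t<0$ side, which is exactly where the envelope and Theorem~\ref{envelopeED} enter, gives the reverse inequality for the same $v$. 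That yields an equality, and only then does linearity pass to arbitrary $\varphi\in C^\infty_c(\Omega)$.

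Two details change because $\varphi_i$ is not compactly supported:
\begin{itemize}
\item You need $u+t\varphi_i\le 0$ on all of $\Omega$ for small $|t|$. Use $|\varphi_i|\le C\dist(\cdot,\p\Omega)$ and $|u|\ge \|u\|_{L^\infty(\Omega)}\dist(\cdot,\p\Omega)/\diam(\Omega)$, as the paper does.
\item Differentiating $J(u+t\varphi_i)$ needs domination up to the boundary, not on a compact support. This again comes from $|\varphi_i|\le C\dist(\cdot,\p\Omega)$ together with your Step 1 bounds, plus $|u|\ge c\,\dist(\cdot,\p\Omega)$ when $p<0$.
\end{itemize}
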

\begin{proof} We divide the proof into several steps.

\medskip
\noindent
{\it Step 1: Existence of minimizers with constraint.} We consider two cases.

{\bf General case of $p$ where \eqref{Distnu} holds.}
If $u\in \E(\Omega)$, then 
by Theorem \ref{Al_E}, \[|u|\leq C(n, \Omega, E(u))\dist^{\frac{1}{n+1}}(\cdot,\p\Omega) \quad \text{in}\quad  \Omega;\]
hence
\[\int_\Omega |u|^{p+1}\, d\nu \leq C^{p+1} \int_\Omega \dist^{\frac{p+1}{n+1}}(\cdot,\p\Omega)d\nu<\infty.\]
This together with $\nu(\Omega)>0$ implies that $K(\Omega)$ is nonempty. 

Let $\{u_k\}_{k=1}^\infty\subset K(\Omega)$ be a minimizing sequence for $E$ so that
\[\lim_{k\rightarrow \infty } E(u_k)= \inf_{v\in K(\Omega)} E(v)=:\lambda_0.\]
By taking a cone-like function $v\in K(\Omega)$ with vertex at $x_0\in\Omega$, we see that $\lambda_0<\infty$.
Then $\{E(u_k)\}_{k=1}^\infty$ is bounded from above by a constant $C_1=C_1(n, p, \nu,\Omega)$. Thus, by Theorem \ref{Al_E},
we find that $\{u_k\}_{k=1}^\infty$ is uniformly bounded in $C^{ \frac{1}{n+1}}(\overline{\Omega})$. By the Arzela--Ascoli theorem, 
there exists a subsequence $\{u_{k_i}\}_{i=1}^\infty$ that converges uniformly on  $\overline{\Omega}$ to a convex function 
$u\in C^{ \frac{1}{n+1}}(\overline{\Omega})$ with $u=0$ on $\p\Omega$. By 
the lower semicontinuity property of $E$ (see Lemma \ref{I_lsc1}),
we have
\[E(u) \leq \liminf_{i\rightarrow \infty} E(u_{k_i}).\]
Moreover, using 
the Dominated Convergence Theorem, we have
\[\int_\Omega |u|^{p+1}\, d\nu=\lim_{i\rightarrow \infty} \int_\Omega |u_{k_i}|^{p+1}\, d\nu=1.\]
Clearly $u\in K(\Omega)$.
As a result, $u$ is a minimizer of $E$ over $K(\Omega)$, so in fact
\[E(u)=   \min_{v\in K(\Omega)} E(v)=\lambda_0>0.\]

{\bf The case $p=n$ and $\nu$ is a locally finite Borel measure on $\Omega$ satisfying \eqref{Distnu2}.} Let $u_k$, $u_{k_i}$ and $u$ be as above. It suffices to show $u\in K(\Omega)$.
We claim that 
\begin{equation}
\label{Inucont}
\int_\Omega |u|^{n+1}\, d\nu=\lim_{i\rightarrow \infty} \int_\Omega |u_{k_i}|^{n+1}\, d\nu=1.\end{equation}
Let $ \nu_{m}=\chi_{\{x\in\Omega: \dist(x, \p\Omega)>1/m\}}\nu$.
For each $m\in\N$, it is clear that $\int_\Omega \dist(\cdot, \p\Omega) d\nu_m<\infty$. As in the general case above, by the Dominated Convergence Theorem, we have
\[\int_\Omega |u|^{n+1}\, d\nu_m=\lim_{i\rightarrow \infty} \int_\Omega |u_{k_i}|^{n+1}\, d\nu_m.\]
Observe that
\begin{equation*}
\begin{split}
\int_\Omega |u_{k_i}|^{n+1}\, d\nu - \int_\Omega |u|^{n+1}\, d\nu \leq \int_{\{x\in\Omega: \dist(x, \p\Omega)\leq 1/m\}} |u_{k_i}|^{n+1}\, d\nu + \int_\Omega (|u_{k_i}|^{n+1}-  |u|^{n+1}) \, d\nu_m.
\end{split}
\end{equation*}
On the right-hand side, we first let $i\to\infty$ and then $m\to\infty$ using \eqref{Distnu2} to discover
\[\limsup_{i\to\infty} \int_\Omega |u_{k_i}|^{n+1}\, d\nu \leq  \int_\Omega |u|^{n+1}\, d\nu.\]
This, together with the Fatou lemma,  implies \eqref{Inucont} as claimed. From this, we  have $E(u)=\lambda_0$.

\medskip

In all cases, $u$ is a minimizer of the functional
$F(u)= \frac{E(u)}{\Big(\int_\Omega |u|^{p+1}\, d\nu\Big)^{\frac{n+1}{p+1}}}$ over $\E(\Omega)$.

\noindent
We now show that a minimizer $u$ of $F$ solves \eqref{Distnu3} in the sense of Aleksandrov. For this, due to rescaling, we can assume that $\int_\Omega |u|^{p+1}\, d\nu=1$ and divide the proof into two steps.

\medskip
\noindent
{\it Step 2: We show that if  $v\in\E(\Omega)\cap C^{0, 1}(\overline{\Omega})$ then the following identity holds:}
\begin{equation}
\label{ELvK1}
\int_\Omega v [d\mu_u- \lambda_0(-u)^p\, d\nu]=0.\end{equation}
 Indeed, there is a constant $C>0$ such that $|v|\leq C\dist (\cdot,\p\Omega)$  in $ \overline{\Omega}$.
The convexity of $u$ gives 
\[|u(x)|\geq \frac{\text{dist}(x,\p\Omega)}{\text{diam}(\Omega)}\|u\|_{L^{\infty}(\Omega)} \quad \text{for all  }x\in\Omega.\]
It follows that for $-c<t< 1$ where $c=\|u\|_{L^{\infty}(\Omega)}/(C \cdot \text{diam}(\Omega))>0$, we have
\[u(x) + t v(x) =-|u(x)| - t|v(x)| \leq 0 \quad \text{for all  }x\in\Omega.\]
Let $\Gamma$ be the convex envelope as in Definition \ref{cvx_env_defn}. Consider the function
\[ f(t) =  \frac{E(\Gamma_{u+ tv})}{\Big(\int_\Omega (-u-tv)^{p+1}\, d\nu\Big)^{\frac{n+1}{p+1}}},\quad -c<t<1.\]
Then, by our assumption, $f(0)=\lambda$.
Since $\Gamma_{u + tv}\leq u + tv\leq 0$ in $\Omega$ when $-c<t< 1$, we have \[\int_\Omega (-u - tv)^{p+1}\, d\nu \leq \int_\Omega (-\Gamma_{u + tv})^{p+1}\, d\nu.\] Thus, recalling the minimality of $u$,
we have
\[f(t) \geq \frac{E (\Gamma_{u + tv})} {\Big(\int_\Omega (-\Gamma_{u + tv})^{p+1}\, d\nu\Big)^{\frac{n+1}{p+1}}} = F(\Gamma_{u+ tv})\geq F(u)=\lambda_0 \quad \text{when }-c<t< 1.\]
Therefore, $f$ attains its minimum value $\lambda_0 $ at $t=0$.

By Theorem \ref{envelopeED}, $f$ is differentiable at $0$ with
\[f'(0) =\frac{(n+ 1)\int_\Omega (-v)  \, d\mu_u- (n+ 1)\lambda_0 \Big(\int_\Omega (-u)^{p+1}\, d\nu\Big)^{\frac{n-p}{p+1}}\int_\Omega (-v) (-u)^p\, d\nu}{\Big(\int_\Omega (-u)^{p+1}\, d\nu\Big)^{\frac{n+1}{p+1}}}.\]
Since $f'(0)=0$ and $\int_\Omega |u|^{p+1}\, d\nu=1$, we obtain \eqref{ELvK1}.

\medskip
\noindent
{\it Step 3: $u$ solves the degenerate Monge--Amp\`ere equation \eqref{Distnu3} in the sense of Aleksandrov.}

Let $\varphi \in C_c^\infty(\Omega)$. Then, by Lemma \ref{2cvx}, we can find $\varphi_1,\varphi_2 \in\E(\Omega)\cap C^{0, 1}(\overline{\Omega})$ such that $\varphi=\varphi_1-\varphi_2$. Applying \eqref{ELvK1} to $\varphi_1$ and $\varphi_2$ and then subtracting, we deduce
\begin{equation*}
\int_\Omega \varphi [d \mu_u- \lambda_0 (-u)^p\, d\nu]=0.\end{equation*}
This holds for all $\varphi\in C_c^\infty(\Omega)$, so $u$ is an Aleksandrov solution of \eqref{Distnu3}.
The theorem is proved.
\end{proof}

We identify a general class of locally finite Borel measures satisfying the hypotheses of Theorem \ref{Dirdistp} for the case $p=n$. See also \cite[Proposition 3.16]{LZ}.
\begin{lem} \label{vamassho} Let $\Omega\subset\R^n$ be a bounded convex domain.
If $\nu=\mu_w$ where $w\in C(\overline{\Omega})$ is a nonzero convex function, then 
\eqref{Distnu2} holds.
\end{lem}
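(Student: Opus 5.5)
Fix a bounded set $\mathcal B\subset\E(\Omega)$, say $E(v)\le M$ for all $v\in\mathcal B$. We must show
\[
\sup_{v\in\mathcal B}\int_{\{\dist(\cdot,\p\Omega)\le 1/m\}}|v|^{n+1}\,d\mu_w\to0 .
\]
The plan is to trade the smallness of the region for smallness of a truncated convex function, using Blocki's estimate (Theorem \ref{BlemR}). Without loss of generality assume $w\le 0$; otherwise subtract the affine function equal to $w$ on\dots{} more simply, since only $\mu_w$ matters we may replace $w$ by the solution $\tilde w$ of $\mu_{\tilde w}=\mu_w$ with zero boundary values. That solution exists by Theorem \ref{Dir_thm} applied to truncations together with the comparison argument used in the mixed-measure lemma of Section \ref{Dir_sec}, with $w$ minus its harmonic extension as a lower barrier. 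So we take $w\in C(\overline\Omega)$ convex, $w=0$ on $\p\Omega$.

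\textbf{Comparison function.} For $\e>0$ set $w_\e:=\max\{w,-\e\}$. This is convex, satisfies $w_\e\ge w$, and equals $w$ in a neighborhood $N_\e$ of $\p\Omega$ (by continuity, $w>-\e$ near $\p\Omega$). By locality (Lemma \ref{mMAloc}) we have $\mu_{w_\e}=\mu_w$ on $\mathrm{int}(N_\e)$.

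Choose $m$ so large that $\{\dist(\cdot,\p\Omega)\le1/m\}\cap\Omega\subset N_\e$. Then
\[
\int_{\{\dist\le1/m\}}|v|^{n+1}\,d\mu_w\le\int_\Omega|v|^{n+1}\,d\mu_{w_\e}.
\]
Apply Theorem \ref{BlemR}(ii) with $v$ in the role of $v$, $w\equiv0$, every $u_i=w_\e$, and every $\tilde u_i=0$. This gives
\[
\int_\Omega|v|^{n+1}\,d\mu_{w_\e}\le 2(n+1)!\,n\,\e^{n}\int_\Omega|v|\,d\mu_v\le C(n)\,\e^n M,
\]
since $d_k\le\e$, $m_k$ is a product involving $\|w_\e\|_\infty\le\e$ and $\|\tilde u_j\|=0$, and $\mu_w$ with $w=0$ contributes nothing. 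Strictly, the product $m_k$ contains $\|\tilde u_j\|_{L^\infty}=0$ for $k<n$, so only the $k=n$ term survives, with $d_nm_n=\e\cdot\e^{n-1}$. A cleaner route is to use Theorem \ref{BlemR}(i) combined with the H\"older/energy route of Lemma \ref{Distex}; either way the bound is $C(n)\e^n M$, uniform in $v\in\mathcal B$. We also need $\mu_{w_\e}$ to be a legitimate argument; this holds because $w_\e$ is a bounded nonpositive convex function.

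\textbf{Conclusion and main obstacle.} Given $\e$, the bound holds for all $m\ge m(\e)$ and is independent of $v\in\mathcal B$. Hence
\[
\limsup_{m\to\infty}\ \sup_{\mathcal B}\int_{\{\dist\le1/m\}}|v|^{n+1}\,d\mu_w\le C\e^nM,
\]
and letting $\e\to0$ gives the claim.

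The main obstacle is justifying the reduction to zero boundary data, so that $w_\e$ agrees with $w$ near the whole boundary. If this step is delicate, the alternative is to pick the harmonic function $h$ with boundary value $w$, note that $w-h$ is not convex, and instead use $\max\{w,\ \ell-\e\}$ locally. The cleanest approach is to solve $\mu_{\tilde w}=\mu_w$ with $\tilde w=0$ on $\p\Omega$ as sketched above, and then verify that $\tilde w$ is continuous up to the boundary via Theorem \ref{Al_E}(i) applied to the truncations.
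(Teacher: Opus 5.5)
Your estimate is correct once $w=0$ on $\p\Omega$. In that case $w_\e=\max\{w,-\e\}$ coincides with $w$ on the open set $\{w>-\e\}$, which contains $\{\dist(\cdot,\p\Omega)\le 1/m\}\cap\Omega$ for large $m$. Theorem \ref{BlemR}(ii) with $u_i=w_\e$, $\tilde u_i=0$ (only the $k=n$ term survives) then gives $\int_\Omega|v|^{n+1}\,d\mu_{w_\e}\le 2(n+1)!\,\e^n E(v)$.

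\textbf{The gap} is the reduction to zero boundary values. Your whole argument needs it: without it, $w_\e$ neither agrees with $w$ near $\p\Omega$ nor is small. Yet the lemma is stated for an arbitrary convex $w\in C(\overline{\Omega})$. You assert there is a convex $\tilde w\in C(\overline{\Omega})$ with $\tilde w=0$ on $\p\Omega$ and $\mu_{\tilde w}=\mu_w$, but none of your justifications produces it.
\begin{itemize}
\item The argument of the mixed-measure lemma in Section \ref{Dir_sec} yields a solution with boundary values $w$, not $0$. Its barriers $w\le u_m\le h$ both carry the boundary data of $w$.
\item The function $w-h$ is not convex, hence not a subsolution. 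So Theorem \ref{compa1} cannot use it as a lower barrier for the zero-data truncated solutions $\tilde w_m$.
\item Theorem \ref{Al_E}(i) only gives $|\tilde w_m|^n\le C\,\dist(\cdot,\p\Omega)\,\mu_w(\{\dist(\cdot,\p\Omega)>1/m\})$. This is not uniform in $m$, because $\mu_w(\Omega)=\infty$ is typical for merely continuous $w$ (already for $-(1-|x|^2)^{\alpha}$ on the ball).
\item Theorem \ref{Dirdist} would need $\int_\Omega \dist(\cdot,\p\Omega)\,d\mu_w<\infty$. The paper does not know this even for zero boundary data (Remark \ref{criticalrem2}).
\item Adding a convex $g$ with $g=-w$ on $\p\Omega$ is impossible on domains with a flat face along which $w$ is strictly convex.
\end{itemize}
So the zero-data Dirichlet problem for $\mu_w$ is a nontrivial statement that your proof assumes rather than proves.

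\textbf{A fix that avoids zero data.} The paper subtracts a constant so that $w\le 0$. It then uses the standard approximants $w_m$ of Remark \ref{Stappro}. These keep the boundary values of $w$, satisfy $\mu_{w_m}=\chi_{\{\dist(\cdot,\p\Omega)>1/m\}}\mu_w$ and $w\le w_m\le 0$, and converge uniformly to $w$ by Dini's theorem. Then $\int_{\{\dist(\cdot,\p\Omega)\le 1/m\}}|v|^{n+1}\,d\mu_w=\int_\Omega |v|^{n+1}(d\mu_w-d\mu_{w_m})$. Theorem \ref{BlemR}(ii) with $u_i=w$, $\tilde u_i=w_m$ bounds this by $C(n)\|w-w_m\|_{L^\infty(\Omega)}\|w\|^{n-1}_{L^\infty(\Omega)}E(v)$. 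Here the smallness comes from $\|w-w_m\|_{L^\infty(\Omega)}\to 0$, not from $\|w_\e\|_{L^\infty(\Omega)}\le\e$.

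\textbf{Keeping your cut-off.} The global version of your vague $\max\{w,\ell-\e\}$ idea works. Let $\psi$ solve $\mu_\psi=0$, $\psi=w$ on $\p\Omega$ (Theorem \ref{Dir_thm}). Then:
\begin{itemize}
\item $\psi\ge w$, and $\max\{w,\psi-\e\}=w$ on the neighborhood $\{\psi-w<\e\}$ of $\p\Omega$;
\item $\|\max\{w,\psi-\e\}-(\psi-\e)\|_{L^\infty(\Omega)}\le \e$;
\item $\mu_\psi=0$, so Theorem \ref{BlemR}(ii) with $\tilde u_i=\psi-\e$ (after subtracting a common constant to make all functions nonpositive) gives an $O(\e)$ bound, uniform on bounded subsets of $\E(\Omega)$.
\end{itemize}
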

\begin{proof}
Subtracting a constant, we can assume $w\leq 0$ in $\Omega$.
Let $w_m\in C(\overline{\Omega})$ be standard approximants of $w$; see Remark \ref{Stappro}. Thus, the convex function $w_m$ solves
\begin{equation*}
   \mu_{w_m}=\chi_{\{x\in\Omega: \dist(x, \p\Omega)>1/m\}}\mu_w \quad\text{in} ~\Omega, \quad
w_m =w\quad\text{on}~\p \Omega,
\end{equation*}
$|w_m|\leq |w|$, and 
$\{w_m\}_{m=1}^{\infty}$ converges uniformly to $w$ in $\overline{\Omega}$.

Let $v\in \E(\Omega)$. Then,
Theorem \ref{BlemR} gives
\begin{equation*}
\begin{split}
\int_{\{x\in\Omega:\dist(x, \p\Omega)\leq 1/m\}} |v|^{n+1}\, d\nu &= \int_\Omega |v|^{n+1}\, (d\mu_w-d\mu_{w_m}) \\&\leq 
2(n+2)! \|w-w_m\|_{L^{\infty}(\Omega)} \|w\|^n_{L^{\infty}(\Omega)}  E(v).
\end{split}
\end{equation*}
When $m\to\infty$, the last quantity converges uniformly to $0$ on bounded subsets of $\E(\Omega)$.
\end{proof}
We can give another proof, without using convex envelopes, of Theorem \ref{Dirdistp} in the case $p=n$ that the minimizers with constraint are in fact Monge--Amp\`ere eigenfunctions.
\begin{rem}[Second proof of the energy characterization of the Monge--Amp\`ere eigenfunctions] This proof is an adaptation of the proof of \cite[Theorem 1.5]{L_scheme} from the case $\nu$ being the Lebesgue measure.
Consider $p=n$ in Theorem \ref{Dirdistp} and $u\in\E(\Omega)\setminus \{0\}$ such that
\[E(u) = \lambda[\Omega,\nu] \int_\Omega|u|^{n+1}\, d\nu,\]
for some $ \lambda[\Omega,\nu] >0$ defined by the variational formula
\[ \lambda[\Omega,\nu] = \min\bigg\{\frac{E(u)}{\int_\Omega |u|^{n+1}\, d\nu}: u\in \E(\Omega)\bigg\}.\]
By Lemma \ref{Distex2}, there exists a unique convex function $v\in \E(\Omega)$ satisfying
\begin{equation}\label{vsch}\mu_v=\lambda[\Omega,\nu] |u|^n\nu\quad \text{in }\Omega,\quad \text{and } v=0\quad\text{on }\p\Omega.\end{equation}
Multiplying  \eqref{vsch} by $|u|$, integrating over $\Omega$ and then using Theorem \ref{gCS}, we have
\[E(u)= \lambda[\Omega,\nu]\|u\|^{n+1}_{L^{n+1}(\Omega,d\nu)}=\int_{\Omega} |u|\, d \mu_v \leq 
[E(u)]^{\frac{1}{n+1}} [E(v)]^{\frac{n}{n+1}}.
\]
It follows that \[E(u)\leq E(v).\] 
Multiplying \eqref{vsch} by $|v|$, integrating over $\Omega$ and then using the H\"older inequality, we have
\begin{equation}
\label{uvEHol}
E(v)=\int_{\Omega} |v|\, d\mu_v = \lambda[\Omega,\nu]\int_{\Omega} |u|^n |v|d\nu\leq   \lambda[\Omega,\nu] \|u\|^n_{L^{n+1}(\Omega, d\nu)} \|v\|_{L^{n+1}(\Omega, d\nu)}.
\end{equation}
By the definition of $ \lambda[\Omega,\nu]$, we have 
$E(v) \geq  \lambda[\Omega,\nu] \|v\|_{L^{n+1}(\Omega, d\nu)}^{n+1}$.
It follows that 
\[\|v\|_{L^{n+1}(\Omega, d\nu)}\leq \|u\|_{L^{n+1}(\Omega, d\nu)}.\]
But then \eqref{uvEHol} implies
\[E(v) \leq \lambda[\Omega,\nu] \|u\|^{n+1}_{L^{n+1}(\Omega, d\nu)} =E(u).\]
Since $E(u)\leq E(v)$, the above inequality is only possible when $E(u)=E(v)$ and all the above inequalities are equalities. In particular, \eqref{uvEHol} is an equality and $\|v\|_{L^{n+1}(\Omega, d\nu)}= \|u\|_{L^{n+1}(\Omega, d\nu)}$. Thus, there must be a constant $c>0$ such that
$|v|= c|u|$ almost everywhere with respect to $\nu$. Now, we must have $c=1$ so $v=u$. Therefore, recalling \eqref{vsch}, we conclude that $u$ satisfies $\mu_u=\lambda[\Omega,\nu] |u|^n\nu$, so it is a Monge--Amp\`ere eigenfunction associated with $\nu$.
\end{rem}
Using the method of the proof of Theorem \ref{Dirdist} and the comparison principle in Theorem \ref{compnu}, we can improve the existence part of Theorem \ref{Dirdistp} when $p\in (0, n)$ to complete the proof of Theorem \ref{Dist0pn} (iii). In the following theorem, the measure $\nu$ can be more singular.
\begin{thm}\label{MApnu}
Let $\Omega\subset\R^n$ be a bounded convex domain. Let $p\in (0, n)$ and
$\nu$ be a Borel measure on $\Omega$ satisfying 
\[\nu(\Omega)>0,\quad \int_\Omega \dist(\cdot,\p\Omega)d\nu\leq C<\infty.\]
Then, there exists a nonzero convex Aleksandrov solution 
$u\in C(\overline{\Omega})$ to
\begin{equation}
\label{0pnnu}
   \det D^{2} u=|u|^p\nu \quad\text{in} ~\Omega, \quad
u =0\quad\text{on}~\p \Omega.
\end{equation}
\end{thm}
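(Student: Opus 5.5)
We truncate the measure, solve the truncated problems with Theorem \ref{Dirdistp}, and pass to the limit using monotonicity from the comparison principle (Theorem \ref{compnu}) and the uniform Aleksandrov--Jerison bound (Theorem \ref{AJ_thm}).

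\textbf{Truncated solutions.} For $m\in\N$ large enough that $\nu_m(\Omega)>0$, let $\nu_m=\chi_{\{x\in\Omega:\dist(x,\p\Omega)>1/m\}}\nu$. This measure is compactly supported in $\Omega$ and has finite total mass, since $\nu_m(\Omega)\le m\int_\Omega \dist(\cdot,\p\Omega)\,d\nu\le mC$. Hence $\nu_m$ satisfies \eqref{Distnu} trivially. Theorem \ref{Dirdistp} then gives a nonzero $w_m\in\E(\Omega)$ and $\lambda_m>0$ with $\mu_{w_m}=\lambda_m|w_m|^p\nu_m$. Because $p\neq n$, rescaling $u_m=\lambda_m^{1/(n-p)}w_m$ yields a nonzero convex $u_m$ with
\[\mu_{u_m}=|u_m|^p\nu_m,\qquad u_m=0 \text{ on } \p\Omega,\qquad u_m<0 \text{ in } \Omega.\]

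\textbf{Monotonicity.} For $m'>m$ we have $\nu_{m'}\ge\nu_m$, so $\mu_{u_{m'}}\ge |u_{m'}|^p\nu_m$; thus $u_{m'}$ is a subsolution for $\nu_m$ and is negative in $\Omega$. Since $u_m$ solves the $\nu_m$ problem with zero boundary values, and $\nu_m$ is compactly supported, case (ii) of Theorem \ref{compnu} applies and gives $u_m\ge u_{m'}$. So $\{u_m\}$ is nonincreasing. It is bounded away from zero from above: $u_m\le u_{m_0}<0$.

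\textbf{Uniform bound (main obstacle).} The right-hand side depends on $u_m$, so Theorem \ref{AJ_thm} does not give a bound directly. I would argue by contradiction via rescaling. Suppose $M_m:=\|u_m\|_{L^\infty}\to\infty$, and set $v_m=u_m/M_m$, so that
\[\mu_{v_m}=M_m^{p-n}|v_m|^p\nu_m\le M_m^{p-n}\nu_m.\]
Applying Theorem \ref{AJ_thm} with $\tilde u=0$ and $\alpha=0$ gives
\[|v_m(x)|^n\le C(n,\diam\Omega)\,M_m^{p-n}\int_\Omega \dist(\cdot,\p\Omega)\,d\nu\to 0,\]
which contradicts $\|v_m\|_{L^\infty}=1$. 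Hence $\sup_m M_m<\infty$, so
\[\int_\Omega \dist(\cdot,\p\Omega)\,d\mu_{u_m}\le \sup_m M_m^p \int_\Omega\dist(\cdot,\p\Omega)\,d\nu \le C.\]

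\textbf{Passage to the limit.} Let $u=\lim_m u_m$, which exists pointwise by monotonicity, and note $u\le u_{m_0}$, so $u\neq0$. For uniform convergence I would use Theorem \ref{AJ_thm} with $\alpha\in(0,1)$ and the uniform bound on $\int_\Omega\dist\,d\mu_{u_m}$, giving a uniform $C^{\alpha/n}$ estimate near $\p\Omega$ (cf.\ Remark \ref{AJ_rem}). Interior uniform Lipschitz bounds follow from convexity and boundedness. Arzel\`a--Ascoli, together with monotonicity, then gives uniform convergence on $\overline\Omega$, with $u=0$ on $\p\Omega$.

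Weak convergence gives $\mu_{u_m}\to\mu_u$. Locally, $|u_m|^p\nu_m\to|u|^p\nu$ weakly on compact sets, since $\nu_m=\nu$ there for $m$ large and $|u_m|^p\to|u|^p$ uniformly. Hence $\mu_u=|u|^p\nu$, which is \eqref{0pnnu}.

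If the boundary equicontinuity step turns out to be delicate, a cleaner route is available. Since $\{\mu_{u_m}\}$ is monotone with $\mu_{u_m}\le |u|^p\nu\le C\nu$ and $\int_\Omega\dist\,d\nu<\infty$, one can let $\tilde u$ be the Theorem \ref{Dirdist} solution of $\mu_{\tilde u}=|u|^p\nu$. Then Theorem \ref{AJ_thm} applied to $u_m$ versus $\tilde u$ shows the Cauchy property exactly as in the proof of Theorem \ref{Dirdist}.
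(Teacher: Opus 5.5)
\textbf{Gap in your first route to boundary equicontinuity.} Theorem \ref{AJ_thm} with $\alpha\in(0,1)$ bounds $|u_m(x_0)|^n$ by $C\dist^{\alpha}(x_0,\p\Omega)\int_\Omega\dist^{1-\alpha}(\cdot,\p\Omega)\,d\mu_{u_m}$. A uniform $C^{\alpha/n}$ estimate therefore needs a uniform bound on $\int_\Omega\dist^{1-\alpha}(\cdot,\p\Omega)\,d\mu_{u_m}$. Your bound on $\int_\Omega\dist(\cdot,\p\Omega)\,d\mu_{u_m}$ is only the case $\alpha=0$, which gives an $L^\infty$ bound with no decay at $\p\Omega$.

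The hypothesis $\int_\Omega\dist(\cdot,\p\Omega)\,d\nu<\infty$ is critical. For instance, $d\nu=\dist^{-2}(\cdot,\p\Omega)\,|\log\dist(\cdot,\p\Omega)|^{-2}\,dx$ near $\p\Omega$ has $\int_\Omega\dist^{1-\alpha}(\cdot,\p\Omega)\,d\nu=\infty$ for every $\alpha>0$. The factor $|u_m|^p$ in $\mu_{u_m}=|u_m|^p\nu_m$ does not help without a priori boundary decay of $u_m$, and that decay is exactly what is at stake. So Remark \ref{AJ_rem} gives nothing here. The delicate step is this boundary equicontinuity, not the $L^\infty$ bound, which you correctly get in one line from Theorem \ref{AJ_thm}.

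\textbf{Your fallback is correct and should be the main argument.} Since $u_m\downarrow u$ and $\nu_m\uparrow\nu$, the measures $\mu_{u_m}=|u_m|^p\nu_m$ increase to $|u|^p\nu$. Moreover $\int_\Omega\dist(\cdot,\p\Omega)|u|^p\,d\nu<\infty$ by the $L^\infty$ bound. Let $\tilde u$ be the solution from Theorem \ref{Dirdist} of $\mu_{\tilde u}=|u|^p\nu$ with $\tilde u=0$ on $\p\Omega$. Theorem \ref{AJ_thm} with $\alpha=0$ then gives
\[\|u_m-\tilde u\|^n_{L^{\infty}(\Omega)}\le C(n,\diam(\Omega))\int_\Omega\dist(\cdot,\p\Omega)\big(|u|^p\,d\nu-|u_m|^p\,d\nu_m\big)\longrightarrow 0\]
by monotone convergence. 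Hence $u=\tilde u\in C(\overline\Omega)$, $u=0$ on $\p\Omega$, and $\mu_u=|u|^p\nu$ directly, with no weak-convergence step. Equivalently, compare $u_m$ with $u_{m'}$ and note that $\int_\Omega\dist(\cdot,\p\Omega)\,d\mu_{u_m}$ is nondecreasing and bounded, hence Cauchy.

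\textbf{Comparison with the paper.} With this repair, your proof shares the paper's skeleton: truncation, Theorem \ref{Dirdistp}, the $L^\infty$ bound via Theorem \ref{AJ_thm}, and monotonicity via Theorem \ref{compnu}(ii). It differs in two places.
\begin{itemize}
\item Non-degeneracy: the paper proves it in a separate step, using a gradient bound and a mass comparison on a fixed $\Omega'\Subset\Omega$ to get $\|u_\e\|_{L^\infty(\Omega)}\ge c>0$. You get it for free from $u_m\le u_{m_0}<0$.
\item Cauchy property: the paper compares $u_\e$ with $u_{\e'}$ and splits $\int_\Omega\dist(\cdot,\p\Omega)(|u_{\e'}|^p\,d\nu_{\e'}-|u_\e|^p\,d\nu_\e)$ into a boundary-layer term and an interior term, the latter handled by local uniform convergence. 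Your monotone-convergence argument does this in one stroke and avoids estimating $|u_{\e'}|^p-|u_\e|^p$ pointwise.
\end{itemize}
A small slip: the rescaling taking $\mu_{w_m}=\lambda_m|w_m|^p\nu_m$ to $\mu_{u_m}=|u_m|^p\nu_m$ is $u_m=\lambda_m^{-1/(n-p)}w_m$, not $\lambda_m^{1/(n-p)}w_m$.
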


\begin{proof}
For $\e>0$, let \[\Omega_\e:= \{x\in\Omega: \dist(x,\p\Omega)>\e\},\quad \nu_\e =\chi_{\Omega_\e} \nu.
\]
Let $\e_0>0$ is small so that $\Omega':=\Omega_{2\e_0}\neq\emptyset$ and  $\nu(\Omega')>0$.
Consider now $0<\e<\e_0$. Then
\[\int_\Omega [\dist(\cdot,\p\Omega)]^{\frac{p+1}{n+1}}d\nu_\e=\int_{\Omega_\e} [\dist(\cdot,\p\Omega)]^{\frac{p+1}{n+1}} d\nu \leq  \e^{\frac{p-n}{n+1}}   \int_{\Omega_\e} \dist(\cdot,\p\Omega)d\nu \leq \e^{\frac{p-n}{n+1}} C.\]
By Theorem \ref{Dirdistp}, there exists
 a nonzero convex function $u_\e\in \E(\Omega)$ satisfying
\begin{equation}
\label{muep}
   \mu_{u_\e}=|u_\e|^p\nu_\e \quad\text{in} ~\Omega, \quad
u_\e =0\quad\text{on}~\p \Omega.
\end{equation}
We will let $\e\to 0$ in \eqref{muep} to obtain a nonzero convex Aleksandrov solution to \eqref{0pnnu}. For this, we need to bound $\|u_\e\|_{L^{\infty}(\Omega)}$ from above and below by positive constants independent of $\e$.

\medskip
\noindent
{\it Step 1. Uniform bound independent of $\e$.} 
We show that \[\|u_\e\|_{L^{\infty}(\Omega)}\leq C_0.\]
Pick $x_0\in\Omega$ where $u_\e$ attains its maximum value. 
By Theorem \ref{AJ_thm}, we have
\[|u_\e(x_0|^n  \leq C(n,\diam(\Omega)) \int_{\Omega} \dist(\cdot,\p\Omega) d\mu_{u_\e} \leq C(n,\diam(\Omega)) |u_\e(x_0)|^p\int_{\Omega} \dist(\cdot,\p\Omega) d\nu.\]
Now, recalling $0<p<n$, the uniform bound of $\|u_\e\|_{L^{\infty}(\Omega)}$ independent of $\e$ follows.

\medskip
\noindent
{\it Step 2. Lower bound for $\alpha_\e:=\|u_\e\|_{L^{\infty}(\Omega)}$ when $\e\leq \e_0$.}
Let $w_\e=u_\e/\alpha_\e$. Then \[\|w_\e\|_{L^{\infty}(\Omega)}=1,\quad \mu_{w_\e}=\alpha_\e^{p-n} |w_\e|^p\nu_\e.\]
By the convexity of $w_\e$ and the gradient estimate for convex functions, we have
\[|Dw_\e| \leq \frac{\|w_\e\|_{L^{\infty}(\Omega)}}{\dist(\Omega',\p\Omega)} \leq C_1,\quad |w_\e|\geq \frac{\dist(\cdot,\p\Omega)}{\text{diam}(\Omega)}\|w_\e\|_{L^{\infty}(\Omega)}\geq C_2\quad \text{in }\Omega'.\]
Thus, for $\e<\e_0$, we have
\[\alpha_\e^{p-n}= \frac{\mu_{w_\e}(\Omega')}{\int_{\Omega'} |w_\e|^p d\nu_\e} \leq \frac{|B_{C_1}(0)|}{C_2^p\int_{\Omega'}  d\nu} \leq C(\nu, p, n,\Omega)<\infty.\]
Again, recalling $0<p<n$, this gives a positive lower bound  \[\|u_\e\|_{L^{\infty}(\Omega)}=\alpha_\e\geq c(\nu, p, n,\Omega)>0.\]
\medskip
\noindent
{\it Step 3. Passage to the limit $\e\to 0$.}
Observe that if $\e>\e'$, then $\nu_{\e'}> \nu_\e$, so 
$\mu_{u_\e}\leq |u_\e|^p \nu_{\e'}$ in $\Omega$, and 
 the comparison principle in Theorem \ref{compnu} for \eqref{muep} gives $u_\e\geq u_{\e'}$ and $ \mu_{u_\e}\leq \mu_{u_{\e'}}$.

Let $x_0\in\Omega$. Then, by Theorem \ref{AJ_thm}, we have for $0<\e'<\e$
\begin{equation}
\label{ueen}
\begin{split}
 |u_\e(x_0)-u_{\e'}(x_0)|^n& \leq C(n, \diam(\Omega))\int_{\Omega} \dist(\cdot,\p\Omega)(d\mu_{u_\e'}-d\mu_{u_\e})\\&= C(n, \diam(\Omega))\int_{\Omega}  \dist(\cdot,\p\Omega) (|u_{\e'}|^p d\nu_{\e'}-|u_\e|^p d\nu_{\e}).
 \end{split}
 \end{equation}
By writing 
\[ |u_{\e'}|^p d\nu_{\e'}-|u_\e|^p d\nu_{\e}= |u_{\e'}|^p (d\nu_{\e'}-d\nu_{\e}) + (|u_{\e'}|^p -|u_\e|^p) d\nu_{\e},\]
and using the uniform bounds from below and above on $\|u_\e'\|_{L^{\infty}(\Omega)}$ and  $\|u_\e\|_{L^{\infty}(\Omega)}$, we find
\[
\int_{\Omega}  \dist(\cdot,\p\Omega) (|u_{\e'}|^p d\nu_{\e'}-|u_\e|^p d\nu_{\e})\leq  C\int_{\Omega_{\e'}\setminus\Omega_\e}  \dist(\cdot,\p\Omega) d\nu + C\int_{\Omega}  \dist(\cdot,\p\Omega) |u_{\e'} -u_\e| d\nu.
\]
 Clearly
 \[\int_{\Omega_{\e'}\setminus\Omega_\e}  \dist(\cdot,\p\Omega) d\nu\to 0\quad\text{as } 0<\e'<\e\to 0^+.\]
 Let $\alpha>0$. Then, there exists $\delta=\delta (\alpha, n,\nu,\Omega))>0$ such that 
 \[C\int_{\Omega\setminus \Omega_\delta}  \dist(\cdot,\p\Omega) |u_{\e'} -u_\e| d\nu \leq CC_0\int_{\Omega\setminus \Omega_\delta}  \dist(\cdot,\p\Omega) d\nu<\alpha/2.\]
 Thus, using the uniform convergence of $\{u_\e\}_{\e>0}$ on compact subsets of $\Omega$, we deduce the existence of $\tilde \e=\tilde\e(\alpha, n,\nu,\Omega)>0$ such that 
 \[C\int_{\Omega}  \dist(\cdot,\p\Omega) |u_{\e'} -u_\e| d\nu < \alpha/2+ C\int_{\Omega_\delta}  \dist(\cdot,\p\Omega) |u_{\e'} -u_\e| d\nu<\alpha\,\, \text{when } 0<\e'<\e<\tilde\e. \]
 It follows from \eqref{ueen} and the above estimates that $\{u_\e\}_{\e}$ is uniformly Cauchy in $C(\overline{\Omega})$. 
Thus, letting $\e\to 0$ in \eqref{muep} gives a convex solution 
$u\in C(\overline{\Omega})$ to \eqref{0pnnu}.
The theorem is proved.
\end{proof}

\subsection{Uniqueness of the Monge--Amp\`ere eigenvalue problem in energy class}

In this subsection, we study the uniqueness property of the Monge--Amp\`ere eigenvalue problem.

The following lemma guarantees the uniqueness of the Monge--Amp\`ere eigenvalue associated with eigenfunctions in the energy class.
\begin{lem}[Uniqueness of the Monge--Amp\`ere eigenvalue with finite energy  eigenfunctions]
\label{MAlu} Let $\nu$ be a Borel measure on a bounded convex domain $\Omega\subset\R^n$  with $\nu(\Omega)>0$.
 Assume $u, v\in \E(\Omega)$ are
 nonzero convex functions such that
 \[\mu_u= \lambda |u|^n\nu,\quad \mu_v \geq \Lambda |v|^n\nu\quad\text{in }\Omega.
 \]
 Then $\lambda\geq \Lambda$.
\end{lem}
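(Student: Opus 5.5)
We plan to test the equation of $u$ against $|v|$, and the inequality of $v$ against $|u|$, by integrating by parts with mixed measures, following the Lebesgue-measure argument in \cite{L}. Since $u,v\in\E(\Omega)$, all mixed energies are finite by Theorem \ref{gCS}. First we check that $\int_\Omega|u|^{n+1}d\nu$ and $\int_\Omega |v|^{n+1}d\nu$ are finite and positive. Finiteness follows from $\lambda\int_\Omega |u|^{n+1}d\nu=E(u)<\infty$, and from $\Lambda\int_\Omega|v|^{n+1}d\nu\le E(v)$ when $\Lambda>0$. If $\Lambda\le 0$ there is nothing to prove, since $\lambda\ge 0$ trivially; and if $\lambda=0$ then $\mu_u=0$, which forces $u\equiv 0$ by the comparison principle, a contradiction. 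So we may assume $\lambda,\Lambda>0$. Positivity of $\int|u|^{n+1}d\nu$ holds because $u\not\equiv0$ gives $\mu_u\neq 0$, hence $\nu(\{u\ne0\})>0$ and $u<0$ in $\Omega$. The same argument applies to $v$ via $\mu_v\ge\Lambda|v|^n\nu$.

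Next we apply Theorem \ref{IBPf} with $u_0=u$ and $u_1=\dots=u_{n-1}=v$, $u_n=u$ or the analogous choice, aiming at the chain
\[
\int_\Omega |u|\,d\mu_v \;=\; \int_\Omega |v|\,d\mu_n[u,v,\dots,v].
\]
The left side is at least $\Lambda\int_\Omega |u||v|^n\,d\nu$. For the right side, the mixed Monge--Amp\`ere inequality (Theorem \ref{mMAnu}) gives only a lower bound, which points the wrong way. So we reverse the roles and use the generalized Cauchy--Schwarz inequality instead. By Theorem \ref{gCS},
\[
\Lambda\int_\Omega |u||v|^n\,d\nu\le\int_\Omega|u|\,d\mu_v\le E(u)^{\frac1{n+1}}E(v)^{\frac n{n+1}}.
\]
This alone is not sharp enough, so the cleaner route is to let $v$ do the dominating. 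By Corollary \ref{NIBP30}, in its inequality form obtained from Theorem \ref{IBPf} together with Theorem \ref{mMAnu} with $f_u=\lambda|u|^n$ and $f_v\ge\Lambda|v|^n$, we get
\[
\int_\Omega |v|\,d\mu_u=\int_\Omega|u|\,d\mu_n[v,u,\dots,u]\ \ge\ \int_\Omega |u|\,\Lambda^{1/n}|v|\,\lambda^{\frac{n-1}n}|u|^{n-1}\,d\nu .
\]
The left side equals $\lambda\int_\Omega |v||u|^n\,d\nu$. Dividing by the positive finite quantity $\int_\Omega|v||u|^n\,d\nu$ gives $\lambda\ge\Lambda^{1/n}\lambda^{(n-1)/n}$, that is, $\lambda\ge\Lambda$. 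Positivity of this quantity holds since $u,v<0$ in $\Omega$ and $\nu(\Omega)>0$. Finiteness follows from H\"older's inequality.

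The main obstacle is applying Theorem \ref{mMAnu} in the form $\mu_n[v,u,\dots,u]\ge (\Lambda|v|^n)^{1/n}(\lambda|u|^n)^{(n-1)/n}\nu$. That theorem requires densities $f_i\in L^1_{\rm loc}(\nu)$, and here $|u|^n,|v|^n$ are bounded continuous, so this is fine. It also requires the integration by parts identity $\int|v|\,d\mu_n[u,\dots,u]=\int|u|\,d\mu_n[v,u,\dots,u]$ for $u,v\in\E(\Omega)$, which is exactly Theorem \ref{IBPf} combined with the symmetry of mixed measures. With these two inputs the proof reduces to the one-line division above.
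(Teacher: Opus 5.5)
Your final argument is correct and is exactly the paper's proof: test the equation $\mu_u=\lambda|u|^n\nu$ against $|v|$, move $v$ into the mixed measure via Theorem \ref{IBPf}, bound $\mu_n[v,u,\dots,u]$ from below using Theorem \ref{mMAnu}, and divide by $\int_\Omega |v||u|^n\,d\nu$. The abandoned first attempt (testing $\mu_v$ against $|u|$, then the Cauchy--Schwarz bound) can simply be deleted. Your extra bookkeeping fills in details the paper leaves implicit: reducing to $\lambda,\Lambda>0$, which is genuinely needed because Theorem \ref{mMAnu} requires nonnegative densities, and checking $0<\int_\Omega|v||u|^n\,d\nu<\infty$. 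One small correction there: $\lambda|u|^n,\Lambda|v|^n\in L^1_{\rm loc}(\Omega,d\nu)$ follows directly from $\mu_u(K),\mu_v(K)<\infty$ on compact sets, not from boundedness, since $\nu$ is not assumed locally finite.
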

\begin{proof} As in \cite[Proposition 5.6]{L}, we use integration by parts.
By Proposition \ref{IBPf}, we have
\[\int_\Omega \lambda |u|^n|v|\, d\nu=\int_\Omega |v|\, d\mu_n[u,\cdots, u]=\int_\Omega |u|\, d\mu_n[v,u,\cdots, u]. \]
By Theorem \ref{mMAnu0},
\[\mu_n[v,u,\cdots, u] \geq (\Lambda |v|^n)^{\frac{1}{n}} (\lambda |u|^n)^{\frac{n-1}{n}}\nu.\]
Thus
\[\int_\Omega \lambda |u|^n|v|\, d\nu\geq \int_\Omega |u| (\Lambda |v|^n)^{\frac{1}{n}} (\lambda |u|^n)^{\frac{n-1}{n}}\nu=  \int_\Omega \Lambda^{\frac{1}{n}}\lambda^{\frac{n-1}{n}} |v||u|^n d\nu.\]
It follows that $\lambda\geq\Lambda$.
\end{proof}

We show that the infimum of the Rayleigh quotient in the energy class is the monotone limit of Monge--Amp\`ere eigenvalues for truncated measures. This proves Theorem \ref{EVP1} (i).
\begin{thm}[Monotonicity]
\label{monolam} 
Let $\nu$ be a locally finite Borel measure on a bounded convex domain $\Omega\subset\R^n$ with $\nu(\Omega)>0$. 
 For each $m\in\N$, let 
$\nu_m= \chi_{\{x\in\Omega: \dist(x, \p\Omega)>1/m\}}\nu$.
Let $u_m\in\E(\Omega)$ be a nonzero solution to the Monge--Amp\`ere eigenvalue problem
\begin{equation}
\label{EVPlamm}
   \mu_{u_m}=\lambda_m |u_m|^n\nu_m \quad\text{in} ~\Omega, \quad
u_{m} =0\quad\text{on}~\p \Omega.
\end{equation}
Then, the sequence $\{\lambda_m\}_{m=1}^{\infty}$ is nonincreasing and 
\[\lim_{m\to\infty }\lambda_m=\lambda[\Omega,\nu]:=\inf\bigg\{\frac{E(u)}{\int_\Omega |u|^{n+1}\, d\nu}: u\in \E(\Omega)\bigg\}.\]
\end{thm}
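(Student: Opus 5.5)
Write $\lambda_m$ via the variational characterization. First observe that $\nu_m$ is compactly supported in $\Omega$, has positive mass for $m$ large, and is finite, so $\nu_m$ satisfies \eqref{Distnu} with any exponent. By Theorem \ref{Dirdistp} (with $p=n$), it therefore has a minimizer $w_m$ of the Rayleigh quotient $R_{\nu_m}$ over $\E(\Omega)$, solving $\mu_{w_m}=\lambda[\Omega,\nu_m]|w_m|^n\nu_m$. Since $u_m$ and $w_m$ both lie in $\E(\Omega)$, Lemma \ref{MAlu} applied in both directions gives $\lambda_m=\lambda[\Omega,\nu_m]$. (The same lemma already shows that $\lambda_m$ does not depend on the choice of $u_m$.) So it suffices to prove
\[\lambda_m=\inf_{v\in\E(\Omega)\setminus\{0\}}\frac{E(v)}{\int_\Omega|v|^{n+1}d\nu_m}.\]

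Monotonicity is then immediate. Because $\nu_m\le\nu_{m+1}\le\nu$ as measures, the denominators increase in $m$, so $R_{\nu_m}(v)\ge R_{\nu_{m+1}}(v)\ge R_\nu(v)$ for every $v\in\E(\Omega)$. Taking infima gives $\lambda_m\ge\lambda_{m+1}\ge\lambda[\Omega,\nu]$. Hence $\lim\lambda_m$ exists and is at least $\lambda[\Omega,\nu]$.

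For the reverse inequality, fix $v\in\E(\Omega)$ with $\int_\Omega|v|^{n+1}d\nu>0$. If this integral is $+\infty$, then $R_\nu(v)=0$, and I would check that the argument below still forces $\lim\lambda_m=0$. By monotone convergence, $\int_\Omega|v|^{n+1}d\nu_m\nearrow\int_\Omega|v|^{n+1}d\nu$ because $\chi_{\{\dist>1/m\}}\nearrow\chi_\Omega$. Thus $R_{\nu_m}(v)\to R_\nu(v)$, with the convention $E(v)/\infty=0$ in the infinite case, and so
\[\lim_m\lambda_m\le\lim_m R_{\nu_m}(v)=R_\nu(v).\]
Taking the infimum over $v$ yields $\lim\lambda_m\le\lambda[\Omega,\nu]$. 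Combining the two directions proves the claim.

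The only delicate point I expect is the identification $\lambda_m=\lambda[\Omega,\nu_m]$, in particular confirming that $\nu_m$ meets the hypotheses of Theorem \ref{Dirdistp} and Lemma \ref{MAlu}. Here $\nu_m(\Omega)>0$ holds for large $m$, and one starts the sequence there, which is implicit in the existence of $u_m$. Moreover, $\int\dist^{\frac{n+1}{n+1}}d\nu_m\le\diam(\Omega)\,\nu_m(\Omega)<\infty$, using local finiteness of $\nu$ on the compact set $\{\dist\ge1/m\}$. Everything else is monotone convergence.
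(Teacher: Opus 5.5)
Your proof is correct. It matches the paper in identifying $\lambda_m=\lambda[\Omega,\nu_m]$ via Theorem~\ref{Dirdistp} and Lemma~\ref{MAlu}, and in the limit step by monotone convergence; it differs in how you obtain monotonicity.

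You read monotonicity off the variational characterization: $\nu_m\le\nu_{m+1}\le\nu$ gives $R_{\nu_m}\ge R_{\nu_{m+1}}\ge R_\nu$ pointwise on $\E(\Omega)$, so the infima are ordered. The paper instead compares eigenfunctions directly in \eqref{lammm}. It integrates $|u_{m'}|$ against $\mu_{u_m}$, integrates by parts (Theorem~\ref{IBPf}), and applies the mixed Monge--Amp\`ere inequality (Theorem~\ref{mMAnu}) to get $\lambda_m\ge\lambda_m^{(n-1)/n}\lambda_{m'}^{1/n}$. This is in effect Lemma~\ref{MAlu} with $u_{m'}$ treated as a subsolution for $\nu_m$.

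Your route is shorter, since the variational characterization is needed anyway for the limit. The paper's route never uses minimality, and that is what makes it reusable. In Theorem~\ref{varchac}, the same computation with $u_{m'}$ replaced by an energy-class eigenfunction $u$ for $\nu$ gives $\lambda_m\ge\lambda$. Comparing Rayleigh quotients cannot give this, because it only yields $\lambda=R_\nu(u)\ge\lambda[\Omega,\nu]$, which is the wrong direction.

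Your explicit treatment of the case $\int_\Omega|v|^{n+1}\,d\nu=+\infty$, and of small $m$ with $\nu_m(\Omega)=0$, covers points the paper leaves implicit.
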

\begin{proof} 
Since $\nu_m$ is compactly supported, combining Theorem \ref{Dirdistp} with Lemma \ref{MAlu}, we obtain the existence of $(\lambda_m, u_m)\in (0,\infty)\times \E(\Omega)\setminus\{0\}$ solving 
\eqref{EVPlamm}.
Moreover,  the eigenvalue $\lambda_m$ is unique and is given by a variational characterization.
Clearly
\[\lambda_m = \frac{E(u_m)}{ \int_\Omega |u_m|^{n+1}\, d\nu_m}\geq \frac{E(u_m)}{ \int_\Omega |u_m|^{n+1}\, d\nu}\geq\lambda[\Omega,\nu].\]
For $m\in \N$, denote $\Omega_m:= \{x\in\Omega: \dist(x, \p\Omega)>1/m\}$.

Let $m<m'$. Then, $\Omega_m\subset\Omega_{m'}$, and by Theorems \ref{IBPf} and \ref{mMAnu}, we can estimate
\begin{equation}
\label{lammm}
\begin{split}
\int_\Omega\lambda_m |u_m|^n  |u_{m'}|\, d\nu_m &=\int_\Omega |u_{m'}|\, d\mu_{u_m}=\int_\Omega |u_m|\,d\mu_n[u_{m'}, u_m, \cdots, u_m]\\
&\geq \int_\Omega |u_m| (\lambda_{m'} |u_{m'}|^n \chi_{\Omega_{m'}})^{\frac{1}{n}}(\lambda_m |u_m|^n \chi_{\Omega_m})^{\frac{n-1}{n}}\, d\nu\\
&\geq \int_\Omega \lambda_{m'}^{\frac{1}{n}} \lambda_m^{\frac{n-1}{n}}  |u_m|^n  |u_{m'}|\, d\nu_m.
\end{split}
\end{equation}
It follows that $\lambda_m\geq \lambda_{m'}$, so the sequence $\{\lambda_m\}_{m=1}^{\infty}$ is nonincreasing and has a finite limit.

On the other hand, for any fixed $v\in \E(\Omega)$, the Monotone Convergence Theorem gives
\[ \lim_{m\to\infty}\int_\Omega |v|^{n+1}\, d\nu_m=  \int_\Omega |v|^{n+1}\, d\nu.\]
Recall that the eigenvalue $\lambda_m$ is given by a variational characterization.
It follows that
\[\lambda_m = \frac{E(u_m)}{ \int_\Omega |u_m|^{n+1}\, d\nu_m} \leq  \frac{E(v)}{ \int_\Omega |v|^{n+1}\, d\nu_m}.\]
Letting $m\to\infty$, we find
\[\lim_{m\to\infty} \lambda_m  \leq  \frac{E(v)}{ \int_\Omega |v|^{n+1}\, d\nu}.\]
Taking the infimum over $v\in \E(\Omega)$ gives
$\lim_{m\to\infty} \lambda_m \leq \lambda[\Omega,\nu]$.
Thus, we must have 
\[\lim_{m\to\infty} \lambda_m=\lambda[\Omega,\nu].\]
The theorem is proved.
\end{proof}
We use truncation and the variational characterization of the Monge--Amp\`ere eigenvalues to establish a Poincar\'e-type inequality. The following proposition is an extension of  Lu--Zeriahi \cite[Corollary 3.7]{LZ} for $u$ in the energy class $\E(\Omega)$ to general convex functions $u\in C(\overline{\Omega})$.

\begin{prop}[Poincar\'e-type inequality]
 \label{unmuE}
Let $\Omega$ be a bounded convex domain in $\R^n$, and $E$ be as in \eqref{Edef}. Let $u\in C(\overline{\Omega})$ be a nonzero convex function on $\Omega$ with $u=0$ on $\p\Omega$. Then 
\[\int_\Omega |v|^{n+1} |u|^{-n}\, d\mu_u\leq E(v)\quad \mbox{for all $v\in\E(\Omega)$}.\]
Consequently, if $\nu$ is a Borel measure with $\nu(\Omega)>0$ and $(\lambda_0, w)\in (0,\infty)\times C(\overline{\Omega})\setminus \{0\}$ is a subsolution to 
the Monge--Amp\`ere eigenvalue problem with measure $\nu$; that is,
\begin{equation}
\label{EVPPI}
   \mu_{w}\geq \lambda_0 |w|^n\nu \quad\text{in} ~\Omega, \quad
w=0\quad \text{on}~\p \Omega,
\end{equation}
then
\[\inf\bigg\{\frac{E(v)}{\int_\Omega |v|^{n+1}\, d\nu}: v\in \E(\Omega)\bigg\}\geq \lambda_0.\]
\end{prop}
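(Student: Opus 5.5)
The plan is to reduce the first inequality to an eigenvalue comparison for truncated problems, which can be handled by the uniqueness argument of Lemma \ref{MAlu} and the monotone convergence argument of Theorem \ref{monolam}. Set $\tilde\nu:=|u|^{-n}\mu_u$. Since $u\in C(\overline{\Omega})$ is convex, nonzero and vanishes on $\p\Omega$, we have $u<0$ in $\Omega$, so $\tilde\nu$ is a locally finite Borel measure on $\Omega$. Write $\Omega_m=\{x\in\Omega:\dist(x,\p\Omega)>1/m\}$ and $\tilde\nu_m=\chi_{\Omega_m}\tilde\nu$. It suffices to show $\int_\Omega |v|^{n+1}\,d\tilde\nu_m\le E(v)$ for every $v\in\E(\Omega)$ and every $m$; the Monotone Convergence Theorem then gives the claim. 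If $\tilde\nu(\Omega_m)=0$ there is nothing to prove, so we assume $\tilde\nu_m(\Omega)>0$.

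Fix $m$. The measure $\tilde\nu_m$ is compactly supported and finite, since $\mu_u(\Omega_m)<\infty$ and $|u|$ is bounded below on $\overline{\Omega_m}$. Hence \eqref{Distnu} holds for $\tilde\nu_m$ with $p=n$, and Theorem \ref{Dirdistp} provides $u_m\in\E(\Omega)\setminus\{0\}$ with
\[
\mu_{u_m}=\lambda_m|u_m|^n\tilde\nu_m,\qquad \lambda_m=\min_{\E(\Omega)}\frac{E(v)}{\int_\Omega|v|^{n+1}d\tilde\nu_m}.
\]
We then need $\lambda_m\ge 1$. Note that $u$ itself satisfies $\mu_u\ge 1\cdot|u|^n\tilde\nu_m$, but $u$ need not lie in $\E(\Omega)$, so Lemma \ref{MAlu} cannot be applied directly.

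To get around this, repeat the integration by parts argument with $u$ replaced by its standard truncations. For $k\ge m$, let $w_k\in C(\overline{\Omega})$ solve $\mu_{w_k}=\chi_{\Omega_k}\mu_u$ with $w_k=0$ on $\p\Omega$. Each $w_k$ has finite total Monge--Amp\`ere mass, so $w_k\in\E(\Omega)$. Moreover $u\le w_k\le 0$, hence $|w_k|\le|u|$, and $\mu_{w_k}=\mu_u\ge|u|^n\tilde\nu_m\ge|w_k|^n\tilde\nu_m$ on $\Omega_m$, since $\tilde\nu_m$ lives on $\Omega_m\subset\Omega_k$. Thus $w_k$ is a subsolution with $\Lambda=1$ in the energy class, and Lemma \ref{MAlu} applied to $(u_m,w_k)$ gives $\lambda_m\ge 1$. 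Explicitly, Theorem \ref{IBPf} followed by Theorem \ref{mMAnu} yields
\[
\int\lambda_m|u_m|^n|w_k|\,d\tilde\nu_m=\int|u_m|\,d\mu_n[w_k,u_m,\dots,u_m]\ge\lambda_m^{\frac{n-1}{n}}\int|w_k||u_m|^n\,d\tilde\nu_m.
\]
The right-hand integral is positive: $w_k<0$ in $\Omega$, and $u_m\neq0$ forces $\int|u_m|^{n+1}d\tilde\nu_m>0$. Therefore $\lambda_m\ge1$. A single fixed $k=m$ already suffices, so no limit in $k$ is needed. It follows that $\int|v|^{n+1}d\tilde\nu_m\le\lambda_m^{-1}E(v)\le E(v)$.

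For the consequence, let $(\lambda_0,w)$ satisfy \eqref{EVPPI}. Then $\lambda_0\nu\le|w|^{-n}\mu_w$ on $\{w<0\}=\Omega$. Applying the first part with $u=w$ gives $\lambda_0\int|v|^{n+1}d\nu\le\int|v|^{n+1}|w|^{-n}d\mu_w\le E(v)$, which is the stated bound. The main obstacle is the step $\lambda_m\ge1$. The care needed is in checking that the truncation $w_k$ really lies in $\E(\Omega)$ and remains a subsolution for the truncated measure; both follow from the comparison principle (Theorem \ref{compa1}), which gives $|w_k|\le|u|$.
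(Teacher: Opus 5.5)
Your proof is correct and follows essentially the same route as the paper. Both arguments truncate via the standard approximants, use Theorem \ref{Dirdistp} together with the integration-by-parts comparison of Lemma \ref{MAlu} to obtain the Poincar\'e inequality for a compactly supported measure, and then pass to the limit. The only difference is bookkeeping. The paper works with $|u_m|^{-n}\mu_{u_m}$, for which the approximant $u_m$ is an exact eigenfunction with eigenvalue $1$, and then uses $|u_m|\le |u|$. You truncate $|u|^{-n}\mu_u$ directly, for which the approximant is only a subsolution, and finish by monotone convergence.
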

\begin{proof} Let $u_m\in C(\overline{\Omega})$ be standard approximants of $u$ (see Remark \ref{Stappro}),  so $u_m$ solves
\begin{equation*}
   \mu_{u_m}=\chi_{\{x\in\Omega: \dist(x, \p\Omega)>1/m\}}\mu_u \quad\text{in} ~\Omega, \quad
u_m =0\quad \text{on}~\p \Omega.
\end{equation*}
Note that  $0> u_m\geq u$ in $\Omega$, and $u_m$ converges uniformly to $u$ in $\overline{\Omega}$.  Clearly $u_m\in \E(\Omega)$ and 
$(1, u_m)\in (0,\infty)\times \E(\Omega)\setminus\{0\}$ is  a solution of the Monge--Amp\`ere eigenvalue problem on $\Omega$ with measure $\nu_m:=(-u_m)^{-n} \mu_{u_m}$. 
Since $\nu_m$ is compactly supported, by combining Theorem \ref{Dirdistp} with Lemma \ref{MAlu}, we find that the Monge--Amp\`ere eigenvalue $1$ is unique and is given by a variational characterization.
Consequently,
 we have for all $v\in\E(\Omega)$
\[\int_\Omega |v|^{n+1} \, d\nu_m\leq E(v). \]
Since $|u_m|  \leq |u|$ and $\nu_m:=|u_m|^{-n} \mu_{u_m}$, this implies
\[\int_\Omega |v|^{n+1} |u|^{-n}\, d\mu_{u_m}\leq E(v).\]
Since $\mu_{u_m}$ converges weakly to $\mu_u$, letting $m\to\infty$ gives the claimed Poincar\'e-type inequality.

For the consequence, we just use $|w|^{-n}\mu_w\geq \lambda_0 \nu$ in $\Omega$, so that for any $v\in \E(\Omega)$, we have
\[E(v) \geq \int_\Omega |v|^{n+1} |w|^{-n} \, d\mu_w\geq \lambda_0 \int_\Omega |v|^{n+1}\, d\nu. \]
The proposition is proved.
\end{proof}

When the measure $\nu$ satisfies a Poincar\'e-type inequality in the energy class $\E(\Omega)$, we show that the infimum of the Rayleigh quotient in the energy class is the limit of an inverse iterative scheme, originally introduced by Abedin--Kitagawa \cite{AK}. This proves Theorem \ref{EVP1} (iii).
\begin{prop}[Infimum of Rayleigh quotient and inverse iterative scheme]
\label{allmono}
Let $\Omega$ be a bounded convex domain in $\R^n$. Let $\nu$ be a locally finite Borel measure with $\nu(\Omega)>0$ and satisfies the Poincar\'e-type inequality: There exists a constant $c>0$ such that
\[R_\nu(u)\geq c\quad \text{for all }u\in \E(\Omega)\quad \text{where }R_\nu (u):= \frac{E(u)}{\int_\Omega |u|^{n+1}\, d\nu}\equiv \frac{\int_\Omega|u|\, d\mu_u}{\int_\Omega |u|^{n+1}\, d\nu}. \]
 Fix $u_0\in \E(\Omega)\setminus\{0\}$. 
Consider the following iterative scheme  
\begin{equation}\label{IIS}
\mu_{u_{k+1}} = R_\nu(u_k) |u_k|^n\nu\quad\quad \text{in } \Omega, \quad
u_{k+1} = 0  \quad \text{on } \partial \Omega,
\end{equation}
which has a unique solution in $\E(\Omega)$ for each nonnegative integer $k$ by Lemma \ref{Distex2}. Then for all $k\geq 0$, we have
\begin{enumerate}
\item[(i)] $E(u_k)\leq E(u_{k+1})$;  \, (ii) $\|u_k\|_{L^{n+1}(\Omega,d\nu)} \leq \|u_{k+1}\|_{L^{n+1}(\Omega,d\nu)}$; \, (iii) $R_\nu(u_{k+1}) \leq R_\nu(u_k)$;
\item[(iv)] 
$\lim_{k\to\infty} R_\nu(u_k) =\lambda[\Omega, \nu]:=\inf\Big\{R_\nu(u): u\in \E(\Omega)\Big\}$. 
\end{enumerate}
\end{prop}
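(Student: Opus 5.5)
Write $\lambda_k:=R_\nu(u_k)$. The plan is to adapt the argument of the second proof of the energy characterization in the Remark above, comparing $u_k$ with $u_{k+1}$ via the generalized Cauchy--Schwarz inequality (Theorem \ref{gCS}) and H\"older's inequality.

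\textbf{Step 1: the identity for $E(u_k)$.} Multiply \eqref{IIS} by $|u_k|$ and integrate. Using $\lambda_k\int_\Omega|u_k|^{n+1}d\nu=E(u_k)$ and Theorem \ref{gCS},
\[
E(u_k)=\int_\Omega|u_k|\,d\mu_{u_{k+1}}\le [E(u_k)]^{\frac1{n+1}}[E(u_{k+1})]^{\frac{n}{n+1}} .
\]
This gives (i).

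\textbf{Step 2: the identity for $E(u_{k+1})$.} Multiply \eqref{IIS} by $|u_{k+1}|$ and apply H\"older's inequality:
\[
E(u_{k+1})=\lambda_k\int_\Omega|u_k|^n|u_{k+1}|\,d\nu\le\lambda_k\|u_k\|^n_{L^{n+1}(\Omega,d\nu)}\|u_{k+1}\|_{L^{n+1}(\Omega,d\nu)} .
\]
By (i) and $\lambda_k\|u_k\|^{n+1}_{L^{n+1}(\Omega,d\nu)}=E(u_k)$, the left side is at least $\lambda_k\|u_k\|^{n+1}_{L^{n+1}(\Omega,d\nu)}$. Dividing yields (ii). Dividing the same display instead by $\|u_{k+1}\|^{n+1}_{L^{n+1}(\Omega,d\nu)}$ and using (ii) gives
\[
\lambda_{k+1}\le\lambda_k\big(\|u_k\|/\|u_{k+1}\|\big)^n\le\lambda_k,
\]
which is (iii). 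Here one needs $u_{k+1}\neq0$; this holds since $\mu_{u_{k+1}}(\Omega)>0$ follows from $E(u_k)>0$. The Poincar\'e hypothesis is exactly what ensures each $u_{k+1}\in\E(\Omega)$ exists, by Lemma \ref{Distex2}, and that $\|u_k\|_{L^{n+1}(\Omega,d\nu)}$ is finite.

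\textbf{Step 3: the limit (iv), the main obstacle.} Since $\lambda_k\ge\lambda[\Omega,\nu]\ge c>0$ is nonincreasing, $\lambda_k\downarrow\lambda_*\ge\lambda[\Omega,\nu]$, and it remains to prove $\lambda_*\le\lambda[\Omega,\nu]$. I cannot rely on a compactness-and-eigenfunction argument, since minimizers need not exist without the vanishing mass condition. Instead I would compare directly with an arbitrary test function $v\in\E(\Omega)$ with $\|v\|_{L^{n+1}(\Omega,d\nu)}=1$.

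Multiply \eqref{IIS} by $|v|$. Theorem \ref{gCS} gives
\[
\lambda_k\int_\Omega|u_k|^n|v|\,d\nu=\int_\Omega|v|\,d\mu_{u_{k+1}}\le E(v)^{\frac1{n+1}}E(u_{k+1})^{\frac n{n+1}} .
\]
This bound alone is not enough; it seems to require near-equality in H\"older's inequality. So instead I would use the mixed Monge--Amp\`ere inequality. Integrating by parts (Theorem \ref{IBPf}) and applying Theorem \ref{mMAnu} with $\mu_v\ge0$ fails, because $\mu_v$ is not comparable to $\nu$.

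The alternative I would try first is the monotone quantity. Normalize $a_k:=\|u_k\|_{L^{n+1}(\Omega,d\nu)}$, which is nondecreasing by (ii). Combining Steps 1 and 2 gives
\[
\lambda_{k+1}a_{k+1}^{n+1}=E(u_{k+1})\le\lambda_k a_k^na_{k+1}.
\]
If $a_k$ stays bounded, then $a_{k+1}/a_k\to1$, which forces near-equality in H\"older, so that $|u_{k+1}|/a_{k+1}\approx|u_k|/a_k$ in $L^{n+1}(\nu)$. Passing to subsequential limits (using the uniform H\"older bounds of Theorem \ref{Al_E} from bounded energy) yields $w$ with $\mu_w\ge\lambda_*|w|^n\nu$, via weak convergence and Fatou's lemma. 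Proposition \ref{unmuE} then gives $\lambda[\Omega,\nu]\ge\lambda_*$.

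If $a_k\to\infty$, I would rescale by $a_k$ and argue in the same way. The delicate points, which I expect to be the hardest, are ensuring that the rescaled limit is nonzero and that the inequality survives the loss of mass of $\nu$ near $\p\Omega$, where Fatou's lemma goes the right way only for a subsolution.
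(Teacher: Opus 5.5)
Your Steps 1 and 2 are correct and coincide with the paper's proof of (i)--(iii). The gap is in Step 3, on two counts.

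First, the case $a_k\to\infty$ never occurs. The H\"older display in Step 2 says $E(u_{k+1})/a_{k+1}\le E(u_k)/a_k$, so $E(u_k)\le C_0a_k$ with $C_0=E(u_0)/a_0$. The Poincar\'e hypothesis gives $a_k^{n+1}\le E(u_k)/c$. Together these bound $E(u_k)$ and $a_k$ uniformly.

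Second, and decisively, the compactness route hits exactly the obstruction you named at the start of Step 3. Bounded energy gives uniform $C^{\frac1{n+1}}$ control. But without the vanishing mass condition \eqref{vamass}, the $L^{n+1}(\Omega,d\nu)$ mass of $u_k$ can escape to $\p\Omega$. The uniform limit $w$ may then be $0$, and Proposition \ref{unmuE} says nothing.

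This really happens. The measure of Example \ref{exaRn} satisfies the hypothesis with $c=1$, and there every subsequential limit of your sequence is $0$. Your near-equality step makes the uniform limits of $u_{k_j}$ and $u_{k_j+1}$ agree $\nu$-a.e. Testing \eqref{IIS} against $C_c(\Omega)$ functions then gives $\mu_w=\lambda_*|w|^n\nu$, and $w\in\E(\Omega)$ by Proposition \ref{I_lsc2}. A nonzero such $w$ is excluded by part (ii) of Example \ref{exaRn}. So $u_k\to0$ uniformly while $a_k\ge a_0>0$. Rescaling by $a_k$ changes nothing, since $a_k\to a_*\in(0,\infty)$.

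The missing idea is to keep your ``multiply by $|v|$'' approach but choose test functions for which the mixed Monge--Amp\`ere inequality applies. Take the eigenfunctions $v_m\in\E(\Omega)$ of the truncated measures $\nu_m=\chi_{\{\dist(\cdot,\p\Omega)>1/m\}}\nu$, so that $\mu_{v_m}=\lambda_m|v_m|^n\nu_m$. By Theorem \ref{monolam}, $\lambda_m\to\lambda[\Omega,\nu]$. Now $\mu_{v_m}$ has a density with respect to $\nu$, so the objection ``$\mu_v$ is not comparable to $\nu$'' disappears. Theorems \ref{IBPf} and \ref{mMAnu}, with $R_\nu(u_k)\ge\lambda_*$, give
\[
\lambda_m\int_\Omega|v_m|^n|u_{k+1}|\,d\nu_m=\int_\Omega|v_m|\,d\mu_n[u_{k+1},v_m,\cdots,v_m]\ge\lambda_*^{\frac1n}\lambda_m^{\frac{n-1}n}\int_\Omega|v_m|^n|u_k|\,d\nu_m .
\]
Iterating,
\[
(\lambda_*/\lambda_m)^{k/n}\int_\Omega|v_m|^n|u_0|\,d\nu_m\le\int_\Omega|v_m|^n|u_k|\,d\nu_m\le\|v_m\|^n_{L^{n+1}(\Omega,d\nu_m)}\,a_k .
\]
The boundedness of $a_k$ then forces $\lambda_*\le\lambda_m$ for every large $m$, hence $\lambda_*\le\lambda[\Omega,\nu]$. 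This argument uses no compactness of $\{u_k\}$, which is why it is insensitive to loss of mass at the boundary.
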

\begin{proof} Note that parts (i)--(iii) are the real counterparts of Lemma 5.1 in Lu--Zeriahi \cite{LZ}. 

Multiplying both sides of the first equation of (\ref{IIS}) by $|u_k|$, integrating over $\Omega$ and then using Theorem \ref{gCS}, we have
\[E(u_k)= R_\nu(u_k) \|u_k\|^{n+1}_{L^{n+1}(\Omega,d\nu)}=\int_{\Omega} |u_k|d \mu_{u_{k+1}}
 \leq 
[E(u_k)]^{\frac{1}{n+1}} [E(u_{k+1})]^{\frac{n}{n+1}}.
\]
Then, part (i) easily follows.  

Multiply \eqref{IIS} by $|u_{k+1}|$, integrate over $\Omega$ and then use the H\"older inequality to obtain
\begin{equation*}
\begin{split}
E(u_{k+1})=\int_{\Omega} |u_{k+1}|\, d\mu_{u_{k+1}} &= R_\nu(u_k) \int_{\Omega} |u_k|^n |u_{k+1}|d\nu \\&\leq   R_\nu(u_k) \|u_k\|^n_{L^{n+1}(\Omega, d\nu)} \|u_{k+1}\|_{L^{n+1}(\Omega, d\nu)}.
\end{split}
\end{equation*}
Rewrite this as one of the following inequalities:
\[\frac{E(u_{k+1})}{ \|u_{k+1}\|_{L^{n+1}(\Omega, d\nu)}} \leq \frac{E(u_k)}{ \|u_k\|_{L^{n+1}(\Omega, d\nu)}} \leq \frac{E(u_0)}{ \|u_0\|_{L^{n+1}(\Omega, d\nu)}}= C_0(n, u_0, \Omega,\nu),\] 
or\[R_\nu(u_{k+1}) \|u_{k+1}\|^n_{L^{n+1}(\Omega,d\nu)}  \leq R_\nu(u_k) \|u_k\|^n_{L^{n+1}(\Omega, d\nu)}.  \]
The first inequality combined with (i) gives  (ii) while the second inequality combined with (ii) gives (iii). It follows that there exists a limit
$\lim_{k\to\infty} R_\nu(u_k) =\Lambda$.

\medskip
We now prove part (iv).  Note that $\lambda[\Omega, \nu]\geq c>0$. Therefore,
\[E(u_k)\leq C_0  \|u_k\|_{L^{n+1}(\Omega, d\nu)} \leq C_0[\lambda[\Omega, \nu]^{-1} E(u_k)]^{\frac{1}{n+1}}.\]
This implies that
\[E(u_k) +  \|u_k\|_{L^{n+1}(\Omega, d\nu)}\leq C_1 (n, u_0,\Omega, \nu)\quad \text{for all } k\geq 0.\]
For each $m\in\N$, let 
\[\nu_m= \chi_{\{x\in\Omega: \dist(x, \p\Omega)>1/m\}}\nu\equiv \chi_{\Omega_m}\nu.\]
 Let $v_m\in\E(\Omega)$ be a nonzero solution to the Monge--Amp\`ere eigenvalue problem
\begin{equation*}
   \mu_{v_m}=\lambda_m |v_m|^n\nu_m \quad\text{in} ~\Omega, \quad
v_{m} =0\quad \text{on}~\p \Omega.
\end{equation*}
Then, by Theorem \ref{monolam},
$\lim_{m\to\infty} \lambda_m=\lambda[\Omega, \nu].
$
For each $k$, we have
\[R_\nu(u_k)\geq \Lambda \geq \lambda[\Omega, \nu].\]
To prove (iv), we will show that $\lambda_m\geq \Lambda$. Observe, using Theorems \ref{IBPf} and \ref{mMAnu}, that
\begin{equation*}
\begin{split}
\int_\Omega \lambda_m |v_m|^n |u_{k+1}|d\nu_m =\int_\Omega |u_{k+1}| \mu_{v_m} &=\int_\Omega  |v_m|\, d\mu_n[u_{k+1}, v_m, \cdots, v_m]\\
&\geq \int_\Omega  |v_m| (R_\nu(u_k) |u_k|^n)^{\frac{1}{n}}  (\lambda_m |v_m|^n\chi_{\Omega_m})^{\frac{n-1}{n}}\, d\nu\\
&\geq \int_\Omega (\Lambda/\lambda_m)^{\frac{1}{n}}  \lambda_m   |v_m|^m |u_k|  \, d\nu_m.
\end{split}
\end{equation*}
Iterating, we find for each $k\in\N$ that 
\begin{equation*}
\begin{split}  \int_\Omega (\Lambda/\lambda_m)^{\frac{k}{n}}    |v_m|^m |u_0|  \, d\nu_m \leq \int_\Omega  |v_m|^n |u_k|d\nu_m &\leq 
 \|v_m\|^n_{L^{n+1}(\Omega, d\nu_m)} \|u_k\|_{L^{n+1}(\Omega, d\nu_m)} \\&\leq C_1 \|v_m\|^n_{L^{n+1}(\Omega, d\nu_m)}.
 \end{split}
\end{equation*}
It follows that 
\[\lambda_m\geq \Lambda\bigg( \frac{ \int_\Omega    |v_m|^m |u_0|  \, d\nu_m }{C_1 \|v_m\|^n_{L^{n+1}(\Omega, d\nu_m)}}\bigg)^{\frac{n}{k}}.\]
Letting $k\to\infty$ shows
 $\lambda_m\geq\Lambda$. This completes the proof of part (iv) and the theorem.
\end{proof}

We now show that if the Monge--Amp\`ere eigenvalue  problem has a solution in the energy class, then the eigenvalue is uniquely given by the infimum of the Rayleigh quotient. Our result here is the real counterpart of Theorem 3.5 in Lu--Zeriahi \cite{LZ}. Our proof strategy is different, however.
\begin{thm}\label{varchac} Let $\nu$ be a locally finite Borel measure on a bounded convex domain $\Omega\subset\R^n$ with $\nu(\Omega)>0$. 
If $(\lambda, u)\in (0,\infty)\times \E(\Omega)\setminus\{0\}$ solves 
  the Monge--Amp\`ere eigenvalue problem
 \[\mu_u=\lambda |u|^n\nu\quad\text{in }\Omega,\quad u=0\quad \text{on }\p\Omega,\]
then $\lambda$ is given by the variational characterization
\[\lambda=\lambda[\Omega,\nu]:=\inf\bigg\{\frac{E(u)}{\int_\Omega |u|^{n+1}\, d\nu}: u\in \E(\Omega)\bigg\}.\]
\end{thm}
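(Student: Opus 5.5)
The plan is to prove the two inequalities $\lambda\ge\lambda[\Omega,\nu]$ and $\lambda\le\lambda[\Omega,\nu]$ separately.

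\emph{The easy direction, $\lambda\ge\lambda[\Omega,\nu]$.} Integrate the equation against $|u|$. Since $u\in\E(\Omega)$, this gives $E(u)=\lambda\int_\Omega|u|^{n+1}\,d\nu$. The right-hand side is finite because $E(u)<\infty$. It is also positive: if $\int_\Omega|u|^{n+1}\,d\nu=0$, then $\mu_u=0$, and the comparison principle (Theorem \ref{compa1}) would force $u\equiv0$. Hence $R_\nu(u)=\lambda$, and therefore $\lambda\ge\lambda[\Omega,\nu]$.

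\emph{The reverse direction, $\lambda\le\lambda[\Omega,\nu]$.} Here I would use the Poincar\'e-type inequality of Proposition \ref{unmuE}. The pair $(\lambda,u)$ is a subsolution of \eqref{EVPPI} with $\lambda_0=\lambda$, because $\mu_u=\lambda|u|^n\nu\ge\lambda|u|^n\nu$. Proposition \ref{unmuE} then gives $E(v)\ge\lambda\int_\Omega|v|^{n+1}\,d\nu$ for every $v\in\E(\Omega)$. Taking the infimum over $v$ yields $\lambda[\Omega,\nu]\ge\lambda$.

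\emph{Checking the use of Proposition \ref{unmuE}.} The hypotheses match: $u\in C(\overline\Omega)$ is nonzero and convex with $u=0$ on $\p\Omega$. The key estimate $\int_\Omega|v|^{n+1}|u|^{-n}\,d\mu_u\le E(v)$ applies directly. The division by $|u|^n$ is legitimate because $u<0$ in $\Omega$.

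\emph{Alternative route.} One could instead use the truncation approach of Theorem \ref{monolam}. Take the eigenpairs $(\lambda_m,u_m)$ for $\nu_m$. Integrate by parts as in \eqref{lammm}, but with $u$ in place of $u_{m'}$, and apply the mixed inequality of Theorem \ref{mMAnu}. This gives
\[\lambda_m\int_\Omega|u_m|^n|u|\,d\nu_m\ge\lambda^{1/n}\lambda_m^{(n-1)/n}\int_\Omega|u_m|^n|u|\,d\nu_m,\]
so $\lambda_m\ge\lambda$. Letting $m\to\infty$ gives $\lambda[\Omega,\nu]\ge\lambda$. This is essentially the argument of Lemma \ref{MAlu}.

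\emph{Expected obstacle.} The only subtle point is the finiteness and positivity of the integrals, which is needed so that the divisions are legitimate. This is settled by $u\in\E(\Omega)$ together with the comparison principle as above. Everything else rests on results already established.
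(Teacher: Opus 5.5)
Your proof is correct, and your ``alternative route'' is exactly the paper's proof. The paper calls the easy inequality obvious. For the reverse inequality it runs \eqref{lammm} with $u$ in place of $u_{m'}$ to get $\lambda_m\ge\lambda$, then lets $m\to\infty$ using Theorem \ref{monolam}.

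Your main route for the reverse inequality is genuinely different. Instead of truncating $\nu$ and passing through the eigenvalues $\lambda_m$ of $\nu_m$, you invoke Proposition \ref{unmuE}. Its proof truncates $\mu_u$ via standard approximants $u_m$ and uses that the compactly supported measure $|u_m|^{-n}\mu_{u_m}$ has eigenvalue exactly $1$. There is no circularity: Proposition \ref{unmuE} rests only on Theorem \ref{Dirdistp} and Lemma \ref{MAlu}, the same base ingredients the paper's route uses through Theorem \ref{monolam}.

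What your route buys is that $\lambda[\Omega,\nu]\ge\lambda$ never uses $u\in\E(\Omega)$. It holds for any continuous convex subsolution, which is Theorem \ref{EVP1}(ii), and it gives $\sup_{\Sigma_\nu}\Lambda\le\lambda[\Omega,\nu]$ for free. So the finite-energy hypothesis enters only in your easy direction, to guarantee $R_\nu(u)=\lambda$. The paper's route needs $u\in\E(\Omega)$ in both directions, because of the integration by parts in Theorem \ref{IBPf}. In exchange, it exhibits $\lambda$ as the monotone limit of the truncated eigenvalues, matching the paper's monotonicity theme. Your check that $\int_\Omega|u|^{n+1}\,d\nu\in(0,\infty)$ usefully fills in what the paper leaves as obvious.
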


\begin{proof} 
Obviously, $\lambda[\Omega,\nu]\leq \lambda$.
 For each $m\in\N$, let 
$\nu_m= \chi_{\{x\in\Omega: \dist(x, \p\Omega)>1/m\}}\nu$.
Let $u_m\in\E(\Omega)$ be a nonzero solution to the Monge--Amp\`ere eigenvalue problem
\begin{equation*}
   \mu_{u_m}=\lambda_m |u_m|^n\nu_m \quad\text{in} ~\Omega, \quad
u_{m} =0\quad \text{on}~\p \Omega.
\end{equation*}
Then, by Theorem \ref{monolam},
$\lim_{m\to\infty} \lambda_m=\lambda[\Omega,\nu]$.
Moreover, replacing $u_{m'}$ by $u$ in the proof of the monotonicity of $\{\lambda_m\}_{m=1}^{\infty}$ (see \eqref{lammm}), we find
\[\lambda_m\geq \lambda.\]
Thus $\lambda[\Omega,\nu]\geq \lambda$. Therefore, we must have $\lambda[\Omega,\nu]=\lambda$. The theorem is proved.
\end{proof}
An immediate consequence of Proposition \ref{unmuE} and Theorem \ref{varchac} in the case the Monge--Amp\`ere eigenvalue problem with measure $\nu$ has nonzero convex solutions in the energy class is the equality of the infimum of the Rayleigh quotient and the supremum of the subeigenvalues. 
\begin{cor}[Spectral characterization of the Monge--Amp\`ere eigenvalue] 
\label{BPcor}
Let $\nu$ be a locally finite Borel measure on a bounded convex domain $\Omega\subset \R^n$ with $\nu(\Omega)>0$. Assume 
the Monge--Amp\`ere eigenvalue problem 
\begin{equation*}
   \mu_{w}= \lambda |w|^n\nu \quad\text{in} ~\Omega, \quad
w=0\quad \text{on}~\p \Omega
\end{equation*}
has a solution $(\lambda, w)\in (0,\infty)\times \E(\Omega)\setminus \{0\}$. 
Then
\[\lambda[\Omega,\nu]:=\inf\bigg\{\frac{E(v)}{\int_\Omega |v|^{n+1}\, d\nu}: v\in \E(\Omega)\bigg\}=\sup_{\Sigma_\nu} \Lambda,\]
where 
\begin{equation*}
\Sigma_\nu:=\Big\{\Lambda\in\R: \mbox{there exists a convex $u\in  C(\overline{\Omega})\setminus\{0\}$, $u=0$ on $\p\Omega$, such that $\mu_u\geq \Lambda|u|^n\nu$}\Big\}.
\end{equation*}
\end{cor}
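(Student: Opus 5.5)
We prove the two inequalities $\lambda[\Omega,\nu]\geq \sup_{\Sigma_\nu}\Lambda$ and $\lambda[\Omega,\nu]\leq \sup_{\Sigma_\nu}\Lambda$ separately. Both follow from results already established, so the argument is essentially an assembly.

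\emph{The bound $\lambda[\Omega,\nu]\geq \sup_{\Sigma_\nu}\Lambda$.} Take any $\Lambda\in\Sigma_\nu$, witnessed by a convex $u\in C(\overline{\Omega})\setminus\{0\}$ with $u=0$ on $\p\Omega$ and $\mu_u\geq \Lambda|u|^n\nu$.

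If $\Lambda\leq 0$, there is nothing to prove: $\lambda[\Omega,\nu]\geq 0$ trivially, since both numerator and denominator of the Rayleigh quotient are nonnegative.

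If $\Lambda>0$, the pair $(\Lambda,u)$ is a subsolution in the sense of \eqref{EVPPI}. The consequence part of Proposition \ref{unmuE} then gives $\lambda[\Omega,\nu]\geq\Lambda$ directly. This step uses no energy-class assumption on $u$; it relies only on the Poincar\'e-type inequality $\int_\Omega |v|^{n+1}|u|^{-n}\,d\mu_u\leq E(v)$. Taking the supremum over $\Lambda\in\Sigma_\nu$ gives the first inequality.

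\emph{The bound $\lambda[\Omega,\nu]\leq \sup_{\Sigma_\nu}\Lambda$.} This uses the hypothesis that a solution $(\lambda,w)\in(0,\infty)\times\E(\Omega)\setminus\{0\}$ exists.

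First, $w$ is convex, continuous, nonzero and vanishes on $\p\Omega$. Since $\mu_w=\lambda|w|^n\nu\geq\lambda|w|^n\nu$, we have $\lambda\in\Sigma_\nu$, so $\sup_{\Sigma_\nu}\Lambda\geq\lambda$.

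Second, because $w$ lies in the energy class, Theorem \ref{varchac} identifies $\lambda=\lambda[\Omega,\nu]$.

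Combining the two facts gives $\lambda[\Omega,\nu]\leq\sup_{\Sigma_\nu}\Lambda$. Together with the first inequality this proves the equality, and in fact shows that the supremum is attained at $\Lambda=\lambda$.

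\emph{Where the difficulty lies.} There is no genuine obstacle left at this stage. The substantive inputs are Theorem \ref{varchac}, which needs $w\in\E(\Omega)$ in order to use the truncation and integration-by-parts argument, and Proposition \ref{unmuE}. The only point to verify is that Proposition \ref{unmuE} applies to subsolutions $u$ outside the energy class. It does, because its proof works through the standard approximants $u_m\in\E(\Omega)$ together with weak convergence of $\mu_{u_m}$. This is exactly why the supremum may be taken over all convex $u\in C(\overline{\Omega})$, not only over functions in $\E(\Omega)$.
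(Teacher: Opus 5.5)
Your proposal is correct and is essentially the paper's argument: the paper states this corollary as an immediate consequence of Proposition \ref{unmuE} and Theorem \ref{varchac}. Proposition \ref{unmuE} gives $\lambda[\Omega,\nu]\ge\Lambda$ for every positive subeigenvalue, even outside the energy class, and Theorem \ref{varchac} together with $\lambda\in\Sigma_\nu$ gives the reverse inequality. As a minor remark, that reverse inequality does not need the full strength of Theorem \ref{varchac}: the trivial bound $\lambda[\Omega,\nu]\le R_\nu(w)=\lambda$, which follows from $E(w)=\lambda\int_\Omega |w|^{n+1}\,d\nu$, is enough.
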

\begin{rem}
Corollary \ref{BPcor} extends the result of Birindelli--Payne \cite[Theorem 6.6]{BP} where the authors considered the case of smooth, uniformly bounded convex domains $\Omega\subset \R^n$ and Lebesgue measure $\nu=d\L^n$. In this case, they showed that
\[\lambda[\Omega,d\L^n]=\sup_{\Sigma'} \Lambda,\]
where 
\begin{multline*}
\Sigma':=\Big\{\Lambda\in\R: \mbox{there exists a convex $u\in  C(\overline{\Omega})\setminus\{0\}$, $u=0$ on $\p\Omega$}\\ \mbox{such that $\det D^2 u\geq \Lambda|u|^n$ in the viscosity sense}\Big\}.
\end{multline*}
The subsolution $\det D^2 u\geq \Lambda|u|^n$ in the viscosity sense above is understood as follows. For each $x_0\in\Omega$ and each function $\varphi$ which is $C^2$ at $x_0$, one has that: \[\mbox{if $u-\varphi$ has a local maximum at $x_0$, then $\det D^2\varphi(x_0)\geq \Lambda|u(x_0)|^n$}.\]
Since $|u|>0$ in $\Omega$, a simple adaptation of the proof of Proposition 7.11 in \cite{Lbook} shows that the subsolution $u$ in the viscosity sense is also a subsolution in the sense of Aleksandrov; that is
$\mu_u\geq \Lambda|u|^n$.
Therefore, Corollary \ref{BPcor} extends  Birindelli--Payne \cite[Theorem 6.6]{BP} to the case of singular Borel measures. 
\end{rem}

Turning to eigenfunctions, we first show that the convex envelope of the difference between a supersolution and a subsolution to the Monge--Amp\`ere eigenvalue equation with the same eigenvalue is again a supersolution. Our result here is the real counterpart of Lemma 3.3 in Lu--Zeriahi \cite{LZ} where the functions involved were required to be in the energy class.
\begin{lem}[Monge--Amp\`ere eigenfunctions and convex envelopes] 
\label{EFconv}
Let $\nu$ be a Borel measure on a bounded convex domain $\Omega\subset\R^n$ with $\nu(\Omega)>0$.
 Let $\lambda>0$ and $u, v\in C(\overline{\Omega})$ be convex functions vanishing on $\p\Omega$
 such that 
 \[\mu_u \leq \lambda |u|^n\nu,\quad \mu_v \geq \lambda |v|^n\nu.\]
 Let $w:=\Gamma_{\min\{u-v, 0\}}$ where $\Gamma$ denotes the convex envelope (see Definition \ref{cvx_env_defn}). Then 
\[ \mu_w \leq \lambda |w|^n\nu.\]
\end{lem}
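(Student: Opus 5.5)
The plan is to compare $w+v$ with $u$ on the contact set of the envelope, and then split off $v$ using the mixed Monge--Amp\`ere inequality (Theorem \ref{mMAnu}).

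\emph{Step 1: boundary behaviour and the trivial case.} Since $v\le 0$ we have $u-v\ge u$. Hence $u\le \min\{u-v,0\}\le 0$, and as $u$ is convex this gives $u\le w\le \min\{u-v,0\}\le 0$. So $w\in C(\overline\Omega)$ is convex with $w=0$ on $\p\Omega$. If $w$ vanishes at an interior point, then $w\equiv 0$ because a nonpositive convex function vanishing on $\p\Omega$ cannot have an interior maximum unless it is constant; the claim is then trivial. So I will assume $w<0$ in $\Omega$.

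\emph{Step 2: localization to the contact set.} Let $D:=\{x\in\Omega: w(x)=\min\{u-v,0\}(x)\}$. By Lemma \ref{Enlinear}, $\mu_w(\Omega\setminus D)=0$, so it suffices to prove $\mu_w\lfloor D\le \lambda|w|^n\nu$. On $D$ we have $w<0$, so $w=u-v$ there. Everywhere in $\Omega$ we have $w+v\le u$, with equality on $D$.

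Thus at each $x\in D$ the convex function $w+v$ touches $u$ from below. Any affine function supporting $w+v$ at $x$ then also supports $u$ at $x$, so $\p(w+v)(x)\subset\p u(x)$. Consequently, for every Borel $S\subset D$,
\[
\mu_{w+v}(S)\le \mu_u(S)\le \lambda\int_S |u|^n\,d\nu=\lambda\int_S (|w|+|v|)^n\,d\nu,
\]
where I used that on $D$ both $w$ and $v$ are nonpositive, so $|u|=|w+v|=|w|+|v|$.

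\emph{Step 3: splitting off $v$.} Superadditivity $\mu_{w+v}\ge\mu_w+\mu_v$ first gives $\mu_w\lfloor D\le \lambda|u|^n\nu$. Hence $\mu_w\lfloor D\ll\nu$, so by Radon--Nikodym $\mu_w\lfloor D=g\nu$ for some $g\ge 0$, and $g$ is locally $\nu$-integrable since $\mu_w$ is locally finite.

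Now apply the consequence of Theorem \ref{mMAnu} with $f_1=g\chi_D$ (we have $\mu_w\ge g\chi_D\nu$) and $f_2=\lambda|v|^n$. This gives $\mu_{w+v}\ge (g^{1/n}+\lambda^{1/n}|v|)^n\nu$ on $D$. Comparing with Step 2,
\[
g^{1/n}+\lambda^{1/n}|v|\le \lambda^{1/n}(|w|+|v|)\qquad \nu\text{-a.e. on } D.
\]
Where $|v|<\infty$ this yields $g\le\lambda|w|^n$, and therefore $\mu_w=g\nu\chi_D\le\lambda|w|^n\nu$.

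\emph{Main obstacle.} The delicate point is the measure-theoretic bookkeeping. Theorem \ref{mMAnu} is stated with densities, so the Lebesgue decomposition in Step 3 (showing that $\mu_w$ has no $\nu$-singular part on $D$) must come before it is applied. One must also check that the pointwise inclusion $\p(w+v)(x)\subset\p u(x)$ on the possibly non-open set $D$ legitimately gives the inequality of measures on Borel subsets of $D$. This follows because $\mu_{w+v}(S)=|\p(w+v)(S)|\le|\p u(S)|=\mu_u(S)$ directly from the definition \eqref{MAmu}.
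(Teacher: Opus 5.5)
Your proposal is correct and is essentially the paper's proof. The paper concentrates $\mu_w$ on the contact set via Lemma~\ref{Enlinear} and uses $\partial(w+v)(x)\subset\partial u(x)$ there together with $\mu_w+\mu_v\le\mu_{w+v}$ to get $\mu_w\le\lambda|u|^n\nu$. It then writes $\mu_w$ as a density against $\nu$ and splits off $v$ with the consequence of Theorem~\ref{mMAnu} to bound that density by $\lambda|w|^n$ on the contact set, exactly as you do (your dichotomy ``$w\equiv 0$ or $w<0$ in $\Omega$'' for the trivial case is equivalent to the paper's ``$u\ge v$'' case split).
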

\begin{proof} If $u\geq v$ in $\Omega$, then $w\equiv 0$, and the conclusion is obvious. Assume now $\{u<v\}\neq\emptyset$.

 Since $w$ is convex, $w=0$ on $\p\Omega$ and $w\leq \min\{u-v, 0\}$ in $\Omega$, so $w<0$ somewhere in $\Omega$, we have $w<0$ in $\Omega$.
 By Lemma \ref{Enlinear}, $\mu_w$ is concentrated on the contact set \[\mathcal{C}= \{w= \min\{u-v, 0\}\}\cap\Omega = \{w= u-v\}.\]
 Thus
 $\mu_w =\chi_{\mathcal{C}}\mu_w$.
  Since $\mu_w+\mu_v\leq \mu_{w+ v}$ (see \cite[Lemma 3.10]{Lbook}), we find
  $\mu_w \leq \chi_{\mathcal{C}}\mu_{w+ v}$.
  
  \medskip
 Note that $u\geq w+ v$ in $\Omega$ and $u=w+ v$ on $\mathcal{C}$, so for each $x\in \mathcal{C}$, we have $\p (w+ v)(x)\subset \p u(x)$. Hence
 $\chi_{\mathcal{C}}\mu_{w+ v} \leq \chi_{\mathcal{C}}\mu_u$.
Therefore
\[\mu_w\leq \chi_{\mathcal{C}}\mu_u\leq \chi_{\mathcal{C}}\lambda|u|^n\nu.\]
Let $\mu_w = \lambda f^n\nu$,
where $f\geq 0$ and $f^n\in L^1_{\text{loc}}(\Omega, d\nu)$.
Then 
$ f^n\leq \chi_{\mathcal{C}}|u|^n$.
By Theorem \ref{mMAnu},
\[\mu_{v+ w}\geq \lambda (|v| + f)^n\nu.\]
Hence
\[\chi_{\mathcal{C}}\lambda (-u)^n\nu\geq  \chi_{\mathcal{C}}\mu_u\geq \chi_{\mathcal{C}}\mu_{w+ v}\geq \chi_{\mathcal{C}} \lambda (-v + f)^n\nu.\]
Thus, almost everywhere with respect to $\nu$, we have 
$\chi_{\mathcal{C}} f \leq \chi_{\mathcal{C}} (v-u)$. Therefore
\[\mu_w =\chi_{\mathcal{C}}\mu_w =\chi_{\mathcal{C}} \lambda f^n\nu\leq\chi_{\mathcal{C}}\lambda (v-u)^n\nu =\chi_{\mathcal{C}}\lambda (-w)^n\nu \leq \lambda (-w)^n\nu.\]
The lemma is proved.
\end{proof}
\begin{rem} In Lemma \ref{EFconv}, we can replace the exponent $n$ in $|u|^n$ and $|v|^n$ by $p\in (0, n]$.
\end{rem}

Finally, we establish the uniqueness of the Monge--Amp\`ere eigenfunctions in the class of globally continuous, convex functions when one of the eigenfunctions is in the energy class. When all functions are in the energy class, our result here is the real counterpart of Theorem 3.8 in Lu--Zeriahi \cite{LZ}.
\begin{thm}[Uniqueness of the Monge--Amp\`ere eigenfunctions]
\label{MAfu}
Let $\nu$ be a Borel measure on a bounded convex domain $\Omega\subset\R^n$ with $\nu(\Omega)>0$.
 Let $u\in\E(\Omega)\setminus\{0\}$ and $v\in C(\overline{\Omega})\setminus\{0\}$ be convex functions vanishing on $\p\Omega$ such that 
 \[\mu_u =\lambda |u|^n\nu,\quad \mu_v = \Lambda |v|^n\nu,\]
where $\Lambda\geq \lambda$. Then, there exists some constant $c>0$ such that \[u= cv,\]
so $\lambda=\Lambda$ and $v\in\E(\Omega)$.
\end{thm}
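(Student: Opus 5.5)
The plan has three stages. First show $\Lambda=\lambda$. Then use Lemma \ref{EFconv} to build, for every $c>0$, a finite-energy function that is either $0$ or a multiple of $u$. Finally let $c$ vary to identify $u$ with a multiple of $v$.

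\emph{Step 1: $\Lambda=\lambda$.} Since $v\in C(\overline{\Omega})\setminus\{0\}$ is convex, vanishes on $\p\Omega$ and satisfies $\mu_v\ge\Lambda|v|^n\nu$, Proposition \ref{unmuE} gives
\[E(\varphi)\ge \Lambda\int_\Omega|\varphi|^{n+1}\,d\nu\quad\text{for all }\varphi\in\E(\Omega).\]
Apply this to $\varphi=u$. Because $u\in\E(\Omega)$ and $\mu_u=\lambda|u|^n\nu$, we have $E(u)=\lambda\int_\Omega|u|^{n+1}\,d\nu$, and this integral is positive. Hence $\lambda\ge\Lambda$, so $\Lambda=\lambda$. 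The same inequality also shows that $R_\nu\ge\lambda$ on $\E(\Omega)$ and that $u$ attains this bound, so $u$ minimizes $R_\nu$.

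\emph{Step 2: envelopes are multiples of $u$.} Fix $c>0$. Then $cv$ satisfies $\mu_{cv}=\lambda|cv|^n\nu$ and $u$ satisfies $\mu_u=\lambda|u|^n\nu$, so Lemma \ref{EFconv} applies to the pair $(u,cv)$ with $u$ as the supersolution. It gives that $w_c:=\Gamma_{\min\{u-cv,0\}}$ satisfies $\mu_{w_c}\le\lambda|w_c|^n\nu$.
\begin{itemize}
\item Since $cv\le 0$, we have $\min\{u-cv,0\}\ge u$, and $u$ is convex, so $w_c\ge u$. By Proposition \ref{latticelem}, $w_c\in\E(\Omega)$.
\item If $w_c\not\equiv0$, then $E(w_c)\le\lambda\int_\Omega|w_c|^{n+1}\,d\nu$. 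Combined with $R_\nu\ge\lambda$ from Step 1, this makes $w_c$ a minimizer of $R_\nu$.
\item Steps 2--3 of the proof of Theorem \ref{Dirdistp} only use minimality over $\E(\Omega)$ (equivalently, use the second proof in the Remark after Lemma \ref{vamassho}). So $w_c$ is an eigenfunction: $\mu_{w_c}=\lambda|w_c|^n\nu$.
\end{itemize}
Next I would show that eigenfunctions in $\E(\Omega)$ are unique up to scaling, giving $w_c=a_cu$ with $a_c\ge0$. To do this, run the same envelope construction on two finite-energy eigenfunctions. Equality must then hold in every inequality of the proof of Lemma \ref{EFconv}: in $\mu_w\le\chi_{\mathcal C}\mu_u$, in the mixed inequality of Theorem \ref{mMAnu} (whose equality clause forces $f=|w|$), and in $f\le\chi_{\mathcal C}(v-u)$. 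From these equalities I expect to get $\mu_w(\Omega\setminus\mathcal C)=0$ and $|w|^n\nu=0$ off the contact set.

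\emph{Step 3: identify $c$.} Note that $w_c\equiv0$ exactly when $u\ge cv$. Let $c^*:=\sup\{c>0:\,u\ge cv\}$. The key claim is that $a_c$ depends monotonically and continuously on $c$, and that at $c=c^*$ the contact set $\{u=c^*v\}$ carries all of $\mu_u$. Then the domination principle (Lemma \ref{domi_prin}) or the comparison principle (Theorem \ref{compa1}), applied to $u$ and $c^*v$, which have the same Monge--Amp\`ere measure where they touch, should give $u=c^*v$. That in turn yields $v\in\E(\Omega)$.

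\emph{Main obstacle.} The hardest step is this last one, getting from ``$w_c=a_cu$ for every $c$'' to $u=c^*v$. Two difficulties stand in the way. It is not clear a priori that $c^*>0$, since $v$ need not be comparable to $u$ near $\p\Omega$ when $v\notin\E(\Omega)$. And the contact sets must be controlled $\nu$-almost everywhere. What I would try first is to exploit the equality cases in the proof of Lemma \ref{EFconv} to derive $u-cv=a_cu$ on the contact set $\mathcal C_c$. This gives $v=\frac{1-a_c}{c}\,u$ there. Letting $c$ vary should make the sets $\mathcal C_c$ exhaust the support of $\nu$, after which the domination principle extends the identity $v=\mathrm{const}\cdot u$ to all of $\Omega$.
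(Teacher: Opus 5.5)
Your Step 1 is correct and is a nice shortcut. Proposition \ref{unmuE}, applied to $v$ and tested on $u$, gives $\Lambda=\lambda$ and shows that $u$ minimizes $R_\nu$; this replaces the paper's appeal to Theorem \ref{varchac}. The first half of Step 2 is also correct. From $w_c\ge u$ you get $w_c\in\E(\Omega)$, and if $\{u<cv\}\neq\emptyset$ then $w_c<0$ in $\Omega$ and $\mu_{w_c}=\lambda|w_c|^n\nu$. You do not need the Euler--Lagrange machinery of Theorem \ref{Dirdistp} for this: $E(w_c)\le\lambda\int_\Omega|w_c|^{n+1}d\nu\le E(w_c)$ already forces $|w_c|\mu_{w_c}=\lambda|w_c|^{n+1}\nu$.

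The genuine gap is everything after that point.
\begin{itemize}
\item Writing $w_c=a_cu$ requires uniqueness of eigenfunctions within $\E(\Omega)$. That is just the special case $v\in\E(\Omega)$ of the statement you are proving, and you only sketch it as a list of expected equality cases.
\item Step 3 is asserted rather than proved: the monotone and continuous dependence of $a_c$ on $c$, and the contact sets exhausting the support of $\nu$.
\item Since $v\le0$, the set $\{c>0:\,u\ge cv\}$ is closed upward, so its supremum is $+\infty$ whenever it is nonempty. You presumably mean the infimum. Whether that set is nonempty at all is exactly the nontrivial point when $v\notin\E(\Omega)$.
\end{itemize}

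The missing idea is short, and you actually wrote it down (``$|w|^n\nu=0$ off the contact set'') without exploiting it. It needs no equality analysis of Lemma \ref{EFconv}.
\begin{itemize}
\item By Lemma \ref{Enlinear}, $\mu_{w_c}$ is concentrated on $\mathcal C_c=\{w_c=u-cv\}\subset\{u<cv\}$.
\item Since $\mu_{w_c}=\lambda|w_c|^n\nu$ and $|w_c|>0$ in $\Omega$, this forces $\nu(\Omega\setminus\mathcal C_c)=0$, in particular $\nu(\{cv<u\})=0$.
\item Hence $\mu_{cv}(\{cv<u\})=\lambda c^n\int_{\{cv<u\}}|v|^n\,d\nu=0$, and the domination principle (Lemma \ref{domi_prin}) gives $u\le cv$ in $\Omega$.
\end{itemize}
So $\{u<cv\}\neq\emptyset$ implies $u\le cv$.

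Now set $c^*:=\sup\{c>0:\{u<cv\}\neq\emptyset\}$.
\begin{itemize}
\item This set is closed downward and nonempty (take $c$ small at a point where $u<0$).
\item It is bounded: evaluating $u\le cv$ at a minimum point $x_0$ of $v$ gives $c^*\le u(x_0)/v(x_0)<\infty$.
\item Letting $c\uparrow c^*$ gives $u\le c^*v$, while $u\ge cv$ for every $c>c^*$ gives $u\ge c^*v$.
\end{itemize}
Thus $u=c^*v$ and $v\in\E(\Omega)$, with no need for $a_c$, uniqueness inside $\E(\Omega)$, or continuity in $c$. This is the paper's argument. It also does not need $\Lambda=\lambda$ in advance, since $\mu_{cv}=\Lambda|cv|^n\nu\ge\lambda|cv|^n\nu$ already suffices for Lemma \ref{EFconv}.
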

\begin{proof} In vew of Theorem \ref{varchac}, $\lambda$ has a  variational characterization. Let $a>0$ be a constant such that $\{u<av\}\neq \emptyset$. We show that
$av\geq u \quad\text{in }\Omega$.

\medskip
Let $w:=\Gamma_{\min\{u-av, 0\}}$. Since $w=0$ on $\p\Omega$ and $w\geq u$, we have $w\in\E(\Omega)$, by Proposition \ref{latticelem}. Then $w<0$ in $\Omega$ and, by Lemma \ref{EFconv},
$\mu_w \leq \lambda |w|^n\nu$.
Multiplying with $|w|$  and integrating, we find
\[E(w)=\int_\Omega |w|\, d\mu_w \leq \lambda \int_\Omega |w|^{n+1}d\nu.\]
Thus, by the variational characterization of $\lambda$, we must have equality and therefore
\[ |w|\mu_w = \lambda |w|^{n+1}\nu\quad\text{in }\Omega,\]
so
\[ \mu_w= \lambda |w|^n\nu.\]
Since $\mu_w$ is supported on the set $\{w= u-av\} \subset \{u-av<0\}$, we have
\[\mu_w(\{av<u\})=0.\]
From the above equation for $\mu_w$ and $\lambda>0, |w|>0$, we find
$\nu(\{av<u\})=0$.
Therefore
\[\mu_{av}(\{av<u\})=(\Lambda |av|^n\nu)(\{av<u\})=0.\]
By the domination principle in Lemma \ref{domi_prin}, we have $av\geq u$, as claimed.

Now, let $c>0$ be the supremum of all $a>0$ such that $\{u<av\}\neq \emptyset$. By the above argument, we have $u\leq av$ for all such $a$. Hence $u\leq cv$ in $\Omega$. Evaluating at the minimum of $v$ shows that $c$ is finite. From the definition of $c$, we have $u\geq cv$. Therefore $u= cv$.
From this we have $\lambda=\Lambda$ and $v\in\E(\Omega)$. This completes the proof of the theorem.
\end{proof}

\subsection{Examples of nonuniqueness and solvability outside energy class}
In view of Proposition \ref{unmuE}, if the Monge--Amp\`ere eigenvalue problem 
with measure $\nu$ has a  solution $(\lambda, u)\in (0,\infty)\times C(\overline{\Omega})\setminus\{0\}$, then 
a Poincar\'e inequality must necessarily hold in $\E(\Omega)$ and hence $\E(\Omega)\subset L^{n+1}(\Omega, d\nu)$. Here, we give examples of Borel measures $\nu$ of the form $\nu= |v|^{-n}\mu_v$ where $v\not\in \E(\Omega)$ to show that $\E(\Omega)\subset L^{n+1}(\Omega, d\nu)$ is not sufficient for the solvability 
in the energy class of 
the Monge--Amp\`ere eigenvalue problem. Moreover, 
in one dimension, it has infinitely many families of eigenvalues and convex eigenfunctions that break symmetry.
 \begin{exa}[Raleigh quotient and insolvability in energy class]
 \label{exaRn}
 For $n\geq 1$, let
 \[\Omega=B_1(0)\subset\R^n,\quad\alpha_n:= \frac{n}{n+1},\quad v(x) =-(1-|x|^2)^{\alpha_n}, \quad \nu =\frac{\mu_v}{|v|^n} = \frac{\det D^2 v}{|v|^n}\, d\L^n.\]
 \begin{enumerate}
\item[(i)] We show that
 \[ \lambda[\Omega, \nu]:=\inf\bigg\{\frac{\int_\Omega |u|\mu_u}{\int_\Omega |u|^{n+1}\, d\nu}: u\in \E(\Omega)\bigg\}=1.\]
  Since  $(1, v)\in (0,\infty)\times C(\overline{\Omega})\setminus\{0\}$ solves the Monge--Amp\`ere eigenvalue problem
  \begin{equation}
  \label{EVPexa1}
   \mu_u=\lambda |u|^n\nu \quad\text{in} ~\Omega, \quad
u =0\quad\text{on}~\p \Omega.
\end{equation}
  Proposition \ref{unmuE} gives
 $\lambda[\Omega, \nu]\geq 1$. For the other direction,
 consider \[u_\e= -\e^{\frac{1}{n+1}}(1-|x|^2)^{\alpha_\e}\in\E(\Omega),\quad \alpha_\e=\alpha_n+\e,  \quad\text{where }0<\e<1/(n+1).\] 
For $\alpha\in (0, 1)$ and \[w(x)=-(1-|x|^2)^\alpha\equiv W(|x|) \quad \text{where }r=|x|,\quad W(r) =-(1-r^2)^{\alpha},\] we compute
 \[\det D^2 w(x)= W''(r)[W'(r)/r]^{n-1}= (2\alpha)^n (1-r^2)^{n(\alpha-1)-1} [1+ r^2(1-2\alpha)].\]
 Using polar coordinates, we have
  \begin{equation*}
 \begin{split}R_\nu(u_\e) =\frac{\int_\Omega |u_\e|\mu_{u_\e}}{\int_\Omega |u_\e|^{n+1}\, d\nu} &=\frac{\int_\Omega |u_\e|\det D^2 u_\e\, dx}{\int_\Omega |u_\e|^{n+1}\det D^2v/|v|^n\, dx} \\
 &=\Big(\frac{\alpha_n+\e}{\alpha_n}\Big)^{n}\frac{\int_0^1 r^{n-1} (1-r^2)^{(n+1)(\alpha_\e-1)} [1+ r^2(1-2\alpha_\e)]\, dr}{\int_0^1 r^{n-1} (1-r^2)^{(n+1)(\alpha_\e-1)} [1+ r^2(1-2\alpha_n)]\, dr}\\
 &\leq \Big(\frac{\alpha_n+\e}{\alpha_n}\Big)^{n}\to 1\quad \text{when } \e\to 0^+.
  \end{split}
 \end{equation*}
 Since $(n+1)(\alpha_\e-1) =(n+1)\e-1>-1$, the claim that $u_\e\in \E(\Omega)$ follows from
 \[E(u_\e) =n|B_1(0)| (2\alpha_\e)^n \int_0^1 r^{n-1} (1-r^2)^{(n+1)(\alpha_\e-1)} [1+ r^2(1-2\alpha_\e)]\, dr<\infty.\]
  It follows that
 $\lambda[\Omega, \nu] \leq \liminf_{\e\to 0^+} R_\nu(u_\e)\leq 1$.
  Hence
  $\lambda[\Omega, \nu]=1$.
  \item[(ii)] We now show that the Monge--Amp\`ere eigenvalue problem \eqref{EVPexa1} 
  has no nonzero solutions in the energy class $\E(\Omega)$. Indeed, if there were a solution $(\Lambda, u)$ where $u\in \E(\Omega)$, then, by Theorem \ref{varchac}, $\Lambda=\lambda[\Omega,\nu]=1$.
Applying Theorem \ref{MAfu} to $u$ and $v$ tells us that $v\in \E(\Omega)$. However,  because $(n+1)(\alpha_n-1)=-1$, we find  $v\not \in \E(\Omega)$ from
\[E(v) =n|B_1(0)| (2\alpha_n)^n \int_0^1 r^{n-1} (1-r^2)^{(n+1)(\alpha_n-1)} [1+ r^2(1-2\alpha_n)]\, dr=\infty.\]
 \end{enumerate}
 \end{exa}
 
 Note that the function $v$ in Example \ref{exaRn} barely fails to be in the energy class $\E(\Omega)$.
Next, we specialize Example \ref{exaRn} to one dimension. 
\begin{exa}[Nonuniqueness and symmetry breaking]
\label{exaH1}
Let \[\Omega=(-1, 1)\subset\R, \quad v(x) =-(1-x^2)^{1/2},\quad
\nu= \frac{\mu_v}{|v|}= \frac{1}{(1-x^2)^2}\, d\L^1.\]
\begin{enumerate}
\item[(i)] From Example \ref{exaRn}, we have 
 \[\lambda[(-1,1),\nu]:=\inf\bigg\{\frac{\int_{-1}^1 (-u)\, d\mu_u}{\int_{-1}^1 |u|^{2}/(1-x^2)^2\, dx}: u\in \E(-1, 1)\bigg\}=1,\]
 and the Monge--Amp\`ere eigenvalue problem with measure $\nu$ on $(-1, 1)$
  has no nonzero solutions in the energy class $\E(\Omega)$.
 \item[(ii)]  For each $\alpha \in (-1/2, 1/2)$ and 
$\lambda_\alpha= 1-4\alpha^2$, let
 \[v_\alpha(x):= -(1-x^2)^{1/2} \Big(\frac{1+x}{1-x}\Big)^\alpha.\]
 Then
 \[v''_\alpha=\lambda_\alpha |v_\alpha|\nu\quad\quad\text{in } \quad(-1, 1),\quad v_\alpha (\pm 1)=0.\]
 Hence, we have infinitely many families of eigenvalues $\lambda_{|\alpha|}$. Within each family $\lambda_{|\alpha|}$ with $\alpha\neq 0$, we have symmetry breaking and nonuniqueness of convex eigenfunctions.  
 \item[(iii)] Consider $u_\e= -\sqrt{\e}(1-|x|^2)^{1/2 +\e}\in\E(-1,1)$ where $0<\e<1/2$. Integrating by parts, we find that
 the Monge--Amp\`ere energy of $u_\e$ is
  \begin{equation*}
 \begin{split}E(u_\e)=\int_{-1}^1 (-u_\e) u_\e''\, dx
&=  2\e(1+2\e)^2 \int_{0}^1  x^2 (1-x^2)^{2\e-1}\, dx\\&\leq   2\e(1+2\e)^2 \int_{0}^1  (1-x)^{2\e-1}\, dx=(1+2\e)^2\leq 4.
  \end{split}
 \end{equation*}
 However, the vanishing mass condition \eqref{vamass} fails. Indeed, we estimate
   \begin{equation*}
 \begin{split}
 \int_{\{x\in\Omega: \dist(x,\p\Omega)\leq 1/m\}} u_\e^2 d\nu&= 2\int_{1-1/m}^1 \e (1-|x|^2)^{2\e-1}\, dx \\&\geq  \int_{1-1/m}^1 \e (1-x)^{2\e-1}\, dx
 = \frac{m^{-2\e}}{2}\to \frac{1}{2}\quad \text{when }\e=\frac{1}{m}\to 0.
  \end{split}
 \end{equation*}
 \end{enumerate}
\end{exa}

\medskip
We now complete the proofs of Theorems \ref{Dist0pn} and \ref{EVP1}.
\begin{proof}[Proof of Theorem \ref{Dist0pn}]  Part (i) is proved in Theorem \ref{Dirdistp}. Part (ii) is proved in Theorem \ref{Dirdist}. Part (iii) is proved in Theorem \ref{MApnu}. Part (iv) follows from combining Theorem  \ref{Dirdistp} (for the existence of the Monge--Amp\`ere eigenvalue and eigenfunction and its variational characterization), Lemma \ref{MAlu} (for uniqueness of the eigenvalue), Theorem \ref{varchac} and Corollary \ref{BPcor} (for variational and  spectral characterizations of the eigenvalue), and Theorem \ref{MAfu} (for uniqueness of the eigenfunction).
\end{proof}
\begin{proof}[Proof of Theorem \ref{EVP1}] 
Part (i) is proved in Theorem  \ref{monolam}. Part (ii) is proved in Proposition \ref{unmuE}. Part (iii) is proved in Proposition \ref{allmono}. Part (iv) follows from Theorems \ref{varchac} and \ref{MAfu} for the insolvability statement, and Example \ref{exaH1} for the nonuniqueness statement.
\end{proof}

Some very natural questions arise from out analysis. We list below two of them.
\noindent
\begin{question} Let $\nu$ be a Borel measure on a bounded convex domain $\Omega\subset\R^n$.
\begin{enumerate} 
\item[(i)] Assume that there is a solution $(\lambda, u)\in (0,\infty)\times\E(\Omega)\setminus\{0\}$ to the Monge--Amp\`ere eigenvalue problem 
\begin{equation*}
   \mu_u=\lambda |u|^n\nu \quad\text{in} ~\Omega\subset\R^n, \quad
u =0\quad\text{on}~\p \Omega.
\end{equation*}
Is it true that it has no globally continuous, convex solution outside the energy class?
\item[(ii)] If we remove the compactly supported condition (ii) of the measure $\nu$ in Theorem \ref{compnu}, does the comparison principle there still hold?
\end{enumerate}
\end{question}

\medskip
\noindent
{\bf Acknowledgements.} The author  would like to thank Professors Norm Levenberg and Chinh H. Lu for helpful discussions.

\end{document}